\documentclass[11pt,leqno]{amsart}
\usepackage[letterpaper,margin=1.5in,footskip=0.25in]{geometry}

\usepackage[T1]{fontenc}
\usepackage{tgschola}

\usepackage{mathrsfs}
\usepackage{microtype}
\usepackage{mathtools}
\usepackage{enumitem}

\PassOptionsToPackage{dvipsnames}{xcolor}
\usepackage{xcolor}

\usepackage{tikz-cd}

\PassOptionsToPackage{pdfusetitle,pagebackref,colorlinks}{hyperref}
\usepackage{hyperref}
\usepackage{bookmark}
\hypersetup{citecolor=Green,linkcolor=red,urlcolor=blue}
\usepackage[hyphenbreaks]{breakurl}

\usepackage{amssymb}
\usepackage{amsfonts}
\usepackage{bbm}
\usepackage{upgreek}
\usepackage{bm}
\usepackage{stmaryrd}
\usepackage{comment}
\usepackage{graphicx}
\usepackage{todonotes}

\allowdisplaybreaks

\makeatletter
\providecommand{\leftsquigarrow}{%
  \mathrel{\mathpalette\reflect@squig\relax}%
}
\newcommand{\reflect@squig}[2]{%
  \reflectbox{$\m@th#1\rightsquigarrow$}%
}
\makeatother

\newcommand{\A}{\mathbb{A}}

\newcommand{\C}{\mathbb{C}}

\newcommand{\Q}{\mathbb{Q}}
\newcommand{\R}{\mathbb{R}}
\newcommand{\Z}{\mathbb{Z}}

\newcommand{\F}{\mathbb{F}}

\newcommand{\cJ}{\mathcal{J}}

\newcommand{\cO}{\mathcal{O}}

\newcommand{\fa}{\mathfrak{a}}

\newcommand{\m}{\mathfrak{m}}

\newcommand{\End}{\operatorname{End}}

\DeclareMathOperator{\Exc}{Exc}

\DeclareMathOperator{\Spec}{Spec}
\DeclareMathOperator{\vol}{vol}

\DeclareMathOperator{\ord}{ord}
\DeclareMathOperator{\Nef}{Nef}

\DeclareMathOperator{\Supp}{Supp}

\DeclareMathOperator{\rk}{rk}
\DeclareMathOperator{\Tr}{Tr}

\newcommand{\ceil}[1]{\left\lceil #1 \right\rceil}
\newcommand{\floor}[1]{\left\lfloor #1 \right\rfloor}
\newcommand{\NS}{\operatorname{NS}}
\newcommand{\nef}{\operatorname{Nef}}
\newcommand{\psef}{\operatorname{\overline{Eff}}}

\newcommand{\taub}{\tau_b}
\newcommand{\Frob}{F}
\newcommand{\Frobpush}[1]{\Frob^{#1}_*}

\newcommand{\w}{\omega}

\newcommand{\Hom}{\mathrm{Hom}}

\newcommand{\Tot}{\operatorname{Tot}}

\newcommand{\Eff}{\operatorname{Eff}}

\newcommand{\codim}{\operatorname{codim}}

\numberwithin{equation}{section}

\newtheorem{prop}{Proposition}[section]
\newtheorem{thm}[prop]{Theorem}

\newtheorem{lem}[prop]{Lemma}
\newtheorem{cor}[prop]{Corollary}
\newtheorem{prop-def}[prop]{Proposition-Definition}

\newtheorem{theorem}[prop]{Theorem}
\newtheorem{lemma}[prop]{Lemma}
\newtheorem{corollary}[prop]{Corollary}

\newtheorem{proposition}[prop]{Proposition}
\newtheorem{thm-defn}[prop]{Theorem-Definition}
\newtheorem{assume}[prop]{Assumption}

\theoremstyle{definition}

\newtheorem{que}[prop]{Question}

\newtheorem{exam}[prop]{Example}

\newtheorem{setup}[prop]{Setup}

\theoremstyle{remark}

\newtheorem{remark}[prop]{Remark}

\newtheorem{definition}[prop]{Definition}

\newtheorem*{ThA*}{\textbf{Theorem 1}}
\newtheorem*{ThB*}{\textbf{Theorem 2}}
\newtheorem*{ThC*}{\textbf{Theorem C}}
\newtheorem*{ThD*}{\textbf{Theorem D}}
\newtheorem*{ThE*}{\textbf{Theorem E}}
\newtheorem*{Con*}{Conjecture}

\title{$\mathbf{F}$-jumping numbers can be irrational}
\author{\fontfamily{cmtt} Rahul Ajit}
\address{Department of Mathematics\\ The University of Utah\\ Salt Lake City, UT 84112, USA.}
\email{rahulajit@math.utah.edu}
\date{\today}

\begin{document}
\begin{abstract}
Let $k$ be an $F$--finite and infinite field of characteristic $p>2$.  We show, there exist infinitely many $F$--finite local domains $(R,\m)$ which are not $\Q$--Gorenstein and $\tau_{\mathrm{b}}(R;\m^t)$ has all but finitely many \emph{irrational} $F$--jumping numbers. 
\end{abstract}

\maketitle

\tableofcontents

\newpage

\section{Introduction}

Test ideals originated in tight closure theory of Hochster \& Huneke in characteristic \(p>0\), and play crucial role in studying $F$-singularities, analogous of multiplier ideals, see \cite{HH,ST-survey, FBook} for a comprehensive introduction.  In the presence of a boundary, one studies triples \((X,\Delta;\fa)\), where \(X\) is a normal \(F\)-finite scheme essentially of finite type over a field of characteristic \(p>0\), \(\Delta\) is an effective \(\Q\)-divisor such that \(K_X+\Delta\) is \(\Q\)-Cartier, and \(\fa\subseteq \cO_X\) is an ideal sheaf.  For \(t\in\R_{\ge 0}\) one defines test ideals \(\tau(X,\Delta;\fa^t)\subseteq \cO_X\) for triples (see \cite[Definition 3.4]{ST}), and the values of \(t\) at which \(\tau(X,\Delta;\fa^t)\) ``jumps" are called the \(F\)-jumping numbers, see \cite{TakagiWatanabe,Mus-TW, FBook} for details.

A striking feature of the log \(\Q\)-Gorenstein setting is that the jumping behavior of \(\tau(X,\Delta;\fa^t)\) is arithmetically rigid: the \(F\)-jumping numbers of \(\tau(X,\Delta;\fa^t)\) form a discrete set of rational numbers by the works of many mathematicians including Blickle, Lyubeznik, Musta{\c{t}}{\u{a}}, Smith, Schwede, Takagi, Tucker, Zhang, etc; see \cite{BMS-1,BMS-2,BST-alt,BSTZ10,Fin-gen, S-Note-Fjump, KLZ,S-Graf, ST, STW-alt,Ajit-Simper-1} and the references therein.

The situation changes dramatically for \emph{non-\(\Q\)-Gorenstein} varieties. De Fernex--Hacon, in \cite{dFH} introduced multiplier ideals on normal (possibly non-\(\Q\)-Gorenstein) varieties and showed that they can be computed by a \emph{single} boundary, see \cite[\S 5]{dFH}. Using this theory, Urbinati then constructed an explicit example whose multiplier ideal have irrational jumping numbers \cite{Urbinati}. 

Even though basepoint free Bertini fails in positive charactersitic, Schwede, in \cite{SchwedeBig}, proved the following formula analogous to \cite{dFH}:
\[
\tau_b(X;\fa^t)=\sum_{\Delta}\tau(X,\Delta;\fa^t),
\]
where the sum runs over effective \(\Q\)-divisors \(\Delta\) for which \(K_X+\Delta\) is \(\Q\)-Cartier with index not divisible by \(p\). Note that even though each individual summand has only rational/discrete jumping behavior, the sum \(\tau_b\) may not. This sum formula will play a crucial role in this note.  

In this context, motivated by characteristic 0 example \cite{Urbinati}, it has long been suspected by experts that one may encounter irrational $F$-jumps as well, see \cite[Question 6.2]{BSTZ10}, \cite{S-Note-Fjump}, \cite[Section 7]{PST-SRI}, \cite[pg 474]{S-Graf} and \cite[Chapter 9, Question 3.5]{FBook} for example. The goal of this paper is to confirm this suspicion by showing infinitely many examples of cones over abelian and K3 surfaces having \emph{all but finitely many} irrational $F$-jumping numbers. We note that most of these examples are new, even in char. 0 and all of them have \emph{all} jumping numbers irrational.

\medskip

\noindent\textbf{Main result and strategy.} 

The main tool to compute $F$-jumps in these examples is the following theorem that follows directly from the works of Schwede and Tucker in \cite[Section 5]{ST}:

\begin{ThA*} (cf \cite[Theorem 5.1]{ST}, Theorem \ref{thm:asymptotic-real})
Let \(X=\Spec R\) be normal, affine, integral, and
\(F\)-finite of characteristic \(p>0\).  Let \(\Delta\ge 0\) be a
\(\Q\)-divisor such that \(K_X+\Delta\) is \(\Q\)-Cartier, and let
\(\mathfrak a\subset R\) be a nonzero ideal.  Assume that the normalized
blowup \(\pi:Y\to X\) of \(\mathfrak a\) is regular, that
\(\mathfrak a\cO_Y=\cO_Y(-G)\) for an effective Cartier divisor \(G\), that
\(-G\) is \(\pi\)-ample, and that
\(\Supp(G\cup \Exc(\pi)\cup \pi_*^{-1}\Delta)\) is simple normal crossings. Then, there exist integer $T_0\ge 0$ such that
\[
\tau(X,\Delta;\mathfrak{a}^t)=\cJ(X,\Delta;\mathfrak{a}^t)
\qquad\text{for all real }t\ge T_0.
\]
\end{ThA*}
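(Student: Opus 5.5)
Following \cite[Section 5]{ST}, the plan is to reduce the statement to a comparison on the regular model $Y$ and then use Serre vanishing together with the $\pi$-ampleness of $-G$.

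Fix $r\ge 1$ with $p\nmid r$ (after the usual perturbation, or in general via the $e$-th root formalism of \cite{ST}) such that $r(K_X+\Delta)$ is Cartier. Write
\[
K_Y+\pi^{-1}_*\Delta-\pi^*(K_X+\Delta)=\textstyle\sum a_i E_i .
\]
Then
\[
\cJ(X,\Delta;\fa^t)=\pi_*\cO_Y\bigl(\lceil K_Y-\pi^*(K_X+\Delta)-tG\rceil\bigr).
\]
The containment $\tau\subseteq\cJ$ holds for all $t$: this is the standard comparison, proved by pushing forward the trace of Frobenius on the regular SNC pair $(Y,\pi^{-1}_*\Delta+tG)$, where the test ideal is computed explicitly. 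So the whole work is the reverse containment $\cJ\subseteq\tau$ for $t\gg0$.

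For the reverse containment I will use the description of $\tau$ as the smallest nonzero ideal $J$ that is compatible, i.e.\ stable under the maps
\[
\phi\in\Hom_R\bigl(F^e_*R(\lceil(p^e-1)\Delta\rceil),R\bigr)
\]
twisted by $F^e_*\fa^{\lceil t(p^e-1)\rceil}$. Equivalently, I will show that
\[
\tau(X,\Delta;\fa^t)=\sum_e \Tr^e\Bigl(F^e_*\bigl(\fa^{\lceil t(p^e-1)\rceil}\cdot\cO_X\bigl((1-p^e)(K_X+\Delta)\bigr)\cdot c\bigr)\Bigr)
\]
for a fixed test element $c$. I will then compute this on $Y$. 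The trace map $F^e_*\cO_Y((1-p^e)K_Y)\to\cO_Y$ is surjective since $Y$ is regular. Tensoring with the divisor
\[
\lceil K_Y-\pi^*(K_X+\Delta)-tG\rceil
\]
and using the projection formula, the trace
\[
F^e_*\cO_Y\bigl(\lceil K_Y-p^e\pi^*(K_X+\Delta)-tp^eG\rceil\bigr)\to\cO_Y\bigl(\lceil K_Y-\pi^*(K_X+\Delta)-tG\rceil\bigr)
\]
is surjective on $Y$. This uses the SNC hypothesis to control the rounding of fractional coefficients, as in the regular case.

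To descend this surjectivity to $X$ I need
\[
R^1\pi_*\bigl(\cO_Y(\lceil K_Y-p^e\pi^*(K_X+\Delta)-tp^eG\rceil)\otimes(\text{kernel})\bigr)=0,
\]
or more simply surjectivity of $H^0$ for the corresponding sheaves. Since $\pi^*(K_X+\Delta)$ is $\pi$-trivial and $-G$ is $\pi$-ample, the divisor $-tp^eG$ dominates for $t\ge T_0$. Relative Serre vanishing, uniform in the fractional parts because there are only finitely many rounding patterns (this is the SNC input), gives vanishing of $R^i\pi_*$ of
\[
\cO_Y(K_Y+\text{bounded}-mG)\quad\text{for all } m\ge m_0.
\]
Using $\fa^n\cO_Y=\cO_Y(-nG)$, I then identify $\pi_*\cO_Y(-nG)=\overline{\fa^n}$, which agrees with $\fa^n$ for large $n$ up to a fixed power by normality of the blowup. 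This lets me replace $\fa^{\lceil t(p^e-1)\rceil}$ by $\pi_*\cO_Y(-\lceil t(p^e-1)\rceil G)$, and I conclude that the image contains $\cJ$. The test element $c$ is absorbed by enlarging $e$.

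I expect the main obstacle to be \textbf{uniformity}. The Serre vanishing threshold must be independent of $e$ and of the real parameter $t$, and the discrepancy between $\fa^n$ and $\pi_*\cO_Y(-nG)$, together with the factor $c$, must be absorbed. I would handle this as in \cite[Theorem 5.1]{ST}. First, reduce to checking the vanishing for the finitely many twists $\cO_Y(K_Y+D_j-mG)$ with $D_j$ ranging over the finitely many roundings. Second, use a Skoda-type/Castelnuovo--Mumford argument to choose $T_0$ so that $\lceil t(p^e-1)\rceil G$ exceeds $m_0$ plus the fixed correction for every $e\ge1$ and every $t\ge T_0$.
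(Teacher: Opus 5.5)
There is a genuine gap exactly at the step you flag as the main obstacle. Write $\Gamma_Y(t)=\pi^*(K_X+\Delta)+tG$. Surjectivity of $\pi_*$ of the trace $F^e_*\cO_Y(\lceil K_Y-p^e\Gamma_Y(t)\rceil)\to\cO_Y(\lceil K_Y-\Gamma_Y(t)\rceil)$ is governed by $R^1\pi_*$ of its \emph{kernel} $N_e(t)$, and $N_e(t)$ is not one of the line bundles $\cO_Y(K_Y+D_j-mG)$. Relative Serre vanishing for those line bundles only kills $R^1\pi_*$ of the source. That merely identifies $R^1\pi_*N_e(t)$ with the cokernel you are trying to kill, so the argument is circular.

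Two further problems compound this. First, $N_e(t)$ is a different sheaf for each $e$, so any Serre-vanishing threshold depends on $e$. Second, your test element $c$ forces $e$ to depend on $t$ without bound. Take the generic point of a component $E$ of $G$ with $\ord_E(c)>0$, on which $\Gamma_Y(t)$ has coefficient $N-\eta$ ($N\in\Z$, $0<\eta\le 1$). There, the image of $F^e_*\bigl(c\cdot\cO_Y(\lceil K_Y-p^e\Gamma_Y(t)\rceil)\bigr)$ reaches $\cO_Y(\lceil K_Y-\Gamma_Y(t)\rceil)$ only when $p^e\eta>\ord_E(c)$. But $\eta\to 0$ as $t$ increases to a value where this coefficient becomes an integer. ``Finitely many rounding patterns'' is true for the divisor $\lceil K_Y-\Gamma_Y(t)\rceil$, but not for this $e$-dependent data, so no single $T_0$ valid for all real $t$ comes out. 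Also, when $p\mid\mathrm{ind}(K_X+\Delta)$ you cannot perturb $\Delta$, since it is fixed in the statement.

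The paper avoids these problems as follows. In the prime-to-$p$ case it fixes one $c$ with $(p^c-1)(K_X+\Delta)$ Cartier and first treats only $t=n+\alpha$ with $(p^c-1)\alpha\in\Z$. Then the twisted trace $\Phi^c_{Y,n+\alpha}$ is identified with $\Phi^c_{Y,\alpha}\otimes\cO_Y(-nG)$. Its kernel is therefore $N(\alpha)\otimes\cO_Y(-nG)$ for finitely many \emph{fixed} coherent sheaves $N(\alpha)$, and relative Serre vanishing gives one threshold $T$. Larger Frobenius powers come from iterating this single map, and $\tau$ is the stable image of $\pi_*\cO_Y(E_\pi(t))$ (Lemma~\ref{lem:stabilize}), so no test element is needed. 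The case $p\mid\mathrm{ind}(K_X+\Delta)$ is reduced to this via $\tau(\omega_X,\Gamma,\fa^t)=\Tr^b\bigl(F^b_*\tau(\omega_X,p^b\Gamma,\fa^{p^bt})\bigr)$ together with compatibility of $\Tr_\pi$ with Frobenius traces.

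Finally, passing from this lattice of exponents to all real $t\ge T_0$ needs an input absent from your plan. Skoda's theorem, together with discreteness and rationality of the jumping numbers of $\tau(X,\Delta;\fa^t)$ and $\cJ(X,\Delta;\fa^t)$, puts every jump beyond $r-1$ into some $\frac1d\Z$. Choosing $b,c$ with $d\mid p^b(p^c-1)$ and using right-continuity, equality on the lattice then forces equality for every real $t\ge T_0$.
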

One should think this as a generalization of the fact that the test ideal of an SNC pair is the multiplier ideal. We use this to prove:

\begin{ThB*}\label{thm:main}
There exists infinitely many $F$-finite, local domain $(R,\m)$ of characteristic $p>2$ and dimension $\ge 3$
such that $R$ is not $\mathbb{Q}$-Gorenstein and $\tau_b(R;\m^t)$ has all but finitely many irrational $F$-jumping numbers.
More precisely, we show in \S \ref{high-dim-example}, that given any $g \ge 2$, there exist an algebraic irrational number $\kappa\in(0,1)$ of degree $g$ such that
\[
n+\kappa \quad\text{is an $F$-jumping number of } \taub(R;\m^t) \quad\text{for all integers } n\ge \# \ \mathrm{generators \ of \ } \m
\]
where dim$(R) = g+1$.
\end{ThB*}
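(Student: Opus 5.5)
We follow Urbinati's strategy, replacing the de Fernex--Hacon sum by Schwede's formula $\taub(X;\fa^t)=\sum_\Delta \tau(X,\Delta;\fa^t)$, and computing each summand via Theorem 1.

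\emph{Construction.} Let $A$ be an abelian variety of dimension $g$ over $k$ (for $g=2$ one may also take a K3 surface) with $\rho(A)\ge 2$. Choose $A$ so that the nef cone $\Nef(A)\subset \NS(A)_\R$ has an irrational boundary. For an abelian variety the nef cone equals the pseudoeffective cone, and on a suitable slice its boundary is cut out by the degree-$g$ form $D\mapsto D^g$; for instance $A=E^g$ with $E$ an ordinary elliptic curve. Fix a very ample $H$ and let $R$ be the localization at the irrelevant ideal of the section ring $\bigoplus_m H^0(A,mH)$, with $\m$ the homogeneous maximal ideal. Then $R$ is a normal $F$-finite domain of dimension $g+1$. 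It is not $\Q$-Gorenstein because $\NS(A)_\Q\ne \Q H$, so some divisor class is not a rational multiple of $H$ and $\mathrm{Cl}(R)=\operatorname{Pic}(A)/\Z H$ is not torsion. Since $K_A=0$, any $\Delta$ with $K_X+\Delta$ $\Q$-Cartier must be the cone over a $\Q$-divisor $D$ on $A$ with $D\equiv_\Q cH$ for some $c\ge 0$. The blowup of $\m$ is the total space $Y$ of $\cO_A(-H)$, which is regular. Moreover $\m\cO_Y=\cO_Y(-E)$ with $E\cong A$ the zero section, $-E$ is $\pi$-ample, and with a general choice of $D$ the support of $E\cup \pi^{-1}_*\Delta$ is SNC. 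So Theorem 1 applies to each pair, provided we check the SNC condition and the uniformity of $T_0$.

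\emph{Computing the summands.} For $t\ge T_0$, Theorem 1 gives $\tau(X,\Delta;\m^t)=\cJ(X,\Delta;\m^t)$. A computation on $Y$ expresses this as
\[
\cJ(X,\Delta;\m^t)=\bigoplus_m H^0\bigl(A, mH\bigr)\quad\text{for } m\ge \ceil{\,t+\text{(discrepancy of }E)\,}\text{ roughly}.
\]
Here the discrepancy of $E$ is governed by the rational number $a$ with $\pi^*(K_X+\Delta)=K_Y+\pi^{-1}_*\Delta+(1+a)E$. Tracking the non-homogeneous version, with $\Delta$ the cone over an arbitrary effective $D\sim_\Q cH$, the summand contains the degree-$m$ part exactly when $m\ge t-c'+\epsilon$, where $c'$ ranges over values with $D$ effective in a class tied to $cH$. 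Taking the sum over all admissible $\Delta$, the threshold becomes a supremum over a slice of the effective cone in $\NS(A)_\R$. Concretely, we expect the degree-$n$ piece of $\taub(R;\m^t)$ to jump exactly at $t=n+\kappa$, where
\[
\kappa=\sup\{s: H-sL\ \text{is pseudoeffective}\}
\]
for some fixed auxiliary class $L$ coming from the non-$\Q$-Cartier direction. This $\kappa$ is a root of the degree-$g$ polynomial $(H-sL)^g=0$.

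\emph{Choosing $H$ and $L$.} We choose $H$ and $L$ on $E^g$ (for example using $\NS(E\times E)$ with intersection form of discriminant non-square) so that $\kappa$ is an irrational algebraic number of degree $g$ in $(0,1)$. Varying the choices gives infinitely many rings.

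\emph{Main obstacle.} The hardest step is controlling the infinite sum in Schwede's formula. Each summand's jumps are rational and eventually periodic, but the sum realizes a supremum that is not attained; this is what produces the irrational jump. We must also show that the constant $T_0$ of Theorem 1 can be taken uniformly over all $\Delta$, bounded by the number of generators of $\m$. I would try to obtain this from Skoda-type periodicity, $\tau(\m^{t})=\m\,\tau(\m^{t-1})$ for $t\ge \#\text{generators}$, applied to each summand. This reduces the uniformity to checking one period, after which the sum commutes with the Skoda reduction.
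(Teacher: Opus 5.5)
There is a genuine gap, and it is in the construction itself. You take the cone over an abelian variety (or K3 surface) $A$ polarized by $H$. But $K_A\sim 0$, so under $\mathrm{Cl}(R)\cong\operatorname{Pic}(A)/\Z H$ the canonical class $K_R$ corresponds to $K_A=0\cdot H$. Thus $\omega_R\cong R$, and $R$ is quasi-Gorenstein, in particular $\Q$-Gorenstein (Lemma~\ref{prop:cone-QG}). That $\mathrm{Cl}(R)$ is not torsion only shows $R$ is not $\Q$-factorial; $\Q$-Gorensteinness is a condition on the single class $K_R$.

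Consequently $\Delta=0$ is an admissible boundary of index $1$ in Schwede's sum. Every other summand is contained in $\tau(R,0;\m^t)$, so $\taub(R;\m^t)=\tau(R;\m^t)$, which has only rational jumping numbers. Concretely, with $W_0\subset Y$ the zero section (your $E$), one has $K_Y=-W_0$ and $\pi^*K_C=0$. Hence $\cJ(R;\m^t)=\m^{\lfloor t\rfloor+1}$, and by Theorem~1 the jumps are eventually just the integers.

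Your auxiliary class $L$ ``from the non-$\Q$-Cartier direction'' cannot enter. A boundary only has to make $K+\Delta$ $\Q$-Cartier, so the only class that matters is the canonical one, and the irrationality must come from the position of $K$ relative to the polarization. That is the missing idea. The paper takes an ample pair $(L,M)$ on $X=E\times E$, $E^g$, etc.\ with irrational threshold $\inf\{t\mid tL-M\in\psef(X)\}$. It then passes to a double cover $f\colon W\to X$ branched along a smooth member of $|2M|$ (this is where $p>2$ is used), so that $K_W=f^*M$, and polarizes by $A=f^*L$. Lemma~\ref{lem:finite-threshold} then makes $t_0(W,A)$ irrational, and the cone is genuinely non-$\Q$-Gorenstein (Corollary~\ref{cor:cone-not-QG}). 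For degree exactly $g$ you also need an irreducibility input, such as the Hilbert-irreducibility argument of Theorem~\ref{thm:appendix-any-degree-proof}; calling $\kappa$ a root of $(H-sL)^g$ does not supply it.

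Second, even with a correct $(W,A)$, you never prove that $n+\kappa$ is a jumping number; you only expect it. The uniform $T_0$ you single out as the main obstacle is not needed, and your Skoda reduction would not produce it. Skoda only transports equalities upward ($t\mapsto t+1$ multiplies both sides by $\m$), so $\tau(\Delta)=\cJ(\Delta)$ beyond a boundary-dependent $T_0(\Delta)$ does not propagate back to the period $[r,r+1)$.

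The paper argues instead as follows. Restricting $\pi^*(K_C+\Gamma)=K_Y+\Gamma_Y+kW_0$ to $W_0$, using adjunction and $W_0|_{W_0}\sim -A$, gives $k-1\ge t_0$ for \emph{every} admissible $\Gamma$ (Lemma~\ref{lem:k-ineq}). Hence $\tau(C,\Gamma;\m^t)\subseteq\cJ(C,\Gamma;\m^t)\subseteq\m^{\lfloor t+t_0+1\rfloor}$ for all $t$, with no asymptotics. For the lower bound one uses one boundary $\Gamma_s$ at a time, with $s\in\Q$ slightly above $t_0$ and denominator prime to $p$, and applies Theorem~1 to it alone. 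This yields $\taub(R;\m^{n+\kappa})=\m^{n+2}$ and $\taub(R;\m^{n+\kappa-\varepsilon})=\m^{n+1}$ for $n\ge N(\varepsilon)$. Skoda for $\taub$ translates these windows down to $r+\kappa$. The no-accumulation theorem (Theorem~\ref{thm:no-accum}) then forces $r+\kappa$, hence every $n+\kappa$ with $n\ge r$, to be a jumping number, and also shows that only finitely many jumping numbers are rational.
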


The number $\kappa$ arises from an irrational \emph{pseudo-effective threshold} on a smooth polarized projective variety $(W,A)$:
\[
t_0(W,A) := \inf\{t\in\R \mid tA - K_W \ \text{is pseudo-effective}\},
\qquad
\kappa := 1 - t_0(W,A).
\]
Here $R$ is the local ring at the vertex of cone over $(W,A)$.

\smallskip

We compute \(\cJ\bigl(C,\Gamma_s;\m^t\bigr)\) explicitly for suitable boundaries $\Gamma_s$ on \(\pi\), and then apply \textbf{Theorem 1} to identify
\(\tau\bigl(R,\Gamma_s;\m^t\bigr)\) with the same explicit formula for all \(t\gg 0\).
Finally, Schwede's sum \cite{SchwedeBig} and Skoda's Theorem convert these explicit formulas into
irrational \(F\)-jumping behavior for \(\tau_b(R;\m^t)\).

We point out that these examples covers almost all positive algebraic irrational numbers. We end this short note by asking two questions.

\medskip

\noindent\textbf{Acknowledgments.}
I am extremely grateful to my advisors Christopher Hacon \& Karl Schwede for suggesting this problem, their constant encouragements, unwavering support, inspiring teachings, and infinite patience. Warm thanks are due to Yu-Ting Huang for a lot of stimulating discussions surrounding \cite{dFH}, 3 years ago, when we were learning these questions together and Christopher Hacon for suggesting to work out more examples and extremely helpful conversations on \S.4. I am indebted to Karl Schwede for his numerous insightful comments (including Theorem \ref{thm:asymptotic-general} and Theorem \ref{thm:dFH-direct}) and many expository suggestions which improved this note. Finally, it is my pleasure to thank Gari Chua for helpful feedback on a previous draft, Yotam Svoray for correcting a few TeX typos and Daniel Apsley, Harold Blum, Mircea Mustaţă, Karen Smith, Kevin Tucker, Joe Waldron for their interest in this work.

\section{Background and conventions}\label{sec:background}

We start by fixing some notation and conventions. All schemes are Noetherian, separated, and excellent. We work over an $F$-finite field of characteristic $p>0$ when discussing test ideals; for multiplier ideals and pseudo-effective thresholds we allow any base field.

Let $X$ be a normal integral scheme.
We fix a canonical divisor $K_X$ once and for all.

A \emph{pair} $(X,\Delta)$ means $X$ normal and $\Delta\ge 0$ a $\Q$-divisor such that $K_X+\Delta$ is $\Q$-Cartier.
We let $\mathrm{ind}(K_X+\Delta)$ denote the Cartier index, i.e.\ the smallest $m>0$ such that $m(K_X+\Delta)$ is Cartier.

\subsection{Frobenius}

Let $X$ be an $F$-finite scheme of characteristic $p>0$.
Write $\Frob^e:X\to X$ for the $e$-fold absolute Frobenius.
We denote by $\Frobpush{e}\mathscr{M}$ the pushforward of an $\cO_X$-module $\mathscr{M}$.

If $X$ is normal and $\omega_X$ is the dualizing sheaf, one has a Frobenius trace map
\[
\Tr^e_X:\Frobpush{e}\omega_X \to \omega_X,
\]
adjoint to the natural inclusion $\cO_X\hookrightarrow \Frobpush{e}\cO_X$ under Grothendieck duality.

We request the reader to consult \cite{FBook} for a lucid guide to the world ruled by $Frobenius$ and \cite{BSTZ10,ST} for more technical terms.

\begin{definition}
     By a triple $(X, \Delta, \fa^t)$, $t \in \R_{\geq 0}$ we mean a normal integral scheme $X$ together with an effective $\Q$-divisor $\Delta$ on $X$ such that $K_X + \Delta$ is $\Q$-Cartier, and an ideal $\fa \subseteq \cO_X$.
\end{definition}

\begin{definition}(\cite{ST,FBook})\label{Def-testideal}Let $(X, \Delta, \fa^t)$ is a triple where $X = \Spec R$. The test ideal $\tau(X, \Delta, \fa^t)$ is the unique smallest non-zero ideal $J \subseteq R$ such that for every $e > 0$ and every section $\phi \in \Hom_{R}(F^e_* R(\lceil (p^e - 1)\Delta \rceil), R) \subseteq \Hom_{R}(F^e_* R, R)$, we have $\phi(F^e_* (\fa^{\lceil t (p^e - 1) \rceil} J)) \subseteq J$.
\end{definition}

\begin{definition}(\cite{ST,FBook})\label{def:param-test} Assume the setup and let $\Gamma \geq 0$ be a $\Q$-divisor. Then the test module, $\tau(\omega_R, \Gamma, \fa^{t})$, is the unique smallest nonzero submodule $J \subseteq \omega_X$ such that
for every $e \geq 0$ and every map $\phi \in \Hom_{R}\big(F^e_* (\omega_R( \lceil (p^e - 1) \Gamma \rceil)), \omega_R\big)$
 we have that $\phi\big(F^e_* ( \fa^{\lceil t(p^e - 1) \rceil}\cdot J)\big) \subseteq J.$
When $\Gamma = 0$ (or $\fa = R$) we write, $\tau(\omega_R,\fa^{t})$ (or $\tau(\omega_R, \Gamma)$, respectively).
\end{definition}

\begin{lemma}({\cite[Lemma~4.2]{ST}})\label{lem:module-to-ideal}
Let $X$ be normal and affine of characteristic $p>0$, and fix a canonical divisor $K_X$ such that $-K_X$ is effective. Let $\Delta\ge 0$ with $K_X+\Delta$ $\Q$-Cartier of index not divisible by $p$.
Then
\[
\tau(\omega_X , K_X+\Delta,\mathfrak{a}^t) \;=\; \tau(X,\Delta;\mathfrak{a}^t).
\]

\end{lemma}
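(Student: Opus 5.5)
Since $-K_X \ge 0$ is fixed, I will prove the statement by identifying the two families of $p^{-e}$-linear maps that appear in Definition \ref{Def-testideal} and Definition \ref{def:param-test}, and then appealing to the uniqueness of the smallest nonzero compatible submodule.

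First I set up the identifications. Since $-K_X$ is effective, $\omega_X=\cO_X(K_X)\subseteq \cO_X$ is an ideal. Moreover $\omega_X(\lceil (p^e-1)(K_X+\Delta)\rceil)=\cO_X(K_X+\lceil (p^e-1)(K_X+\Delta)\rceil)$. Restrict to those $e$ with $(p^e-1)(K_X+\Delta)$ integral and Cartier; since the index is prime to $p$, such $e$ are cofinal. For these $e$, Grothendieck duality together with reflexivity gives
\[
\Hom_R\bigl(F^e_*\omega_R((p^e-1)(K_X+\Delta)),\omega_R\bigr)\cong F^e_*R\bigl((1-p^e)K_X-(p^e-1)(K_X+\Delta)+K_X\bigr)\cdots
\]
More simply: a map $\phi\colon F^e_*R((p^e-1)\Delta)\to R$ corresponds to a map $\psi\colon F^e_*\omega_R((p^e-1)(K_X+\Delta))\to\omega_R$. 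The correspondence twists by the Cartier divisor $(p^e-1)(K_X+\Delta)$ and uses the $R$-linearity $\phi(F^e_* r^{p^e}x)=r\phi(F^e_*x)$ for a local generator $r$. Under the inclusions $\omega_R\subseteq R$ and $\omega_R((p^e-1)(K_X+\Delta))\subseteq R((p^e-1)\Delta)$, I check that $\psi$ is the restriction of $\phi$. Both sides are rank one reflexive, so $\phi$ is determined by $\psi$, and every $\phi$ arises this way. This is the standard fact that $\phi$ and $\psi$ are both given by $\Tr^e$ premultiplied by the same element.

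Second, I relate the two compatibility conditions. If $J\subseteq\omega_R$ is $\psi$-compatible for all such $\psi$ (with the factor $\fa^{\lceil t(p^e-1)\rceil}$), I want an $R$-ideal out of it. Conversely, if $I\subseteq R$ is $\phi$-compatible, I consider $I\cdot\omega_R$, or better $I\cap\omega_R$. For $x\in I\cap\omega_R$ and $a\in\fa^{\lceil t(p^e-1)\rceil}$, we have $\phi(F^e_*ax)=\psi(F^e_*ax)\in I\cap\omega_R$, so $I\cap\omega_R$ is compatible and nonzero. Hence $\tau(\omega_X,K_X+\Delta,\fa^t)\subseteq\tau(X,\Delta;\fa^t)\cap\omega_R$.

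For the reverse direction, note that $\tau(\omega)$ is a nonzero submodule of $R$. To show that it is $\phi$-compatible as an ideal, I use the image description. For $e$ divisible enough, the test ideal equals $\sum_e\sum_\phi \phi(F^e_*\fa^{\lceil t(p^e-1)\rceil}cR)$ for a suitable test element $c$, and the test module has the analogous description with $c\omega_R$. Choosing $c\in\omega_R$ (possible, since we may multiply $c$ by an element of $\omega_R$), I compare the sums term by term via the restriction identification $\phi|=\psi$. Equality of the ideals then requires showing that the $\phi$-images of $F^e_*\fa cR$ and the $\psi$-images of $F^e_*\fa c\omega_R$ generate the same thing. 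I handle this by noting that $cR\supseteq c\omega_R\supseteq c'R$ for $c'=cd$ with $d\in\omega_R$ nonzero, and that test ideals are independent of the choice of test element.

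The main obstacle is exactly this step: making the test-element descriptions precise, with uniformity over $e$ and the non-integral exponents $\lceil t(p^e-1)\rceil$. In the worst case I will simply cite the reduction to the Cartier-index-prime-to-$p$ case in \cite[Lemma 4.2]{ST}.
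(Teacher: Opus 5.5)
There is a genuine gap, and it is in the step you did not flag. In your second paragraph you take $x\in I\cap\omega_R$ and assert $\phi(F^e_*ax)=\psi(F^e_*ax)\in I\cap\omega_R$. Neither half of this holds.

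First, $\psi$ is not defined on $F^e_*ax$. The domain of $\psi$ is $\omega_R(\lceil (p^e-1)(K_X+\Delta)\rceil)=R\bigl(p^eK_X+\lceil (p^e-1)\Delta\rceil\bigr)$. This contains $\omega_R=R(K_X)$ only if $(p^e-1)(-K_X)\le\lceil (p^e-1)\Delta\rceil$. When $-K_X\ge 0$, the boundary $K_X+\Delta$ is typically not effective, so Definition~\ref{def:param-test} does not even apply literally.

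Second, the value does not land in $\omega_R$. Once you extend $\psi$ (equivalently $\phi$) to $F^e_*K(X)\to K(X)$, the value $\phi(F^e_*ax)$ lies in $I$ but in general not in $\omega_R$. Concretely, take $R=k[x]$ with $k$ perfect, $\Delta=0$, $\fa=R$, $K_X=-\mathrm{div}(x)$, so $\omega_R=xR$. The generator $\Phi$ of $\Hom_R(F^e_*R,R)$ sends $F^e_*x^{p^e-1}\in F^e_*\omega_R$ to $1\notin\omega_R$.

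Worse, your intermediate conclusion is itself false. In this example $\tau(X,0)=R$. By the twisting rule used in Lemma~\ref{lem:twist-kernel}, $\tau(\omega_R,K_X)=\tau(\omega_R)\otimes\cO_X(-K_X)=xR\cdot x^{-1}R=R$. So both sides of the lemma equal $R\not\subseteq\omega_R$, and your inclusion $\tau(\omega_X,K_X+\Delta,\fa^t)\subseteq\tau(X,\Delta;\fa^t)\cap\omega_R$ fails. For the same reason, no reverse argument that realizes the test module inside $\omega_R$ can work: here $\omega_R$ has no nonzero compatible submodule at all. The hypothesis $-K_X\ge 0$ puts $\omega_R$ inside $R$. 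It does not put the test module of the non-effective divisor $K_X+\Delta$ inside $\omega_R$; that object lives in $K(X)$, as in \cite{ST}.

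The repair is short, and your first step is most of it. Extend every $\phi\in\Hom_R(F^e_*R(\lceil (p^e-1)\Delta\rceil),R)$ and every $\psi$ uniquely to $p^{-e}$-linear maps $F^e_*K(X)\to K(X)$. Your restriction/reflexive-twist identification says exactly that the two families of extended maps coincide.
\begin{itemize}
\item With the Hom-set convention this holds for every $e$, since $\lceil (p^e-1)(K_X+\Delta)\rceil=(p^e-1)K_X+\lceil (p^e-1)\Delta\rceil$.
\item With the twisted-trace convention $\Tr^e\bigl(F^e_*(-\cdot\cO_X((1-p^e)(K_X+\Delta)))\bigr)$ of \cite{ST}, it holds for those $e$ with $(p^e-1)(K_X+\Delta)$ Cartier. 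Locally, $\Tr^e(F^e_*(g\cdot -))$ generates $\Hom_R(F^e_*R((p^e-1)\Delta),R)$ when $g$ generates $\cO_X((1-p^e)(K_X+\Delta))$. This, together with the standard fact that the test ideal can be computed along such $e$, is where ``index prime to $p$'' enters.
\end{itemize}
Hence an $R$-submodule $N\subseteq K(X)$ is compatible in one sense if and only if it is compatible in the other.

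Finally, if $N\neq 0$ is compatible, then so is the nonzero ideal $N\cap R$, because $\phi(F^e_*R)\subseteq R$ when $\Delta\ge 0$. Therefore $\tau(X,\Delta;\fa^t)\subseteq N$. So the test ideal is also the smallest nonzero compatible submodule of $K(X)$, i.e.\ it is the test module. No intersection with $\omega_R$ and no test-element comparison is needed.

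Your fallback of citing \cite[Lemma~4.2]{ST} is exactly what the paper does, since it gives no proof of its own, but it is not an argument.
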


When $R$ is normal but not $\Q$-Gorenstein, one can still define a meaningful test ideal invariant: the \emph{big test ideal} $\taub$.
We will use Schwede's sum formula \cite{SchwedeBig} as the definition for our purposes.

\begin{thm}(\cite[Theorem~6.2]{SchwedeBig})\label{def:big-test}
Let $R$ be an $F$-finite normal domain of characteristic $p>0$, and let $\mathfrak{a}\subseteq R$ be a nonzero ideal.
For $t\in\R_{\ge 0}$, the \emph{big test ideal} $\taub(R;\mathfrak{a}^t)$ (\cite{HH}, \cite[Definition 3.16]{SchwedeBig}) is given by the sum
\[
\taub(R;\mathfrak{a}^t)
\;=\;
\sum_{\Delta} \tau(R,\Delta;\mathfrak{a}^t),
\]
where the sum ranges over all effective $\Q$-divisors $\Delta$ on $X=\Spec R$
such that $K_X+\Delta$ is $\Q$-Cartier with index not divisible by $p$.
\end{thm}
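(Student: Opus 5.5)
The plan is to prove the two inclusions separately, using the description of $\taub(R;\fa^t)$ as the smallest nonzero ideal $J$ with $\phi\bigl(F^e_*(\fa^{\lceil t(p^e-1)\rceil}J)\bigr)\subseteq J$ for \emph{every} $e>0$ and every $\phi\in\Hom_R(F^e_*R,R)$. Definition \ref{Def-testideal} says the same thing for $\tau(R,\Delta;\fa^t)$, except that $\phi$ ranges only over the subset $\Hom_R(F^e_*R(\lceil (p^e-1)\Delta\rceil),R)$.

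\textbf{The inclusion $\supseteq$.} This is formal. The ideal $\taub(R;\fa^t)$ is nonzero and is stable under all $\phi\in\Hom_R(F^e_*R,R)$, so in particular under those $\phi$ extending over $F^e_*R(\lceil (p^e-1)\Delta\rceil)$. Minimality in Definition \ref{Def-testideal} then gives $\tau(R,\Delta;\fa^t)\subseteq\taub(R;\fa^t)$ for every admissible $\Delta$. Summing over $\Delta$ gives the inclusion.

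\textbf{The inclusion $\subseteq$.} I would first fix an element $c\neq 0$ with $R_c$ regular and $c\in\fa$. After replacing $c$ by a power, $c$ is a big test element, and it is also a test element for every pair $(R,\Delta)$. Using $c$, I would write
\[
\taub(R;\fa^t)=\sum_{e\ge 0}\ \sum_{\phi\in\Hom_R(F^e_*R,R)}\phi\bigl(F^e_*(c\,\fa^{\lceil t(p^e-1)\rceil})\bigr),
\]
together with the analogous formula for $\tau(R,\Delta;\fa^t)$ in which $\phi$ is restricted to $\Delta$-compatible maps. Next I would use the correspondence between maps and divisors. A nonzero $\phi\in\Hom_R(F^e_*R,R)\cong F^e_*R((1-p^e)K_X)$ determines an effective divisor $D_\phi\sim(1-p^e)K_X$. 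Set $\Delta_\phi:=D_\phi/(p^e-1)$. Then $\Delta_\phi\ge0$ and $(p^e-1)(K_X+\Delta_\phi)\sim 0$, so $K_X+\Delta_\phi$ is $\Q$-Cartier of index dividing $p^e-1$, which is prime to $p$. By construction $\phi$ extends to $F^e_*R((p^e-1)\Delta_\phi)$. So each single summand $\phi\bigl(F^e_*(c\,\fa^{\lceil t(p^e-1)\rceil})\bigr)$ lies in $\tau(R,\Delta_\phi;\fa^t)$, and summing over $e$ and $\phi$ gives the inclusion.

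\textbf{Main obstacle.} The delicate point is justifying the displayed sum formula, with a \emph{single} $e$ per term and a uniform test element $c$. The defining closure property involves compositions of maps with different $e$, whereas each $\Delta_\phi$ only controls $\phi$ and its own iterates $\phi^n$. These iterates correspond to the same divisor, because $(p^{ne}-1)\Delta_{\phi^n}=D_{\phi^n}$ and $D_{\phi^n}=(1+p^e+\dots+p^{(n-1)e})D_\phi$. I would handle this as follows. First, check that any composition of $\phi_1$ (level $e_1$) and $\phi_2$ (level $e_2$) is again a map in $\Hom_R(F^{e_1+e_2}_*R,R)$. Second, check that the exponents of $\fa$ combine as $\lceil t(p^{e_1}-1)\rceil$ and $\lceil t(p^{e_2}-1)\rceil$ sitting inside $\lceil t(p^{e_1+e_2}-1)\rceil$, up to an error absorbed by the factor $c\in\fa$. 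With these two facts, the one-map-at-a-time sum is already closed under all compositions and contains $c$-multiples. Hence by minimality it equals $\taub$. The rounding discrepancies between $t(p^e-1)$ and $tp^e$ are exactly what the extra power of $c$ is chosen to absorb.
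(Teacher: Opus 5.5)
The paper does not prove this statement. It quotes it from Schwede \cite{SchwedeBig} and uses it as a black box, so your argument has to stand on its own, and as written it has a genuine gap in the inclusion $\subseteq$.

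Your inclusion $\supseteq$ is fine. So is the description $\taub(R;\fa^t)=\sum_{e,\phi}\phi\bigl(F^e_*(c\,\fa^{\lceil t(p^e-1)\rceil})\bigr)$ for a nonzero $c\in\taub(R;\fa^t)$. The step you single out as the main obstacle is routine: $\lceil t(p^{e_1}-1)\rceil+p^{e_1}\lceil t(p^{e_2}-1)\rceil\ge t(p^{e_1+e_2}-1)$, so no error term arises.

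The gap is the sentence claiming that each summand $\phi\bigl(F^e_*(c\,\fa^{\lceil t(p^e-1)\rceil})\bigr)$ lies in $\tau(R,\Delta_\phi;\fa^t)$. This rests on the assertion that $c$ is a test element for every pair $(R,\Delta)$, which is false. Compatibility of $\phi$ with $\Delta_\phi$ only gives $\phi\bigl(F^e_*(\fa^{\lceil t(p^e-1)\rceil}\tau(R,\Delta_\phi;\fa^t))\bigr)\subseteq\tau(R,\Delta_\phi;\fa^t)$. So you would need $c\in\tau(R,\Delta_\phi;\fa^t)\subseteq R(-\lfloor\Delta_\phi\rfloor)$ for \emph{all} $\phi$. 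But replacing $\phi$ by $\phi(F^e_*(h^N\cdot))$ gives $\Delta_\phi$ arbitrarily large coefficients along any prime divisor, so no nonzero $c$ can work.

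Here is a concrete failure. Let $R=k[x]$, $\fa=R$, $c=x^a$, $e=1$, and $\phi=\Phi(F_*(x^N\cdot))$ with $\Phi$ a generator of $\Hom_R(F_*R,R)$. Then $\Delta_\phi=\tfrac{N}{p-1}\operatorname{div}(x)$ and $\tau(R,\Delta_\phi)=(x^{\lfloor N/(p-1)\rfloor})$. On the other hand $\phi(F_*(cR))=(x^{\lfloor (N+a)/p\rfloor})$, which is strictly larger once $N\gg a$. The theorem itself holds trivially here (take $\Delta=0$), which shows that $\Delta_\phi$ is simply the wrong boundary to attach to a single summand.

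What is missing is the real content of Schwede's theorem. You need a way to attach to one map $\phi$ a \emph{small} boundary of prime-to-$p$ index whose test ideal contains that map's contribution. In the example, $\phi(F_*R)=\tau\bigl(R,\tfrac{N}{p}\operatorname{div}(x)\bigr)$, so the right normalization is $D_\phi/p^e$ plus a fixed boundary, not $D_\phi/(p^e-1)$. One way to proceed:
\begin{enumerate}
\item Generate $\taub(R;\fa^t)$ as $\sum_{e,\phi}\phi\bigl(F^e_*(\fa^{\lceil t(p^e-1)\rceil}J_0)\bigr)$, where $J_0=\tau(R,\Delta_0;\fa^t)\neq 0$ and $\Delta_0=\Delta_{\phi_0}$ for a fixed nonzero $\phi_0$ of level $e_0$. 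This works by your $\supseteq$ argument together with the composition check.
\item Use a Frobenius transformation rule in the spirit of Lemma~\ref{lem:frob-scaling} to get $\phi\bigl(F^e_*(\fa^{\lceil t(p^e-1)\rceil}J_0)\bigr)\subseteq\tau\bigl(R,\Delta';\fa^t\bigr)$, where $\Delta':=\tfrac{D_\phi+\Delta_0}{p^e}$.
\item Handle the index: $K_X+\Delta'\sim_{\Q}p^{-e}(K_X+\Delta_0)$ can have Cartier index divisible by $p$. Approximate $\Delta'$ by $\Delta_n:=\Delta_{\phi\circ F^e_*\phi_0^n}$, whose index divides $p^{e+ne_0}-1$. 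One checks $\Delta_n\le\Delta'+\varepsilon_nD_\phi$ with $\varepsilon_n\to0$, and $\tau(R,\Delta'+\varepsilon G;\fa^t)=\tau(R,\Delta';\fa^t)$ for $0<\varepsilon\ll1$.
\end{enumerate}
None of this appears in your proposal, and without it the inclusion $\subseteq$ is not proved.
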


\begin{remark}
Schwede proves that this sum agrees with the intrinsic definition of $\taub$ \cite{SchwedeBig}.
For our applications, the sum formula is all that we need.
\end{remark}

\subsection{Multiplier ideals}\label{def:Jpi}

Let $X$ be a normal affine variety over a field (any characteristic).
Fix an effective $\Q$-divisor $\Delta$ with $K_X+\Delta$ $\Q$-Cartier, and a nonzero ideal $\mathfrak{a}\subseteq \cO_X$.
Choose a log resolution\footnote{whose existence we will assume; however all the examples in this note will have explicit resolution given by blow up.} $\pi:Y\to X$, so $\mathfrak{a}\cO_Y=\cO_Y(-G)$ for an effective Cartier divisor $G$ on $Y$,
and $Y$ is regular with $\Supp(\Exc(\pi)\cup G)$ simple normal crossings.

For $t\in\R_{\ge 0}$ define
\[
\cJ(X,\Delta;\mathfrak{a}^t)
\;:=\;
\pi_*\cO_Y\Bigl(\ceil{K_Y-\pi^*(K_X+\Delta)-tG}\Bigr)\subseteq \cO_X.
\]
 We refer the reader to \cite{Positivity-II} for a comprehensive introduction and \cite{dFH,dFDTT} for a more technical and interesting view.

\section{$\tau$ equals $\cJ$ for large exponents, following \cite{ST}}\label{sec:asymptotic}

We prove a general theorem asserting that for a fixed log resolution $\pi:Y\to X$ obtained by blowing up of $\mathfrak{a}$,
the test ideal $\tau(X,\Delta;\mathfrak{a}^t)$ coincides with the multiplier ideal $\cJ(X,\Delta;\mathfrak{a}^t)$
for all sufficiently large $t$. The main idea and ingredients are all in \cite{ST}, but we write down the exact version we need with streamlined proof for the ease of the reader.

\begin{setup}\label{setup:asymptotic}
Let \(X=\Spec R\) be normal, affine, integral, and
\(F\)-finite of characteristic \(p>0\).  Let \(\Delta\ge 0\) be a
\(\Q\)-divisor such that \(K_X+\Delta\) is \(\Q\)-Cartier, and let
\(\mathfrak a\subset R\) be a nonzero ideal.  Assume that the normalized
blowup \(\pi:Y\to X\) of \(\mathfrak a\) is regular, that
\(\mathfrak a\cO_Y=\cO_Y(-G)\) for an effective Cartier divisor \(G\), that
\(-G\) is \(\pi\)-ample, and that
\(\Supp(G\cup \Exc(\pi)\cup \pi_*^{-1}\Delta)\) is simple normal crossings.

Write the Cartier index of $K_X+\Delta$ as
\[
\mathrm{ind}(K_X+\Delta)=p^b m,\qquad p\nmid m.
\]
Choose an integer $c>0$ such that $m \mid (p^c-1)$.
Equivalently, $(p^c-1)p^b(K_X+\Delta)$ is Cartier.
\end{setup}

\begin{definition}\label{def:allowed-frac}
With $b,c$ as above, define the finite set
\[
A_{b,c} \;:=\; \Bigl\{\alpha\in[0,1)\cap\Q \ \Big|\ p^b(p^c-1)\alpha\in\Z\Bigr\}.
\]
Equivalently, $A_{b,c}=\{0,\frac{1}{p^b(p^c-1)},\dots,\frac{p^b(p^c-1)-1}{p^b(p^c-1)}\}$.
\end{definition}

We first prove the ``asymptotic equality" theorem for exponents $t$ of the form $t=n+\alpha$ with $n\in\Z_{\ge 0}$ large and $\alpha\in A_{b,c}$.

On a regular scheme with simple normal crossings data, parameter test modules have an explicit description.

\begin{lemma}(\cite{Hara-Yoshida,Takagi-multiplier,Blickle-toric})\label{lem:snc}
Let $Y$ be regular, $F$-finite, and of characteristic $p>0$.
Let $\Theta$ be a $\Q$-divisor on $Y$ with simple normal crossings support such that $(p^e-1)\Theta$ is Cartier for some $e>0$.
Then
\[
\tau(\w_Y ,\Theta) = \cO_Y\bigl(\ceil{K_Y-\Theta}\bigr)\subseteq \omega_Y.
\]
More generally, if $\mathfrak{b}\subseteq \cO_Y$ is an ideal such that $\mathfrak{b}\cO_Y=\cO_Y(-H)$ for a Cartier divisor $H$ whose support is SNC with $\Theta$,
then for every real $t \ge 0$
\[
\tau(\w_Y, \Theta,\mathfrak{b}^t) \;=\; \cO_Y\bigl(\ceil{K_Y-\Theta-tH}\bigr).
\]
\end{lemma}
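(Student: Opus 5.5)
This is a local statement on $Y$, so I would work affine-locally at a point $y\in Y$. There I would choose a regular system of parameters $x_1,\dots,x_d$ such that $\Supp\Theta\cup\Supp H$ is contained in $\{x_1\cdots x_d=0\}$. In these coordinates write $\Theta=\sum_i a_i\,\mathrm{div}(x_i)$ with $(p^e-1)a_i\in\Z$, write $H=\sum_i h_i\,\mathrm{div}(x_i)$ with $h_i\in\Z_{\ge0}$, and take $K_Y=0$ locally, so that $\w_Y\cong\cO_Y$. Test modules are compatible with localization and completion, and $F$-finiteness lets the trace map commute with completion. I may therefore assume $Y=\Spec S$ with $S=k[[x_1,\dots,x_d]]$, or its $F$-finite analogue with coefficient field $k$ and a $p$-basis. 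Under this reduction the statement becomes a monomial computation: $\tau(\w_S,\Theta,(x^h)^t)$ should be generated by the monomial $x^{\lceil -a-th\rceil}$, with ceilings taken coordinatewise.

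The first step is the case without ideal, where the claim is $\tau(\w_Y,\Theta)=\cO_Y(\lceil -\Theta\rceil)$. The module $\Hom_S(F^e_*S,S)$ is generated by the trace map $\Tr^e$. Relative to the $p$-basis, $\Tr^e$ sends $F^e_*x^{(p^e-1)\mathbf 1}$ to $1$ and kills the other monomials $F^e_*x^{u}$ with $0\le u_i\le p^e-1$. The maps allowed by Definition \ref{def:param-test} are therefore $\phi=\Tr^e(F^e_*x^{\lceil(p^e-1)a\rceil}\cdot -)$, and composition reduces to the case where $e$ is a multiple of the fixed period. With this description I would check two things:
\begin{enumerate}
\item The module $J=(x^{\lceil -a\rceil})$ is stable under all such $\phi$. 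This is the exponent inequality $\lfloor(\lceil -a_i\rceil(p^e)+\lceil(p^e-1)a_i\rceil-(p^e-1))/p^e\rfloor\ge\lceil -a_i\rceil$, and I would verify it directly from $(p^e-1)a_i\in\Z$.
\item The module $J$ is the smallest nonzero stable submodule. Given any nonzero $c$ in a stable $J'$, the element $c$ contains some monomial. Applying $\phi$ for $e\gg0$, after multiplying by a suitable monomial, yields $x^{\lceil -a\rceil}$: the trace "divides exponents by $p^e$", so the contribution of $c$ disappears in the limit.
\end{enumerate}

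The second step adds the ideal term, for real $t$. Here $\fa^{\lceil t(p^e-1)\rceil}$ is the principal monomial ideal $(x^{h\lceil t(p^e-1)\rceil})$, so the maps become $\Tr^e(F^e_*x^{\lceil(p^e-1)a\rceil+h\lceil t(p^e-1)\rceil}\cdot-)$. The same two checks then produce the candidate $(x^{\lceil -a-th\rceil})$. An alternative that avoids redoing the estimates is to approximate. Choose rationals $t'\ge t$ with $t'$ close to $t$ and $(p^{e'}-1)t'\in\Z$ for a suitable $e'$. Then $t'H+\Theta$ has $(p^{e'}-1)$-divisible coefficients, so I can fold the principal ideal into the divisor: $\tau(\w,\Theta,\fa^{t'})=\tau(\w,\Theta+t'H)$. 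Combining this with the known right-continuity of $t\mapsto\tau(\w,\Theta,\fa^t)$ and the right-continuity of $t\mapsto\lceil -a-th\rceil$ gives the formula for all real $t$.

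I expect the main obstacle to be the minimality in the first step. One must show that every nonzero $\phi$-stable submodule contains $x^{\lceil -a\rceil}$, and this is where the hypothesis $(p^e-1)\Theta$ Cartier is really used, to get exact ceilings rather than an inclusion. I would handle it by taking a test element such as $x_1\cdots x_d$. Since the pair is strongly $F$-regular off the boundary, I would show $\tau=\sum_e\phi_e(F^e_*(c\cdot x^{\lceil -a\rceil}))$, and then compute that sum term by term on monomials. The references cited in the lemma carry out exactly this monomial calculation, so in the write-up I would follow them, noting only the reduction to the complete or local case and the right-continuity argument for real $t$.
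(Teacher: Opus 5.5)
Your outline (localize/complete, write the allowed maps as $\Tr^e\bigl(F^e_*(u\,x^{\lceil (p^e-1)a\rceil})\cdot -\bigr)$, check stability and minimality on monomials, then pass to real $t$) is the standard monomial computation from the sources the paper cites. The paper gives no argument beyond those citations, so your route is the same as the paper's. However, the explicit computation you wrote down is wrong, and the stability step fails as stated.

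\textbf{The generator has the wrong sign.} The paper uses the convention $\cO_Y(D)=\{f:\mathrm{div} f+D\ge 0\}$; this is the convention that makes $\pi_*\cO_Y(-nW_0)=\m^n$. Under it, $\cO_Y(\lceil -\Theta-tH\rceil)$ is generated by $x^{-\lceil -a-th\rceil}=x^{\lfloor a+th\rfloor}$, not by $x^{\lceil -a-th\rceil}$. For example, with $\Theta=0$, $H=\mathrm{div}(x_1)$ and $t=2$, your formula gives $x_1^{-2}$ instead of $x_1^{2}$.

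\textbf{The stability inequality is false.} Your displayed inequality reduces to $\lceil (p^e-1)a_i\rceil\ge p^e-1$, i.e.\ essentially $a_i\ge 1$. Already for $\Theta=0$ it reads $\lfloor -(p^e-1)/p^e\rfloor\ge 0$, which fails. The correct bookkeeping is $\Tr^e(F^e_*x^mS)=x^{\lfloor m/p^e\rfloor}S$ coordinatewise for $m\in\Z^d_{\ge 0}$. Note that $F^e_*$ does not multiply the exponent of an element of $J$ by $p^e$. With $b=\lfloor a\rfloor$, stability of $x^bS$ is the inequality $\lfloor (b_i+\lceil(p^e-1)a_i\rceil)/p^e\rfloor\ge b_i$. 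This holds for every $e$ because $\lceil(p^e-1)a_i\rceil\ge (p^e-1)b_i$, so no reduction to multiples of a period is needed.

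\textbf{Making minimality precise.} To justify ``the contribution of $c$ disappears,'' proceed as follows.
\begin{enumerate}
\item Pick any monomial $x^\mu$ of $c\neq 0$.
\item Take $e$ with $\mu_i<p^e$ and $(p^e-1)(1-\{a_i\})\ge \mu_i-b_i$ for all $i$.
\item Use $u=x^{s}$ with $s=p^eb+(p^e-1)\mathbf 1-\mu-\lceil (p^e-1)a\rceil$; this is $\ge 0$ by the choice of $e$. Multiply by a suitable scalar if $k$ is imperfect.
\item Any other monomial $x^\nu$ of $c$ survives the trace only if $\nu-\mu\in p^e\Z^d$. A negative entry would force $\nu_i<0$, so $x^\nu$ contributes only to $x^b\m$.
\end{enumerate}
Hence $\phi(F^e_*c)$ is $x^b$ times a unit, so $x^b$ lies in the stable module.

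\textbf{Role of the Cartier hypothesis.} The minimality argument uses only $\{a_i\}<1$. Contrary to your last paragraph, the hypothesis that $(p^e-1)\Theta$ is Cartier is not what makes minimality work. In fact the same two checks, with exponent $\lceil(p^e-1)a_i\rceil+h_i\lceil t(p^e-1)\rceil$, give the real-$t$ formula directly. Your approximation plus right-continuity argument is also fine.

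\textbf{Non-effective $\Theta$.} Cases such as $\Theta=K_Y+\cdots$ should be handled by twisting with a Cartier divisor, as in \cite[Definition 4.3]{ST}.
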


We now specialize \cite[Section 5]{ST} to our fixed resolution $\pi:Y\to X$.
We keep Setup~\ref{setup:asymptotic} throughout.

\begin{definition}\label{def:GammaY}
For $t\in\R_{\ge 0}$ define the $\Q$-divisor on $Y$
\[
\Gamma_Y(t) \;:=\; \pi^*(K_X+\Delta) + tG.
\]
Define also the integral divisor
\[
E_\pi(t) \;:=\; \ceil{K_Y - \Gamma_Y(t)}=\ceil{K_Y-\pi^*(K_X+\Delta)-tG}.
\]
\end{definition}

\begin{remark}
The rounding identity
\[
E_\pi(n+\alpha) = E_\pi(\alpha) - nG
\qquad(n\in\Z,\ 0\le \alpha<1)
\]
holds because $G$ is integral Cartier.
\end{remark}

Let us first treat the case where $K_X+\Delta$ has index not divisible by $p$ (i.e.\ $b=0$).
Then $(p^c-1)(K_X+\Delta)$ is Cartier for some $c>0$.
In that situation, \cite{ST} gives a concrete description of $\tau(\omega_X , K_X+\Delta,\mathfrak{a}^t)$
as the stable image as follows.

\begin{definition}\label{def:PhiY}
Assume for the moment that $b=0$, so $(p^c-1)(K_X+\Delta)$ is Cartier for some fixed $c>0$.
For $t\in \Q_{\ge 0}$ with $(p^c-1)t\in\Z$, set
\[
M(t) \;:=\; \tau(\w_Y,\Gamma_Y(t)) \subseteq \omega_Y.
\]
Define the $\cO_Y$-linear map
\[
\Phi^c_{Y,t}:\Frobpush{c}\Bigl(M(t)\otimes \cO_Y\bigl((1-p^c)\Gamma_Y(t)\bigr)\Bigr)\longrightarrow M(t)
\]
to be the restriction of the trace map $\Tr^c_Y$ (twisted appropriately) as in \cite[\S5]{ST}.
Let
\[
N(t):=\ker(\Phi^c_{Y,t}).
\]
\end{definition}

\begin{lemma}(\cite[Proof of Theorem 5.1]{ST})\label{lem:stabilize}
Assume $b=0$ and $(p^c-1)t\in\Z$.
Then $\tau(\omega_X , K_X+\Delta,\mathfrak{a}^t)$ is the stable image of the sequence
\[
\pi_*M(t)\xleftarrow{\ \pi_*\Phi^c_{Y,t}\ }\Frobpush{c}\bigl(\pi_*M(t)\bigr)\xleftarrow{\ \Frobpush{c}(\pi_*\Phi^c_{Y,t})\ }\Frobpush{2c}\bigl(\pi_*M(t)\bigr)\xleftarrow{\ \cdots\ } \cdots
\]

In particular, if $\pi_*\Phi^c_{Y,t}$ is surjective, then
\[
\tau(\omega_X , K_X+\Delta,\mathfrak{a}^t) = \pi_*M(t) \subseteq \omega_X.
\]
\end{lemma}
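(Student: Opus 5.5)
The plan is to follow the proof of \cite[Theorem 5.1]{ST}, using three ingredients: Grothendieck duality for the finite morphism $F^c$, the projection formula on $Y$, and Lemma~\ref{lem:module-to-ideal}. Throughout, fix $K_X$ with $-K_X\ge 0$ and set $K_Y$ compatibly, so that $\pi_*K_Y=K_X$.

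\emph{Step 1: the maps on $X$.} Since $b=0$, the divisor $(p^c-1)(K_X+\Delta)$ is Cartier. By duality, the module $\Hom_R\bigl(F^c_*\omega_R(\lceil (p^c-1)(K_X+\Delta)\rceil),\omega_R\bigr)$ is generated by a single map. This map is the trace $\Tr^c_X$ twisted by $(1-p^c)(K_X+\Delta)$. Composing it with multiplication by elements of $\fa^{(p^c-1)t}$ (an integer exponent, by hypothesis), iterating, and using the standard reduction of \cite{ST} that only $e$ divisible by $c$ need be checked, we get the following description: $\tau(\omega_X,K_X+\Delta,\fa^t)$ is the stable image of the iterates
\[
\phi_t : F^{c}_*\bigl(\fa^{(p^c-1)t}\cdot J\bigr)\to J,
\]
applied to a sufficiently small test element multiple $J = d\cdot\omega_X$.

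\emph{Step 2: pulling back to $Y$.} Because $\pi$ is birational, the twisted trace on $X$ is $\pi_*$ of the twisted trace $\Tr^c_Y$ on $Y$; this uses $\pi_*\omega_Y\subseteq\omega_X$ and agreement on the regular locus. Since $\fa\cO_Y=\cO_Y(-G)$, multiplication by $\fa^{(p^c-1)t}$ corresponds to twisting by $-(p^c-1)tG$. Both twists combine into $(1-p^c)\Gamma_Y(t)$, so the composite on $Y$ is exactly $\Phi^c_{Y,t}$.

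\emph{Step 3: $M(t)$ is $\Phi^c_{Y,t}$-stable with surjective restriction.} By Lemma~\ref{lem:snc}, $M(t)=\cO_Y(E_\pi(t))$, and the SNC hypothesis gives that $\Phi^c_{Y,t}$ maps $F^c_*\bigl(M(t)\otimes \cO_Y((1-p^c)\Gamma_Y(t))\bigr)$ onto $M(t)$, since the trace of an SNC pair surjects onto its test module. Starting the sequence from $\pi_*(\text{small multiple of }\omega_Y)$, the images then stabilize to a submodule of $\pi_*M(t)$. This submodule contains the stable image on $X$, because $\tau$ is the smallest such module and every element of $\omega_X$ becomes a section over $Y$ after multiplying by a test element. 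Conversely, $\pi_*M(t)$ is $\phi_t$-compatible in the sense of the sequence. Thus $\tau(\omega_X,\dots)$ is the stable image of the displayed sequence of pushforwards, which is the first assertion.

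\emph{Step 4: the "in particular" clause.} If $\pi_*\Phi^c_{Y,t}$ is surjective, then every map in the sequence is surjective, and the stable image is $\pi_*M(t)$ itself.

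I expect the main obstacle to be Step 3, specifically showing that the stable image computed on $X$ (which starts from $\omega_X$, or from a test-element multiple of it) agrees with the one starting from $\pi_*M(t)$. The route I would try first is the one in \cite{ST}. Choose $0\ne d$ such that $d\,\omega_X\subseteq \pi_*M(t)$ and $d$ lies in the test module. Sandwich
\[
d\cdot\omega_X\subseteq \pi_*M(t)\subseteq \omega_X,
\]
and use that the stable images of the iterates starting from $d\,\omega_X$ and from $\omega_X$ coincide; this is the minimality characterization in Definition~\ref{def:param-test}.
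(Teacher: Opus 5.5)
Your Steps 1--3 prove only one inclusion, and the reverse inclusion is the actual content of the lemma. Write $S$ for the stable image of the displayed sequence and $\tau:=\tau(\omega_X,K_X+\Delta,\fa^t)$.

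Minimality in Definition~\ref{def:param-test} only ever gives lower bounds. Compatibility of $\pi_*M(t)$ with $\phi_t$ yields $\tau\subseteq\pi_*M(t)$. Since $\tau$ is reproduced by the iterates of $\phi_t$, it lies in every image of the sequence, so $\tau\subseteq S$.

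The reverse inclusion $S\subseteq\tau$ is not proved. In the surjective case it is exactly the claim $\pi_*M(t)\subseteq\tau$, which is how the lemma is used in Theorem~\ref{thm:asymptotic-b0}. The principle you invoke for it is false. The descending chain of images of iterates starting from $\omega_X$ stabilizes to the non-$F$-pure module $\sigma(\omega_X,K_X+\Delta,\fa^t)$, not to $\tau$. For instance, take $\fa=R$, $t=0$, $\Delta=0$ and $X$ the cone over an ordinary elliptic curve. The trace is surjective, so this stable image is $\omega_X$, whereas $\tau(\omega_X)=\m\,\omega_X$.

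Relatedly, Step~1 misdescribes $\tau$. For a test element $d$, $\tau$ is the \emph{sum} $\sum_{e\ge 0}\phi_t^e\bigl(F^{ec}_*(d\,\fa^{(p^{ec}-1)t}\omega_X)\bigr)$, not a stable image. So the sandwich $d\,\omega_X\subseteq\pi_*M(t)\subseteq\omega_X$ only traps $S$ between a submodule of $\tau$ and a $\sigma$-type module.

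There is also a smaller issue in Step~4. The later maps are $\pi_*$ of $\Phi^c_{Y}$ twisted by $\cO_Y(-(p^{ec}-1)tG)$ and a pullback from $X$. Their surjectivity does not follow from that of the first map. It comes from the Serre vanishing of Lemma~\ref{lem:serre}, which holds for all more negative twists.

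The paper itself just defers to \cite[Proof of Theorem 5.1]{ST}. What such a proof must supply, and what you are missing, is a way to get a test element under $F^{ec}_*$. This is where the size of $t$ and the $\pi$-ampleness of $-G$ must be used, and your argument uses neither. At $t=0$ the sequence is just a twisted trace on $\pi_*\omega_Y$, whose stable image is of $\sigma$-type.

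Here is one route, in the regime $t=n+\alpha$, $n\gg 0$, where the lemma is applied.
\begin{enumerate}
\item Since $M(t)\otimes\cO_Y(-(p^{ec}-1)tG)=M(p^{ec}t)$, surjectivity of the iterates says that $\pi_*M(t)$ is the image of $F^{ec}_*\pi_*M(p^{ec}t)$, twisted by $\cO_X((1-p^{ec})(K_X+\Delta))$, under the twisted trace.
\item Lemma~\ref{lem:frob-scaling} says the same map sends $F^{ec}_*\tau(\omega_X,K_X+\Delta,\fa^{p^{ec}s})$ onto $\tau(\omega_X,K_X+\Delta,\fa^{s})$.
\item There is a fixed $N$ with $\pi_*M(s)\subseteq\tau(\omega_X,K_X+\Delta,\fa^{s-N})$ for all $s\gg 0$. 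This holds because $\pi$ is an isomorphism over $X\setminus V(\fa)$, where the pair is SNC and $\tau=\cJ$ by Lemma~\ref{lem:snc}. It also uses that relative ampleness gives $\pi_*(M(s)\otimes\cO_Y(-G))=\fa\,\pi_*M(s)$ for $s\gg 0$.
\item Combining these gives $\pi_*M(t)\subseteq\tau(\omega_X,K_X+\Delta,\fa^{t-N/p^{ec}})$ for $e\gg 0$.
\item To remove the loss $N/p^{ec}$, run the argument at $t+\varepsilon$ with $M(t+\varepsilon)=M(t)$. This requires surjectivity of the iterates at $t+\varepsilon$ as well, which Serre vanishing again provides because $t\gg 0$.
\end{enumerate}
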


\begin{lemma}\label{lem:twist-kernel}
Assume $b=0$ and $t=n+\alpha$ with $n\in\Z_{\ge 0}$ and $\alpha\in A_{0,c}$.
Then there are natural isomorphisms
\[
M(t) \;\cong\; M(\alpha)\otimes \cO_Y(-nG),
\qquad
N(t) \;\cong\; N(\alpha)\otimes \cO_Y(-nG).
\]
\end{lemma}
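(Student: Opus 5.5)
The plan is to derive both isomorphisms directly from the definitions, using only that $G$ is an integral Cartier divisor. For $M(t)$, I would write $\Gamma_Y(t)=\Gamma_Y(\alpha)+nG$. Lemma~\ref{lem:snc} applies to $\Theta=\Gamma_Y(t)$: its support is contained in $\Supp(G\cup\Exc(\pi)\cup\pi^{-1}_*\Delta)$, which is SNC, and $(p^c-1)\Gamma_Y(t)$ is Cartier because $(p^c-1)\alpha\in\Z$ and $(p^c-1)(K_X+\Delta)$ is Cartier. The lemma gives
\[
M(t)=\cO_Y\bigl(\ceil{K_Y-\Gamma_Y(\alpha)-nG}\bigr)=\cO_Y\bigl(\ceil{K_Y-\Gamma_Y(\alpha)}\bigr)\otimes\cO_Y(-nG)=M(\alpha)\otimes\cO_Y(-nG).
\]
The middle equality holds because $nG$ is integral, so it can be pulled out of the round-up. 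This is the rounding identity $E_\pi(n+\alpha)=E_\pi(\alpha)-nG$ recorded in the remark above.

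For $N(t)$, I would compare the source and target of $\Phi^c_{Y,t}$ with those of $\Phi^c_{Y,\alpha}$. The source is
\[
\Frobpush{c}\bigl(M(t)\otimes\cO_Y((1-p^c)\Gamma_Y(t))\bigr)=\Frobpush{c}\bigl(M(\alpha)\otimes\cO_Y((1-p^c)\Gamma_Y(\alpha))\otimes\cO_Y(-p^c nG)\bigr).
\]
Since $\cO_Y(-p^cnG)=(F^c)^*\cO_Y(-nG)$, the projection formula identifies this with
\[
\Frobpush{c}\bigl(M(\alpha)\otimes\cO_Y((1-p^c)\Gamma_Y(\alpha))\bigr)\otimes\cO_Y(-nG).
\]
The target is $M(\alpha)\otimes\cO_Y(-nG)$. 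The key claim is that under these identifications $\Phi^c_{Y,t}=\Phi^c_{Y,\alpha}\otimes\mathrm{id}_{\cO_Y(-nG)}$.

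To justify this, I would recall that $\Phi^c_{Y,t}$ is the trace map $\Tr^c_Y$ on $\Frobpush{c}\omega_Y\bigl((1-p^c)\Gamma_Y(t)\bigr)$, restricted to the subsheaves coming from $M(t)$. Twisting by the line bundle $\cO_Y(-nG)$ commutes with trace via the projection formula, because $\Tr^c_Y$ is $\cO_Y$-linear, that is, linear over $\cO_Y$ acting through $p^c$-th powers on the source. Both maps are then restrictions of the same twisted trace, so they agree on the subsheaves, and the projection-formula identifications are compatible with the inclusions into $\omega_Y$-twists. Finally, $-\otimes\cO_Y(-nG)$ is exact because $\cO_Y(-nG)$ is invertible. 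Taking kernels therefore gives $N(t)\cong N(\alpha)\otimes\cO_Y(-nG)$.

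The main obstacle is this compatibility check: that the twisted trace defining $\Phi^c_{Y,t}$ in \cite[\S5]{ST} is literally the $\cO_Y(-nG)$-twist of the one defining $\Phi^c_{Y,\alpha}$. To settle it, I would work locally where $G=\operatorname{div}(g)$. Locally, $\cO_Y(-nG)$ is multiplication by $g^n$ on the target and by $g^{np^c}$ on the source, and $\Tr^c_Y(F^c_*(g^{np^c}x))=g^n\Tr^c_Y(F^c_*x)$. This shows the two maps agree locally. The local identifications glue because they are canonical, being induced by the inclusion of line bundles.
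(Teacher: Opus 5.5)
Your proposal is correct and follows the paper's route: you write $\Gamma_Y(t)=\Gamma_Y(\alpha)+nG$, twist by the line bundle $\cO_Y(-nG)$, and use the projection formula for $F^c_*$ together with exactness of twisting to pass to kernels. The paper gets the $M$-isomorphism from the general twisting rule $\tau(\omega_Y,\Theta+nG)=\tau(\omega_Y,\Theta)\otimes\cO_Y(-nG)$ of \cite[Definition~4.3]{ST}, which needs no SNC hypothesis, whereas you use the explicit formula of Lemma~\ref{lem:snc} plus the rounding identity (correctly checking that $(p^c-1)\Gamma_Y(t)$ is Cartier); you also spell out the trace compatibility $\Tr^c_Y(F^c_*(g^{np^c}x))=g^n\Tr^c_Y(F^c_*x)$, which the paper leaves as ``trivially, using projection formula''.
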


\begin{proof}
Because $G$ is Cartier and integral, adding $nG$ to $\Gamma_Y(\alpha)$ simply twists:
\[
\Gamma_Y(n+\alpha)=\Gamma_Y(\alpha)+nG.
\]
Then, by \cite[Definition 4.3]{ST},
\[
\tau(\w_Y,\Theta + nG)=\tau(\w_Y,\Theta)\otimes \cO_Y(-nG)
\]
for any $\Q$-divisor $\Theta$ and any Cartier divisor $nG$.
The rest follows trivially, using projection formula.
\end{proof}

\begin{lemma}(\cite[\S5]{ST})\label{lem:serre}
Fix $b=0$ and $c>0$.
For each $\alpha\in A_{0,c}$ there exists an integer $n_\alpha\ge 0$ such that
\[
R^1\pi_*\bigl(N(\alpha)\otimes \cO_Y(-nG)\bigr)=0
\qquad\text{for all }n\ge n_\alpha.
\]
Consequently, if $t=n+\alpha$ with $n\ge n_\alpha$, then $\pi_*\Phi^c_{Y,t}$ is surjective.
\end{lemma}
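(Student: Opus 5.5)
Two things have to be shown: the vanishing of $R^1\pi_*\bigl(N(\alpha)\otimes\cO_Y(-nG)\bigr)$ for $n\gg0$, and the surjectivity of $\pi_*\Phi^c_{Y,t}$ that follows from it. Since $A_{0,c}$ is finite, it suffices to fix one $\alpha$ and produce $n_\alpha$.

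\textbf{Vanishing.} First I check that $N(\alpha)$ is a coherent $\cO_Y$-module. The source of $\Phi^c_{Y,\alpha}$ is $\Frobpush{c}$ of the coherent sheaf $M(\alpha)\otimes\cO_Y\bigl((1-p^c)\Gamma_Y(\alpha)\bigr)$. Here $(p^c-1)\Gamma_Y(\alpha)=(p^c-1)\pi^*(K_X+\Delta)+(p^c-1)\alpha G$ is an integral Cartier divisor, because $b=0$ and $(p^c-1)\alpha\in\Z$. Since $Y$ is $F$-finite, $\Frobpush{c}$ of a coherent sheaf is coherent. So the source is coherent, and the kernel $N(\alpha)$ of an $\cO_Y$-linear map between coherent sheaves is coherent. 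Next, $\pi$ is projective: it is the normalized blowup of $\mathfrak{a}$, and normalization is finite because $R$ is excellent. By hypothesis $\cO_Y(-G)$ is $\pi$-ample. Relative Serre vanishing over the affine base $X$ then gives $n_\alpha$ with $R^i\pi_*\bigl(N(\alpha)\otimes\cO_Y(-nG)\bigr)=0$ for all $i>0$ and $n\ge n_\alpha$. Take $n_\alpha$ to be this bound.

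\textbf{Surjectivity.} Let $t=n+\alpha$ with $n\ge n_\alpha$. By Lemma~\ref{lem:twist-kernel}, $N(t)\cong N(\alpha)\otimes\cO_Y(-nG)$, so $R^1\pi_*N(t)=0$. The key point, which is what \cite[\S5]{ST} establishes, is that $\Phi^c_{Y,t}$ is already surjective onto $M(t)$ on $Y$. This holds because $M(t)=\tau(\w_Y,\Gamma_Y(t))$ is the test module of an SNC pair on a regular scheme, described by Lemma~\ref{lem:snc}, and the test module is by definition the stable image of the twisted trace, i.e.\ fixed by it. This gives a short exact sequence
\[
0\to N(t)\to \Frobpush{c}\Bigl(M(t)\otimes\cO_Y\bigl((1-p^c)\Gamma_Y(t)\bigr)\Bigr)\xrightarrow{\Phi^c_{Y,t}} M(t)\to 0 .
\]
Apply $\pi_*$. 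The long exact sequence shows $\pi_*\Phi^c_{Y,t}$ is surjective, since its cokernel injects into $R^1\pi_*N(t)=0$. Finally, the source of $\pi_*\Phi^c_{Y,t}$ must be identified with the term appearing in Lemma~\ref{lem:stabilize}. Pushforward commutes with Frobenius pushforward, $\pi_*\Frobpush{c}=\Frobpush{c}\pi_*$, and the twist $(1-p^c)\Gamma_Y(t)$ is pulled back from $X$ up to the $\cO_Y(-(p^c-1)tG)$-part, which is absorbed by the projection formula as in the proof of Lemma~\ref{lem:twist-kernel}.

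\textbf{Main obstacle.} The delicate step is the surjectivity of $\Phi^c_{Y,t}$ on $Y$ itself, i.e.\ that the twisted trace maps onto the SNC test module $\cO_Y\bigl(\ceil{K_Y-\Gamma_Y(t)}\bigr)$. I would verify this locally in SNC coordinates, where it reduces to surjectivity of the Frobenius trace on monomials in a regular ring. If Lemma~\ref{lem:snc} is not sharp enough to give this directly, I would cite the corresponding step of \cite[Theorem 5.1]{ST}.
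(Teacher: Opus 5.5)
Your argument is correct and is essentially the paper's: coherence of $N(\alpha)$ together with relative Serre vanishing for the $\pi$-ample $-G$, then Lemma~\ref{lem:twist-kernel} and the long exact sequence of $\pi_*$ applied to $0\to N(t)\to\cdots\to M(t)\to 0$. Your proposal also justifies a step the paper uses silently when it writes that short exact sequence, namely that $\Phi^c_{Y,t}$ is surjective onto $M(t)$; your justification (the test module is fixed by the twisted trace, or a monomial check in SNC coordinates) is sound, and your final identification with the source in Lemma~\ref{lem:stabilize} is not needed for this lemma.
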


\begin{proof}
Since $N(\alpha)$ is coherent, and since $-G$ is $\pi$-ample, by relative Serre vanishing,
\[
R^i\pi_*\bigl(N(\alpha)\otimes \cO_Y(-nG)\bigr)=0
\qquad\text{for all }i>0\text{ and }n\gg 0.
\]

This gives the existence of $n_\alpha$ such that $R^1$ vanishes for $n\ge n_\alpha$.

Now take the exact sequence
\[
0\to N(t)\to \Frobpush{c}\bigl(M(t)\otimes\cO_Y((1-p^c)\Gamma_Y(t))\bigr)\xrightarrow{\Phi^c_{Y,t}} M(t)\to 0
\]
and apply $\pi_*$.
The vanishing of $R^1\pi_*N(t)$ implies exactness on global pushforwards, hence $\pi_*\Phi^c_{Y,t}$ is surjective.
Finally use Lemma~\ref{lem:twist-kernel} to identify $N(t)\cong N(\alpha)\otimes\cO_Y(-nG)$.
\end{proof}

We are ready for the prime-to-$p$ index asymptotic comparison. One should view it as ``\cite[Theorem 5.1]{ST} with $\alpha\in A_{0,c}$ varying".

\begin{theorem}(cf. \cite[Theorem 5.1]{ST})\label{thm:asymptotic-b0}
In Setup~\ref{setup:asymptotic}, assume $b=0$.
Fix $c>0$ such that $(p^c-1)(K_X+\Delta)$ is Cartier.
Then there exists an integer $T\ge 0$ such that for every $t=n+\alpha$ with $\alpha\in A_{0,c}$ and $n\ge T$ one has
\[
\tau(X,\Delta;\mathfrak{a}^t) \;=\; \cJ(X,\Delta;\mathfrak{a}^t)
\;=\;
\pi_*\cO_Y\bigl(E_\pi(t)\bigr).
\]
\end{theorem}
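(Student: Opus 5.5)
The plan is to assemble the lemmas of this section. Since $A_{0,c}$ is finite, set $T:=\max_{\alpha\in A_{0,c}} n_\alpha$, with $n_\alpha$ from Lemma~\ref{lem:serre}. Fix $t=n+\alpha$ with $\alpha\in A_{0,c}$ and $n\ge T$. Then $(p^c-1)t\in\Z$, so the objects $M(t)$ and $\Phi^c_{Y,t}$ of Definition~\ref{def:PhiY} make sense. By Lemma~\ref{lem:serre}, $\pi_*\Phi^c_{Y,t}$ is surjective. Lemma~\ref{lem:stabilize} therefore gives
\[
\tau(\omega_X, K_X+\Delta,\mathfrak a^t)=\pi_*M(t)=\pi_*\tau(\omega_Y,\Gamma_Y(t)).
\]

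Next I would evaluate $M(t)$ using Lemma~\ref{lem:snc}. The divisor $\Gamma_Y(t)=\pi^*(K_X+\Delta)+tG$ has SNC support, because $\Supp \pi^*(K_X+\Delta)\subseteq \Supp(\Exc(\pi)\cup\pi^{-1}_*\Delta)$ once $K_X$ is chosen appropriately. Also, $(p^c-1)\Gamma_Y(t)$ is Cartier, since $(p^c-1)(K_X+\Delta)$ is Cartier and $(p^c-1)t\in\Z$. Hence
\[
M(t)=\cO_Y\bigl(\ceil{K_Y-\Gamma_Y(t)}\bigr)=\cO_Y(E_\pi(t)).
\]
To pass from modules to ideals, I would use Lemma~\ref{lem:module-to-ideal}. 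This requires choosing $K_X$ with $-K_X\ge 0$, which is possible on the affine scheme $X$ after adding a principal divisor; the choice does not change $E_\pi(t)$ or $\cJ$, because both depend only on $K_Y-\pi^*K_X$ with compatible choices. It then gives $\tau(X,\Delta;\mathfrak a^t)=\pi_*\cO_Y(E_\pi(t))$. By the definition in \S\ref{def:Jpi}, this is $\cJ(X,\Delta;\mathfrak a^t)$ computed on the resolution $\pi$.

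The main obstacle is bookkeeping rather than vanishing. First, one must check that the canonical divisor choices are compatible: $K_Y$ must be chosen with $\pi_*K_Y=K_X$, so that $\pi_*\omega_Y\subseteq\omega_X$ and the ideal/module identification is consistent. Second, one should confirm that the multiplier ideal defined via $\pi$ agrees with the intrinsic one; only the $\pi$-version is needed here, since that is how $\cJ$ was defined. A further point is that $\Gamma_Y(t)$ need not be effective on non-exceptional components only if $K_X+\Delta$ is negative there. I would handle this by noting that Lemma~\ref{lem:snc} is stated for arbitrary $\Q$-divisors with SNC support, so effectivity is not needed.
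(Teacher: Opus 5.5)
Your proposal is correct and follows the paper's own proof step by step. Both take $T=\max_{\alpha\in A_{0,c}}n_\alpha$, obtain surjectivity of $\pi_*\Phi^c_{Y,t}$ from Lemma~\ref{lem:serre}, deduce $\tau(\omega_X,K_X+\Delta,\mathfrak a^t)=\pi_*M(t)$ from Lemma~\ref{lem:stabilize}, compute $M(t)=\cO_Y(E_\pi(t))$ by Lemma~\ref{lem:snc}, and pass to ideals via Lemma~\ref{lem:module-to-ideal}.

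One small fix to your SNC justification: $\Supp\pi^*(K_X+\Delta)$ generally contains the strict transform of $K_X$. The correct statement is that, once you choose $K_Y$ with $\pi_*K_Y=K_X$, the divisor $K_Y-\Gamma_Y(t)$ is supported on $\Exc(\pi)\cup\pi^{-1}_*\Delta\cup G$, and this is what Lemma~\ref{lem:snc} actually needs.
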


\begin{proof}
Fix $\alpha\in A_{0,c}$.
Choose $n_\alpha$ as in Lemma~\ref{lem:serre}, and set $T:=\max_{\alpha\in A_{0,c}} n_\alpha$.
Then for $t=n+\alpha$ with $n\ge T$, $\pi_*\Phi^c_{Y,t}$ is surjective.
Lemma~\ref{lem:stabilize} therefore gives
\[
\tau(\omega_X , K_X+\Delta,\mathfrak{a}^t)=\pi_*M(t)\subseteq \omega_X.
\]

Next, since $Y$ is regular and \(\Supp(G\cup \Exc(\pi)\cup \pi_*^{-1}\Delta)\) is SNC, Lemma~\ref{lem:snc} applies to $\Gamma_Y(t)$:
\[
M(t)=\tau(\w_Y,\Gamma_Y(t))=\cO_Y\bigl(\ceil{K_Y-\Gamma_Y(t)}\bigr)=\cO_Y(E_\pi(t)).
\]
Thus $\pi_*M(t)=\pi_*\cO_Y(E_\pi(t)) \subseteq \cO_X$.

Finally, apply Lemma~\ref{lem:module-to-ideal}:
\[
\tau(X,\Delta;\mathfrak{a}^t)
= \pi_*\cO_Y(E_\pi(t)) = \cJ(X,\Delta;\mathfrak{a}^t).
\] by Definition~\ref{def:Jpi}.
\end{proof}

We now handle the general case $b\ge 0$ in Setup~\ref{setup:asymptotic}.

\begin{lemma}(\cite[Lemma~4.4(b)]{ST})\label{lem:frob-scaling}
Let $X$ be normal, affine, and $F$-finite of characteristic $p>0$.
Let $\Gamma$ be a $\Q$-divisor and $\mathfrak{a}\subseteq \cO_X$ a nonzero ideal.
For any integer $b\ge 1$ and any $t\ge 0$ one has
\[
\tau(\w_X,\Gamma,\mathfrak{a}^t)
=
\Tr^b_X\Bigl(\Frobpush{b}\tau(\w_X,p^b\Gamma,\mathfrak{a}^{p^b t})\Bigr)\subseteq \omega_X,
\]
\end{lemma}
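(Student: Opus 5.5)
The plan is to prove both inclusions using the standard ``sum over all maps'' description of parameter test modules. Recall (from \cite{ST}, via the existence of test elements) that there is a nonzero $c\in R$ such that for every sufficiently large multiple $c'$ of $c$, and every $\Q$-divisor $\Theta$ and real $s\ge 0$,
\[
\tau(\w_X,\Theta,\mathfrak{a}^s)=\sum_{e\ge 0}\ \sum_{\phi}\ \phi\Bigl(\Frobpush{e}\bigl(c'\,\mathfrak{a}^{\lceil s(p^e-1)\rceil}\,\omega_X\bigr)\Bigr),
\]
where $\phi$ ranges over $\Hom_R\bigl(\Frobpush{e}\omega_X(\lceil (p^e-1)\Theta\rceil),\omega_X\bigr)$. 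We will apply this with $(\Theta,s)=(\Gamma,t)$ and with $(\Theta,s)=(p^b\Gamma,p^bt)$, and compare the two descriptions term by term.

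\emph{Step 1 (composing maps).} Take $\phi:\Frobpush{e}\omega_X(\lceil (p^e-1)p^b\Gamma\rceil)\to\omega_X$. Then $\Tr^b_X\circ\Frobpush{b}\phi$ is a map
\[
\Frobpush{e+b}\omega_X\bigl(\lceil (p^e-1)p^b\Gamma\rceil\bigr)\longrightarrow\omega_X .
\]
Conversely, every map $\psi:\Frobpush{e+b}\omega_X(D)\to\omega_X$ factors as $\Tr^b\circ\Frobpush{b}\phi$ for some $\phi$. This holds because $\Hom(\Frobpush{b}\omega_X,\omega_X)\cong\Frobpush{b}\cO_X$ is generated by $\Tr^b$ as an $\Frobpush{b}\cO_X$-module, which follows from duality for the finite map $F^b$ on the normal scheme $X$ (check in codimension one, then extend reflexively). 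So precomposing $\Tr^b$ with $\Frobpush{b}$ of the $(p^b\Gamma)$-maps at level $e$ produces exactly the $\Gamma$-type maps at level $e+b$, up to a change of divisor.

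\emph{Step 2 (comparing roundings).} We must compare the divisors $\lceil (p^e-1)p^b\Gamma\rceil$ and $\lceil (p^{e+b}-1)\Gamma\rceil$. Their difference is bounded independently of $e$, by a divisor of roughly size $\lceil (p^b-1)\Gamma\rceil$ plus $1$ on each component. Likewise, the ideal exponents $\lceil p^bt(p^e-1)\rceil$ and $\lceil t(p^{e+b}-1)\rceil$ differ by at most $t(p^b-1)+1$.

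\emph{Step 3 (absorbing the discrepancy).} These bounded discrepancies are absorbed into the test element. Choose $d\in R$ nonzero with $\mathrm{div}(d)$ dominating the divisor discrepancy and with $d\in\mathfrak{a}^{\lceil t(p^b-1)\rceil+1}$. Replacing $c'$ by $c'd$ or $c'd^{p^b}$ on the appropriate side then turns each summand of one description into a summand of the other.

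Concretely, the right-hand side is
\[
\Tr^b\Bigl(\Frobpush{b}\sum_{e,\phi}\phi\bigl(\Frobpush{e}(c'\mathfrak{a}^{\lceil p^bt(p^e-1)\rceil}\omega_X)\bigr)\Bigr).
\]
This gives the two inclusions as follows.
\begin{itemize}
\item[$\subseteq$:] Each summand of the right-hand side is a summand, for level $e+b$, of the sum defining $\tau(\w_X,\Gamma,\mathfrak{a}^t)$ with test element $c'$ replaced by a suitable multiple.
\item[$\supseteq$:] The summands of $\tau(\w_X,\Gamma,\mathfrak{a}^t)$ at levels $e'\ge b$ are of the form $\Tr^b\circ\Frobpush{b}\phi$ by Step 1. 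Here we use that the sum may be restricted to $e'\ge b$, since the test module is already generated by large $e$. Pulling the multiplier $c'$ through $\Frobpush{b}$ as $c'^{p^b}$ lands inside the right-hand side.
\end{itemize}
The independence of the test module from the choice of (large multiple of) test element then yields equality.

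The main obstacle is Step 2/Step 3: making the rounding comparison uniform in $e$ and in the coefficients of $\Gamma$, which need not have denominators prime to $p$. In particular, we must ensure that the single fixed element $d$ works simultaneously for the divisor twist and the ideal power. I would handle this by first reducing to $\Gamma$ effective, twisting by a Cartier divisor using the twisting rule invoked in Lemma~\ref{lem:twist-kernel}. I would then bound every rounding error by $\lceil (p^b-1)\Gamma\rceil+\Supp\Gamma$, which is independent of $e$.
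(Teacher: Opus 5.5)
The paper gives no argument of its own here; it simply imports \cite[Lemma~4.4(b)]{ST}. Your proposal is a correct reconstruction of the standard proof behind that citation: the sum description of the test module, duality for the finite map $F^b$ to reindex level-$e$ maps for $p^b\Gamma$ as level-$(e+b)$ maps for $\Gamma$, absorbing the $e$-independent rounding discrepancies (bounded by $\lceil (p^b-1)\Gamma\rceil$ and $\lceil t(p^b-1)\rceil$) into the test element, discarding levels $e<b$ by minimality, and reducing to $\Gamma\ge 0$ by a principal twist $D$ (with $p^bD$ on the Frobenius side, by the projection formula).

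Two bookkeeping points need fixing. In the $\subseteq$ direction the extra factor $d$ must go on the right-hand side: the sum for $p^b\Gamma$ computed with test element $c'd$ lies termwise in the sum for $\Gamma$ computed with $c'$, not the other way around. The $\supseteq$ direction is termwise with the same $c'$ (restrict $\phi$ and use $\mathfrak{a}^{\lceil t(p^{e+b}-1)\rceil}\subseteq\mathfrak{a}^{\lceil p^bt(p^e-1)\rceil}$), so no $c'^{p^b}$ is needed.
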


\begin{lemma}(\cite[\S 2.3]{BST-alt})\label{lem:trace-compat}
Let $\pi:Y\to X$ be a proper morphism of normal $F$-finite schemes of characteristic $p>0$, and let $e\ge 1$.
Write $F_X^e$ and $F_Y^e$ for the $e$-fold Frobenius morphisms.
Then $F_X^e\circ \pi=\pi\circ F_Y^e$, hence there is a natural identification
$F_{X*}^e\pi_* \omega_Y \cong \pi_*F_{Y*}^e\omega_Y$.
With respect to this identification, the trace maps satisfy
\[
\Tr_X^e\circ F_{X*}^e(\Tr_\pi)
\;=\;
\Tr_\pi\circ \pi_*(\Tr_Y^e),
\]
as maps $F_{X*}^e\pi_*\omega_Y\to \omega_X$.

\end{lemma}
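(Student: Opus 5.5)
The statement is the compatibility of Frobenius traces with the Grothendieck trace of a proper map: $\Tr_X^e\circ F^e_{X*}(\Tr_\pi)=\Tr_\pi\circ\pi_*(\Tr^e_Y)$ as maps $F^e_{X*}\pi_*\omega_Y\to\omega_X$. I would prove it by reducing both traces to counits of adjunction in Grothendieck duality and then using functoriality of the upper-shriek functor under composition.

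\textbf{Step 1: the commuting square.} The identity $F^e_X\circ\pi=\pi\circ F^e_Y$ holds because the absolute Frobenius is natural. On any scheme it is the identity on the topological space and the $p^e$-th power on functions. Pushing forward along $F^e_X\circ\pi=\pi\circ F^e_Y$ then gives the identification $F^e_{X*}\pi_*\cong\pi_*F^e_{Y*}$; since $F^e$ is affine, this holds on derived functors as well.

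\textbf{Step 2: traces as counits.} Normalize $\omega_X=H^{-d}(\omega_X^\bullet)$ and $\omega_Y^\bullet=\pi^!\omega_X^\bullet$. Since $F$-finiteness gives $F^{e!}\omega^\bullet\cong\omega^\bullet$, each trace is a counit:
\[
\Tr^e_X\colon F^e_{X*}F^{e!}_X\omega_X^\bullet\to\omega_X^\bullet,\qquad
\Tr_\pi\colon R\pi_*\pi^!\omega_X^\bullet\to\omega_X^\bullet,
\]
and similarly for $\Tr^e_Y$. Because $\pi$ is birational here, the dimensions match and no shift discrepancy appears.

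\textbf{Step 3: compare the two composites.} Pseudofunctoriality of $(-)^!$ gives a canonical isomorphism
\[
(F^e_X\circ\pi)^!\cong\pi^!F^{e!}_X\cong F^{e!}_Y\pi^!\cong(\pi\circ F^e_Y)^!,
\]
and it is compatible with counits. Concretely, the counit of a composite $g\circ f$ equals the counit of $g$ precomposed with $g_*$ applied to the counit of $f$, taken in either order. Applying this to both factorizations:
\begin{itemize}
\item the route through $X$ gives $\Tr^e_X\circ F^e_{X*}(\Tr_\pi)$;
\item the route through $Y$ gives $\Tr_\pi\circ\pi_*(\Tr^e_Y)$.
\end{itemize}
Both equal the counit for the single morphism $\pi\circ F^e_Y=F^e_X\circ\pi$, so they agree.

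\textbf{Step 4: pass to cohomology.} Take $H^{-d}$. Using $R\pi_*$ only through $\pi_*$ is harmless because the trace $\pi_*\omega_Y\to\omega_X$ is $H^{-d}$ of the derived counit, and the lowest cohomology satisfies $H^{-d}R\pi_*=\pi_*H^{-d}$.

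\textbf{Main obstacle.} The delicate point is Step 3. One must check that the normalizations $F^{e!}_X\omega_X^\bullet\cong\omega_X^\bullet$ and $F^{e!}_Y\omega_Y^\bullet\cong\omega_Y^\bullet$ are compatible with $\pi^!$; otherwise the two sides could differ by a unit. I would fix $\omega_Y^\bullet:=\pi^!\omega_X^\bullet$ and define the $Y$-normalization by transport through the pseudofunctor isomorphism, so the compatibility holds by construction. This is exactly how \cite[\S 2.3]{BST-alt} sets things up, so I would cite it for the coherence of these identifications.
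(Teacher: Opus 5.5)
Your proposal is correct and is essentially the standard Grothendieck-duality argument behind the paper's citation of \cite[\S 2.3]{BST-alt}. The paper gives no proof beyond that citation. In your argument, both composites are the counit for the single morphism $F^e_X\circ\pi=\pi\circ F^e_Y$, computed through its two factorizations, once $F^{e!}\omega^\bullet\cong\omega^\bullet$ on $X$ and $Y$ are normalized compatibly via $\omega_Y^\bullet=\pi^!\omega_X^\bullet$. The one point to make explicit is that you use $\dim Y=\dim X$ (e.g.\ $\pi$ birational or generically finite, as in the paper's application). The statement implicitly needs this anyway for $\Tr_\pi\colon\pi_*\omega_Y\to\omega_X$ to be defined.
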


I thank my advisor Karl Schwede for suggesting the following Theorem \ref{thm:asymptotic-general}.

\begin{theorem}(cf. \cite[Theorem 5.1]{ST})\label{thm:asymptotic-general}
In Setup~\ref{setup:asymptotic}, let $b\ge 0$ be such that $p^b(K_X+\Delta)$ has Cartier index not divisible by $p$,
and choose $c>0$ such that $(p^c-1)p^b(K_X+\Delta)$ is Cartier.
Then there exists an integer $T\ge 0$ such that for every $t=n+\alpha$ with $\alpha\in A_{b,c}$ and $n\ge T$ one has
\[
\tau(X,\Delta;\mathfrak{a}^t) \;=\; \cJ(X,\Delta;\mathfrak{a}^t).
\]
\end{theorem}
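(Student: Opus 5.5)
The plan is to reduce to the prime-to-$p$ case, Theorem~\ref{thm:asymptotic-b0}, using the Frobenius scaling of Lemma~\ref{lem:frob-scaling}, and then to show that the trace map $\Tr^b_X$ carries the pushforward from $Y$ to the pushforward from $Y$, via Lemma~\ref{lem:trace-compat}.

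First choose $K_X$ with $-K_X\ge 0$. Set $\Gamma=K_X+\Delta$ and $t=n+\alpha$ with $\alpha\in A_{b,c}$. The ideal $\tau(X,\Delta;\fa^t)$ equals the test module $\tau(\w_X,\Gamma,\fa^t)$; for $b=0$ this is Lemma~\ref{lem:module-to-ideal}, and in general it is \cite[Lemma 4.2]{ST}, which does not need the index hypothesis. By Lemma~\ref{lem:frob-scaling},
\[
\tau(\w_X,\Gamma,\fa^t)=\Tr^b_X\bigl(\Frobpush{b}\tau(\w_X,p^b\Gamma,\fa^{p^bt})\bigr).
\]
Here $p^b\Gamma$ is $\Q$-Cartier with $(p^c-1)p^b\Gamma$ Cartier. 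Moreover $p^bt=p^bn+p^b\alpha$, and since $p^b(p^c-1)\alpha\in\Z$, the fractional part of $p^b\alpha$ lies in $A_{0,c}$. Therefore the argument of Theorem~\ref{thm:asymptotic-b0} applies verbatim to the "boundary" $p^b\Gamma$: Lemmas~\ref{lem:stabilize}, \ref{lem:twist-kernel} and \ref{lem:serre} only use that $(p^c-1)$ times the divisor is Cartier and that $-G$ is $\pi$-ample. This produces $T'$ such that
\[
\tau(\w_X,p^b\Gamma,\fa^{s})=\pi_*\cO_Y\bigl(\ceil{K_Y-p^b\pi^*\Gamma-sG}\bigr)
\]
for all $s=m+\beta$ with $\beta\in A_{0,c}$ and $m\ge T'$. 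Take $T$ with $p^bT\ge T'$; then this holds at $s=p^bt$ for every $n\ge T$. One has to be careful here that $p^b\Gamma$ is not of the form $K_X+\Delta'$ with $\Delta'$ effective. I would therefore run the proof of Theorem~\ref{thm:asymptotic-b0} directly at the level of modules, working with $\tau(\w_Y,\Gamma_Y)$ and Lemma~\ref{lem:snc}, and stopping before Lemma~\ref{lem:module-to-ideal} is invoked.

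Next, I compute $\Tr^b_X$ of $\Frobpush{b}\pi_*\cO_Y(D)$, where $D=\ceil{K_Y-p^b\Gamma_Y(t)}$. By Lemma~\ref{lem:trace-compat}, this equals $\pi_*$ applied to the image of $\Tr^b_Y\colon\Frobpush{b}\cO_Y(D)\to\w_Y$, intersected with $\omega_X$ as a subsheaf of $K(X)$; all sheaves are viewed inside the function field. On the regular SNC scheme $Y$, Lemma~\ref{lem:snc} together with \cite[Lemma 4.4(b)]{ST} on $Y$ gives
\[
\Tr^b_Y\bigl(\Frobpush{b}\cO_Y(\ceil{K_Y-p^b\Gamma_Y})\bigr)=\cO_Y(\ceil{K_Y-\Gamma_Y}).
\]
Alternatively, this follows from the local monomial computation: $\Tr$ sends $\Frobpush{b}\cO(\ceil{K-p^b\Theta})$ onto $\cO(\ceil{K-\Theta})$.

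The main obstacle is the failure of $\pi_*$ to commute with images: a priori one only gets
\[
\Tr^b_X\bigl(\Frobpush{b}\pi_*\cO_Y(D)\bigr)\subseteq\pi_*\cO_Y(E_\pi(t)).
\]
To get equality I would use Serre vanishing once more. Let $N'$ be the kernel of $\Tr^b_Y\colon\Frobpush{b}\cO_Y(D)\to\cO_Y(E_\pi(t))$. Replacing $t$ by $t+p^{-b}\cdot$(integer) twists $D$ by multiples of $G$, and twisting by $n$ on the target corresponds to twisting by $p^bn$ on the source, via the projection formula. Hence $N'(n+\alpha)\cong N'(\alpha)\otimes\cO_Y(-nG)$. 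Relative Serre vanishing for each of the finitely many $\alpha\in A_{b,c}$ then gives $R^1\pi_*N'=0$ for $n\gg0$. So $\pi_*$ of the trace is surjective, and the inclusion is an equality after enlarging $T$.

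Combining the steps, $\tau(X,\Delta;\fa^t)=\pi_*\cO_Y(E_\pi(t))$, and this is $\cJ(X,\Delta;\fa^t)$ by Definition~\ref{def:Jpi}.
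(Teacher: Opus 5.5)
Your proposal is correct and follows the paper's route: reduce to the prime-to-$p$ case by applying Lemma~\ref{lem:frob-scaling} to $p^b\Gamma$, then move the trace through $\pi$ with Lemma~\ref{lem:trace-compat}. Your extra Serre-vanishing step for $\ker\bigl(\Tr^b_Y\colon \Frobpush{b}\cO_Y(D)\to \cO_Y(E_\pi(t))\bigr)\cong N'(\alpha)\otimes\cO_Y(-nG)$ is exactly what justifies the paper's asserted equality $\Tr^b_X\bigl(\Frobpush{b}\pi_*\tau(\w_Y,p^b\Gamma_Y(t))\bigr)=\pi_*\tau(\w_Y,\Gamma_Y(t))$, which trace compatibility alone only gives as an inclusion; accordingly, drop your earlier phrasing that this ``equals $\pi_*$ of the image, intersected with $\omega_X$''.
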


\begin{proof}
Set $\Gamma:=K_X+\Delta$.
By assumption $\mathrm{ind}(\Gamma)=p^b m$ with $p\nmid m$, and we chose $c>0$ so that $(p^c-1)p^b\Gamma$ is Cartier.

Fix $t=n+\alpha$ with $\alpha\in A_{b,c}$ and $n\gg 0$ (to be made precise below). Theorem~\ref{thm:asymptotic-b0} applied to $p^b\Gamma$ gives existence of $T$ so that for all such $t$ with $n\ge T$,
\[
\tau\bigl(\w_X, p^b\Gamma,\mathfrak{a}^{p^b t}\bigr)
=
\pi_*\omega_Y\Bigl(\ceil{K_Y-\pi^*(p^b\Gamma)-p^b t\,G}\Bigr)
\subseteq \omega_X.
\]
On $Y$ we have $\pi^*(p^b\Gamma)=p^b\pi^*\Gamma$, and by Definition~\ref{def:GammaY},
$p^b\Gamma_Y(t)=p^b\pi^*\Gamma + p^b t\,G$.
Since $Y$ is regular and $\Supp(\Exc(\pi)\cup G)$ is SNC, Lemma~\ref{lem:snc} gives
\[
\omega_Y\Bigl(\ceil{K_Y-\pi^*(p^b\Gamma)-p^b t\,G}\Bigr)
=
\tau\bigl(\w_Y, p^b\Gamma_Y(t)\bigr).
\]
Hence 
\[
\tau\bigl(\w_X, p^b\Gamma,\mathfrak{a}^{p^b t}\bigr)
=
\pi_*\tau\bigl(\w_Y, p^b\Gamma_Y(t)\bigr)
\subseteq \omega_X.
\]

Now apply Lemma~\ref{lem:frob-scaling} and Lemma~\ref{lem:trace-compat} to get:
\[
\tau(\w_X,\Gamma,\mathfrak{a}^t)
=
\Tr_X^b\Bigl(\Frobpush{b}\pi_*\tau(\w_Y,p^b\Gamma_Y(t))\Bigr)
= \pi_*(\tau(\w_Y,\Gamma_Y(t))).
\]

Applying Lemma~\ref{lem:snc} and Lemma~\ref{lem:module-to-ideal} gives
\[
\tau(X,\Delta;\mathfrak{a}^t)
=
\pi_*\cO_Y(E_\pi(t))
=
\cJ(X,\Delta;\mathfrak{a}^t),
\]
as desired.
\end{proof}

\subsection{Extending Theorem~\ref{thm:asymptotic-general} to all real $t\gg 0$}
In many applications (including ours), one wants the equality for \emph{all} real $t$ sufficiently large.
The idea is simple: both $t\mapsto \tau(X,\Delta;\mathfrak{a}^t)$ and $t\mapsto \cJ(X,\Delta;\mathfrak{a}^t)$ are right-continuous and have discrete jumping sets,
so if two such step functions agree on a sufficiently fine lattice in each unit interval beyond some point, they agree everywhere beyond that point.

\begin{theorem}
    
\label{thm:asymptotic-real}
In Setup~\ref{setup:asymptotic}, there exist integer $T_0\ge 0$ such that
\[
\tau(X,\Delta;\mathfrak{a}^t)=\cJ(X,\Delta;\mathfrak{a}^t)
\qquad\text{for all real }t\ge T_0.
\]
\end{theorem}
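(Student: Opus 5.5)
Fix $b,c$ as in Theorem~\ref{thm:asymptotic-general} and let $T$ be the integer it provides, so that $\tau(X,\Delta;\mathfrak a^t)=\cJ(X,\Delta;\mathfrak a^t)$ for all $t=n+\alpha$ with $n\ge T$ and $\alpha\in A_{b,c}$. The plan is to extend this equality from the lattice $\Z_{\ge T}+A_{b,c}$ to all real $t\ge T_0:=T+1$. I will sandwich an arbitrary $t$ between two lattice points on which both functions are constant. The sandwich uses the standard monotonicity of both invariants in $t$ (nonincreasing), their right-continuity, and the fact that both are piecewise constant on explicit half-open intervals.

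First I analyze $\cJ$ for large $t$. Write $G=\sum_i g_iE_i$ and $\pi^*(K_X+\Delta)=\sum_i d_iE_i$, where $p^b(p^c-1)d_i\in\Z$. The divisor $E_\pi(t)=\ceil{K_Y-\pi^*(K_X+\Delta)-tG}$ is constant in $t$ on each interval between consecutive elements of the discrete set
\[
S:=\Bigl\{\,t : t g_i-d_i+k_i\in\Z \text{ for some } i \text{ with } g_i>0\,\Bigr\},
\]
and it is right-continuous. The difficulty is that the points of $S$ have denominators $g_i\,p^b(p^c-1)$, which need not lie in $A_{b,c}$. I will therefore not insist on using only the lattice $A_{b,c}$. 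Instead I enlarge $c$: replacing $c$ by a multiple $c'$ keeps the hypothesis $(p^{c'}-1)p^b(K_X+\Delta)$ Cartier, and $A_{b,c}\subseteq A_{b,c'}$. Choosing $c'$ and $b'\ge b$ so that $p^{b'}(p^{c'}-1)$ is divisible by every $g_i$ is impossible when $p\mid g_i$ is involved only through the $b'$ part. However, every $g_i$ divides $p^{b'}(p^{c'}-1)$ once $b'\ge v_p(g_i)$ and the prime-to-$p$ part of $g_i$ divides $p^{c'}-1$. Theorem~\ref{thm:asymptotic-general} is stated for any $b$ with $p^b(K_X+\Delta)$ of index prime to $p$, so a larger $b'$ is allowed. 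After this enlargement every jump of $\cJ$ in $[T',\infty)$ lies in $\Z+A_{b',c'}$, with $T'$ the new constant.

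Second, I sandwich $\tau$. Let $t\ge T'$ and let $s\le t<s'$ be consecutive points of $\Z+A_{b',c'}$. By the first step, $\cJ$ is constant on $[s,s')$. By monotonicity of $\tau$ and the equality at lattice points,
\[
\cJ(s)=\tau(s)\supseteq\tau(t)\supseteq\tau(s'-\varepsilon)\quad\text{for all small }\varepsilon>0.
\]
This alone does not bound $\tau(t)$ from below, so I need the standard fact that $\tau$ is constant on $[s,s+\delta)$ and that the jumps of $\tau$ are discrete. Discreteness alone does not exclude a jump strictly inside $(s,s')$. To handle this I would prove the equality directly at arbitrary rational $t=n+\alpha$ with $n$ large. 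For such $t$, choose $b'',c''$ (depending on the denominator of $\alpha$) so that $\alpha\in A_{b'',c''}$, then apply Theorem~\ref{thm:asymptotic-general}. The catch is that this gives a threshold $T$ depending on $c''$.

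This uniformity is the main obstacle. I expect to resolve it as follows. For $t\in(s,s')$ the divisor $\Gamma_Y(t)$ has the same round-up as $\Gamma_Y(s)$. So $\tau(\w_Y,\Gamma_Y(t))\supseteq\tau(\w_Y,\Gamma_Y(s'))$, and the Serre-vanishing surjectivity argument of Lemma~\ref{lem:serre} applies to the kernels $N$ at $s$. Alternatively, I would use the known inclusion $\tau\subseteq\cJ$ (which holds for all $t$ via the trace description in Lemma~\ref{lem:stabilize}), together with $\tau(t)\supseteq\tau(s'')$ for a lattice point $s''>t$ in a finer lattice $A_{b',c'k}$ with $s''-t$ tiny. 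Here $\cJ(s'')=\cJ(t)$ because $t$ is not a jump of $\cJ$. Since each lattice $A_{b',c'k}$ carries its own threshold, I will first try to show that the threshold $T$ of Theorem~\ref{thm:asymptotic-b0} can be taken independent of $k$. The key point would be that $\Gamma_Y$ rounds the same way for all $\alpha$ in a fixed interval between consecutive jump points of $\cJ$. This reduces the Serre vanishing to finitely many kernel sheaves.
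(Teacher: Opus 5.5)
Your proposal has a genuine gap, and it is the one you flag as the main obstacle. You never show that $\tau(X,\Delta;\mathfrak a^t)$ has no jumping numbers strictly between consecutive points $s<s'$ of the lattice $\Z+A_{b',c'}$. Your first step controls only the jumps of $\cJ$, and as you note, the sandwich $\cJ(s)=\tau(s)\supseteq\tau(t)\supseteq\tau(s')$ gives no lower bound for $\tau(t)$.

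Your variant (b) does not close this. The kernel $N$ at $s$ governs only the stable image computing $\tau$ at $s$. For $t\notin\Z+A_{b',c'}$ there is no $F^{c'}$-periodic trace description for it to act on.

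Your variant (c) needs a threshold $T$ uniform over all refined lattices $A_{b',c'k}$. The reason you give, that $\Gamma_Y$ rounds the same way on an interval so only finitely many kernel sheaves occur, is not correct. The sheaf $N(\alpha)$ in Lemma~\ref{lem:serre} is the kernel of the $c'k$-th iterated trace $F^{c'k}_*\bigl(M(\alpha)\otimes\cO_Y((1-p^{c'k})\Gamma_Y(\alpha))\bigr)\to M(\alpha)$. It depends on $k$ and on the integral divisor $(1-p^{c'k})\Gamma_Y(\alpha)$, hence on $\alpha$ itself and not just on $\ceil{-\Gamma_Y(\alpha)}$. So infinitely many distinct coherent sheaves appear, and relative Serre vanishing gives no common bound. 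You would need a genuinely different input, such as a Fujita-type uniform vanishing applied to a filtration of the kernel of the iterated trace, and the proposal does not supply one.

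The missing idea, which is how the paper argues, combines two known facts:
\begin{enumerate}
\item \emph{Rationality and discreteness.} The $F$-jumping numbers of $\tau(X,\Delta;\mathfrak a^t)$ are rational and discrete when $K_X+\Delta$ is $\Q$-Cartier, and the same holds for $\cJ$.
\item \emph{Skoda for both families.} If $\mathfrak a$ has $r$ generators, then $\tau(\mathfrak a^{t+1})=\mathfrak a\,\tau(\mathfrak a^t)$ and $\cJ(\mathfrak a^{t+1})=\mathfrak a\,\cJ(\mathfrak a^t)$ for $t\ge r$.
\end{enumerate}
By Skoda, every jump of either family in $(r-1,\infty)$ is an integer translate of one of the finitely many rational jumps in $(r-1,r]$. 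So there is a single $d$ with all such jumps in $\frac1d\Z$.

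Now choose $b,c$ once with $d\mid p^b(p^c-1)$, and apply Theorem~\ref{thm:asymptotic-general} once to get a single $T$. For real $t\ge T_0:=\max\{T,r-1\}$, set $t^-:=\floor{dt}/d$. Neither family jumps in $(t^-,t]$, so both are constant on $[t^-,t]$, and the equality at the lattice point $t^-$ transfers to $t$. No uniformity over refined lattices is needed.

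There is also a minor slip in your first step. To place the jumps of $\cJ$ in $\Z+A_{b',c'}$ you need $g_i\,p^b(p^c-1)\mid p^{b'}(p^{c'}-1)$, not merely $g_i\mid p^{b'}(p^{c'}-1)$. This is easily arranged by taking $b'\ge b+v_p(g_i)$ and $c'$ suitably divisible.
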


\begin{proof}
Suppose $\fa$ is generated by $r$ elements.  Set $T_0:=\max\{T,r-1\}$. By the Skoda's theorem for test ideals \cite{ST} and multiplier ideals \cite{LazPosI}, for all real $t\ge r$ one has
\[
\tau(X,\Delta;\fa^{t+1})=\fa\cdot \tau(X,\Delta;\fa^t)
\qquad\text{and}\qquad
\cJ(X,\Delta;\fa^{t+1})=\fa\cdot \cJ(X,\Delta;\fa^t).
\]
Hence every jumping number of either family in $(r-1,\infty)$ is an integer translate of a jumping number in $(r-1,r]$.

\medskip

By discreteness and rationality of jumping numbers for test ideals and multiplier ideals \cite{ST,LazPosI},
there are finitely many jumping numbers of $\tau(X,\Delta;\fa^t)$ and $\cJ(X,\Delta;\fa^t)$ in the interval $(r-1,r]$,
and all of them are rational . Let $d$ be the least common multiple of the denominators of all these finitely many jumping numbers.
Then every jumping number of either family in $(r-1,\infty)$ lies in $\frac1d\mathbb Z$.

Choose $b,c$ so that $d\mid p^b(p^c-1)$ and $(p^c-1)p^b(K_X+\Delta)$ is Cartier.
Then
\[
A_{b,c}+\mathbb Z \;=\;\frac{1}{p^b(p^c-1)}\mathbb Z
\supseteq \frac1d\mathbb Z.
\]
By Theorem~\ref{thm:asymptotic-general} , for all $t\in A_{b,c}+\mathbb Z$ with $t\ge T_0$ we have
$\tau(X,\Delta;\fa^t)=\cJ(X,\Delta;\fa^t)$.

\medskip

Fix any real $t\ge T_0$ and set $t^-:=\lfloor dt\rfloor/d\in \frac1d\mathbb Z$.
Then $t^-\le t<t^-+1/d$, and since all jumping numbers of both families in $[T_0,\infty)$ lie in $\frac1d\mathbb Z$,
there is \emph{no} jump of either family in the interval $(t^-,t]$.
Therefore both families are constant on $(t^-,t]$ (by right-continuity, \cite[Lemma 6.1]{ST}), and hence
\[
\tau(X,\Delta;\fa^t)=\tau(X,\Delta;\fa^{t^-})
\quad\text{and}\quad
\cJ(X,\Delta;\fa^t)=\cJ(X,\Delta;\fa^{t^-}).
\]
But $t^-\in \frac1d\mathbb Z\subseteq A_{b,c}+\mathbb Z$ and $t^-\ge T_0$, so Theorem~\ref{thm:asymptotic-general} gives
$\tau(X,\Delta;\fa^{t^-})=\cJ(X,\Delta;\fa^{t^-})$, and thus $\tau(X,\Delta;\fa^t)=\cJ(X,\Delta;\fa^t)$.
\end{proof}

\begin{remark}
Theorem \ref{thm:asymptotic-real}
fails badly beyond Setup~\ref{setup:asymptotic} in general. We note from literature a few example of such failure in  \cite[Theorem 3.1, Example 3.4]{MY-vs} and \cite[\S 4]{Mus-TW} and \cite[Chapter 4, \S3]{FBook}, punched with Skoda's Theorem. 
\end{remark}

\begin{corollary}\label{cor:pair-asymptotic-ray}
In Setup~\ref{setup:asymptotic}, and let $\mathfrak a\subseteq R$ be an ideal generated by $r$ elements.
Fix a real number $t\ge r$.
Then there exists an integer $N_0\ge 0$ such that for all integers $N\ge N_0$ one has
\[
\tau_b(X;\mathfrak a^{t+N}) \;=\; \cJ_{\mathrm{dFH}}(X;\mathfrak a^{t+N}).
\]
\end{corollary}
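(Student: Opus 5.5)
The plan is to compare the two sums term by term, using one well-chosen boundary to get the reverse inclusion. By Theorem~\ref{def:big-test}, $\tau_b(X;\fa^{s})=\sum_\Delta \tau(X,\Delta;\fa^{s})$, where $\Delta$ runs over effective $\Q$-divisors with $K_X+\Delta$ $\Q$-Cartier of index prime to $p$. Analogously, by \cite{dFH}, $\cJ_{\mathrm{dFH}}(X;\fa^{s})$ is the sum (in fact the maximal member) of the $\cJ(X,\Delta;\fa^{s})$ over all such boundaries. I will prove the two inclusions separately.

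\emph{The inclusion $\subseteq$.} For each fixed $\Delta$ I will use the general containment $\tau(X,\Delta;\fa^{s})\subseteq \cJ(X,\Delta;\fa^{s})$. This is valid for any log resolution, by pushing forward the trace along $\pi$ as in Lemma~\ref{lem:trace-compat} and comparing with Lemma~\ref{lem:snc}; see \cite{ST}. Summing over $\Delta$ gives $\tau_b(X;\fa^{s})\subseteq \cJ_{\mathrm{dFH}}(X;\fa^{s})$ for every $s$, in particular for $s=t+N$.

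\emph{The inclusion $\supseteq$.} First, by \cite[\S 5]{dFH}, for the fixed exponent $t$ there is a single boundary $\Delta_0$ with $\cJ_{\mathrm{dFH}}(X;\fa^{t})=\cJ(X,\Delta_0;\fa^{t})$. I would choose $\Delta_0$ so that its index is prime to $p$ and its strict transform is SNC with $G\cup\Exc(\pi)$ on the blowup $Y$. This is possible because the dFH construction takes $\Delta_0$ to be a general member of a suitable linear system and we have room to choose. This puts $(X,\Delta_0,\fa)$ in Setup~\ref{setup:asymptotic}.

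Second, I propagate the single boundary to all $t+N$ using Skoda's theorem, which holds since $t\ge r$:
\[
\cJ_{\mathrm{dFH}}(X;\fa^{t+N})=\fa^N\cJ_{\mathrm{dFH}}(X;\fa^{t})=\fa^N\cJ(X,\Delta_0;\fa^{t})=\cJ(X,\Delta_0;\fa^{t+N}).
\]
Here the outer equalities are Skoda for dFH ideals and for the pair $(X,\Delta_0)$ respectively. So $\Delta_0$ computes $\cJ_{\mathrm{dFH}}(X;\fa^{t+N})$ for every $N\ge0$.

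Third, Theorem~\ref{thm:asymptotic-real} applied to $\Delta_0$ gives $T_0$ with $\tau(X,\Delta_0;\fa^{s})=\cJ(X,\Delta_0;\fa^{s})$ for all real $s\ge T_0$. Take $N_0$ with $t+N_0\ge T_0$. Then for $N\ge N_0$,
\[
\cJ_{\mathrm{dFH}}(X;\fa^{t+N})=\tau(X,\Delta_0;\fa^{t+N})\subseteq\tau_b(X;\fa^{t+N}).
\]
Combined with the first inclusion, this gives the equality.

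The main obstacle is the choice of $\Delta_0$. It must simultaneously compute the dFH ideal at exponent $t$, have Cartier index prime to $p$, and have strict transform SNC with the fixed blowup (a Bertini-type statement in characteristic $p$). If the general-member argument fails, I would first try to verify these properties directly in the concrete cone examples, where $\Delta_0$ can be written explicitly as a cone over a suitable divisor on $W$.
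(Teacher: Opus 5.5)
Your argument is essentially the paper's, in a leaner form. The paper picks finite sets $J_\tau(t)$ and $J_{\cJ}(t)$ of boundaries computing $\taub$ and $\cJ_{\mathrm{dFH}}$ at the exponent $t$. It transports everything to $t+N$ by Skoda, implicitly also for $\taub$ and $\cJ_{\mathrm{dFH}}$, and applies Theorem~\ref{thm:asymptotic-real} to every $\Delta\in J_\tau(t)\cup J_{\cJ}(t)$.

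You instead observe that $\taub\subseteq\cJ_{\mathrm{dFH}}$ holds at every exponent by Lemma~\ref{lem:tau-in-J}. So the asymptotic comparison is needed for only one boundary $\Delta_0$ computing $\cJ_{\mathrm{dFH}}(X;\fa^t)$. In particular, Schwede's boundaries never have to satisfy Setup~\ref{setup:asymptotic}; they come from $p^{-e}$-linear maps and are not general in any sense. Nor does $\Delta_0$ need index prime to $p$. Theorem~\ref{thm:asymptotic-real} allows $b\ge 0$, and $\tau(X,\Delta_0;\fa^s)\subseteq\taub(X;\fa^s)$ for every $\Delta_0$, because Definition~\ref{Def-testideal} tests fewer maps $\phi$ than the intrinsic definition of $\taub$ in \cite{SchwedeBig}.

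The step you flag is nonetheless a genuine gap, and your justification does not close it. Theorem~\ref{thm:asymptotic-real} needs $\Supp(G\cup\Exc(\pi)\cup\pi^{-1}_*\Delta_0)$ to be SNC on the \emph{fixed} normalized blowup $Y$ of $\fa$. The general-member construction of \cite[\S5]{dFH} only makes the strict transform SNC on a log resolution of $\fa\cdot\cO_X(-mK_X)$, and $Y$ need not be one. On $Y$ the relevant linear system has base points inside $\Supp G$. The strict transform of a general member passes through them and can be singular there, or non-transverse to $\Supp(G\cup\Exc(\pi))$. On the cone, for instance, the computation in Proposition~\ref{prop:natural-pullback} shows that near $W_0$ the ideal $\cO_C(-mK_C)\cdot\cO_Y$ is generated by the degree-$q_m$ piece. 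Hence it is invertible only if $|q_mA-mK_W|$ is base point free.

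Passing to a higher log resolution does not help. Lemma~\ref{lem:serre} uses relative Serre vanishing for the $\pi$-ample divisor $-G$, whose pullback is only relatively nef on a further blowup. Moreover, in characteristic $p$ the Bertini step in \cite{dFH} is applied to a base-point-free system, which is exactly the form of Bertini that fails. The Skoda identity for $\cJ_{\mathrm{dFH}}$ you use is usually derived from the existence of such computing boundaries at every exponent, so it inherits the same problem.

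The paper's own proof has the identical hole: it applies Theorem~\ref{thm:asymptotic-real} to each $\Delta\in J$ without checking the SNC hypothesis. So neither argument establishes the corollary in the stated generality. What does work is the cone setting. There the explicit boundaries $\Gamma_s$, cones over smooth members of very ample systems on $W$, are SNC with $W_0$. For $s$ close to $t_0$ they compute $\cJ_{\mathrm{dFH}}(C;\m^t)$ at the given $t$ (Propositions~\ref{prop:boundary-realization} and~\ref{prop:raywise-agreement}). That is precisely your fallback.
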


\begin{proof}
By \cite[Cor.~5.4]{SchwedeBig} there exists a finite set of effective boundaries $J_\tau(t)$ with $K_X+\Delta$ $\Q$-Cartier such that
\[
\tau_b(X;\mathfrak a^t)=\sum_{\Delta\in J_\tau(t)} \tau(X,\Delta;\mathfrak a^t).
\]
Likewise, by Noetherianity there exists a finite set $J_{\cJ}(t)$ such that
\[
\cJ_{\mathrm{dFH}}(X;\mathfrak a^t)=\sum_{\Delta\in J_{\cJ}(t)} \cJ(X,\Delta;\mathfrak a^t).
\]
Set $J:=J_\tau(t)\cup J_{\cJ}(t)$; enlarging the index set does not change either sum.

For each $\Delta\in J$, apply Theorem~\ref{thm:asymptotic-real} to obtain $T_\Delta$ such that
$\tau(X,\Delta;\mathfrak a^u)=\cJ(X,\Delta;\mathfrak a^u)$ for all $u\ge T_\Delta$.
Let $T=\max_{\Delta\in J}T_\Delta$.

Choose $N_0$ so that $t+N_0\ge T$.
For $N\ge N_0$, Skoda's theorem for test ideals and multiplier ideals (\cite{LazPosI,BSTZ10}) yields, for every $\Delta\in J$,
\[
\tau(X,\Delta;\mathfrak a^{t+N})=\mathfrak a^N\tau(X,\Delta;\mathfrak a^t),
\qquad
\cJ(X,\Delta;\mathfrak a^{t+N})=\mathfrak a^N\cJ(X,\Delta;\mathfrak a^t),
\]
since $t\ge r$.
Therefore the same finite set $J$ computes both pair ideals at $t+N$:
\[
\tau_b(X;\mathfrak a^{t+N})=\sum_{\Delta\in J}\tau(X,\Delta;\mathfrak a^{t+N}),
\qquad
\cJ_{\mathrm{dFH}}(X;\mathfrak a^{t+N})=\sum_{\Delta\in J}\cJ(X,\Delta;\mathfrak a^{t+N}).
\]
As $t+N\ge T$, the summands coincide termwise, hence the sums are equal. 
\end{proof}

For the cone argument in Section~\ref{sec:irrational-f-jumps} we will need the following well-known \emph{upper} bound on test ideals in terms of multiplier ideals $\cJ$.

\begin{lemma}(\cite[Theorem 2.13]{Takagi-multiplier}, \cite{BST-alt})\label{lem:tau-in-J}
Let $X=\Spec R$ be normal, and $F$-finite of characteristic $p>0$.
Let $\Delta\ge 0$ be such that $K_X+\Delta$ is $\Q$-Cartier.
Let $\mathfrak{a}\subseteq R$ be an ideal.
Then for every real $t\ge 0$,
\[
\tau(X,\Delta;\mathfrak{a}^t)\subseteq \cJ(X,\Delta;\mathfrak{a}^t).
\]
\end{lemma}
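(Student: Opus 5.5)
The plan is to reduce the inclusion to a statement about Cartier-type maps on a log resolution, and then compare with the explicit description of test modules on a regular SNC scheme (Lemma~\ref{lem:snc}). Since $\tau(X,\Delta;\fa^t)$ is nondecreasing as $t$ decreases and is right-continuous in $t$ (\cite[Lemma 6.1]{ST}), while $\cJ$ is also right-continuous, we may perturb the data. We replace $t$ by a slightly larger $t'$ with $(p^e-1)t'\in\Z$ for suitable $e$, which only makes $\tau$ smaller and leaves $\cJ$ unchanged. Likewise we replace $\Delta$ by some $\Delta'\ge\Delta$ whose index is prime to $p$ and which does not change the round-up on a fixed log resolution. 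Here we use $\tau(X,\Delta';\fa^{t'})\subseteq\tau(X,\Delta;\fa^t)$, so the perturbation must be chosen so that the multiplier ideal for the new data still equals the old one. If this perturbation of $\Delta$ is delicate, we first handle the case where the index is prime to $p$, and then reduce the general case to it via Lemma~\ref{lem:frob-scaling}, exactly as in the proof of Theorem~\ref{thm:asymptotic-general}.

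In the prime-to-$p$ case we pass to modules using Lemma~\ref{lem:module-to-ideal}, choosing $K_X$ with $-K_X$ effective. The goal becomes
\[
\tau(\omega_X,K_X+\Delta,\fa^t)\subseteq \pi_*\cO_Y\bigl(\ceil{K_Y-\pi^*(K_X+\Delta)-tG}\bigr)
\]
for a log resolution $\pi:Y\to X$. (If no resolution exists, we instead use a regular alteration and the version in \cite{BST-alt}.) The key point is that the right-hand side $J:=\pi_*M(t)$ is a nonzero submodule of $\omega_X$ that is stable under all the maps $\phi$ in Definition~\ref{def:param-test}. By the minimality in that definition this forces $\tau\subseteq J$.

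To check stability, every $\phi\in\Hom(\Frobpush{e}\omega_X(\ceil{(p^e-1)(K_X+\Delta)}),\omega_X)$ is, up to premultiplication, the twisted trace $\Tr^e_X$. By Lemma~\ref{lem:trace-compat}, the trace on $X$ is compatible with $\Tr^e_Y$ through $\pi_*$. So it suffices to show that on $Y$ the twisted trace sends $\Frobpush{e}\bigl(\fa^{\ceil{t(p^e-1)}}\cO_Y\cdot M(t)\otimes\cO_Y((1-p^e)\Gamma_Y)\bigr)$ into $M(t)$. This is precisely the statement that the ideal $\cO_Y(\ceil{K_Y-\Gamma_Y(t)})=\tau(\omega_Y,\Gamma_Y(t))$ is compatible, which Lemma~\ref{lem:snc} supplies since test modules are stable by construction. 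One then pushes forward.

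I expect the main obstacle to be this pushforward compatibility: making precise that an arbitrary $\phi$ on $X$ extends to a map on $Y$ with the correct divisor twist, in particular that $\pi^*\bigl((p^e-1)(K_X+\Delta)\bigr)$ is Cartier, so that the twist is an honest line bundle. If I cannot make this direct argument work cleanly, the fallback is Lemma~\ref{lem:stabilize}. Its proof shows that $\tau(\omega_X,K_X+\Delta,\fa^t)$ is the stable image of the iterated maps $\pi_*\Phi$, and every term of that sequence lies inside $\pi_*M(t)$. That already gives the inclusion whenever $(p^c-1)t\in\Z$, and general $t$ follows from the right-continuity argument in the first paragraph.
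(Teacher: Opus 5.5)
Your central step is right, and it is the standard argument behind the references the paper cites in place of a proof. Every map allowed in Definition~\ref{Def-testideal} extends to $Y$, so $\cJ(X,\Delta;\fa^t)$ is a nonzero compatible ideal, and minimality of $\tau$ then forces $\tau(X,\Delta;\fa^t)\subseteq\cJ(X,\Delta;\fa^t)$. This is the trace-along-$\pi$ argument of \cite{BST-alt}; as far as I recall, Takagi's argument in \cite{Takagi-multiplier} is the same idea in its Matlis-dual, local-cohomology form.

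The reductions around that step should be discarded.

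\emph{The perturbation of $\Delta$ runs the wrong way.} From $\tau(X,\Delta';\fa^{t'})\subseteq\tau(X,\Delta;\fa^t)$ and $\tau(X,\Delta';\fa^{t'})\subseteq\cJ(X,\Delta;\fa^t)$ nothing follows about $\tau(X,\Delta;\fa^t)$. You would need the equality $\tau(X,\Delta';\fa^{t'})=\tau(X,\Delta;\fa^t)$, a stability result you do not supply. For $t$ the perturbation is fine, but it is right-continuity that justifies it, not ``making $\tau$ smaller''.

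\emph{Both fallbacks have limits.} The Frobenius-scaling route, like the proof of Theorem~\ref{thm:asymptotic-general}, applies Lemma~\ref{lem:module-to-ideal} when $p$ divides the index, which that lemma excludes. Lemma~\ref{lem:stabilize} is only available in Setup~\ref{setup:asymptotic}, not for an arbitrary log resolution.

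\emph{None of this is needed if you argue with ideals directly, and your ``main obstacle'' disappears.} A map $\phi\in\Hom_R(\Frobpush{e}R(\ceil{(p^e-1)\Delta}),R)$ has divisor $\Delta_\phi\ge(p^e-1)\Delta$, with $(p^e-1)K_X+\Delta_\phi$ principal. As a map on $K(Y)$ its divisor is $\Delta_{\phi_Y}=\pi^*((p^e-1)K_X+\Delta_\phi)-(p^e-1)K_Y$. Since $\frac{1}{p^e-1}\Delta_\phi-\Delta$ is effective and $\Q$-Cartier, its pullback is effective, so $\Delta_{\phi_Y}\ge(p^e-1)(\pi^*(K_X+\Delta)-K_Y)$. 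This inequality is all you need; no twist has to be Cartier.

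Set $B:=K_Y-\pi^*(K_X+\Delta)-tG$. A computation at the codimension-one points of $Y$, where the local rings are DVRs, gives
\[
\phi_Y\bigl(\Frobpush{e}\cO_Y(\ceil{B}-\ceil{t(p^e-1)}G)\bigr)\subseteq\cO_Y\Bigl(\ceil{\tfrac{1}{p^e}\bigl(\ceil{B}+(p^e-1)B\bigr)}\Bigr)\subseteq\cO_Y(\ceil{B})
\]
for every $e>0$ and every real $t\ge 0$. Taking global sections on $Y$ shows $\cJ(X,\Delta;\fa^t)$ is compatible. This uses no SNC hypothesis, no condition on the index, and no perturbation of $t$.
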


\section{Computation on cones}\label{sec:cone}

In this section we explain why an irrational pseudo-effective threshold on a smooth variety produces irrational jumping behavior
for multiplier/test ideals on the cone over that variety. This section is influenced by \cite{Urbinati} which is my main reference for this section as well. We follow \cite{EGAIV,Kollar-Sing} as general reference on cones.

Let $W$ be a smooth projective variety over an infinite field $k$.
Let $A$ be an ample Cartier divisor on $W$.
Define the section ring
\[
R(W,A) := \bigoplus_{m\ge 0} H^0\bigl(W,\cO_W(mA)\bigr),
\]
and the affine cone
\[
C := \Spec R(W,A).
\]
Let $\m\subset R(W,A)$ be the homogeneous maximal ideal, corresponding to the vertex of the cone.

\begin{lemma}(\cite{EGAIV})\label{prop:blowup-vertex}
Assume $R(W,A)$ is generated in degree $1$.
Let $\pi:Y\to C$ be the blowup of $C$ at the vertex $\m$, and \(W_0\subset Y\) be the
exceptional divisor with $\m\cO_Y=\cO_Y(-W_0)$.
Then:
\begin{enumerate}[label=(\alph*),leftmargin=2.4em]
  \item There is an isomorphism
\[
Y\cong \Tot\bigl(\cO_W(-A)\bigr)
\]
over \(W\).  Let \(p:Y\to W\) denote this bundle projection.

\item The exceptional divisor \(W_0\) is the zero section.  In particular,
\[
\cO_Y(-W_0)\cong p^*\cO_W(A).
\]

\item For every integer \(n\ge 0\),
\[
\pi_*\cO_Y(-nW_0)=\m^n.
\]
Moreover, for every integer \(n\ge 0\),
\[
\pi_*\cO_Y(nW_0)=\cO_C.
\]

\item The canonical bundle of \(Y\) is
\[
\omega_Y\cong p^*(\omega_W\otimes \cO_W(A)),
\]
so after choosing canonical divisors compatibly we may write
\[
K_Y=p^*K_W-W_0.
\]
\end{enumerate}
\end{lemma}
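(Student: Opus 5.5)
The plan is to realize the blowup explicitly as a relative Proj and compare it with the total space of the line bundle. Since $R=R(W,A)$ is generated in degree $1$, we have $\m^n=R_{\ge n}$. The blowup is therefore
\[
Y=\mathrm{Proj}_R\bigoplus_{n\ge0}\m^n=\mathrm{Proj}_R\bigoplus_{n\ge0}R_{\ge n}.
\]
I will construct a morphism from this to $W$ and identify $Y$ with $\Tot(\cO_W(-A))=\mathrm{Spec}_W\bigoplus_{m\ge0}\cO_W(mA)$. Concretely, I would cover $W$ by the opens $D_+(s)$ with $s\in R_1$; these do cover $W$ because $A$ is very ample, which follows from generation in degree $1$. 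On the chart of the blowup given by the degree-one generator $s\in\m$, the coordinate ring is
\[
R[\m/s]=R_{(s)}[s],
\]
which is a polynomial ring over $R_{(s)}=\cO_W(D_+(s))$ in the variable $s$. That variable is the fiber coordinate of the line bundle $\cO_W(-A)$ trivialized by $s$, and the charts glue to give (a).

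For (b), the exceptional divisor is cut out by $s=0$ on each chart, which is exactly the zero section. The ideal $\m\cO_Y$ is generated by $s$ on the chart, so $\cO_Y(-W_0)$ is the pullback of the line bundle whose local generators are the degree-one elements. Checking how the transition functions transform identifies this with $p^*\cO_W(A)$; I will keep careful track of the sign convention here.

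For (c), I would compute
\[
\pi_*\cO_Y(-nW_0)=H^0\bigl(Y,p^*\cO_W(nA)\bigr)=\bigoplus_{m\ge0}H^0\bigl(W,\cO_W((n+m)A)\bigr)=R_{\ge n}=\m^n,
\]
using $p_*\cO_Y=\bigoplus_m\cO_W(mA)$ and the projection formula. For the statement about $\pi_*\cO_Y(nW_0)$, the same computation gives
\[
\bigoplus_{m\ge0}H^0\bigl(W,\cO_W((m-n)A)\bigr),
\]
and the terms with $m<n$ vanish because $A$ is ample and $\dim W>0$. Hence this equals $R=\cO_C$. Equivalently, $C$ is normal and $\pi$ is birational, so any section regular off the vertex, which has codimension $\ge2$, extends.

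For (d), the relative cotangent sequence for the line bundle $p:Y\to W$ is
\[
0\to p^*\Omega_W\to\Omega_Y\to\Omega_{Y/W}\to0,
\]
and $\Omega_{Y/W}\cong p^*\cO_W(A)$ for the total space of $\cO_W(-A)$. Taking determinants gives $\omega_Y\cong p^*(\omega_W\otimes\cO_W(A))$. Combined with (b), this gives $K_Y=p^*K_W-W_0$.

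I expect the main obstacle to be bookkeeping rather than substance: getting the sign of $A$ right in the identifications of (a), (b) and (d) so that all three are compatible. I would settle this in a single chart using the explicit coordinate $s$.
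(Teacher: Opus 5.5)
Your proposal is correct. The chart identification $R[\m/s]=R_{(s)}[s]$ glues to $\mathrm{Spec}_W\bigoplus_{m\ge0}\cO_W(mA)$, and the projection-formula computation of $\pi_*\cO_Y(\mp nW_0)$ and the relative cotangent sequence with $\Omega_{Y/W}\cong p^*\cO_W(A)$ both go through. The sign bookkeeping you were worried about is settled at once by noting that the ideal of the zero section is $\bigoplus_{m\ge1}\cO_W(mA)\cong p^*\cO_W(A)$. The paper itself gives no argument beyond citing EGA IV, and yours is the standard proof behind that citation.
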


\subsection{Pseudo-effective thresholds}\label{def:t0}\label{def:theta}

(cf.\cite[Proof of Theorem 3.3]{Urbinati}) Let $W$ be a smooth projective surface and $A$ ample. Define \emph{pseudo-effective threshold} as
\[
\vartheta(W,A):=\inf\{s\in \R\mid sA-K_W\in \overline{\Eff}(W)\}.
\]
Since \(A\) is ample, \(\vartheta(W,A)\) is always finite.
We also define the truncated version,
\[
t_0(W,A):=\inf\{t\ge 0\mid tA-K_W\in \overline{\Eff}(W)\}.
\]
Note \(t_0(W,A)=\max\{\vartheta(W,A),0\}\).

\begin{remark}\label{lem:theta=t0}
If \(K_W\) is pseudo-effective, then \(\vartheta(W,A)=t_0(W,A)\). In all the examples discussed, \(K_W\) is indeed pseudo-effective, giving
\(\vartheta=t_0\).  Nevertheless, the distinction matters as shown in the following Example 4.4.
\end{remark}
\begin{remark}\label{rem:veronese}
Replacing $A$ by a positive multiple $dA$ does not change the cone up to a Veronese regrading.
The irrationality of pseudo-effective threshold $\vartheta(W,A)$ is unchanged because:
\[
\vartheta(W,dA) = \inf\{t\mid t(dA)-K_W\in\psef(W)\} = \frac{1}{d}\vartheta(W,A).
\]
In practice we choose $d$ large enough so that $R(W,dA)$ is generated in degree $1$
(and even projectively normal); see \cite[\S1.8]{LazPosI}. Henceforth, we assume $R(W,A)$ is \emph{generated in degree $1$}.
\end{remark}

\begin{exam}
    Take \(W=\mathbf P^2\) and \(A=H\), the hyperplane class.  Then
\(K_W=-3H\), so
\[
\vartheta(W,A)=\inf\{s\in\R\mid (s+3)H\in\overline{\Eff}(W)\}=-3,
\qquad
 t_0(W,A)=0.
\]
The cone over \((\mathbf P^2,H)\) is the smooth affine threefold \(\A^3\), and
its multiplier ideals 
\[
\cJ_{\mathrm{dFH}}(C;\m^t)=\m^{\max\{0,\lfloor t+1-3\rfloor\}}
=\m^{\max\{0,\lfloor t-2\rfloor\}},
\]
\end{exam}

We record the following very well-known lemma. 
\begin{lemma}(cf. \cite{Kollar-Sing,Goto-Watanabe-1}, \cite[Lemma 2.32]{BdFF})\label{prop:cone-QG}
The cone $C$ is $\Q$-Gorenstein if and only if $K_W$ is $\Q$-linearly equivalent to a rational multiple of $A$.
In that case, $\vartheta(W,A)\in\Q$.
\end{lemma}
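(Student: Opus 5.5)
The plan is to prove both implications through the description of the divisor class group of the cone, and then read off rationality of $\vartheta$ directly from the definition.

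\emph{Class group of the cone.} Put $C^\circ:=C\setminus\{\m\}$. By Lemma~\ref{prop:blowup-vertex}, $Y\setminus W_0\cong C^\circ$. Since $R(W,A)$ is generated in degree $1$, $C^\circ$ is the $\mathbb{G}_m$-bundle $\Tot(\cO_W(-A))\setminus(\text{zero section})$ over $W$. Since $W$ is smooth, $Y$ is regular, so $\mathrm{Cl}(C)=\mathrm{Cl}(C^\circ)=\mathrm{Pic}(C^\circ)$. The localization sequence for the zero section,
\[
\Z\cdot[W_0]\to \mathrm{Pic}(Y)\to \mathrm{Pic}(C^\circ)\to 0,
\]
combined with $\mathrm{Pic}(Y)\cong p^*\mathrm{Pic}(W)\cong\mathrm{Pic}(W)$ (homotopy invariance for the line bundle $p$) and $[W_0]\mapsto -[A]$ (Lemma~\ref{prop:blowup-vertex}(b)), gives
\[
\mathrm{Cl}(C)\cong \mathrm{Pic}(W)/\Z[A].
\]
Under this isomorphism a divisor $D$ on $W$ goes to the closure of $p^{-1}(D)\cap C^\circ$.

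\emph{Location of $K_C$.} By Lemma~\ref{prop:blowup-vertex}(d), $K_Y=p^*K_W-W_0$. Restricting to $C^\circ$ shows that $K_C$ is the image of $[K_W]$ in $\mathrm{Pic}(W)/\Z[A]$.

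\emph{The equivalence.} $C$ is $\Q$-Gorenstein iff $mK_C$ is Cartier for some $m>0$. Since $C$ is local at the vertex and the grading lets us reduce to graded divisors, this holds iff $mK_C\sim 0$, i.e. iff $[mK_W]\in\Z[A]$ in $\mathrm{Pic}(W)$. That is exactly the condition $K_W\sim_\Q rA$ with $r\in\Q$.

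The step I expect to be the main obstacle is the claim ``Cartier at the vertex iff trivial in $\mathrm{Cl}$''. I would handle it by the standard graded argument. A reflexive rank-one module on a graded ring is graded, and a graded module that is locally free at the graded maximal ideal is free, so it is isomorphic to $R(-a)$. Hence $\cO_C(mK_C)$ is trivial on $C^\circ$, and its class is zero. The converse direction is immediate. Alternatively one can simply cite \cite{Kollar-Sing}, as the statement does.

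\emph{Rationality of $\vartheta$.} Suppose $K_W\sim_\Q rA$ with $r\in\Q$. Then
\[
sA-K_W\equiv (s-r)A,
\]
which is pseudo-effective iff $s\ge r$, since $A$ is ample and a negative multiple of an ample class is not pseudo-effective. Hence $\vartheta(W,A)=r\in\Q$.
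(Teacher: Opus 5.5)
Your proof is correct. It is the standard argument behind the references the statement cites; the paper gives no proof of its own, and those references also identify $\mathrm{Cl}(C)\cong\mathrm{Pic}(W)/\Z[A]$ with $[K_C]$ the image of $[K_W]$ and use the graded Nakayama argument to pass from ``Cartier at the vertex'' to ``trivial class.''

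One small imprecision: a rank-one reflexive module over a graded ring need not be graded. You only need this for $\cO_C(mK_C)$ with $K_C$ chosen as the $\mathbb{G}_m$-invariant cone over $K_W$, and that module is graded, so the argument goes through.
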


\begin{corollary}(\cite[Lemma 2.32]{BdFF})\label{cor:cone-not-QG}
If $\vartheta(W,A)\notin\Q$, then the cone $C$ is not $\Q$-Gorenstein. The cone \(C\) is not numerically \(\Q\)-Gorenstein as well.
\end{corollary}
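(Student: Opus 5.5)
The plan is to argue by contraposition, using Lemma~\ref{prop:cone-QG} and the fact that an irrational threshold forces the ray $\R_{\ge 0}A + \R K_W$ to meet the boundary of $\psef(W)$ at an irrational point.

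\emph{Step 1: $C$ is not $\Q$-Gorenstein.} If $C$ were $\Q$-Gorenstein, Lemma~\ref{prop:cone-QG} would give $K_W\sim_\Q \lambda A$ for some $\lambda\in\Q$. Then
\[
sA-K_W\equiv (s-\lambda)A,
\]
and this class is pseudo-effective exactly when $s\ge\lambda$. For $s<\lambda$ the class is a negative multiple of an ample class, so it has negative degree against $A^{\dim W-1}$ and cannot be pseudo-effective. Hence $\vartheta(W,A)=\lambda\in\Q$, contradicting $\vartheta(W,A)\notin\Q$.

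\emph{Step 2: $C$ is not numerically $\Q$-Gorenstein.} Recall the definition of \cite{BdFF}: $C$ is numerically $\Q$-Gorenstein if $K_C$ is numerically $\Q$-Cartier. For a cone over $(W,A)$, this is equivalent to $K_W\equiv \lambda A$ for some $\lambda\in\R$; this is the content of \cite[Lemma 2.32]{BdFF}. I would sketch the reason as follows. On the blowup $Y=\Tot(\cO_W(-A))$ with $K_Y=p^*K_W-W_0$, the numerical pullback of $K_C$ exists only if $K_Y$ restricted to $W_0\cong W$ is numerically proportional to $W_0|_{W_0}=-A$. This forces $K_W\equiv\lambda A$.

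The remaining point, and the main one, is to show that $\lambda$ must be rational. Since $A$ is an integral Cartier class, $K_W\equiv\lambda A$ gives
\[
K_W\cdot A^{\dim W-1}=\lambda\, A^{\dim W}.
\]
Both intersection numbers are integers and $A^{\dim W}>0$, so $\lambda\in\Q$. The same pseudo-effectivity argument as in Step 1, now carried out purely numerically, then gives $\vartheta(W,A)=\lambda\in\Q$, again a contradiction.

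I expect the main obstacle to be the numerical criterion in Step 2, namely the translation of numerical $\Q$-Cartierness of $K_C$ into $K_W\equiv\lambda A$. I would take this from \cite[Lemma 2.32]{BdFF} rather than reprove it, checking only that it holds over an arbitrary infinite field. After that, the rationality argument via intersection numbers is routine.
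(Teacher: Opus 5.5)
Your proof is correct and follows the same route as the paper. The paper obtains the $\Q$-Gorenstein statement as the contrapositive of Lemma~\ref{prop:cone-QG}, and defers the numerical statement to the criterion $K_W\equiv\lambda A$ of \cite[Lemma 2.32]{BdFF}; your intersection-number argument for $\lambda\in\Q$, the check that $\vartheta(W,A)=\lambda$, and your blowup sketch via $(K_Y+cW_0)|_{W_0}\equiv K_W+(1-c)A$ are all right and simply make explicit what the paper leaves to that citation.
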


\begin{remark}\label{rem:num-q-factorial}
In \cite[Example~5.6]{BdFFU}, the authors show that
\(\mathrm{Cl}_{\mathrm{num}}(C)\cong \NS(W)/\Z[A]\).  In particular, the cone is numerically
\(\Q\)-factorial if and only if \(\rho(W)=1\).  Since all of our examples come from varieties with Picard number at least 2, they are also not numerically \(\Q\)-factorial.
\end{remark}

\begin{definition}\label{def:qm}
For each integer \(m\ge 1\), define
\[
q_m:=\min\{q\in\Z\mid H^0\bigl(W,\cO_W(qA-mK_W)\bigr)\ne 0\}.
\]
\end{definition}

\begin{lemma}\label{lem:qm-limit}
The sequence \((q_m)_{m\ge 1}\) is subadditive and satisfies
\[
\lim_{m\to\infty}\frac{q_m}{m}=\vartheta(W,A).
\]
\end{lemma}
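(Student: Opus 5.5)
The plan is to prove subadditivity directly from the multiplication of sections, to deduce that the limit exists and equals $\inf_m q_m/m$ by Fekete's lemma, and then to compare this infimum with $\vartheta(W,A)$ using the geometry of the pseudo-effective cone. Throughout, $\dim W=d$ and $W$ is integral, smooth and projective.

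\emph{Step 1: $q_m$ is a well-defined integer.} For $q\gg 0$ the divisor $qA-mK_W$ is ample, and a large enough multiple of it has sections; more simply, Serre vanishing and Riemann--Roch give $H^0(W,\cO_W(qA-mK_W))\neq 0$ for $q\gg0$. So the set defining $q_m$ is nonempty. It is bounded below: if $D\in|qA-mK_W|$, then $D\cdot A^{d-1}\ge 0$, hence
\[
q\ \ge\ m\,\frac{K_W\cdot A^{d-1}}{A^d}.
\]
Thus $q_m\in\Z$ and $q_m/m$ is bounded below by the constant $K_W\cdot A^{d-1}/A^d$.

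\emph{Step 2: subadditivity.} Pick nonzero sections $s\in H^0(q_mA-mK_W)$ and $s'\in H^0(q_nA-nK_W)$. Since $W$ is integral, $s\otimes s'$ is a nonzero section of $\cO_W((q_m+q_n)A-(m+n)K_W)$. Therefore $q_{m+n}\le q_m+q_n$. Fekete's lemma, together with the lower bound from Step 1, shows that $\lim q_m/m$ exists, is finite, and equals $\inf_m q_m/m$.

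\emph{Step 3: the infimum is at least $\vartheta(W,A)$.} For every $m$ the divisor $q_mA-mK_W$ is effective. Dividing by $m$ shows that $(q_m/m)A-K_W$ is an effective $\Q$-class, so it lies in $\psef(W)$. By definition of $\vartheta$ this gives $q_m/m\ge\vartheta(W,A)$ for every $m$.

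\emph{Step 4: the infimum is at most $\vartheta(W,A)$.} This is the only step that uses more than formal properties. Fix a rational number $s=a/b>\vartheta(W,A)$ with $b>0$, and choose a real $s'$ with $\vartheta<s'<s$. Then $s'A-K_W$ is pseudo-effective, since the pseudo-effective set $\{t\mid tA-K_W\in\psef(W)\}$ is a closed ray because $A$ is ample. Hence
\[
sA-K_W=(s-s')A+(s'A-K_W)
\]
is ample plus pseudo-effective. Such a class lies in the interior of $\psef(W)$, so it is big. Consequently the integral divisor $aA-bK_W=b(sA-K_W)$ is big, and some multiple $\ell(aA-bK_W)$ has a nonzero section. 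This gives $q_{\ell b}\le \ell a$, i.e.\ $q_{\ell b}/(\ell b)\le s$. Letting $s\downarrow\vartheta$ through rationals yields $\inf_m q_m/m\le\vartheta(W,A)$, and combined with Step 3 the limit equals $\vartheta(W,A)$.

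The main care point is Step 4. One must use the facts that pseudo-effective plus ample is big, and that a big integral divisor has sections in some multiple. Both are standard over an arbitrary field, since bigness can be defined by the growth of $h^0$ and the interior of $\psef(W)$ is the big cone. This is the only place where we rely on more than the multiplication of sections.
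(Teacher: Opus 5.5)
Your proof is correct and follows essentially the same route as the paper. Both arguments get subadditivity from multiplying sections and apply Fekete's lemma, deduce $q_m/m\ge\vartheta(W,A)$ from effectivity of $q_mA-mK_W$, and obtain the reverse inequality from bigness of $sA-K_W$ for rational $s>\vartheta(W,A)$. Your explicit lower bound $q_m/m\ge K_W\cdot A^{d-1}/A^d$ and your direct estimate $\inf_m q_m/m\le q_{\ell b}/(\ell b)\le s$ only make precise two points the paper states more briefly.
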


\begin{proof}
If \(s_m\in H^0(W,\cO_W(q_mA-mK_W))\setminus\{0\}\) and
\(s_n\in H^0(W,\cO_W(q_nA-nK_W))\setminus\{0\}\), then the product
\(s_ms_n\) is a nonzero section of
\(\cO_W((q_m+q_n)A-(m+n)K_W)\).  Hence
\(q_{m+n}\le q_m+q_n\).  By Fekete's lemma\footnote{Thanks to Daniel Apsley for pointing this out!},
\(\lim_{m\to\infty} q_m/m = \inf_m q_m/m\) exists in \(\R\cup\{-\infty\}\).
Since \(A\) is ample, the values are bounded below linearly, so the limit is
finite.

If \(H^0(W,\cO_W(q_mA-mK_W))\ne 0\), then \(q_mA-mK_W\) is linearly equivalent
to an effective divisor, hence pseudo-effective.  Therefore
\(q_m/m\ge \vartheta(W,A)\) for every \(m\), and so
\[
\lim_{m\to\infty}\frac{q_m}{m}\ge \vartheta(W,A).
\]

Conversely fix any rational number \(s>\vartheta(W,A)\).  Choose a rational
number \(r\) with \(\vartheta(W,A)<r<s\).  Then \(rA-K_W\) is pseudo-effective,
while \((s-r)A\) is ample; hence
\[
sA-K_W=(s-r)A+(rA-K_W)
\]
is big.  For all sufficiently divisible integers \(m\), the divisor
\(m(sA-K_W)\) is Cartier and has a nonzero section.  Consequently
\(q_m\le ms\) for all such \(m\).  Dividing by \(m\) and letting \(m\to\infty\)
shows
\[
\limsup_{m\to\infty}\frac{q_m}{m}\le s.
\]
Since this holds for every rational \(s>\vartheta(W,A)\), we conclude that
\(\limsup q_m/m\le \vartheta(W,A)\).  Together with the first inequality this
proves the claim.
\end{proof}

\subsection{Boundaries}

We now explain why \emph{any} boundary $\Gamma$ on the cone with $K_C+\Gamma$ $\Q$-Cartier must have a coefficient constraint controlled by $\vartheta(W,A)$.

\begin{lemma}(\cite{Urbinati})\label{lem:k-ineq}
Let $C$ be the cone over $(W,A)$ as above, and let $\pi:Y\to C$ be the blowup of the vertex with exceptional divisor $W_0\cong W$.
Let $\Gamma\ge 0$ be a $\Q$-divisor on $C$ such that $K_C+\Gamma$ is $\Q$-Cartier.
Write
\[
\pi^*(K_C+\Gamma) = K_Y + \Gamma_Y + kW_0,
\]
where $\Gamma_Y$ is an effective $\Q$-divisor on $Y$ not containing $W_0$ and $k\in\Q$.
Then
\[
k-1 \ \ge\ \vartheta(W,A).
\]
\end{lemma}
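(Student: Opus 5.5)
Restrict the defining relation to the exceptional divisor $W_0\cong W$ and use adjunction together with the normal bundle of $W_0$. First, I will reduce to an integral statement. Choose $m>0$ such that $m(K_C+\Gamma)$ is Cartier. Since $C$ is affine and local at the vertex, we may shrink $C$ if needed so that $m(K_C+\Gamma)\sim 0$ near the vertex; indeed, a Cartier divisor on the cone is trivial in a neighbourhood of the vertex. Then $\pi^*(m(K_C+\Gamma))\sim 0$ on $Y$, at least over a neighbourhood of $W_0$, which is all we need. This gives
\[
m(K_Y+\Gamma_Y+kW_0)\sim_{\Q} 0 \quad\text{near } W_0 .
\]

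Next I restrict to $W_0$. Using Lemma~\ref{prop:blowup-vertex}(d), we have $K_Y=p^*K_W-W_0$. Using (b), $\cO_Y(W_0)|_{W_0}\cong\cO_W(-A)$, so $W_0|_{W_0}\sim -A$. Restricting the relation gives
\[
0\sim_{\Q} K_W + A + \Gamma_Y|_{W_0} - kA
\qquad\text{on } W .
\]
Here $K_Y|_{W_0}=K_W-W_0|_{W_0}=K_W+A$, which is also adjunction, since $(K_Y+W_0)|_{W_0}=K_W$. The coefficient $k$ is rational, and the restriction is taken in $\NS(W)_{\Q}$; this is legitimate because everything is $\Q$-Cartier on the smooth $Y$. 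Therefore
\[
(k-1)A-K_W\sim_{\Q}\Gamma_Y|_{W_0}.
\]

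Finally, $\Gamma_Y$ is effective and does not contain $W_0$, so its restriction $\Gamma_Y|_{W_0}$ is a well-defined effective $\Q$-divisor on $W$. Hence $(k-1)A-K_W$ is $\Q$-effective, so it lies in $\overline{\Eff}(W)$. By the definition of $\vartheta(W,A)$ as an infimum, this gives $k-1\ge\vartheta(W,A)$.

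The main point to check is the first step: that $\pi^*(K_C+\Gamma)$ restricts to a numerically (indeed $\Q$-linearly) trivial class on $W_0$. I would justify this without relying on triviality near the vertex. A pullback of a $\Q$-Cartier divisor has zero intersection with every curve contracted by $\pi$, and $\pi$ contracts all of $W_0$. This yields numerical triviality on $W_0$. Numerical triviality is enough here, because pseudo-effectivity depends only on the numerical class.

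A secondary check is the sign convention in $K_Y=p^*K_W-W_0$ together with $W_0|_{W_0}=-A$. I would confirm these against the example $W=\mathbf P^2$, $A=H$, where the cone is $\A^3$ and the discrepancy of the vertex blowup is $2$. There $\Gamma=0$ gives $k=-2$, and restriction gives $-3H-K_W\equiv0$, which is consistent with $\vartheta=-3$ and the required inequality $k-1=-3\ge-3$.
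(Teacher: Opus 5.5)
Your proof is correct and follows the paper's argument. You restrict $\pi^*(K_C+\Gamma)=K_Y+\Gamma_Y+kW_0$ to $W_0$, use adjunction and $W_0|_{W_0}\sim -A$ to get $(k-1)A-K_W\sim_{\Q}\Gamma_Y|_{W_0}\ge 0$, and conclude from the definition of $\vartheta(W,A)$. Your two justifications that $\pi^*(K_C+\Gamma)|_{W_0}$ is trivial (local principality at the vertex, or numerical triviality via the projection formula) fill in a step the paper only asserts.
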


\begin{proof}
Restrict the equality to $W_0$.
By adjunction on the smooth divisor $W_0\subset Y$,
\[
(K_Y+W_0)|_{W_0} = K_{W_0} = K_W.
\]
Also $\Gamma_Y|_{W_0}$ is effective.
Since $W_0|_{W_0}$ is the normal bundle of $W_0$ in $Y$, Proposition~\ref{prop:blowup-vertex} gives
\[
W_0|_{W_0} \;\sim\; -A.
\]
Restricting $\pi^*(K_C+\Gamma)=K_Y+\Gamma_Y+kW_0$ to $W_0$ and using $\pi(W_0)=\{\m\}$, so $(\pi^*(K_C+\Gamma))|_{W_0}\sim 0$, we obtain
\[
0 \;\sim_\Q\; (K_Y+\Gamma_Y+kW_0)|_{W_0}
\;=\; (K_Y+W_0)|_{W_0} + \Gamma_Y|_{W_0} + (k-1)W_0|_{W_0}
\;=\; K_W + \Gamma_Y|_{W_0} - (k-1)A.
\]
Thus $(k-1)A - K_W \sim_\Q \Gamma_Y|_{W_0}$ is pseudo-effective.
By definition of $\vartheta(W,A)$, this implies $k-1\ge \vartheta(W,A)$.
\end{proof}

Lemma~\ref{lem:k-ineq} shows that the coefficient $k-1$ along $W_0$ is constrained from below by $t_0(W,A)$.
To get jumping behavior, we need boundaries where $k-1$ is \emph{just above} $t_0(W,A)$.

\begin{lemma}\label{lem:boundary-near}
Let $W$ be a smooth projective variety over an infinite field and $A$ ample.
Fix a rational number $s>t_0(W,A)$ such that the divisor class $sA-K_W$ is ample.
Then there exists an effective $\Q$-divisor $\Delta_s$ on $W$ such that:
\begin{enumerate}[label=(\alph*),leftmargin=2.4em]
  \item $K_W+\Delta_s \sim_\Q sA$;
  \item $(W,\Delta_s)$ is klt (in fact, $\Delta_s$ has coefficients $<1$ and SNC support).
\end{enumerate}
\end{lemma}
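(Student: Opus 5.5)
The plan is a Bertini argument on the ample $\Q$-divisor class $L:=sA-K_W$. Since $s$ is rational, choose an integer $m\ge 2$ such that $msA$ is an integral Cartier divisor and $mL=msA-mK_W$ is very ample. Replacing $m$ by a multiple is harmless, because a sufficiently large multiple of an ample class is very ample. The divisor $\Delta_s$ will be a small rational multiple of a sum of general members of $|mL|$.

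\textbf{Choosing the members.} Pick $H_1,\dots,H_N\in|mL|$ general, with $N\ge 2$. Set
\[
\Delta_s:=\frac{1}{mN}\,(H_1+\cdots+H_N).
\]
Then $\Delta_s\sim_\Q \frac{1}{m}\cdot mL = sA-K_W$, so $K_W+\Delta_s\sim_\Q sA$, which is (a). The coefficients of $\Delta_s$ are at most $\frac{1}{mN}<1$, provided the $H_i$ are reduced and share no components. It remains to arrange that each $H_i$ is smooth (hence reduced and, on a connected $W$, irreducible or at least reduced) and that $\sum H_i$ has simple normal crossings.

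\textbf{Obtaining SNC.} This is the main obstacle. Over an infinite field $k$, the classical Bertini theorem for smoothness of general hyperplane sections of a very ample linear system holds in any characteristic (Jouanolou; Hartshorne II.8.18 over algebraically closed fields). The general members form a dense open subset of the projective space of hyperplanes, and that open set has $k$-rational points because $k$ is infinite. I would apply this inductively. Choose $H_1$ so that it is smooth. Then choose $H_2$ so that both $H_2$ and $H_2\cap H_1$ are smooth of the expected dimension. Continue, choosing $H_j$ transverse to every stratum $H_{i_1}\cap\cdots\cap H_{i_r}$ built so far; each requirement is a Bertini condition on a finite list of smooth subvarieties. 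Since finitely many dense open conditions can be imposed at once, the result is an SNC divisor.

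If $k$ is not perfect, "smooth" should mean smooth over $k$. One worry is that $W$ is only assumed regular, and regular need not mean smooth over $k$ in characteristic $p$. I would address this by assuming $W$ is smooth over $k$, as the statement says, and using the Bertini theorem for smooth varieties over infinite fields, which holds for very ample systems in all characteristics.

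\textbf{Klt.} Once $\Delta_s$ has SNC support and all coefficients are $<1$, the pair $(W,\Delta_s)$ is klt by the standard discrepancy computation for SNC pairs, giving (b). Note that the hypothesis $s>t_0(W,A)$ is not used directly in this argument; it only ensures the relevant range of $s$, while the ampleness of $sA-K_W$ is what drives the construction.
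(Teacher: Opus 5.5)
Your proof is correct and follows the paper's route: make $m(sA-K_W)$ very ample, use Bertini over the infinite field to pick a smooth general member, and scale by $1/m$. The only difference is that the paper uses a single smooth member $B$ with $\Delta_s=\frac{1}{m}B$, which is already SNC because the components of a smooth divisor are pairwise disjoint, so your $N\ge 2$ members and the inductive transversality argument are valid but unnecessary; the paper also takes $p\nmid m$, which is what is needed later for the index of $K_C+\Gamma_s$ to be prime to $p$.
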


\begin{proof}
Since $sA-K_W$ is ample, choose an integer $m\gg 0$ such that $m(sA-K_W)$ is very ample Cartier (when $\operatorname{char}k=p>0$, we may additionally assume $p\nmid m$.)
Over an infinite field, Bertini implies a general member $B\in |m(sA-K_W)|$ is smooth (\cite[\href{https://stacks.math.columbia.edu/tag/0FD4}{Tag 0FD4}]{stacks-project}).
Set $\Delta_s:=\frac{1}{m}B$.
Then $K_W+\Delta_s\sim_\Q sA$ and $\Delta_s$ has a single smooth component with coefficient $1/m<1$,
so the pair $(W,\Delta_s)$ is klt.
\end{proof}

From $\Delta_s$ we build a boundary on the cone with coefficient $k=s+1$.

\begin{proposition}(cf. \cite{Urbinati})\label{prop:Gamma-s}
Let $C$ be the cone over $(W,A)$ with blowup $\pi:Y\to C$ and exceptional divisor $W_0$.
Fix $s>t_0(W,A)$ rational and choose $\Delta_s$ as in Lemma~\ref{lem:boundary-near}.
Then there exists an effective $\Q$-divisor $\Gamma_s$ on $C$ such that:
\begin{enumerate}[label=(\alph*),leftmargin=2.4em]
  \item $K_C+\Gamma_s$ is $\Q$-Cartier;
  \item writing $\pi^*(K_C+\Gamma_s)=K_Y+\Gamma_{s,Y}+(s+1)W_0$ with $\Gamma_{s,Y}$ not containing $W_0$,
        we have $\Gamma_{s,Y}|_{W_0}=\Delta_s$;
  \item the coefficients of $\Gamma_{s,Y}$ are $<1$.
\end{enumerate}
\end{proposition}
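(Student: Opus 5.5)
The plan is to define $\Gamma_s$ as the cone over $\Delta_s$. By Lemma~\ref{lem:boundary-near}, $\Delta_s=\frac1m B$ with $B\in|m(sA-K_W)|$ a smooth member. Let $B_C\subset C$ be the cone over $B$, i.e.\ the closure in $C$ of $p^{-1}(B)\setminus W_0$ pushed forward by $\pi$. Equivalently, $B_C=\pi_*p^*B$. Set $\Gamma_s:=\frac1m B_C$, which is effective. Its strict transform on $Y$ is $\Gamma_{s,Y}:=\pi^{-1}_*\Gamma_s=\frac1m p^*B$. This is a smooth prime divisor (the pullback of a smooth divisor under a line bundle projection) with coefficient $1/m<1$, which gives (c). It meets the zero section $W_0\cong W$ exactly in $B$, so $\Gamma_{s,Y}|_{W_0}=\Delta_s$, which gives (b) once we know that the coefficient of $W_0$ is $s+1$.

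The main point is (a), the $\Q$-Cartierness of $K_C+\Gamma_s$, together with the computation of the coefficient along $W_0$. Using Lemma~\ref{prop:blowup-vertex}(d), write
\[
K_Y+\Gamma_{s,Y}=p^*K_W-W_0+\tfrac1m p^*B=p^*(K_W+\Delta_s)-W_0 .
\]
By Lemma~\ref{lem:boundary-near}(a), $m(K_W+\Delta_s)\sim msA$; we may choose $m$ so that $ms\in\Z$. Lemma~\ref{prop:blowup-vertex}(b) gives $p^*A\sim -W_0$. Hence
\[
m(K_Y+\Gamma_{s,Y})\sim -msW_0-mW_0=-m(s+1)W_0,
\]
that is, $m(K_Y+\Gamma_{s,Y}+(s+1)W_0)\sim 0$ on $Y$. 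Pushing this forward by $\pi$ (which is an isomorphism outside the vertex and contracts $W_0$) gives $m(K_C+\Gamma_s)\sim \pi_*(0)=0$. Concretely, take a rational function $f$ on $Y$ with $\mathrm{div}(f)=m(K_Y+\Gamma_{s,Y}+(s+1)W_0)$ for a compatible choice of $K_Y$. Then $\mathrm{div}_C(f)=m(K_C+\Gamma_s)$, since pushforward of divisors commutes with taking $\mathrm{div}$ and $\pi_*K_Y=K_C$. Therefore $m(K_C+\Gamma_s)$ is principal, in particular Cartier, which proves (a). Moreover $\pi^*(m(K_C+\Gamma_s))=\mathrm{div}_Y(f)$, which yields exactly
\[
\pi^*(K_C+\Gamma_s)=K_Y+\Gamma_{s,Y}+(s+1)W_0 .
\]
This is (b), with the coefficient $k=s+1$ along $W_0$.

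The step I expect to need the most care is the bookkeeping of linear equivalences on $Y$ versus $C$. Two things must be checked. First, the linear equivalence $m(K_W+\Delta_s)\sim msA$ has to hold on the nose (not merely up to $\Q$-equivalence) after adjusting $m$; this is why $B$ is taken in $|m(sA-K_W)|$ with $ms$ integral. Second, $\pi_*$ sends the canonical divisor $K_Y$ to $K_C$, so that the choices of canonical divisors are compatible. Once $\mathrm{div}(f)$ is identified on $Y$, the pullback formula is automatic, because $\pi^*$ of a principal divisor is the divisor of the same function. If $p\mid m$ is undesirable, the remark in Lemma~\ref{lem:boundary-near} lets us take $p\nmid m$, and then the index of $K_C+\Gamma_s$ divides $m$, which is prime to $p$.
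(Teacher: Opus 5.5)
Your proof is correct and uses the same construction as the paper: $\Gamma_s$ is the cone over $\Delta_s=\frac1m B$, following Urbinati. The paper only cites \cite[Claim~3.3.3]{Urbinati} for the $\Q$-Cartierness and the coefficient $s+1$. You verify both directly on $Y=\Tot(\cO_W(-A))$: you show $m(K_Y+\Gamma_{s,Y}+(s+1)W_0)\sim 0$ and push forward the corresponding rational function, which also shows that $m(K_C+\Gamma_s)$ is principal.
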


\begin{proof}
Choose $m\gg 0$ divisible such that both $mA$ and $m(sA-K_W)$ are very ample Cartier divisors.
Pick a smooth member $D\in |m(sA-K_W)|$ and set $\Delta_s:=\frac{1}{m}D$.
Then $0\le \Delta_s<1$ and $K_W+\Delta_s\sim_\Q sA$.

Let $\Gamma_s$ be the divisor on $C$ defined as the projective cone over $\Delta_s$ away from the vertex, see \cite[Claim~3.3.3]{Urbinati}
and the discussion around \cite[Remark~3.3.1]{Urbinati}.
In our notation, this produces an effective $\Q$-divisor $\Gamma_s$ on $C$ whose support avoids the vertex,
such that $K_C+\Gamma_s$ is $\Q$-Cartier and
\[
\pi^*(K_C+\Gamma_s)=K_Y+\Gamma_{s,Y}+(s+1)W_0,
\]
with $\Gamma_{s,Y}|_{W_0}=\Delta_s$.
Since the coefficients of $\Delta_s$ are $<1$, the construction ensures that the coefficients of $\Gamma_{s,Y}$ are also $<1$.
\end{proof}

We can now compute the multiplier ideals $\cJ(C,\Gamma_s;\m^t)$ explicitly.

\begin{proposition}\label{prop:J-Gamma-s}
Let $C$ be the cone over $(W,A)$, and let $\pi:Y\to C$ be the blowup of the vertex with exceptional divisor $W_0$.
Fix a rational $s>t_0(W,A)$ and construct $\Gamma_s$ as in Proposition~\ref{prop:Gamma-s}.
Then for every real $t\ge 0$,
\[
\cJ(C,\Gamma_s;\m^t)=\m^{\floor{s+1+t}}.
\]
\end{proposition}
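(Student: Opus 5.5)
We compute directly from the definition of $\cJ$ in \S\ref{def:Jpi}, using the blowup $\pi:Y\to C$ of the vertex as the log resolution. By Lemma~\ref{prop:blowup-vertex}, $Y\cong\Tot(\cO_W(-A))$ is smooth and $\m\cO_Y=\cO_Y(-W_0)$, so $G=W_0$. Taking $m$ in Proposition~\ref{prop:Gamma-s} so that $\Delta_s$ is a smooth divisor, $\Gamma_{s,Y}$ is the cone over $\Delta_s$, namely $p^*\Delta_s$. Hence $W_0+\Gamma_{s,Y}$ has SNC support, because $\Delta_s$ is smooth on $W\cong W_0$ and $p^*\Delta_s$ meets the zero section transversally. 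If one prefers not to rely on this, one can pass to a further log resolution; the computation below is insensitive to that, since the coefficients of $\Gamma_{s,Y}$ are $<1$ and the pair is klt away from $W_0$.

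Using the decomposition in Proposition~\ref{prop:Gamma-s}(b), we get
\[
K_Y-\pi^*(K_C+\Gamma_s)-tW_0=-\Gamma_{s,Y}-(s+1+t)W_0 .
\]
Since $\Gamma_{s,Y}$ is effective with coefficients in $[0,1)$ and has no component equal to $W_0$, rounding up gives
\[
\ceil{-\Gamma_{s,Y}-(s+1+t)W_0}=-\floor{s+1+t}\,W_0 .
\]
Here we use $\ceil{-x}=0$ for $x\in[0,1)$ on each component of $\Gamma_{s,Y}$ and $\ceil{-y}=-\floor{y}$ on $W_0$.

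It follows that
\[
\cJ(C,\Gamma_s;\m^t)=\pi_*\cO_Y\bigl(-\floor{s+1+t}\,W_0\bigr)=\m^{\floor{s+1+t}}
\]
by Lemma~\ref{prop:blowup-vertex}(c). This uses $\floor{s+1+t}\ge 1\ge 0$, which holds since $s>t_0\ge 0$ and $t\ge 0$.

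The main obstacle is justifying that the computation on $\pi$ really computes $\cJ$, that is, that the SNC condition holds, or else that $\cJ$ is independent of the chosen resolution. I would settle this first by checking the explicit structure of $\Gamma_{s,Y}=p^*\Delta_s$ near $W_0$. Alternatively, one can argue that any further blowup along strata of $W_0+\Gamma_{s,Y}$ yields exceptional divisors with discrepancy $\ge$ the needed amount, because $(Y,\Gamma_{s,Y})$ is klt with $\Gamma_{s,Y}<1$. Either route leaves the remaining rounding computation routine.
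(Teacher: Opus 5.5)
Your proposal is correct and is essentially the paper's proof. Both compute $\ceil{-\Gamma_{s,Y}-(s+1+t)W_0}=-\floor{s+1+t}\,W_0$ on the vertex blowup and then push forward using $\pi_*\cO_Y(-nW_0)=\m^n$. Two of your additions tidy up points the paper leaves implicit: the check that $\Gamma_{s,Y}=p^*\Delta_s$ is smooth and transverse to $W_0$ (so $\pi$ really is a log resolution of the triple), and the citation of part (c) rather than the paper's ``(d)'', together with the remark that $\floor{s+1+t}\ge 1$.
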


\begin{proof}
On $Y$, $\m\cO_Y=\cO_Y(-W_0)$ by Proposition~\ref{prop:blowup-vertex}.
By construction,
\[
\pi^*(K_C+\Gamma_s)=K_Y+\Gamma_{s,Y}+(s+1)W_0.
\]
Hence
\[
E_\pi(t)
=
\ceil{K_Y-\pi^*(K_C+\Gamma_s)-tW_0}
=
\ceil{-\Gamma_{s,Y}-(s+1+t)W_0}.
\]
Since the coefficients of $\Gamma_{s,Y}$ are $<1$, we have $\ceil{-\Gamma_{s,Y}}=0$.
Therefore
\[
E_\pi(t)=\ceil{-(s+1+t)W_0}= -\floor{s+1+t}\,W_0.
\]
Finally,
\[
\cJ(C,\Gamma_s;\m^t)
=
\pi_*\cO_Y(E_\pi(t))
=
\pi_*\cO_Y\bigl(-\floor{s+1+t}\,W_0\bigr)
=
\m^{\floor{s+1+t}}
\]
by Proposition~\ref{prop:blowup-vertex}(d).
\end{proof}

In characteristic $p>0$, Proposition~\ref{prop:J-Gamma-s} combines beautifully with the asymptotic comparison theorem from Section~\ref{sec:asymptotic}.

\begin{proposition}\label{prop:tau-Gamma-s}
Assume $\mathrm{char}(k)=p>0$ and $C$ is $F$-finite.
Fix $s>t_0(W,A)$ rational and construct $\Gamma_s$ as in Proposition~\ref{prop:Gamma-s}.
Then there exists an integer $T_s\ge 0$ such that for all real $t\ge 0$ with $\floor{t}\ge T_s$,
\[
\tau(C,\Gamma_s;\m^t)=\m^{\floor{s+1+t}}.
\]
\end{proposition}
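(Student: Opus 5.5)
The plan is to apply Theorem~\ref{thm:asymptotic-real} to the triple $(C,\Gamma_s;\m)$ with $\pi:Y\to C$ the blowup of the vertex. Once the equality $\tau=\cJ$ is available for $t\ge T_0$, Proposition~\ref{prop:J-Gamma-s} identifies the right-hand side with $\m^{\floor{s+1+t}}$. So the work is to check the hypotheses of Setup~\ref{setup:asymptotic}; then we set $T_s:=T_0$. Note that $\floor{t}\ge T_0$ forces $t\ge T_0$.

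I will verify the hypotheses in the following order.
\begin{enumerate}[label=(\roman*)]
\item $C$ is normal, affine, integral and $F$-finite. This holds because $R(W,A)$ is a section ring of a smooth projective variety and is generated in degree $1$ (Remark~\ref{rem:veronese}); $F$-finiteness is assumed.
\item $K_C+\Gamma_s$ is $\Q$-Cartier and $\Gamma_s\ge 0$, by Proposition~\ref{prop:Gamma-s}(a).
\item The normalized blowup of $\m$ is $\pi$ itself. Indeed $Y\cong\Tot(\cO_W(-A))$ is regular since $W$ is smooth, and in particular normal (Lemma~\ref{prop:blowup-vertex}(a)).
\item $\m\cO_Y=\cO_Y(-W_0)$ with $W_0$ effective Cartier, by Lemma~\ref{prop:blowup-vertex}(b).
\item $-W_0$ is $\pi$-ample. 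It suffices to check this on the only positive-dimensional fiber $W_0\cong W$, where $\cO_Y(-W_0)|_{W_0}\cong\cO_W(A)$ is ample. Alternatively, $\cO_Y(-W_0)=\cO_Y(1)$ for the blowup, which is relatively ample by construction.
\item The SNC condition. Here $\Exc(\pi)=W_0=\Supp G$, and $\pi^{-1}_*\Gamma_s$ is the cone over the smooth divisor $D\in|m(sA-K_W)|$ away from the vertex; its strict transform is $p^{-1}(D)$. Since $D$ is smooth, $p^{-1}(D)$ is smooth, and it meets the zero section $W_0$ transversally in $D$. So $W_0+p^{-1}(D)$ is SNC.
\end{enumerate}

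The main obstacle is step (vi). I need to check that the strict transform of $\Gamma_s$ really is the pullback $p^*\Delta_s$ of $\Delta_s$ under the bundle projection, which is how Urbinati's construction works. The exceptional coefficient is then entirely absorbed into $(s+1)W_0$. I would argue this directly: the cone over $D$ minus the vertex is $\pi(p^{-1}(D)\setminus W_0)$. With that identification, the components are a smooth divisor and a section of a line bundle meeting transversally, which is SNC.

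No restriction on the index of $K_C+\Gamma_s$ is needed, since Theorem~\ref{thm:asymptotic-real} handles arbitrary $b\ge 0$. (If preferred, $m$ can be taken prime to $p$ as in Lemma~\ref{lem:boundary-near}.) Combining the conclusion of Theorem~\ref{thm:asymptotic-real} with Proposition~\ref{prop:J-Gamma-s} then gives $\tau(C,\Gamma_s;\m^t)=\cJ(C,\Gamma_s;\m^t)=\m^{\floor{s+1+t}}$ for all $t$ with $\floor{t}\ge T_s$.
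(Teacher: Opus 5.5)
Your proposal is correct and follows the paper's own proof. You apply Theorem~\ref{thm:asymptotic-real} to $(C,\Gamma_s;\m)$ with $\pi$ the blowup of the vertex, then substitute the formula from Proposition~\ref{prop:J-Gamma-s}; your explicit check of the hypotheses of Setup~\ref{setup:asymptotic} (notably that $\pi^{-1}_*\Gamma_s=\frac{1}{m}p^{-1}(D)$ meets $W_0$ transversally, so the SNC condition holds) supplies details the paper only asserts.
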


\begin{proof}
As $K_C+\Gamma_s$ is $\Q$-Cartier and we are in Setup \ref{setup:asymptotic}, apply Theorem~\ref{thm:asymptotic-real} to get, for $t \ge T_s$,
\[
\tau(C,\Gamma_s;\m^t)=\cJ(C,\Gamma_s;\m^t).
\]
Now apply Proposition~\ref{prop:J-Gamma-s}.
\end{proof}

\subsection{The de Fernex--Hacon multiplier ideal on the cone}

Recall that on a normal variety \(X\), for a Weil divisor \(D\),
de Fernex--Hacon defined the natural pullback by
\[
\cO_Y(-f^{\natural}D):=(\cO_X(-D)\cdot \cO_Y)^{\vee\vee},
\]
whenever \(f:Y\to X\) is a birational morphism with \(Y\) normal
\cite[Definition~2.6]{dFH}.  In our situation \(X=C\), \(f=\pi\), and
\(Y\) is smooth.

Let \(U:=C\setminus \{v\}\), where \(v\) is the vertex.  Since
\(\codim_C\{v\}=d+1\ge 2\), every rank-one reflexive sheaf on \(C\) is
determined by its restriction to \(U\).  On the other hand,
\(U\cong Y\setminus W_0\), and on \(U\) we have
\[
\omega_U\cong p^*\omega_W|_U
\]
because \(\omega_Y\cong p^*(\omega_W\otimes\cO_W(A))\) and
\(\cO_Y(-W_0)|_U\cong \cO_U\).

\begin{proposition}\label{prop:natural-pullback}
For every integer \(m\ge 1\), one has
\[
\cO_Y\bigl(-\pi^{\natural}(mK_C)\bigr)
\cong
p^*\cO_W(-mK_W)(-q_mW_0).
\]
Equivalently,
\[
\pi^{\natural}(mK_C)=p^*(mK_W)+q_mW_0.
\]
\end{proposition}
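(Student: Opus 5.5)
The plan is to compute the natural pullback directly from its definition, $\cO_Y(-\pi^{\natural}(mK_C))=(\cO_C(-mK_C)\cdot\cO_Y)^{\vee\vee}$, in three steps: describe $\cO_C(-mK_C)$ as a graded module, pull its generators back to $Y$, and read off the divisorial part of the resulting ideal. The first step uses the discussion just before the statement. $\cO_C(-mK_C)$ is reflexive of rank one, hence determined by its restriction to $U$. On $U\cong Y\setminus W_0$ we have $\omega_U\cong p^*\omega_W|_U$, so $\cO_C(-mK_C)|_U\cong p^*\cO_W(-mK_W)|_U$. Taking global sections on $U$ and decomposing by the $\mathbb{G}_m$-weight along the fibres of $p$, I expect to obtain
\[
H^0\bigl(C,\cO_C(-mK_C)\bigr)=\bigoplus_{q\in\Z}H^0\bigl(W,\cO_W(qA-mK_W)\bigr).
\]
This is the standard description of divisorial sheaves on cones (cf.\ \cite{Kollar-Sing}), coming from $p_*\cO_{Y\setminus W_0}=\bigoplus_{q\in\Z}\cO_W(qA)$. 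By Definition~\ref{def:qm}, the lowest nonzero graded piece sits in degree $q_m$.

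The second step tracks each homogeneous element on $Y$. Since $\cO_Y(-W_0)\cong p^*\cO_W(A)$ by Proposition~\ref{prop:blowup-vertex}(b), a section $\sigma\in H^0(W,\cO_W(qA-mK_W))$ viewed on $U$ extends to a section of
\[
p^*\cO_W(-mK_W)\otimes\cO_Y(-qW_0)\subseteq p^*\cO_W(-mK_W)\otimes\cO_Y(-q_mW_0)
\]
for every $q\ge q_m$. This shows that $\cO_C(-mK_C)\cdot\cO_Y$ is contained in $p^*\cO_W(-mK_W)(-q_mW_0)$. Consequently
\[
\cO_C(-mK_C)\cdot\cO_Y=p^*\cO_W(-mK_W)(-q_mW_0)\cdot\mathcal{I}
\]
for some ideal sheaf $\mathcal{I}\subseteq\cO_Y$. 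Because $Y$ is smooth, taking the reflexive hull removes exactly the codimension $\ge 2$ part of the cosupport of $\mathcal{I}$. So it suffices to show that $\mathcal{I}$ is trivial at the generic point of every prime divisor of $Y$.

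The third step checks the prime divisors, and I expect it to be the only real point. There are two kinds.
\begin{enumerate}[label=(\roman*),leftmargin=2.4em]
  \item For $W_0$ itself: choose a nonzero $s_m\in H^0(W,\cO_W(q_mA-mK_W))$. As a generator of $p^*\cO_W(-mK_W)(-q_mW_0)$ it restricts to $W_0$ as $s_m\neq 0$. Hence it is a unit at the generic point of $W_0$, and the vanishing order along $W_0$ is exactly $q_m$.
  \item For a divisor not equal to $W_0$: such a divisor meets $U$ and its image in $W$ is a proper closed subset. Since $A$ is ample, $qA-mK_W$ is globally generated for $q\gg0$. The corresponding homogeneous sections then generate $p^*\cO_W(-mK_W)$ on all of $U$, so $\mathcal{I}|_U=\cO_U$.
\end{enumerate}
Therefore $\mathcal{I}$ is cosupported in codimension $\ge2$ (inside $W_0$). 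It follows that $(\cO_C(-mK_C)\cdot\cO_Y)^{\vee\vee}=p^*\cO_W(-mK_W)(-q_mW_0)$, which is equivalent to $\pi^{\natural}(mK_C)=p^*(mK_W)+q_mW_0$.

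The main care needed is in the first step: the graded identification must be compatible with the fixed choice $K_Y=p^*K_W-W_0$ from Proposition~\ref{prop:blowup-vertex}(d), so that the twist along $W_0$ is not off by a shift.
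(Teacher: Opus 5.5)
Your proposal is correct and follows essentially the paper's route: the $\mathbb{G}_m$-weight decomposition of $H^0(C,\cO_C(-mK_C))$, followed by comparison with $p^*\cO_W(-mK_W)(-q_mW_0)$. Your explicit check at the generic point of $W_0$, using a section of degree exactly $q_m$, supplies the codimension-one verification that the paper's final sentence leaves implicit, since agreement on $U$ alone does not fix the coefficient of $W_0$. One harmless slip: a prime divisor other than $W_0$ can dominate $W$ (e.g.\ the divisor $\{f=1\}$ for a nonzero $f$ of degree one), but you never use this, because $\mathcal{I}|_U=\cO_U$ holds regardless (indeed trivially, as $\pi$ is an isomorphism over $U$, where $\cO_C(-mK_C)$ is already invertible).
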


\begin{proof}
We first compute the graded module of global sections of
\(\cO_C(-mK_C)\).  Since \(\cO_C(-mK_C)\) is a rank-one reflexive sheaf on the
normal affine variety \(C\), and the complement of \(U\) has codimension at
least two, restriction gives
\[
H^0\bigl(C,\cO_C(-mK_C)\bigr)
\cong
H^0(U,\omega_U^{-m}).
\]
Now \(U\to W\) is the complement of the zero section in the total space of the
line bundle \(\cO_W(-A)\), hence it is a principal \(\mathbf G_m\)-bundle.
Consequently
\[
(p|_U)_*\cO_U\cong \bigoplus_{r\in\Z}\cO_W(rA),
\]
and since \(\omega_U^{-m}\cong p^*\cO_W(-mK_W)|_U\), we get
\[
(p|_U)_*\omega_U^{-m}
\cong
\bigoplus_{r\in\Z}\cO_W(rA-mK_W).
\]
Taking global sections yields a canonical graded decomposition
\begin{equation}\label{eq:canonical-module-cone}
H^0\bigl(C,\cO_C(-mK_C)\bigr)
\cong
\bigoplus_{r\in\Z}H^0\bigl(W,\cO_W(rA-mK_W)\bigr).
\end{equation}
By definition of \(q_m\), the right-hand side has no graded pieces below
\(q_m\), and the \(q_m\)-th graded piece is nonzero.

Now consider, for an arbitrary integer \(q\), the invertible sheaf
\[
\mathscr L_q:=p^*\cO_W(-mK_W)(-qW_0)
\]
on \(Y\).  Since \(Y=\Tot(\cO_W(-A))\) and
\(p_*\cO_Y\cong \bigoplus_{n\ge 0}\cO_W(nA)\), the projection formula gives
\[
p_*\mathscr L_q
\cong
\left(\bigoplus_{n\ge 0}\cO_W(nA)\right)\otimes \cO_W(qA-mK_W)
\cong
\bigoplus_{n\ge 0}\cO_W((n+q)A-mK_W).
\]
Equivalently,
\begin{equation}\label{eq:pushforward-Lq}
H^0(Y,\mathscr L_q)
\cong
\bigoplus_{r\ge q} H^0\bigl(W,\cO_W(rA-mK_W)\bigr).
\end{equation}
If we choose \(q=q_m\), then the right-hand side is exactly the graded module
in~\eqref{eq:canonical-module-cone}.  Therefore
\[
H^0\bigl(Y,\mathscr L_{q_m}\bigr)
\cong
H^0\bigl(C,\cO_C(-mK_C)\bigr)
\]
as graded \(R(W,A)\)-modules.  Since \(C\) is affine, this identifies the
pushforward \(\pi_*\mathscr L_{q_m}\) with \(\cO_C(-mK_C)\).

By construction, \(\mathscr L_{q_m}\) agrees with the pullback of
\(\cO_C(-mK_C)\) on \(U=Y\setminus W_0\).  Since both are rank-one reflexive
sheaves on the smooth variety \(Y\), the reflexive hull of
\(\cO_C(-mK_C)\cdot\cO_Y\) is exactly \(\mathscr L_{q_m}\).  This is the
natural pullback by definition, so
\[
\cO_Y\bigl(-\pi^{\natural}(mK_C)\bigr)=\mathscr L_{q_m}
=p^*\cO_W(-mK_W)(-q_mW_0),
\]
as claimed.
\end{proof}

\begin{corollary}\label{cor:KmYoverC}
For every integer \(m\ge 1\),
\[
K_{m,Y/C}:=K_Y-\frac{1}{m}\pi^{\natural}(mK_C)
= -\left(1+\frac{q_m}{m}\right)W_0.
\]
\end{corollary}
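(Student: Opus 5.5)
This is a direct substitution into Proposition~\ref{prop:natural-pullback}, combined with the canonical divisor formula of Lemma~\ref{prop:blowup-vertex}(d). Proposition~\ref{prop:natural-pullback} gives, for every $m\ge 1$, the identity of divisors on $Y$
\[
\pi^{\natural}(mK_C)=p^*(mK_W)+q_mW_0 .
\]
Dividing by $m$ gives
\[
\tfrac{1}{m}\pi^{\natural}(mK_C)=p^*K_W+\tfrac{q_m}{m}W_0 .
\]

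Next we use the choice of canonical divisors fixed in Lemma~\ref{prop:blowup-vertex}(d), namely $K_Y=p^*K_W-W_0$. Subtracting, the $p^*K_W$ terms cancel:
\[
K_{m,Y/C}=p^*K_W-W_0-p^*K_W-\tfrac{q_m}{m}W_0=-\Bigl(1+\tfrac{q_m}{m}\Bigr)W_0 .
\]

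The only point needing care is compatibility of the chosen canonical divisors. The natural pullback $\pi^{\natural}(mK_C)$ depends on the representative $K_C$, and the corollary only makes sense when $K_C$ and $K_Y$ agree on $U=Y\setminus W_0\cong C\setminus\{v\}$. This is exactly the convention used in the proof of Proposition~\ref{prop:natural-pullback}, where $\omega_U\cong p^*\omega_W|_U$. There $K_C|_U$ is represented by $p^*K_W|_U$, and $K_Y=p^*K_W-W_0$ restricts to the same divisor on $U$. With these compatible choices the displayed computation is an equality of divisors, not merely of classes, and the corollary follows.
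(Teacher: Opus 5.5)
Your proof is correct and is exactly the paper's argument: substitute $K_Y=p^*K_W-W_0$ from Lemma~\ref{prop:blowup-vertex}(d) and $\pi^{\natural}(mK_C)=p^*(mK_W)+q_mW_0$ from Proposition~\ref{prop:natural-pullback}, divide by $m$, and subtract. Your added remark makes explicit a convention the paper leaves implicit: $K_C$ and $K_Y$ must be chosen compatibly on $U=Y\setminus W_0$, so that the result is an equality of divisors and not merely of classes.
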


\begin{proof}
By Proposition~\ref{prop:blowup-vertex}(d),
\(K_Y=p^*K_W-W_0\).  By Proposition~\ref{prop:natural-pullback},
\(\pi^{\natural}(mK_C)=p^*(mK_W)+q_mW_0\).  Subtracting gives
\[
K_{m,Y/C}
= p^*K_W-W_0-p^*K_W-\frac{q_m}{m}W_0
= -\left(1+\frac{q_m}{m}\right)W_0.
\qedhere
\]
\end{proof}

\begin{corollary}(cf. \cite[\S3.1]{BdFFU})\label{prop:irrational-discrepancy}
Let \(v_0:=\ord_{W_0}\).  Then for every integer \(m\ge 1\), the
\(m\)-limiting log discrepancy of \(v_0\) in the sense of
\cite[\S3.1]{BdFFU} is
\[
A_C^{(m)}(v_0)=-\frac{q_m}{m}.
\]
Consequently,
\[
A_C(v_0)=\lim_{m\to\infty} A_C^{(m)}(v_0)=-\vartheta(W,A).
\]
In particular, if \(\vartheta(W,A)\) is irrational, then \(C\) has an
irrational log discrepancy.
\end{corollary}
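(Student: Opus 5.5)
The plan is to read off $A_C^{(m)}(v_0)$ from Corollary~\ref{cor:KmYoverC}, and then get the limit from Lemma~\ref{lem:qm-limit}.

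First, recall the definition from \cite[\S3.1]{BdFFU}. For a divisorial valuation $v=\ord_E$ with $E$ a prime divisor on a normal model $Y\to C$, the $m$-limiting log discrepancy is
\[
A_C^{(m)}(v)=1+\ord_E\bigl(K_{m,Y/C}\bigr),\qquad K_{m,Y/C}=K_Y-\tfrac1m\pi^{\natural}(mK_C).
\]
Here $K_Y$ is chosen compatibly with $K_C$, so the choice of canonical divisors cancels. This expression is independent of the model carrying $E$. We may therefore compute it on the blowup $\pi:Y\to C$ with $E=W_0$.

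Corollary~\ref{cor:KmYoverC} gives $K_{m,Y/C}=-(1+q_m/m)W_0$. Since $W_0$ is a smooth prime divisor with $\ord_{W_0}(W_0)=1$, and $p^*K_W$ has no $W_0$-component, we get
\[
A_C^{(m)}(v_0)=1-\Bigl(1+\frac{q_m}{m}\Bigr)=-\frac{q_m}{m}.
\]
The one point to check carefully is the compatibility of canonical divisors. On $U=Y\setminus W_0\cong C\setminus\{v\}$, the divisor $K_Y=p^*K_W-W_0$ restricts to $p^*K_W|_U$. This is exactly the divisor used for $K_C$ when the natural pullback was computed in Proposition~\ref{prop:natural-pullback}. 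So the formula involves no hidden shift.

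Next, \cite{BdFFU} defines $A_C(v)$ as the limit (equivalently, the supremum over divisible $m$) of $A_C^{(m)}(v)$. Lemma~\ref{lem:qm-limit} says $q_m/m\to\vartheta(W,A)$ along all $m$, using subadditivity and Fekete's lemma. Hence $A_C(v_0)=-\lim_m q_m/m=-\vartheta(W,A)$. This limit also agrees with the $\limsup$/$\sup$ formulations of \cite{BdFFU}: by subadditivity, $q_m/m\ge\inf_k q_k/k=\lim_k q_k/k$, so $-q_m/m$ increases toward its supremum $-\vartheta$ along divisible sequences.

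The last assertion is then immediate: if $\vartheta(W,A)\notin\Q$, then $A_C(v_0)$ is irrational. The main obstacle is not computational. It is matching conventions with \cite{BdFFU}: the sign convention for $\pi^\natural$ versus $\pi^*$ applied to $-mK_C$, and the convention that log discrepancy carries the ``$+1$''. I would verify both against the smooth case $W=\mathbf P^2$, $A=H$. There $q_m=-3m$, which should give $A_C(v_0)=3$, the log discrepancy of the origin blowup on $\A^3$.
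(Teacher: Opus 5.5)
Your proposal is correct and matches the paper's argument. The paper's proof is the one-line observation that $A_C^{(m)}(v_0)=1+\ord_{W_0}(K_{m,Y/C})=-q_m/m$ follows from Corollary~\ref{cor:KmYoverC}, with the limit supplied by Lemma~\ref{lem:qm-limit}. Your additional checks are details the paper leaves implicit: that $K_Y$ restricts on $U$ to the $K_C$ used in Proposition~\ref{prop:natural-pullback}, that subadditivity makes $-q_m/m$ increase along divisible $m$ so the limit agrees with the supremum in \cite{BdFFU}, and the $\mathbf P^2$ sanity check.
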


\begin{proof}
Follows directly from Corollary \ref{cor:KmYoverC}.
\end{proof}

Let \(t\ge 0\).  De Fernex--Hacon define \(\cJ_m(C;\m^t)\) by choosing a log
resolution of the pair \((C,\m+\cO_C(-mK_C))\).

\begin{theorem}\label{thm:Jm-explicit}
For every integer \(m\ge 1\) and every real number \(t\ge 0\),
\[
\cJ_m(C;\m^t)
=
\m^{\max\{0,\lfloor t+1+q_m/m\rfloor\}}.
\]
\end{theorem}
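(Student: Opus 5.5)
We compute $\cJ_m(C;\m^t)$ directly on the blowup $\pi:Y\to C$ of the vertex. Recall the de Fernex--Hacon definition:
\[
\cJ_m(C;\m^t)=f_*\cO_{Y'}\bigl(\lceil K_{Y'}-\tfrac1m f^{\natural}(mK_C)-tG'\rfloor\bigr),
\]
where $f:Y'\to C$ is any log resolution of $(C,\m+\cO_C(-mK_C))$ and $\m\cO_{Y'}=\cO_{Y'}(-G')$. Here the rounding is the round-up $\ceil{\cdot}$ of the $\Q$/$\R$-divisor.

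\textbf{Step 1: $\pi$ is such a log resolution.} $Y$ is regular and $\m\cO_Y=\cO_Y(-W_0)$ by Proposition~\ref{prop:blowup-vertex}. By Proposition~\ref{prop:natural-pullback}, $\cO_C(-mK_C)\cdot\cO_Y$ has reflexive hull equal to the invertible sheaf $p^*\cO_W(-mK_W)(-q_mW_0)$.

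We also need $\cO_C(-mK_C)\cdot\cO_Y$ itself to be invertible. I would argue this from the graded description \eqref{eq:canonical-module-cone}: the image ideal is generated by sections pulled back from the pieces $H^0(W,\cO_W(rA-mK_W))$. Strictly, one needs $\cO_W(rA-mK_W)$ to be globally generated for $r\gg0$, and then a further check that the cosupport of the ideal has SNC support with $W_0$.

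This is the step I expect to be the main obstacle. If invertibility fails, I would pass to a further log resolution $g:Y'\to Y$ principalizing the ideal, and then argue separately. The natural pullback only increases under composition, $f^\natural(mK_C)\ge g^*\pi^\natural(mK_C)$, while the valuative contribution along $W_0$ is unchanged. Independence of the resolution, via \cite[\S4]{dFH}, would then reduce the computation to $Y$, provided the extra exceptional divisors of $g$ have positive enough coefficients. That they do would follow from the same local computation as in Proposition~\ref{prop:J-Gamma-s}.

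\textbf{Step 2: compute the divisor.} By Corollary~\ref{cor:KmYoverC},
\[
K_Y-\tfrac1m\pi^{\natural}(mK_C)-tW_0=-\bigl(1+\tfrac{q_m}{m}+t\bigr)W_0 .
\]
Its round-up is therefore
\[
-\floor{t+1+q_m/m}\,W_0 .
\]

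\textbf{Step 3: push forward.} If $n:=\floor{t+1+q_m/m}\ge0$, Proposition~\ref{prop:blowup-vertex}(c) gives $\pi_*\cO_Y(-nW_0)=\m^n$. If $n<0$, the same part (c) gives $\pi_*\cO_Y(|n|W_0)=\cO_C=\m^0$. Combining the two cases yields
\[
\cJ_m(C;\m^t)=\m^{\max\{0,\floor{t+1+q_m/m}\}},
\]
which is the formula in the statement.
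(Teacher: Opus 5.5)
Your Steps 2 and 3 agree with the end of the paper's computation. Step 1, however, is a genuine gap, and neither of your two routes closes it.

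Write $\cO_C(-mK_C)\cdot\cO_Y=\mathcal I\cdot\mathscr L_{q_m}$ with $\mathscr L_{q_m}=p^*\cO_W(-mK_W)(-q_mW_0)$, as in Proposition~\ref{prop:natural-pullback}. Near a point of $W_0$ with local equation $z$, a degree-$r$ section $s\in H^0(W,\cO_W(rA-mK_W))$ becomes $\tilde s\,z^{r-q_m}$ times a local generator of $\mathscr L_{q_m}$. Hence $\mathcal I+(z)=\mathfrak b_m\cO_Y+(z)$, where $\mathfrak b_m$ is the base ideal of the \emph{lowest} piece $|q_mA-mK_W|$.

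Global generation of $\cO_W(rA-mK_W)$ for $r\gg0$ therefore only controls $Y\setminus W_0$. Invertibility is equivalent to $|q_mA-mK_W|$ being base point free, and nothing guarantees that: $q_m$ is the first degree with any sections, and when $h^0(q_mA-mK_W)=1$ the base locus is the whole unique member.

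Your fallback does not rescue this. If $g\colon Y'\to Y$ principalizes $\mathcal I$, with $\mathcal I\cO_{Y'}=\cO_{Y'}(-F)$, then exactly $f^\natural(mK_C)=g^*\pi^\natural(mK_C)+F$. Setting $\beta:=t+1+q_m/m$, the reverse inclusion comes down to
\[
\cJ\bigl(Y,\gamma W_0;\mathcal I^{1/m}\bigr)=\cO_Y\ \text{along }W_0,\qquad \gamma:=\beta-\lfloor\beta\rfloor .
\]
This is a log-canonicity-type condition on $\mathfrak b_m$, and it can fail. Suppose locally $\mathcal I=(h^a,z)$ with $a>m$. The monomial valuation with weights $(1,a)$ on $(h,z)$ has log discrepancy $1+a$ and value $a$ on both $\mathcal I$ and $W_0$. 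So the ideal above is nontrivial once $1-\gamma\le\frac1m-\frac1a$.

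Proposition~\ref{prop:J-Gamma-s} is an SNC computation on $Y$ itself, with no $\mathcal I$ and no further blow-ups, so it says nothing about these extra divisors. Independence of the resolution does not let you discard $F$ either. What your argument actually proves, for every $m$, is only $\cJ_m(C;\m^t)\subseteq\m^{\max\{0,\lfloor t+1+q_m/m\rfloor\}}$.

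For comparison, the paper takes your fallback route. It passes to a log resolution $g\colon\widetilde Y\to Y$ of $\cO_C(-mK_C)\cdot\cO_Y$ and asserts $K_{m,\widetilde Y/C}=K_{\widetilde Y/Y}+g^*K_{m,Y/C}$. It then concludes using $g_*\cO_{\widetilde Y}(K_{\widetilde Y/Y}+\lceil-\beta g^*W_0\rceil)=\cO_Y(-\lfloor\beta\rfloor W_0)$ for the smooth divisor $W_0$. That composition identity is precisely the statement $F=0$, i.e.\ $\mathcal I=\cO_Y$. So the paper's argument rests on the same point you flagged, and you cannot close your gap by importing it; the stated equality for a given $m$ needs an input on the singularities of $\mathfrak b_m$.

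If what you need downstream is Theorem~\ref{thm:dFH-direct}, the lower bound can be obtained another way. Take $m$ with $m\Gamma_s$ integral and $m(K_C+\Gamma_s)$ Cartier. Then $\cO_C(-m(K_C+\Gamma_s))\subseteq\cO_C(-mK_C)$ gives $K_{m,Y'/C}\ge K_{Y'}-f^*(K_C+\Gamma_s)$ on a common log resolution. Hence $\cJ_m(C;\m^t)\supseteq\cJ(C,\Gamma_s;\m^t)=\m^{\lfloor s+1+t\rfloor}$ by Proposition~\ref{prop:J-Gamma-s}. When $K_W$ is pseudo-effective, letting $s\downarrow t_0$ gives $\cJ_{\mathrm{dFH}}(C;\m^t)\supseteq\m^{\lfloor t+1+t_0\rfloor}$.
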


\begin{proof}
Choose a log resolution \(g:\widetilde Y\to Y\) of the ideal sheaf
\(\cO_C(-mK_C)\cdot\cO_Y\); then
\(\widetilde\pi:=\pi\circ g:\widetilde Y\to C\) is a log resolution of
\((C,\m+\cO_C(-mK_C))\).  Since \(Y\) is smooth
\[
K_{m,\widetilde Y/C}=K_{\widetilde Y/Y}+g^*K_{m,Y/C}
\]
(see \cite[Lemma~3.5]{dFH}).  By
Corollary~\ref{cor:KmYoverC},
\[
K_{m,\widetilde Y/C}-t\,\widetilde\pi^{-1}(\m)
=K_{\widetilde Y/Y}-\left(t+1+\frac{q_m}{m}\right)g^*W_0,
\]
because \(\m\cO_Y=\cO_Y(-W_0)\) and hence
\(\widetilde\pi^{-1}(\m)=g^*W_0\).

Set
\(\beta_m(t):=t+1+q_m/m\).  Then
\[
\cJ_m(C;\m^t)
=\widetilde\pi_*\cO_{\widetilde Y}\bigl(K_{\widetilde Y/Y}+\lceil-\beta_m(t)\,g^*W_0\rceil\bigr).
\]
Now \(W_0\subset Y\) is a smooth divisor, so 
\cite[Lemma~4.6]{dFH} ( \cite[Lemma 9.2.19, Remark 9.2.10]{Positivity-II}) yields
\[
g_*\cO_{\widetilde Y}\bigl(K_{\widetilde Y/Y}+\lceil-\beta_m(t)\,g^*W_0\rceil\bigr)
=
\cO_Y\bigl(\lceil-\beta_m(t)W_0\rceil\bigr)
=
\cO_Y\bigl(-\lfloor\beta_m(t)\rfloor W_0\bigr).
\]
Pushing forward further by \(\pi\) gives
\[
\cJ_m(C;\m^t)
=
\pi_*\cO_Y\bigl(-\lfloor\beta_m(t)\rfloor W_0\bigr).
\]
If \(\lfloor\beta_m(t)\rfloor\ge 0\), then Proposition~\ref{prop:blowup-vertex}(c)
gives
\(\pi_*\cO_Y(-\lfloor\beta_m(t)\rfloor W_0)=\m^{\lfloor\beta_m(t)\rfloor}\).
If \(\lfloor\beta_m(t)\rfloor<0\), then the same proposition gives
\(\pi_*\cO_Y(-\lfloor\beta_m(t)\rfloor W_0)=\cO_C\).  Combining the two cases
produces exactly the stated formula.
\end{proof}

For a normal variety \(X\) and a formal pair \((X,\mathfrak a^t)\), de Fernex--Hacon defined multiplier ideal
\(\cJ_{\mathrm{dFH}}(X;\mathfrak a^t)\) to be the unique maximal element among
\(\{\cJ_m(X;\mathfrak a^t)\}_{m\ge 1}\) as in
\cite[Proposition~4.7 and Definition~4.8]{dFH}.

I thank my advisor Karl Schwede for asking a question on an earlier draft which resulted the following Theorem \ref{thm:dFH-direct}.

\begin{theorem}(cf. \cite[Theorem 3.6]{Urbinati})\label{thm:Jmax-cone}\label{thm:dFH-direct}
Let \(C\) be the affine cone over \((W,A)\) as above, and let
\(\vartheta(W,A)\) be from
\S~\ref{def:theta}.  Then for every real number \(t\ge 0\),
\[
\cJ_{\mathrm{dFH}}(C;\m^t)
=
\m^{\max\{0,\lfloor t+1+\vartheta(W,A)\rfloor\}}.
\]
Equivalently, if \(K_W\) is pseudo-effective, then
\[
\cJ_{\mathrm{dFH}}(C;\m^t)
=
\m^{\lfloor t+t_0(W,A)+1\rfloor}.
\]
In particular, if $t_0\notin\Q$ then all jumping numbers of $\cJ_{dFH}(C;\m^t)$ are precisely the irrational numbers
\[
n+\kappa \qquad (n\in\Z_{\ge 0}).
\]
\end{theorem}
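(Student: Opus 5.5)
By \cite[Proposition~4.7, Definition~4.8]{dFH}, \(\cJ_{\mathrm{dFH}}(C;\m^t)\) is the unique maximal element of the family \(\{\cJ_m(C;\m^t)\}_{m\ge1}\). So I would start from the explicit formula of Theorem~\ref{thm:Jm-explicit}, \(\cJ_m(C;\m^t)=\m^{\max\{0,\lfloor t+1+q_m/m\rfloor\}}\), and determine which \(m\) realizes the maximum. Since \(\m^a\supseteq\m^b\) exactly when \(a\le b\), the maximal ideal in the family is \(\m^{e(t)}\), where
\[
e(t):=\min_{m\ge1}\max\{0,\lfloor t+1+q_m/m\rfloor\}.
\]
Here I use that \(\m^a\neq\m^b\) for \(a\neq b\), which holds because \(\m\) is the homogeneous maximal ideal of a positively graded ring generated in degree \(1\). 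The whole proof then reduces to showing \(e(t)=\max\{0,\lfloor t+1+\vartheta\rfloor\}\) with \(\vartheta=\vartheta(W,A)\).

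\emph{Lower bound.} From the proof of Lemma~\ref{lem:qm-limit} we have \(q_m/m\ge\vartheta\) for every \(m\), because \(q_mA-mK_W\) is effective and hence pseudo-effective. Both \(\lfloor\cdot\rfloor\) and \(\max\{0,\cdot\}\) are monotone, so every term in the minimum is at least \(\max\{0,\lfloor t+1+\vartheta\rfloor\}\).

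\emph{Upper bound.} This is the only step that needs care. Set \(N:=\lfloor t+1+\vartheta\rfloor\), so that \(t+1+\vartheta<N+1\). Lemma~\ref{lem:qm-limit} gives \(q_m/m\to\vartheta\), so there is an \(m\) with \(t+1+q_m/m<N+1\), i.e.\ \(\lfloor t+1+q_m/m\rfloor\le N\). This \(m\) attains the bound, hence \(e(t)=\max\{0,N\}\). The strict inequality is what makes this work: it would fail only if \(t+1+\vartheta\) were an integer that \(q_m/m\) approaches from above without ever reaching. That cannot happen, because the inequality \(t+1+\vartheta<N+1\) is strict by the definition of the floor, and convergence then gives the needed \(m\). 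Note that the infimum \(\inf_m q_m/m=\vartheta\) need not be attained, and the argument does not require it to be.

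\emph{Remaining statements.} If \(K_W\) is pseudo-effective then \(\vartheta=t_0\ge0\) by Remark~\ref{lem:theta=t0}. Then \(t+1+t_0\ge1\), so the \(\max\) with \(0\) is redundant, which gives the second formula. For the jumping numbers, the function \(t\mapsto\m^{\lfloor t+1+t_0\rfloor}\) is right-continuous and changes exactly when \(t+1+t_0\) crosses an integer, i.e.\ at \(t=n-t_0=(n-1)+(1-t_0)\). Since \(\kappa=1-t_0\), these are the points \(n'+\kappa\) with \(n'\ge0\); I would check the \(n'\ge0\) range against \(t\ge0\) using \(t_0<1\), which holds because \(\kappa\in(0,1)\). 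Each of these values is irrational whenever \(t_0\notin\Q\).
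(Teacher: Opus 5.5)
Your argument is correct and is essentially the paper's proof. Both feed Theorem~\ref{thm:Jm-explicit} the two facts $q_m/m\ge\vartheta(W,A)$ for every $m$ and $q_m/m\to\vartheta(W,A)$, so that $\m^{\max\{0,\lfloor t+1+\vartheta\rfloor\}}$ contains every $\cJ_m(C;\m^t)$, equals one of them, and is therefore the dFH maximal element. Your aside about $q_m/m$ approaching an integer ``from above'' is muddled: approach from above is exactly the harmless direction, by right-continuity of the floor. Nothing in your argument depends on that aside, and your treatment of the jumping-number clause, using $\m^a\neq\m^b$ and $0<t_0<1$, supplies a step the paper's proof leaves implicit.
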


\begin{proof} By Theorem~\ref{thm:Jm-explicit}, these are
powers of the maximal ideal:
\[
\cJ_m(C;\m^t)=\m^{a_m(t)},
\qquad
 a_m(t):=\max\{0,\lfloor t+1+q_m/m\rfloor\}.
\]
By Lemma~\ref{lem:qm-limit}, \(q_m/m\to\vartheta(W,A)\).  Therefore, for all
sufficiently large \(m\),
\[
\lfloor t+1+q_m/m\rfloor=\lfloor t+1+\vartheta(W,A)\rfloor,
\]
so eventually
\[
a_m(t)=\max\{0,\lfloor t+1+\vartheta(W,A)\rfloor\}.
\]
Set
\[
e(t):=\max\{0,\lfloor t+1+\vartheta(W,A)\rfloor\}.
\]
Then for all sufficiently divisible \(m\) one has \(\cJ_m(C;\m^t)=\m^{e(t)}\).
On the other hand, since each \(q_m/m\ge \vartheta(W,A)\), we have
\(a_m(t)\ge e(t)\) for every \(m\), hence
\(\cJ_m(C;\m^t)\subseteq \m^{e(t)}\) for every \(m\).  Thus \(\m^{e(t)}\) is both
an upper bound for the family \(\{\cJ_m(C;\m^t)\}_m\) and one of its elements.
It is therefore the unique maximal element.

If \(K_W\) is pseudo-effective, then \(\vartheta(W,A)=t_0(W,A)\) by
Lemma~\ref{lem:theta=t0}.  This gives the second formula.
\end{proof}

\begin{proposition}\label{prop:boundary-realization}
Assume that \(W\) is smooth projective over an infinite field and that
\(K_W\) is pseudo-effective, so \(\vartheta(W,A)=t_0(W,A)\).  Fix \(t\ge 0\).
Then there exists an effective \(\Q\)-divisor \(\Gamma\) on \(C\) such that
\(K_C+\Gamma\) is \(\Q\)-Cartier and
\[
\cJ\bigl(C,\Gamma;\m^t\bigr)=\cJ_{\mathrm{dFH}}(C;\m^t).
\]
More concretely, one may take \(\Gamma=\Gamma_s\) from Proposition \ref{prop:Gamma-s} for any rational number
\(s>t_0(W,A)\) sufficiently close to \(t_0(W,A)\)  construction.
\end{proposition}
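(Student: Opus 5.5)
The plan is to compare the two explicit formulas already proved. Proposition~\ref{prop:J-Gamma-s} gives
\[
\cJ(C,\Gamma_s;\m^t)=\m^{\floor{s+1+t}}
\]
for every rational $s>t_0(W,A)$. Theorem~\ref{thm:dFH-direct}, together with $\vartheta=t_0$ when $K_W$ is pseudo-effective, gives
\[
\cJ_{\mathrm{dFH}}(C;\m^t)=\m^{\floor{t+1+t_0(W,A)}} .
\]
Here $t+1+t_0\ge 1>0$, so the $\max\{0,\cdot\}$ in that theorem is inactive. It therefore suffices to choose $s$ with
\[
\floor{s+1+t}=\floor{t_0+1+t}.
\]

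To find such an $s$, put $x:=t+1+t_0(W,A)$ and $n:=\floor{x}$, so $n\le x<n+1$. The interval $(x,n+1)$ is nonempty, so it contains rational numbers. For any rational $s$ with $t_0<s<t_0+(n+1-x)$ we get $n\le x<s+1+t<n+1$, hence $\floor{s+1+t}=n$. Thus every rational $s$ in the open interval $(t_0,\,t_0+\epsilon)$, with $\epsilon:=n+1-x>0$, works. This is the precise meaning of ``sufficiently close''. Note that this side of the interval is always available because $\floor{\cdot}$ is right-continuous, so no assumption that $t_0$ is irrational is needed.

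The one real obstacle is that Proposition~\ref{prop:Gamma-s} requires $\Delta_s$ from Lemma~\ref{lem:boundary-near}, which needs $sA-K_W$ to be \emph{ample}, not merely $s>t_0$. I would handle this as in the proof of Lemma~\ref{lem:qm-limit}. For $s>t_0=\vartheta$, pick a rational $r$ with $\vartheta<r<s$. Then
\[
sA-K_W=(s-r)A+(rA-K_W)
\]
is ample plus pseudo-effective, hence big.

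Bigness alone does not give ampleness. I would therefore either check that the construction in Proposition~\ref{prop:Gamma-s} only needs a smooth member (or an SNC member with small coefficients) of $|m(sA-K_W)|$, writing a big class as ample plus effective and scaling the effective part by a small coefficient, or note that in the paper's examples (abelian and K3 surfaces) the pseudo-effective and nef cones coincide, so $\overline{\Eff}=\Nef$ and $sA-K_W$ lies in the interior of the nef cone, i.e.\ is ample, for every $s>t_0$. I expect to use the second route, perhaps stated as a hypothesis that $sA-K_W$ is ample for all $s$ slightly above $t_0$. With that in place, $\Gamma_s$ exists, $K_C+\Gamma_s$ is $\Q$-Cartier, and the two displayed ideals coincide, which proves the proposition.
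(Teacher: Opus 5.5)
Your floor comparison is exactly the paper's proof. The paper picks a rational $s>t_0(W,A)$ with $\floor{t+s+1}=\floor{t+t_0(W,A)+1}$ and then cites Proposition~\ref{prop:J-Gamma-s} and Theorem~\ref{thm:dFH-direct}. Your explicit window $t_0<s<t_0+(n+1-x)$ is correct, as are the remark that only right-continuity of $\floor{\cdot}$ is used and the check that the $\max\{0,\cdot\}$ is inactive.

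The ampleness issue you raise is real, and the paper does not address it. It invokes Proposition~\ref{prop:Gamma-s} as if that proposition applied to every rational $s>t_0$, but the construction goes through Lemma~\ref{lem:boundary-near}, which assumes $sA-K_W$ is ample. So your extra hypothesis makes explicit something the paper uses silently; it is not a weakness of your route.

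Your two proposed fixes need correcting, though.

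\emph{First route.} This does not work in general. Scaling the effective part of ``ample $+$ effective'' gives small coefficients only for classes that are big \emph{and nef}. For instance, on a surface, suppose $t_0A-K_W=cC'$ with $C'^2<0$ and $c>1$. Then the Zariski negative part of $sA-K_W=cC'+(s-t_0)A$ is $x_sC'$ with $x_s\to c$ as $s\downarrow t_0$. Every effective $\Q$-divisor $\Q$-linearly equivalent to $sA-K_W$ dominates this negative part, so for $s$ near $t_0$ no $\Delta_s$ with coefficients $<1$ exists.

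\emph{Second route.} This is the right one, but argue it on $X$, not on $W$. In the paper's examples $W$ is the double cover $f\colon W\to X$ of an abelian variety or K3 surface, and $\psef(W)=\Nef(W)$ is never established. Instead, $sA-K_W\sim_{\Q} f^*(sL-M)$ by Proposition~\ref{prop:double-cover}, since $K_X\sim 0$. For $s>t_0$, the class $sL-M=(t_0L-M)+(s-t_0)L$ is nef plus ample on $X$ because $\psef(X)=\Nef(X)$, so it is ample. Its pullback under the finite morphism $f$ is then ample.

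With this justification your argument is complete in every case where the paper actually uses the proposition.
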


\begin{proof}
Fix \(t\ge 0\).  Choose a rational number \(s>t_0(W,A)\) so close to
\(t_0(W,A)\) that
\[
\lfloor t+s+1\rfloor=\lfloor t+t_0(W,A)+1\rfloor.
\]
Rest follows by Proposition \ref{prop:J-Gamma-s} and Theorem~\ref{thm:dFH-direct}.
\end{proof}

Now we prove ray-wise agreement $\tau_b=\mathcal J_{dFH}$ away from $\kappa$. The main difference to Lemma \ref{cor:pair-asymptotic-ray} is that this is valid \emph{for any $\alpha\in[0,1)$ with $\alpha\neq \kappa :=1-t_0(W,A)$.}

\begin{proposition}\label{prop:raywise-agreement}
Fix any $\alpha\in[0,1)$ with $\alpha\neq \kappa:=1-t_0(W,A)$.
Then there exists an integer $N(\alpha)$ such that for every integer $n\ge N(\alpha)$,
\[
\tau_b(R;\mathfrak m^{n+\alpha})
=
\mathcal J_{dFH}(R;\mathfrak m^{n+\alpha})
=
\mathfrak m^{\lfloor n+\alpha+t_0+1\rfloor}.
\]
\end{proposition}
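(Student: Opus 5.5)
The plan is to sandwich $\tau_b(R;\m^{n+\alpha})$ between two copies of $\m^{\lfloor n+\alpha+t_0+1\rfloor}$. The upper bound uses Lemma~\ref{lem:tau-in-J} together with the de Fernex--Hacon bound. The lower bound uses a single boundary $\Gamma_s$, chosen according to $\alpha$, together with Proposition~\ref{prop:tau-Gamma-s}. Throughout, $K_W$ is pseudo-effective (as in all our examples), so $\vartheta=t_0$. The equality $\cJ_{\mathrm{dFH}}(R;\m^{n+\alpha})=\m^{\lfloor n+\alpha+t_0+1\rfloor}$ is then just Theorem~\ref{thm:dFH-direct}, so only the statement about $\tau_b$ needs proof.

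\emph{Lower bound.} Since $\alpha\neq\kappa=1-t_0$ and $\alpha\in[0,1)$, the number $\alpha+t_0$ is not an integer. Indeed, $t_0\in[0,1)$ in our situation, so the only possible integer values of $\alpha+t_0$ are $1$ and $0$. The value $1$ is excluded because it would force $\alpha=\kappa$. The value $0$ requires $\alpha=t_0=0$, which I will treat separately or exclude since $t_0$ is irrational. Hence there is a rational $s>t_0$ with $\lfloor \alpha+s\rfloor=\lfloor\alpha+t_0\rfloor$, and we may shrink $s$ so that $sA-K_W$ is ample. Such $s$ exist arbitrarily close to $t_0$: $t_0A-K_W$ is pseudo-effective and $A$ is ample, so $(s-t_0)A+(t_0A-K_W)$ is big, and after perturbing we arrange ampleness as in the constructions preceding Proposition~\ref{prop:Gamma-s}. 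Build $\Gamma_s$ as in Proposition~\ref{prop:Gamma-s}, choosing $m$ prime to $p$ so that the index of $K_C+\Gamma_s$ is prime to $p$ and $\Gamma_s$ is admissible in Theorem~\ref{def:big-test}. Proposition~\ref{prop:tau-Gamma-s} gives $T_s$ such that for $n\ge T_s$
\[
\tau(R,\Gamma_s;\m^{n+\alpha})=\m^{\lfloor n+\alpha+s+1\rfloor}=\m^{\lfloor n+\alpha+t_0+1\rfloor}.
\]
By Schwede's sum formula this ideal is contained in $\tau_b(R;\m^{n+\alpha})$. Set $N(\alpha):=T_s$.

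\emph{Upper bound.} For every admissible $\Delta$, Lemma~\ref{lem:tau-in-J} gives $\tau(R,\Delta;\m^{t})\subseteq\cJ(R,\Delta;\m^t)$. I then need $\cJ(R,\Delta;\m^t)\subseteq\cJ_{\mathrm{dFH}}(R;\m^t)$, which is the standard fact from \cite{dFH} that $\cJ_{\mathrm{dFH}}$ contains all boundary multiplier ideals. To stay self-contained, I can argue directly on the blowup $Y$. Write $\pi^*(K_C+\Delta)=K_Y+\Delta_Y+kW_0$. Lemma~\ref{lem:k-ineq} gives $k\ge 1+t_0$, so along $W_0$ the coefficient of $K_Y-\pi^*(K_C+\Delta)-tW_0$ is at most $-(1+t_0+t)$. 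Any section of $\cJ(R,\Delta;\m^t)$, computed on a log resolution factoring through $Y$, vanishes along $W_0$ to order at least $\lceil 1+t_0+t\rceil-1$. For irrational $t_0+t$ this equals $\lfloor t+t_0+1\rfloor$. Since $\m^j=\pi_*\cO_Y(-jW_0)$ by Proposition~\ref{prop:blowup-vertex}(c), this yields the containment in $\m^{\lfloor t+t_0+1\rfloor}$. Summing over $\Delta$ gives $\tau_b(R;\m^{n+\alpha})\subseteq\m^{\lfloor n+\alpha+t_0+1\rfloor}$.

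\emph{Main obstacle.} I expect the main obstacle to be the upper bound. Two issues arise there. First, $\Delta_Y$ need not be effective along other divisors over the vertex, nor need $\Delta_Y$ be SNC. Second, the order-of-vanishing estimate must be made on a further resolution $g:\widetilde Y\to Y$ rather than on $Y$ itself. I would handle this as in the proof of Theorem~\ref{thm:Jm-explicit}: compare with $g^*$ of the $W_0$-coefficient and use $\ord_{W_0}$ of the pushforward. It also suffices to use only the valuation $\ord_{W_0}$, because $\m^j$ is exactly the set of functions with $\ord_{W_0}\ge j$. The exceptional case $\alpha+t_0\in\Z$ does not arise once $t_0$ is irrational, which is the only case we need.
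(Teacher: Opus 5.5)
Your proposal follows the paper's proof. The lower bound comes from the single admissible boundary $\Gamma_s$, with $s>t_0$ rational and close enough that $\lfloor\alpha+s\rfloor=\lfloor\alpha+t_0\rfloor$, via Proposition~\ref{prop:tau-Gamma-s} and Schwede's sum. The upper bound comes from Lemma~\ref{lem:tau-in-J} together with the $\ord_{W_0}$ estimate from Lemma~\ref{lem:k-ineq}, which is exactly Lemma~\ref{lem:upper-bound-all}. Your worry about $\Delta_Y$ does not arise, since $\Delta_Y=\pi^{-1}_*\Delta\ge 0$ and $W_0$ is the only divisor over the vertex on $Y$.

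A few small fixes:
\begin{enumerate}
\item Ampleness of $sA-K_W$ does not follow by ``perturbing'' a big class. In the examples, however, it holds for every $s>t_0$, because $sA-K_W=f^*(sL-M)$ and $\overline{\Eff}(X)=\Nef(X)$.
\item You should choose $s$ with denominator prime to $p$, as the paper does. Taking $m$ prime to $p$ with $ms\in\Z$ forces this.
\item The condition that matters in the upper bound is $t_0+t\notin\Z$, not irrationality. In fact $\lfloor k+t\rfloor\ge\lfloor t_0+1+t\rfloor$ holds unconditionally.
\end{enumerate}
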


\begin{proof}
Set $\delta:=\{\alpha+t_0+1\}\in(0,1)$ \footnote{this is where we use $\alpha\neq \kappa$}.
Choose a rational number $s>t_0$ satisfying
\begin{equation}\label{eq:choose-s-close}
0<s-t_0< 1-\delta,
\end{equation}
and with denominator not divisible by $p$ \footnote{such rationals are dense in $\R$}.
For any integer $n\ge 0$, we have
\[
\lfloor n+\alpha+s+1\rfloor
=
n+\lfloor \alpha+s+1\rfloor,
\qquad
\lfloor n+\alpha+t_0+1\rfloor
=
n+\lfloor \alpha+t_0+1\rfloor.
\]
Since $s>t_0$, we have $\alpha+s+1>\alpha+t_0+1$, hence
$\lfloor \alpha+s+1\rfloor\ge \lfloor \alpha+t_0+1\rfloor$.
On the other hand, using \eqref{eq:choose-s-close} we have
\[
\alpha+s+1
<
\alpha+t_0+1 + (1-\delta)
=
\lfloor \alpha+t_0+1\rfloor+\delta + (1-\delta)
=
\lfloor \alpha+t_0+1\rfloor+1,
\]
so $\lfloor \alpha+s+1\rfloor\le \lfloor \alpha+t_0+1\rfloor$.
Therefore we have equality:
\begin{equation}\label{eq:floor-equality}
\lfloor n+\alpha+s+1\rfloor=\lfloor n+\alpha+t_0+1\rfloor
\qquad\text{for all integers }n\ge 0.
\end{equation}

Now apply Proposition~\ref{prop:tau-Gamma-s}
to the boundary $\Gamma_s$ to get an integer $N(\alpha) = T_s$ such that for all $n\ge N(\alpha,s)$,
\[
\tau\bigl(R;\Gamma_s,\mathfrak m^{n+\alpha}\bigr)
=
\mathfrak m^{\lfloor n+\alpha+s+1\rfloor}.
\]
Using \eqref{eq:floor-equality} and Theorem \ref{thm:Jmax-cone}, this becomes
\[
\tau\bigl(R;\Gamma_s,\mathfrak m^{n+\alpha}\bigr)
=
\mathfrak m^{\lfloor n+\alpha+t_0+1\rfloor}
=
\mathcal J_{dFH}(R;\mathfrak m^{n+\alpha}).
\]
Now using Schwede's sum
\cite[Cor. 5.2]{SchwedeBig}, we get one containment
\[
\tau_b(R;\mathfrak m^{n+\alpha})
\supseteq
\tau\bigl(R;\Gamma_s,\mathfrak m^{n+\alpha}\bigr)
=
\mathcal J_{dFH}(R;\mathfrak m^{n+\alpha}).
\]
The reverse containment
$\tau_b(R;\mathfrak m^{n+\alpha})\subseteq \mathcal J_{dFH}(R;\mathfrak m^{n+\alpha})$
is just Lemma \ref{lem:tau-in-J}.
Thus equality holds for all $n\ge N(\alpha)$.
\end{proof}

\begin{remark}\label{rmk:kappa-ray}
For $\alpha=\kappa$, Proposition~\ref{prop:big-values} already computes $\tau_b(R;\mathfrak m^{n+\kappa})$ yields the same equality
$\tau_b(R;\mathfrak m^{n+\kappa})=\mathcal J_{dFH}(R;\mathfrak m^{n+\kappa})$
for all $n\gg 0$.
\end{remark}

\subsection{What this buys me}

The punchline of this section is the following: if $t_0(W,A)$ is irrational and $s>t_0$ is rational but extremely close to $t_0$,
then the explicit formula $\m^{\floor{s+1+t}}$ forces the ideal to jump at exponents very close to the irrational numbers
\[
n+\kappa,\qquad \kappa:=1-t_0(W,A).
\]
The remaining work is to produce surfaces with irrational $t_0(W,A)$ and then upgrade ``jumps in shrinking intervals'' to actual irrational jumping numbers for $\taub$.
That is the content of the next sections.

\section{Example I: one-parameter family on $E^g, \ \ g \ge 2$}\label{sec:examples-abelian-EE}

The first half of this section produces a smooth surface $W$ and an ample divisor $A$ on $W$, very similar to \cite{Urbinati}, for which $t_0(W,A)$ is irrational. When the parameter $k=2$ below, this specializes to Urbinati's original example \cite{Urbinati}.

Let $E$ be an elliptic curve over a field $k$ \emph{without complex multiplication}.
Let $X=E\times E$.
Let $f_1$ and $f_2$ denote the numerical classes of fibers of the two projections,
and let $\delta$ denote the class of the diagonal.
Then $\NS(X)_\R$ contains the span of $\{f_1,f_2,\delta\}$ with intersection numbers (\cite[Remark 3.4]{BauerBorntraeger18})
\[
f_1^2=f_2^2=\delta^2=0,\qquad
f_1\cdot f_2=f_1\cdot \delta=f_2\cdot \delta=1.
\]
In particular, for any class $D=af_1+bf_2+c\delta$ one has
\[
D^2 = 2(ab+ac+bc).
\]
I will follow \cite{BirkenhakeLange,Rosen-Shnidman} closely in this section.
\begin{lemma}(\cite{Bauer-AbVar,BirkenhakeLange,LazPosI}.)\label{lem:psef-nef-abelian}
Let $X$ be an abelian surface over any field.
Then every effective divisor on $X$ is nef.
Consequently, the pseudo-effective cone and the nef cone coincide: $\psef(X)=\nef(X)$ and its boundary inside the positive cone is cut out by the isotropic condition $D^2=0$.

\end{lemma}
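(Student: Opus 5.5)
The plan is to reduce everything to two standard facts: effective divisors on an abelian surface move freely under translation, and a limit of nef classes is nef. From these the cone statements follow formally, and the boundary description then comes from Hodge index.

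\textbf{Step 1: effective divisors are nef.} Let $D\ge 0$ be effective on $X$ and $C\subset X$ an irreducible curve. I would check $D\cdot C\ge 0$ by translating. For $x\in X(\bar k)$, the translate $t_x^*D$ is algebraically equivalent to $D$, because $\mathrm{Pic}^0$ is connected and the classes $t_x^*\mathcal L\otimes\mathcal L^{-1}$ lie in $\mathrm{Pic}^0(X)$. Hence $t_x^*D\equiv D$ numerically. Choose $x$ so that no component of $t_x^*D$ contains $C$; this is possible because the set of bad $x$ is a proper closed subset. Then
\[
D\cdot C=t_x^*D\cdot C\ge 0.
\]
Intersection numbers are invariant under base change, so we may pass to $\bar k$ for this argument, which covers the ``any field'' clause. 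Since $C$ was arbitrary, $D$ is nef.

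\textbf{Step 2: the cones coincide.} Step 1 gives $\Eff(X)\subseteq\nef(X)$. The nef cone is closed, so taking closures gives $\psef(X)\subseteq\nef(X)$. The inclusion $\nef(X)\subseteq\psef(X)$ holds on any projective variety: if $N$ is nef and $H$ is ample, then $N+\epsilon H$ is ample, hence some multiple is effective, and letting $\epsilon\to 0$ shows $N\in\psef(X)$. Therefore $\psef(X)=\nef(X)$.

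\textbf{Step 3: the boundary.} I would first show that a nef class $D$ satisfies $D^2\ge 0$, being a limit of ample classes. Fix an ample class $H$. A class with $D^2>0$ and $D\cdot H>0$ is big and nef, and in fact ample, since it has positive degree on every curve $C$:
\begin{enumerate}[label=(\roman*),leftmargin=2.4em]
  \item if $C^2>0$, Hodge index gives $D\cdot C>0$;
  \item if $C^2=0$, Hodge index again forces $D\cdot C>0$, unless $C\equiv 0$, which is impossible for a curve.
\end{enumerate}
Here we use that every curve on $X$ has $C^2\ge 0$, which again follows from Step 1, and Nakai--Moishezon. Conversely, any class with $D^2\ge 0$ and $D\cdot H>0$ lies in the closure of the positive cone, and each such class is nef by the same argument applied to nearby classes. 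Thus $\nef(X)$ equals the closed positive cone component $\{D^2\ge 0,\ D\cdot H\ge 0\}$, and its boundary is exactly the isotropic locus $\{D^2=0\}$.

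\textbf{Main obstacle.} The one genuinely geometric step is the translation argument in Step 1: showing that $t_x^*D$ is numerically equivalent to $D$, and that a general translate meets $C$ properly. Over an arbitrary field I would pass to $\bar k$ and use the theorem of the square to get $t_x^*\mathcal L\otimes\mathcal L^{-1}\in\mathrm{Pic}^0(X)$, which is numerically trivial.
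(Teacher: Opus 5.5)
Your proof is correct, and it is the standard argument: translate so that $C\not\subset\Supp t_x^*D$, then apply Hodge index to the closed positive cone. This is the proof found in the sources the paper cites, such as Lazarsfeld's \emph{Positivity I}; the paper gives no proof of its own beyond those citations. One ingredient is superfluous: Nakai--Moishezon in Step 3, which for $\R$-classes would need extra justification, can be dropped, since $D\cdot C>0$ for every curve $C$ already gives nefness, and that is all the equality $\Nef(X)=\{D^2\ge 0,\ D\cdot H\ge 0\}$ requires.
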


Since $K_X\sim 0$, the threshold computations reduce to analyzing when a divisor becomes nef, equivalently when it lies in the closure of the positive cone.

\begin{lem}(\cite{Bauer-Schulz,BauerBorntraeger18})\label{lem:nef-cone-exex}
Assume $\operatorname{End}(E)=\Z$.
Let $X:=E\times E$ and write a real divisor class as $D=af_1+bf_2+c\delta\in NS(X)_\R$.
Under the identification of $NS(X)_\R$ with the space of symmetric $2\times 2$ real matrices given by
\[
D\longmapsto
H(D):=
\begin{pmatrix}
a+c & c\\
c & b+c
\end{pmatrix},
\]
one has $D^2=2\det(H(D))$.
Moreover, $D$ is nef if and only if $H(D)$ is positive semidefinite, equivalently
\[
a+c\ge 0,\qquad b+c\ge 0,\qquad ab+ac+bc\ge 0.
\]
\end{lem}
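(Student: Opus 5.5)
Three things need to be established: the identity $D^2=2\det H(D)$; the identification of $NS(X)_\R$ with the span of $f_1,f_2,\delta$; and the description of the nef cone.

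\textbf{The intersection identity.} This is a direct computation from the intersection numbers recorded above. We have $D^2=2(ab+ac+bc)$, while
\[
\det H(D)=(a+c)(b+c)-c^2=ab+ac+bc .
\]
No further input is needed.

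\textbf{The Néron--Severi group.} We use the standard description $NS(E\times E)\cong \Z^2\oplus \Hom(E,E)$ (see \cite{BirkenhakeLange}). Since $\End(E)=\Z$, it has rank $3$, so $\{f_1,f_2,\delta\}$ spans $NS(X)_\R$. The map $D\mapsto H(D)$ is then a linear isomorphism onto the $3$-dimensional space of real symmetric $2\times 2$ matrices; it is visibly injective. Alternatively, one can identify $NS(X)_\R$ with Hermitian (here symmetric, as there is no CM) forms on the tangent space, in the sense of Appell--Humbert; under that identification the matrix is exactly $H(D)$ in the basis dual to the two factors. As a check, $f_1\mapsto \mathrm{diag}(1,0)$ and $f_2\mapsto\mathrm{diag}(0,1)$.

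\textbf{The nef cone.} By Lemma~\ref{lem:psef-nef-abelian}, $\nef(X)=\psef(X)$ equals the closure of the component of the positive cone $\{D^2>0\}$ containing an ample class. A real symmetric $2\times2$ matrix is positive semidefinite if and only if its diagonal entries and its determinant are all $\ge 0$. The set of $H$ with $\det H\ge0$ consists of two cones, $\pm$ the PSD cone. The ample class $f_1+f_2$ maps to the identity matrix, so the positive component corresponds to positive definite matrices, and taking closure gives positive semidefinite ones. Translating back, $D$ is nef if and only if $a+c\ge0$, $b+c\ge0$ and $ab+ac+bc\ge0$.

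The step requiring care is to justify that the nef cone is exactly the closure of one component of $D^2>0$, using Lemma~\ref{lem:psef-nef-abelian} together with the Hodge index theorem (Picard number $3$, signature $(1,2)$). Once this is in place, checking which component contains $f_1+f_2$ is immediate.
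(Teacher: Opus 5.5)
Your proposal is correct. The paper gives no argument of its own for this lemma; it is simply quoted from \cite{Bauer-Schulz,BauerBorntraeger18}. So your write-up is a self-contained reconstruction of the cited result, not a variant of a proof in the text.

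Its ingredients are:
\begin{itemize}
\item the identity $D^2=2\det H(D)$, read off from the intersection table;
\item $\rho(X)=3$ from $\NS(E\times E)\cong\Z^2\oplus\End(E)$;
\item $\nef(X)=\overline{P^+}$, the closure of the positive-cone component containing $f_1+f_2$.
\end{itemize}
One small addition to the second point: to conclude that $f_1,f_2,\delta$ form a basis you also need them to be independent. This holds because their Gram matrix has determinant $2$.

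The step you flag closes just as you suggest. Nef classes satisfy $D^2\ge 0$ and $D\cdot A\ge 0$, so $\nef(X)\subseteq\overline{P^+}$. Conversely, every irreducible curve is nef by Lemma~\ref{lem:psef-nef-abelian}, hence lies in $\overline{P^+}$. Two classes in $\overline{P^+}$ meet nonnegatively by the Hodge index theorem. In your matrix model this is just $D_1\cdot D_2=\operatorname{tr}\bigl(H(D_1)\operatorname{adj}H(D_2)\bigr)\ge 0$ for positive semidefinite $H(D_i)$.

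The advantage of your route is that every input holds in any characteristic: effective $\Rightarrow$ nef via translations, the N\'eron--Severi group of a product of curves, and Hodge index. That is what the paper actually needs, since it applies the lemma over fields of characteristic $p>2$. By contrast, your Appell--Humbert aside is specific to $\C$, and it is not needed.

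That aside also has a small sign slip. The diagonal is the pullback of a point under $(x,y)\mapsto x-y$, so its Hermitian form is $\begin{psmallmatrix}1&-1\\-1&1\end{psmallmatrix}$. The form of $D$ therefore has off-diagonal entries $-c$, and it agrees with $H(D)$ only after conjugating by $\mathrm{diag}(1,-1)$. This is harmless for positive semidefiniteness.
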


\subsection{A one-parameter family}

Fix an integer $k\ge 2$ and consider divisors
\[
L_k := 3(2f_1+f_2+k\delta),
\qquad
M := 3(f_1+f_2).
\]
We compute the pseudo-effective threshold
\[
t_0(X;L_k,M) := \inf\{t\ge 0 \mid tL_k - M \ \text{is pseudo-effective}\}.
\]
Because $\psef=\nef$ and $X$ is a surface, a nef divisor on the boundary of the nef cone has self-intersection zero.
Thus $t_0$ is the larger root of $(tL_k-M)^2=0$.

\begin{proposition}\label{prop:t0-EE}
With notation as above,
\[
L_k^2 = 18(3k+2),\qquad
L_k\cdot M = 9(2k+3),\qquad
M^2=18,
\]
and the threshold is
\[
t_0(X;L_k,M) = \frac{(2k+3)+\sqrt{4k^2+1}}{2(3k+2)}.
\]
In particular, $t_0(X;L_k,M)$ is irrational for every $k\ge 2$.
\end{proposition}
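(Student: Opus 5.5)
The plan is to compute the three intersection numbers directly, then read off $t_0$ as the larger root of a quadratic, using Lemma~\ref{lem:psef-nef-abelian} and Lemma~\ref{lem:nef-cone-exex} to justify that the nef/pseudo-effective boundary is exactly where $(tL_k-M)^2=0$.

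\textbf{Intersection numbers.} Apply the formula $D^2=2(ab+ac+bc)$ for $D=af_1+bf_2+c\delta$, together with bilinearity and the given table $f_i\cdot f_j$, $f_i\cdot\delta$. For $L_k/3=2f_1+f_2+k\delta$ we get $(L_k/3)^2=2(2+2k+k)=2(3k+2)$, hence $L_k^2=18(3k+2)$. Next, $(2f_1+f_2+k\delta)\cdot(f_1+f_2)=2+1+2k$, which gives $L_k\cdot M=9(2k+3)$. Finally, $(f_1+f_2)^2=2$, so $M^2=18$.

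\textbf{The threshold.} Set $D(t):=tL_k-M$. By Lemma~\ref{lem:psef-nef-abelian}, $D(t)$ is pseudo-effective iff it is nef. Since $\operatorname{End}(E)=\Z$, $\NS(X)_\R$ is spanned by $f_1,f_2,\delta$, so Lemma~\ref{lem:nef-cone-exex} applies. In coordinates $D(t)=(6t-3)f_1+(3t-3)f_2+3kt\,\delta$, and nefness means:
\begin{itemize}
\item the diagonal entries $6t-3+3kt$ and $3t-3+3kt$ are $\ge 0$;
\item $D(t)^2\ge 0$.
\end{itemize}
We have
\[
D(t)^2=18\bigl((3k+2)t^2-(2k+3)t+1\bigr),
\]
a quadratic with roots
\[
t_\pm=\frac{(2k+3)\pm\sqrt{4k^2+1}}{2(3k+2)}.
\]
For $t_-<t<t_+$ the determinant is negative, so $D(t)$ is not nef. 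For $t\ge t_+$ the determinant is $\ge 0$. I will check that the diagonal entries are positive at $t_+$ (the smaller entry is $3(k+1)t-3$, and $t_+>1/(k+1)$ holds since $t_+\ge \frac{2k+3+2k}{2(3k+2)}>\frac{1}{k+1}$); they are increasing in $t$, so $D(t)$ is nef for all $t\ge t_+$. Since $t_+>0$, this shows $t_0=t_+$, as stated.

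\textbf{Irrationality.} It suffices that $4k^2+1$ is not a perfect square. This follows from $(2k)^2<4k^2+1<(2k+1)^2$ for $k\ge 1$, so $\sqrt{4k^2+1}\notin\Q$ and $t_0$ is irrational.

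The main point requiring care, rather than computation, is the step from $D^2\ge0$ to nefness. One must ensure we are on the correct sheet of the cone $\{D^2\ge 0\}$ and that the Picard number is exactly $3$. Both are handled by the positive-semidefiniteness criterion of Lemma~\ref{lem:nef-cone-exex}, which is why I check the diagonal entries explicitly.
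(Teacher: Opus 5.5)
Your proposal is correct and follows the paper's route: the paper's conditions $D_t\cdot f_1\ge 0$ and $D_t\cdot f_2\ge 0$ are exactly your diagonal entries of $H(D_t)$, both arguments then take the larger root of $D_t^2=0$, and your bound $(2k)^2<4k^2+1<(2k+1)^2$ does the same job as the paper's factorization $(m-2k)(m+2k)=1$. One step to make explicit (the paper's claim that $D_t^2<0$ for $t<t_0$ is inaccurate here) is why no $t\in[0,t_-]$ qualifies even though $D(t)^2\ge 0$ there: either note that the entry $3(k+1)t-3$ is negative there since $t_-<\frac{1}{k+1}$, or observe that pseudo-effectivity of $D(t)$ would propagate to $D(t)+(t'-t)L_k$ for all $t'\ge t$, contradicting failure on $(t_-,t_+)$.
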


\begin{proof}
For $t\in\R$ write $D_t:=tL_k-M$.
Using the intersection table, we compute the intersections with the nef boundary classes:
\begin{align*}
D_t\cdot f_1 &= t(L_k\cdot f_1)-(M\cdot f_1)
= t(3+3k)-3
=3\big(t(1+k)-1\big),\\
D_t\cdot f_2 &= t(L_k\cdot f_2)-(M\cdot f_2)
= t(6+3k)-3
=3\big(t(2+k)-1\big),\\
D_t\cdot \delta &= t(L_k\cdot \delta)-(M\cdot \delta)
=9t-6
=3(3t-2).
\end{align*}
Thus $D_t\cdot f_1\ge 0$ for $t\ge 1/(1+k)$, $D_t\cdot f_2\ge 0$ for $t\ge 1/(2+k)$, and
$D_t\cdot\delta\ge 0$ for $t\ge 2/3$.
Since the threshold $t_0$ computed below is always $>2/3$, these inequalities are automatic at~$t=t_0$,
and nefness is detected by the remaining condition $D_t^2\ge 0$ from Lemma~\ref{lem:nef-cone-exex}.

We now compute $D_t^2$.
First,
\[
L_k^2=(6f_1+3f_2+3k\delta)^2
=2\big(18+18k+9k\big)=36+54k=18(3k+2),
\]
\[
L_k\cdot M=(6f_1+3f_2+3k\delta)\cdot(3f_1+3f_2)
=18+9+9k+9k=27+18k=9(2k+3),
\]
and $M^2=18$.
Therefore
\[
D_t^2=(tL_k-M)^2
=t^2L_k^2-2t(L_k\cdot M)+M^2
=18\Big((3k+2)t^2-(2k+3)t+1\Big).
\]
The quadratic in parentheses has discriminant
\[
(2k+3)^2-4(3k+2)=4k^2+1.
\]
For $k\ge 1$ this is not a perfect square: if $m^2=4k^2+1$, then
\[
(m-2k)(m+2k)=1
\]
forces $m=1$ and $k=0$, a contradiction.
Hence the larger real root is irrational and equals
\[
t_0=\frac{(2k+3)+\sqrt{4k^2+1}}{2(3k+2)}=\frac{(2k+3)+\sqrt{4k^2+1}}{6k+4}.
\]
Since $D_t^2<0$ for $t<t_0$ and $D_{t_0}^2=0$, Lemma~\ref{lem:nef-cone-exex} yields that
$D_t$ is nef if and only if $t\ge t_0$.
Since $4k^2+1$ is not a square for $k\ge 1$, the root is irrational.
\end{proof}

\begin{remark}
For $k=2$ this gives
\[
t_0=\frac{7+\sqrt{17}}{16},
\qquad
\kappa=1-t_0=\frac{9-\sqrt{17}}{16},
\]
which matches the quadratic irrational appearing in \cite{Urbinati}.
\end{remark}

We now modify the surface by taking a double cover branched in $|2M|$ so that $K_W$ is numerically nontrivial and aligned with $M$.

\begin{proposition}(\cite{BirkenhakeLange,Rosen-Shnidman})\label{prop:double-cover}
Let $X$ be a smooth projective variety over an infinite field $k$ with $\mathrm{char}(k)\ne 2$.
Let $M$ be an ample divisor on $X$ such that $|2M|$ contains a smooth member $B$.
Let $f:W\to X$ be the double cover branched along $B$.
Then $W$ is smooth and
\[
K_W \sim f^*(K_X+M).
\]
\end{proposition}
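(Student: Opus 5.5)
Since $2M$ is divisible by $2$ in $\mathrm{Pic}(X)$ and $B\in|2M|$, I would construct $W$ as the standard cyclic double cover and then compute its canonical class via Hurwitz.

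\emph{Construction.} Let $\mathcal L:=\cO_X(M)$ and let $s\in H^0(X,\mathcal L^{2})$ be the section cutting out $B$. Set
\[
W:=\mathbf{Spec}_X\bigl(\cO_X\oplus \mathcal L^{-1}\bigr),
\]
with algebra structure $\mathcal L^{-1}\otimes\mathcal L^{-1}\to\cO_X$ given by multiplication by $s$. The map $f:W\to X$ is finite and flat of degree $2$, with $f_*\cO_W=\cO_X\oplus\mathcal L^{-1}$. Locally on an open set where $\mathcal L$ is trivialized, $W$ is $\{z^2=g\}\subset U\times\A^1$, where $g$ is a local equation of $B$.

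\emph{Smoothness.} This is checked with the Jacobian criterion, using $\mathrm{char}(k)\ne2$. The partial derivatives of $z^2-g$ are $(2z,-\partial g)$. At a point with $z\neq0$, the derivative $2z$ is a unit. At a point with $z=0$, we have $g=0$, so the point lies on $B$; since $B$ is smooth, $dg\neq0$ there. Hence $W$ is smooth. Away from $B$ the map $f$ is étale, and over $B$ it is totally ramified along the reduced divisor $R:=f^{-1}(B)_{\mathrm{red}}$, with $f^*B=2R$.

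\emph{Canonical class.} I would compute this by duality for the finite flat morphism $f$:
\[
f_*\omega_W\cong \mathcal{H}om_{\cO_X}(f_*\cO_W,\omega_X)=\omega_X\oplus\omega_X\otimes\mathcal L.
\]
It follows that $\omega_W\cong f^*(\omega_X\otimes\mathcal L)$, that is, $K_W\sim f^*(K_X+M)$.

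The one point needing care is that last step: one must verify that the $f_*\cO_W$-module structure on $\mathcal Hom(f_*\cO_W,\omega_X)$ makes it the pullback line bundle, not just some rank-one module. There are two ways to do this.
\begin{itemize}[leftmargin=2.4em]
  \item \emph{Ramification formula.} Since $f$ is separable (as $\mathrm{char}\ne2$) and tamely ramified with index $2$ along $R$, we have $K_W=f^*K_X+R$. Using $2R=f^*B\sim f^*(2M)$, we get $R\sim f^*M$: the function $z$ is a section of $f^*\mathcal L$ vanishing exactly along $R$.
  \item \emph{Local generator.} The generator of $\mathcal Hom$ dual to $z$ generates the module locally, and multiplication by $z$ acts compatibly.
\end{itemize}
Either argument gives the stated isomorphism, and the rest is routine.
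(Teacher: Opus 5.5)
Your proof is correct and is the standard cyclic double-cover construction with the Hurwitz computation; the paper does not write this out and simply defers to \cite{Pardini91,BirkenhakeLange,Rosen-Shnidman}, and your first bullet ($\operatorname{div}(z)=R$, so $R\sim f^*M$ and $K_W\sim f^*K_X+R$) settles the canonical class by itself, making the duality step optional. Your choice of $\cO_X(M)$ as the square root of $\cO_X(B)$ is exactly what gives linear rather than merely numerical equivalence: other square roots differ by $2$-torsion in $\operatorname{Pic}(X)$, which is nonzero for the abelian varieties used later, though only the numerical class matters for the paper's threshold computations.
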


\begin{proof}
This is standard, see \cite{Pardini91,BirkenhakeLange,Rosen-Shnidman}.
\end{proof}

We apply this with $X=E\times E$ (so $K_X\sim 0$) and the divisor $M=3(f_1+f_2)$, which is ample and $|2M|$ is globally generated, see \cite[Theorem (1)]{PP}.
Over an infinite field, Bertini guarantees a smooth $B\in|2M|$. Set $A:=f^*L_k$ on $W$.
Then $A$ is ample and $K_W\sim f^*M$.

\begin{lemma}(cf. \cite{Urbinati})\label{lem:finite-threshold}
Let \(f:W\to X\) be a finite surjective morphism of smooth projective varieties
of arbitrary dimension.  Let \(L\) and \(M\) be Cartier divisors on \(X\), and
set \(A:=f^*L\), \(B:=f^*M\).  Then
\[
\inf\{t\in\R\mid tA-B\in\overline{\Eff}(W)\}
=
\inf\{t\in\R\mid tL-M\in\overline{\Eff}(X)\}.
\]
In particular, pseudo-effective threshold $\vartheta(W,A)$ remains invariant under finite cover.
\end{lemma}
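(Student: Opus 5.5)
We prove the equality of thresholds by showing that the two sets $\{t: tA-B\in\psef(W)\}$ and $\{t: tL-M\in\psef(X)\}$ coincide. Since $tA-B=f^*(tL-M)$, it suffices to show the following claim for every $\R$-divisor class $D$ on $X$:
\[
D\in\psef(X)\iff f^*D\in\psef(W).
\]
The final assertion then follows. Apply the claim with $L$ ample, so $A=f^*L$ is ample because $f$ is finite. Take $M=K_X+M'$ in the double cover situation, so $K_W\sim f^*M$ by Proposition~\ref{prop:double-cover}, which gives $\vartheta(W,A)$ equal to the corresponding threshold on $X$.

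For the forward implication, suppose $D$ is pseudo-effective, i.e.\ a limit of effective $\R$-divisor classes $D_i$. Pullback $f^*:N^1(X)_\R\to N^1(W)_\R$ is linear, hence continuous. It sends effective divisors to effective divisors, because $f$ is surjective and so no component of an effective Cartier divisor pulls back to something non-effective. Therefore $f^*D=\lim f^*D_i$ is pseudo-effective.

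For the reverse implication, use the pushforward of cycles $f_*:N^1(W)_\R\to N^1(X)_\R$. Both varieties are smooth, so Weil divisors are Cartier and proper pushforward of codimension-one cycles descends to numerical classes by the projection formula $f_*\alpha\cdot\gamma=\alpha\cdot f^*\gamma$ against curve classes. This map is linear, continuous, and sends effective divisors to effective divisors, so it preserves $\psef$. Moreover $f_*f^*D=(\deg f)\,D$. Thus if $f^*D\in\psef(W)$, then $(\deg f)D\in\psef(X)$, hence $D\in\psef(X)$ since $\deg f>0$.

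The only point requiring care, and the one I expect to be the main obstacle, is justifying that $f_*$ is well-defined on numerical classes of divisors, equivalently on $\NS$ tensored with $\R$, in arbitrary dimension and characteristic. I would handle it via the projection formula $f_*(f^*D\cdot\text{curves})$ and duality: define $f_*$ on $N^1$ as the transpose of $f^*:N_1(X)\to N_1(W)$. Then I would check that it agrees with cycle pushforward on effective divisors (\cite[Example 1.4.4]{LazPosI}/Fulton), so that effectivity is preserved. With this in place, both inclusions give equality of the two sets, hence equality of the infima.
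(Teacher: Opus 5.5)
Your proof is correct and follows the same route as the paper: you pull back to show $tL-M$ pseudo-effective implies $tA-B$ pseudo-effective, and for the converse you push forward and use $f_*f^*(tL-M)=(\deg f)(tL-M)\in\overline{\Eff}(X)$. Your extra check that $f_*$ descends to numerical classes via the projection formula is a detail the paper leaves implicit.
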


\begin{proof}
If \(tL-M\in\overline{\Eff}(X)\), then pulling back gives
\(tA-B=f^*(tL-M)\in\overline{\Eff}(W)\).

Conversely suppose that \(tA-B\in\overline{\Eff}(W)\).  Push forward by \(f\).
Since the pushforward of an effective divisor under a finite surjective map is
effective, we get
\[
f_*(tA-B)=f_*f^*(tL-M)=(\deg f)(tL-M)\in\overline{\Eff}(X).
\]
This implies
\(tL-M\in\overline{\Eff}(X)\).  The two threshold sets therefore coincide.
\end{proof}

\begin{corollary}\label{cor:W-EE}
Let $W\to E\times E$ be the double cover branched along a smooth divisor in $|2M|$, where $M=3(f_1+f_2)$.
Let $A=f^*L_k$ with $L_k=3(2f_1+f_2+k\delta)$.
Then
\[
t_0(W,A)=t_0(X;L_k,M)=\frac{(2k+3)+\sqrt{4k^2+1}}{2(3k+2)}\notin\Q.
\]
\end{corollary}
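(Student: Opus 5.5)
The statement to prove is Corollary~\ref{cor:W-EE}. The plan is to reduce it to Proposition~\ref{prop:t0-EE} by means of Lemma~\ref{lem:finite-threshold}, applied to the double cover $f\colon W\to X=E\times E$.

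First, I would record the relevant divisors on $W$. By Proposition~\ref{prop:double-cover}, with $K_X\sim 0$, we get $K_W\sim f^*M$. We also have $A=f^*L_k$, and $A$ is ample because $L_k$ is ample and $f$ is finite. Ampleness of $L_k$ holds since $L_k^2>0$ and $L_k\cdot f_1,\ L_k\cdot f_2,\ L_k\cdot\delta>0$; equivalently, $H(L_k)$ is positive definite in Lemma~\ref{lem:nef-cone-exex}. The surface $W$ is smooth projective by Proposition~\ref{prop:double-cover}.

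Second, apply Lemma~\ref{lem:finite-threshold} with $L=L_k$ and with $B=f^*M=K_W$ (up to linear equivalence, which does not affect pseudo-effectivity). This gives
\[
\vartheta(W,A)=\inf\{t\in\R \mid tL_k-M\in\psef(X)\}.
\]
By Lemma~\ref{lem:psef-nef-abelian} and the computation in Proposition~\ref{prop:t0-EE}, this infimum equals the larger root $t_0(X;L_k,M)$. In that proof, $D_t$ is nef exactly when $t\ge t_0$. For negative $t$, the class $D_t$ is not nef, since $D_t\cdot f_1<0$, so the infimum over all of $\R$ agrees with the infimum over $t\ge 0$.

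Third, I would pass from $\vartheta$ to $t_0$. The class $K_W=f^*M$ is ample, hence pseudo-effective, so Remark~\ref{lem:theta=t0} gives $t_0(W,A)=\vartheta(W,A)$. Irrationality then follows from Proposition~\ref{prop:t0-EE}, because $4k^2+1$ is not a perfect square.

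The only delicate point is Lemma~\ref{lem:finite-threshold}: one must make sure that pushforward of pseudo-effective classes under a finite map preserves pseudo-effectivity and that $f_*f^*=\deg f$ on $N^1$. Both are standard, so I expect no real obstacle. A side check is that $E$ without complex multiplication exists over $k$, which is needed to use Lemma~\ref{lem:nef-cone-exex}.
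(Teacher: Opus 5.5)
Your proposal is correct and follows the paper's own route. The paper gives no separate proof of this corollary: it obtains it by combining Proposition~\ref{prop:double-cover} ($K_W\sim f^*M$), Lemma~\ref{lem:finite-threshold}, and Proposition~\ref{prop:t0-EE}. Your additional checks (negative $t$, and $\vartheta=t_0$ because $K_W$ is pseudo-effective) just make explicit what the paper leaves implicit.
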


We now have a supply of smooth surfaces $(W,A)$ with irrational $t_0$.
In later sections we will input these into the cone constructions.

\begin{remark}
    Following \cite[\S4.1]{Bauer-Schulz}, one can do similar computations when $E$ has \emph{complex multiplication} as well. If $\End(E)=\mathbb Z[\omega]$ with $\omega = e^{2\pi i/3}$ and $\Sigma$ is the graph of multiplication by $\omega$ then (see \cite[\S4.1]{Bauer-Schulz}) we have
\[
\NS(X)\cong \mathbb Z F_1 \oplus \mathbb Z F_2 \oplus \mathbb Z \Delta \oplus \mathbb Z \Sigma,
\]
Picking $M := F_1+F_2$ and $L := F_1+F_2+\Delta+\Sigma$ gives $t_0(X;L,M)=\dfrac{3+\sqrt3}{6}$.
\end{remark}

\subsection{Higher dimensional examples}\label{high-dim-example}

The key point is that on a product of elliptic curves,
the nef cone is described by \emph{positive semidefinite symmetric matrices}, so
its boundary is cut out by a determinant. We follow \cite{BirkenhakeLange,Pren-Smith} closely.

\begin{theorem}(\cite[Ch. 5]{BirkenhakeLange}, \cite[\S1--\S3]{Pren-Smith})
\label{thm:NS-Eg}
Let $E$ be an elliptic curve over an algebraically closed field of characteristic $0$
with $\End(E)=\mathbb{Z}$, and let $X := E^g$.
Fix the product polarization $\Theta$ on $X$.
Then there is a canonical identification
\[
\NS(X)_{\mathbb{R}} \;\cong\; Sym_g(\mathbb{R}),
\]
under which the nef cone $\Nef(X)$ corresponds to the cone of positive semidefinite
symmetric matrices, and the ample cone corresponds to the positive definite ones. Moreover, on an abelian variety a divisor class is effective if and only if it is nef
\cite[Proposition~1.2]{Pren-Smith}, hence
\[
\overline{Eff}(X) \;=\; \Nef(X)
\qquad\text{inside } N^1(X)_{\R}.
\]
\end{theorem}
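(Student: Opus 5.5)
The plan is to assemble the identification from standard facts about abelian varieties, in the order of the three assertions: the identification $\NS(X)_\R\cong Sym_g(\R)$, the nef and ample cones, and $\psef(X)=\Nef(X)$. There is no new geometry here.

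\emph{Step 1: Néron--Severi as symmetric endomorphisms.} Since $\Theta$ is a principal polarization with polarization map $\phi_\Theta:X\to\widehat X$, the map
\[
L\mapsto \phi_\Theta^{-1}\circ\phi_L
\]
identifies $\NS(X)_\Q$ with the subspace of $\End(X)_\Q$ fixed by the Rosati involution \cite[Ch.~5]{BirkenhakeLange}. The hypothesis $\End(E)=\Z$ gives $\End(E^g)=M_g(\Z)$. For the product polarization the Rosati involution is matrix transposition, so the symmetric elements are exactly $Sym_g(\Q)$; tensoring with $\R$ gives $\NS(X)_\R\cong Sym_g(\R)$. As a consistency check, for $g=2$ this should recover the matrix $H(D)$ of Lemma~\ref{lem:nef-cone-exex}, with $f_1,f_2$ corresponding to the diagonal elementary matrices and $\delta$ to $\begin{pmatrix}1&1\\1&1\end{pmatrix}$.

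\emph{Step 2: cones.} I would invoke the criterion that $L$ is ample if and only if all eigenvalues (equivalently, the roots of the analytic characteristic polynomial) of $\phi_\Theta^{-1}\phi_L$ are positive \cite[Ch.~5]{BirkenhakeLange}. For a real symmetric matrix this says precisely that the matrix is positive definite. The nef cone is the closure of the ample cone, and the closure of the positive definite cone is the positive semidefinite cone. This is the Prendergast-Smith description \cite[\S1--\S3]{Pren-Smith}.

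\emph{Step 3: $\psef=\Nef$.} For the equality $\overline{Eff}(X)=\Nef(X)$, take an effective divisor $D$ and a curve $C$. Translates $D+x$ are algebraically equivalent to $D$, and for general $x$ the translate $D+x$ does not contain $C$. Hence $D\cdot C=(D+x)\cdot C\ge 0$, so every effective divisor is nef and $\overline{Eff}\subseteq\Nef$. Conversely $\Nef=\overline{\mathrm{Amp}}\subseteq\overline{Eff}$. This is \cite[Proposition~1.2]{Pren-Smith}.

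The main obstacle is Step 2, specifically the justification of the eigenvalue criterion for ampleness. I would cite it from \cite{BirkenhakeLange} rather than reprove it; it rests on Riemann--Roch, $\chi(L)^2=\deg\phi_L$, applied to $\chi(nL-\Theta)$ as a polynomial in $n$. A secondary point is the characteristic-$0$, algebraically closed hypothesis: Step 1 uses $\End=M_g(\Z)$ after base change. I would restrict to the stated hypotheses and not address descent to other fields.
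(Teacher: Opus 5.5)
Your proposal is correct and follows the same route as the paper, which gives no argument of its own beyond citing Birkenhake--Lange Ch.~5 and Prendergast-Smith. From the first it takes the Rosati-symmetric endomorphisms, the Rosati involution being transpose on $M_g(\Z)$, and the eigenvalue criterion for ampleness; from the second it takes effective $\Rightarrow$ nef via translates, which is all that $\overline{\Eff}(X)=\Nef(X)$ needs. This is exactly your Steps~1--3.

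One harmless slip in your $g=2$ sanity check: the diagonal is $d^*(0)$ for $d(x,y)=x-y$, so under $L\mapsto\phi_\Theta^{-1}\phi_L$ it goes to $\begin{psmallmatrix}1&-1\\-1&1\end{psmallmatrix}$; it is the anti-diagonal that gives $\begin{psmallmatrix}1&1\\1&1\end{psmallmatrix}$. So the paper's $H(D)$ is the canonical identification conjugated by $\mathrm{diag}(1,-1)$, which leaves all the cones unchanged.
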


\begin{corollary}\label{lem:NS-Eg-thrsh}(\cite{BirkenhakeLange,Pren-Smith})
In the setup of the above Theorem \ref{thm:NS-Eg}, let \(L\) and \(M\) be ample divisor classes corresponding to positive definite
symmetric matrices \(P\) and \(Q\).  Then
\[
\vartheta(X;L,M):=\inf\{t\in\R\mid tL-M\in\overline{\Eff}(X)\}
\]
is the largest generalized eigenvalue of the pair \((Q,P)\), namely
\[
\vartheta(X;L,M)=\lambda_{\max}(P^{-1/2}QP^{-1/2}).
\]
In particular, if \(P=nI_g\), then
\[
\vartheta(X;L,M)=\frac{\lambda_{\max}(Q)}{n}.
\]
\end{corollary}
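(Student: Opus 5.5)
Under the identification of Theorem~\ref{thm:NS-Eg}, a class $D\in\NS(X)_\R$ is pseudo-effective if and only if it is nef, i.e.\ if and only if its matrix is positive semidefinite. The map $\NS(X)_\R\to Sym_g(\R)$ is linear, so $tL-M$ corresponds to $tP-Q$. The threshold therefore becomes
\[
\vartheta(X;L,M)=\inf\{t\in\R\mid tP-Q\succeq 0\},
\]
and the statement is reduced to pure linear algebra about symmetric matrices.

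Since $P$ is positive definite, it has a positive definite square root $P^{1/2}$. Congruence by an invertible matrix preserves positive semidefiniteness, so conjugating by $P^{-1/2}$ gives
\[
tP-Q\succeq 0 \iff P^{-1/2}(tP-Q)P^{-1/2}=tI_g-S\succeq 0,\qquad S:=P^{-1/2}QP^{-1/2}.
\]
The matrix $S$ is real symmetric, so by the spectral theorem $S=O\,\mathrm{diag}(\lambda_1,\dots,\lambda_g)\,O^{T}$ with $O$ orthogonal. Then $tI_g-S\succeq 0$ exactly when $t\ge\lambda_i$ for every $i$, that is, when $t\ge\lambda_{\max}(S)$. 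The set of admissible $t$ is the closed ray $[\lambda_{\max}(S),\infty)$, so its infimum is $\lambda_{\max}(S)$.

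The eigenvalues of $S$ are the roots of $\det(tI_g-S)=\det(P)^{-1}\det(tP-Q)=0$, which are the generalized eigenvalues of the pair $(Q,P)$. This justifies the phrase ``largest generalized eigenvalue''. In the special case $P=nI_g$ we have $S=Q/n$, which gives $\vartheta=\lambda_{\max}(Q)/n$.

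The only point that needs care is the first step: that the identification in Theorem~\ref{thm:NS-Eg} is an $\R$-linear isomorphism, so that $tL-M\mapsto tP-Q$. This is how the cited theorem is meant, and I would simply cite it. The positivity of $Q$ is not actually used, and the infimum is attained, so the case $\vartheta<0$ needs no separate treatment.
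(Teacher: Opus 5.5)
Your proposal is correct and is the argument the paper intends. The paper states this corollary without a written proof, citing only Theorem~\ref{thm:NS-Eg}. Your steps supply exactly the omitted linear algebra: transport $tL-M$ to $tP-Q$ by linearity, use $\psef(X)=\nef(X)$ $=$ positive semidefinite matrices, conjugate by $P^{-1/2}$, and apply the spectral theorem. Your side remarks are also right: only positive definiteness of $P$ is used, and the infimum is attained.
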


\begin{definition}
For \(n\ge 0\), define polynomials\footnote{These are rescaled Chebyshev polynomials of the second kind, see \url{https://en.wikipedia.org/wiki/Chebyshev_polynomials}. They naturally appear as characteristic polynomials of tridiagonal matrices.} \(R_n(x)\in \Z[x]\) recursively by
\[
R_0(x)=1,
\qquad
R_1(x)=x,
\qquad
R_n(x)=xR_{n-1}(x)-R_{n-2}(x)\quad (n\ge 2).
\]
\end{definition}

\begin{lemma}\label{lem:tridiagonal-charpoly}
For \(g\ge 2\) and a parameter \(a\), let
\[
T_g(a)=
\begin{pmatrix}
0&1&0&\cdots&0\\
1&0&1&\ddots&\vdots\\
0&1&0&\ddots&0\\
\vdots&\ddots&\ddots&0&1\\
0&\cdots&0&1&a
\end{pmatrix}.
\]
Then the characteristic polynomial of \(T_g(a)\) is
\[
\chi_{T_g(a)}(x)=R_g(x)-aR_{g-1}(x).
\]
\end{lemma}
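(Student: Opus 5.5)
We prove the formula \(\chi_{T_g(a)}(x)=R_g(x)-aR_{g-1}(x)\) by induction on \(g\), using cofactor expansion of \(\det(xI-T_g(a))\) along the last row. The only non-routine ingredient is an auxiliary fact about the matrix with \(a=0\). Let \(J_n:=T_n(0)\) be the \(n\times n\) tridiagonal matrix with zero diagonal and ones on the off-diagonals. Set \(D_n(x):=\det(xI_n-J_n)\), with the convention \(D_0=1\).

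\textbf{Step 1: the auxiliary fact.} We first show \(D_n=R_n\) for all \(n\ge 0\). Directly, \(D_0=1=R_0\) and \(D_1=x=R_1\). For \(n\ge 2\), expand \(\det(xI_n-J_n)\) along the last row. That row has \(x\) in position \((n,n)\) and \(-1\) in position \((n,n-1)\).
\begin{itemize}
\item The \((n,n)\) minor is \(D_{n-1}\).
\item The \((n,n-1)\) minor has last column equal to \((0,\dots,0,-1)^T\). Expanding along that column gives \(-D_{n-2}\), and the sign bookkeeping yields a total contribution of \(-D_{n-2}\).
\end{itemize}
Hence \(D_n=xD_{n-1}-D_{n-2}\). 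This is the same recursion with the same initial values as \(R_n\), so \(D_n=R_n\).

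\textbf{Step 2: the main computation.} Now expand \(\det(xI_g-T_g(a))\) along its last row, whose entries are \(-1\) in position \((g,g-1)\) and \(x-a\) in position \((g,g)\).
\begin{itemize}
\item The leading \((g-1)\times(g-1)\) block is \(xI_{g-1}-J_{g-1}\), because the entry \(a\) sits only in the last diagonal position. This block contributes \((x-a)R_{g-1}\).
\item The off-diagonal term contributes \(-R_{g-2}\), by exactly the same computation as in Step 1.
\end{itemize}
Therefore
\[
\chi_{T_g(a)}(x)=(x-a)R_{g-1}-R_{g-2}=\bigl(xR_{g-1}-R_{g-2}\bigr)-aR_{g-1}=R_g-aR_{g-1}.
\]

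An equivalent route is multilinearity of the determinant in the last row. Write the last row of \(xI_g-T_g(a)\) as the last row of \(xI_g-J_g\) plus \((0,\dots,0,-a)\). The first piece gives \(R_g\) by Step 1. The second gives \(-a\) times the \((g,g)\) cofactor, which is \(-aR_{g-1}\).

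The only place needing care is the sign in the off-diagonal cofactor. We check it on the case \(g=2\): there \(\det\begin{pmatrix}x&-1\\-1&x-a\end{pmatrix}=x^2-ax-1\), and \(R_2-aR_1=x^2-1-ax\), so the two agree. The remaining steps are routine.
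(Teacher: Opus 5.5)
Your proposal is correct and follows the paper's proof essentially verbatim. Like the paper, you identify $D_n=\det(xI_n-J_n)$ with $R_n$ via the last-row expansion recurrence, and then expand $\det(xI_g-T_g(a))$ along its last row to get $(x-a)R_{g-1}-R_{g-2}=R_g-aR_{g-1}$. Your explicit sign check of the off-diagonal cofactor and the multilinearity variant are fine additions, though the only induction you actually use is the one in Step 1, not an induction on $g$.
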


\begin{proof}
Let \(D_n(x)\) denote the determinant of the \(n\times n\) tridiagonal matrix
with diagonal entries \(x,\dots,x\) and with \(-1\) on the super- and
sub-diagonals.  Expanding along the last row gives the familiar recurrence
\[
D_n(x)=xD_{n-1}(x)-D_{n-2}(x),
\qquad
D_0(x)=1,
\quad
D_1(x)=x.
\]
Hence \(D_n(x)=R_n(x)\) for all \(n\).

Now compute
\(\chi_{T_g(a)}(x)=\det(xI_g-T_g(a))\).  The matrix \(xI_g-T_g(a)\) has diagonal
entries \(x,\dots,x,x-a\) and \(-1\) on the adjacent off-diagonals.  Expanding
again along the last row yields
\[
\chi_{T_g(a)}(x)=(x-a)R_{g-1}(x)-R_{g-2}(x).
\]
Using the recurrence relation for \(R_g\), namely
\(R_g(x)=xR_{g-1}(x)-R_{g-2}(x)\), we obtain
\[
\chi_{T_g(a)}(x)=R_g(x)-aR_{g-1}(x),
\]
as claimed.
\end{proof}

\begin{lemma}\label{lem:coprime-Rn}
For every \(n\ge 1\), the polynomials \(R_n\) and \(R_{n-1}\) are coprime in
\(\Q[x]\).
\end{lemma}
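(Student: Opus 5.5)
We prove $\gcd(R_n,R_{n-1})=1$ in $\Q[x]$ by induction on $n$, running the Euclidean algorithm along the three-term recurrence defining the $R_n$. The base case $n=1$ is immediate: $R_1=x$ and $R_0=1$, and $R_0$ is a unit, so the pair is coprime.

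For the inductive step, assume $R_{n-1}$ and $R_{n-2}$ are coprime for some $n\ge 2$. Suppose a polynomial $h\in\Q[x]$ divides both $R_n$ and $R_{n-1}$. The recurrence can be rewritten as
\[
R_{n-2}(x)=xR_{n-1}(x)-R_n(x),
\]
so $h$ also divides $R_{n-2}$. Then $h$ divides both $R_{n-1}$ and $R_{n-2}$, which are coprime by the inductive hypothesis, so $h$ is a unit. Hence $\gcd(R_n,R_{n-1})=1$.

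Equivalently, the recurrence gives an invertible transformation of the pair,
\[
(R_n,R_{n-1})\mapsto(R_{n-1},R_{n-2}),
\]
so the ideal $(R_n,R_{n-1})\subseteq\Q[x]$ is independent of $n$. It therefore equals $(R_1,R_0)=(x,1)=\Q[x]$.

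Nothing here is a real obstacle; the one point to check is that the recurrence expresses $R_{n-2}$ as a $\Z[x]$-linear combination of $R_n$ and $R_{n-1}$, which the displayed identity shows. In the paper, this coprimality is presumably used to show that $\chi_{T_g(a)}=R_g-aR_{g-1}$ is irreducible for suitable integers $a$, for instance via a Hilbert irreducibility or Eisenstein-type argument.
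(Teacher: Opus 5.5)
Your proof is correct and is essentially the paper's argument: the paper also notes that any common divisor of $R_n$ and $R_{n-1}$ divides $R_{n-2}=xR_{n-1}-R_n$, and descends to $R_1=x$ and $R_0=1$. Your induction, and the equivalent observation that the ideal $(R_n,R_{n-1})$ does not depend on $n$, simply make that descent formal.
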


\begin{proof}
If a polynomial \(G\in \Q[x]\) divides both \(R_n\) and \(R_{n-1}\), then by the
recurrence it also divides
\[
R_{n-2}=xR_{n-1}-R_n.
\]
Descending inductively, \(G\) divides \(R_1=x\) and \(R_0=1\), hence \(G\) is a
unit.  Therefore \(\gcd(R_n,R_{n-1})=1\).
\end{proof}

\begin{proposition}\label{prop:irreducible-family}
For every \(g\ge 2\), the polynomial
\[
F_g(a,x):=R_g(x)-aR_{g-1}(x)
\in \Q[a,x]
\]
is irreducible as a polynomial in \(x\) over \(\Q(a)\).
\end{proposition}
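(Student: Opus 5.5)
The plan is to exploit the fact that \(F_g(a,x)\) is \emph{linear} in \(a\), and reduce irreducibility over \(\Q(a)\) to irreducibility in the polynomial ring \(\Q[a,x]=\Q[x][a]\). The tools are Gauss's lemma and Lemma~\ref{lem:coprime-Rn}.

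First, view \(F_g=R_g(x)-aR_{g-1}(x)\) as an element of \(\Q[x][a]\), where \(\Q[x]\) is a UFD. It has degree \(1\) in \(a\), and its content (the gcd of its coefficients \(R_g\) and \(-R_{g-1}\) in \(\Q[x]\)) is \(\gcd(R_g,R_{g-1})=1\) by Lemma~\ref{lem:coprime-Rn}. A primitive polynomial of degree \(1\) over a UFD is irreducible. Indeed, any factorization \(F_g=UV\) in \(\Q[x][a]\) has one factor of \(a\)-degree \(0\), i.e.\ an element \(U\in\Q[x]\). That element divides both coefficients, so \(U\) is a unit. Hence \(F_g\) is irreducible in \(\Q[a,x]\).

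Second, switch viewpoints and regard \(F_g\) in \(\Q[a][x]\). Note that \(R_n\) is monic of degree \(n\) by the recurrence. So \(F_g\) has leading coefficient \(1\) in \(x\) and degree \(g\ge 2\) in \(x\). Since it is monic in \(x\), it is primitive over the UFD \(\Q[a]\).

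Third, apply Gauss's lemma over \(\Q[a]\) with fraction field \(\Q(a)\). A primitive polynomial in \(\Q[a][x]\) of positive degree is irreducible in \(\Q(a)[x]\) if and only if it is irreducible in \(\Q[a][x]=\Q[a,x]\). The latter holds by the first step, so \(F_g\) is irreducible over \(\Q(a)\).

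The only point needing care is the coprimality used in the first step, and that is exactly Lemma~\ref{lem:coprime-Rn}. Everything else is a formal UFD argument, so I expect no real obstacle. The one thing to check is monicity of \(R_n\) in \(x\), so that no nonunit of \(\Q[a]\) can split off in the final step; this is immediate from \(R_n=xR_{n-1}-R_{n-2}\) by induction.
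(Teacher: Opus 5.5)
Your proposal is correct and follows the paper's own argument: primitivity of the degree-one polynomial in \(\Q[x][a]\) via Lemma~\ref{lem:coprime-Rn}, then Gauss's lemma to pass from \(\Q[a,x]\) to \(\Q(a)[x]\). Your explicit check that \(F_g\) is monic in \(x\), hence primitive over \(\Q[a]\), spells out the ``clearing denominators'' step that the paper leaves implicit.
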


\begin{proof}
We first view \(F_g(a,x)\) as a polynomial in the variable \(a\) with
coefficients in the UFD \(\Q[x]\):
\[
F_g(a,x)=R_g(x)-aR_{g-1}(x).
\]
By Lemma~\ref{lem:coprime-Rn}, the coefficients \(R_g(x)\) and
\(R_{g-1}(x)\) are coprime in \(\Q[x]\).  Therefore \(F_g(a,x)\) is primitive in
\(\Q[x][a]\), and because it has degree~\(1\) in \(a\), it is irreducible in the
polynomial ring \(\Q[x][a]\).

Now suppose, for contradiction, that \(F_g(a,x)\) factors in \(\Q(a)[x]\).
Clearing denominators in \(a\), one obtains a nontrivial factorization in
\(\Q[a,x]\).  By Gauss' lemma this would give a nontrivial factorization in the
UFD \(\Q[x][a]\), contradicting the previous paragraph.  Hence
\(F_g(a,x)\) is irreducible in \(\Q(a)[x]\).
\end{proof}

\begin{theorem}\label{thm:appendix-any-degree-proof}
Fix an integer \(g\ge 2\).  There exist positive definite symmetric matrices
\(P,Q\in \operatorname{Sym}_g(\Q)\) such that the generalized eigenvalue
polynomial
\[
\det(Q-\lambda P)
\]
is irreducible of degree \(g\) over \(\Q\).  Consequently, there is an ample
pair \((L,M)\) on \(E^g\) whose threshold
\[
\inf\{t\in \R\mid tL-M\in \overline{\Eff}(E^g)\}
\]
has exact algebraic degree \(g\).
\end{theorem}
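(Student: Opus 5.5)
The plan is to use the tridiagonal family from Lemma~\ref{lem:tridiagonal-charpoly} and Proposition~\ref{prop:irreducible-family}, then specialize the parameter $a$ by Hilbert irreducibility. Take $P=nI_g$ for a suitable positive integer $n$. Take $Q:=cI_g+T_g(a)$ with $a\in\Q$ (or $a\in\Z$) and $c\in\Z$ large enough, for instance $c>2+|a|$; by Gershgorin's theorem this makes $Q$ positive definite. Then
\[
\det(Q-\lambda P)=\det\bigl((c-n\lambda)I_g+T_g(a)\bigr)=(-1)^g\chi_{T_g(a)}(n\lambda-c)=(-1)^gF_g(a,n\lambda-c).
\]
The substitution $x\mapsto n\lambda-c$ is an affine change of variable over $\Q$, so it preserves irreducibility. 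It therefore suffices to find $a\in\Q$ such that $F_g(a,x)=R_g(x)-aR_{g-1}(x)\in\Q[x]$ is irreducible of degree $g$. The degree is automatically $g$, because $R_g$ is monic of degree $g$ and $R_{g-1}$ has degree $g-1$.

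To find such an $a$, I would invoke Hilbert's irreducibility theorem. Proposition~\ref{prop:irreducible-family} says $F_g(a,x)$ is irreducible in $\Q(a)[x]$, so there are infinitely many $a\in\Z$ (in fact, a Hilbert set) for which $F_g(a_0,x)$ is irreducible over $\Q$. Choose one such $a_0$, then choose $c$ as above. If a fully explicit choice is preferred, a concrete alternative would be to pick $a_0$ so that Eisenstein's criterion applies after a shift. Hilbert irreducibility is the clean route, however, and this step is the only real arithmetic input.

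For the geometric consequence, I would use Theorem~\ref{thm:NS-Eg} to realize the rational positive definite matrices $P$ and $Q$ as ample $\Q$-classes on $E^g$. After scaling both by a common positive integer they become integral classes $L$ and $M$; scaling both does not change the threshold. Corollary~\ref{lem:NS-Eg-thrsh} then gives the threshold as $\lambda_{\max}(P^{-1/2}QP^{-1/2})$, which is the largest root of $\det(Q-\lambda P)$. That polynomial is irreducible of degree $g$ over $\Q$, so the root has exact degree $g$; in particular it is irrational, since $g\ge2$.

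The main obstacle I expect is the lattice-integrality step: the identification in Theorem~\ref{thm:NS-Eg} is of $\NS(X)_\R$ with $\operatorname{Sym}_g(\R)$, and one must check that rational symmetric matrices correspond to $\Q$-divisor classes, so that clearing denominators lands in $\NS(E^g)$. For $\End(E)=\Z$ this holds because $\NS(E^g)\cong\operatorname{Sym}_g(\Z)$. I would cite \cite{BirkenhakeLange} for this rather than reprove it.
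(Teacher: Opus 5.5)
Your proposal is correct and follows the paper's proof: you specialize $F_g(a,x)$ to some $a_0$ by Hilbert irreducibility, take $Q=T_g(a_0)+cI_g$ with $c$ large enough to make $Q$ positive definite, take $P=nI_g$, and read off the threshold as $\lambda_{\max}(Q)/n$ via Theorem~\ref{thm:NS-Eg} and Corollary~\ref{lem:NS-Eg-thrsh}. The differences are minor: you choose $c$ explicitly through Gershgorin rather than via the smallest eigenvalue, and you clear denominators using $\NS(E^g)\cong\operatorname{Sym}_g(\Z)$, which fills in an integrality point the paper leaves implicit.
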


\begin{proof}
Let \(F_g(a,x)\) be the irreducible family from
Proposition~\ref{prop:irreducible-family}.  By Hilbert irreducibility, there exist infinitely many integers \(a_0\) such
that the specialization
\[
F_g(a_0,x)=R_g(x)-a_0R_{g-1}(x)
\]
remains irreducible in \(\Q[x]\).  Fix such an \(a_0\), and let
\(\lambda_1,\dots,\lambda_g\in \R\) be the eigenvalues of the real symmetric
matrix \(T_g(a_0)\).  Because the characteristic polynomial is irreducible of
degree \(g\), the largest eigenvalue \(\lambda_{\max}\) has exact algebraic degree
\(g\).

Choose an integer \(c> -\lambda_g\), where \(\lambda_g\) is the smallest
eigenvalue of \(T_g(a_0)\).  Then
\[
Q:=T_g(a_0)+cI_g
\]
is positive definite.  Its characteristic polynomial is
\[
\det(xI_g-Q)=F_g(a_0,x-c),
\]
which is still irreducible over \(\Q\), so the largest eigenvalue of \(Q\) also
has exact degree \(g\).

Now choose an integer \(n\) strictly larger than the largest eigenvalue of
\(Q\), and set \(P=nI_g\).  Then \(P\) is positive definite and
\[
\det(Q-\lambda P)=n^g\det\!\left(\frac{1}{n}Q-\lambda I_g\right).
\]
Hence the generalized eigenvalues of the pair \((Q,P)\) are exactly the numbers
\(\mu_i/n\), where \(\mu_i\) are the eigenvalues of \(Q\).  In particular, the
largest generalized eigenvalue is \(\lambda_{\max}(Q)/n\), again of degree
\(g\).

Finally, applying Theorem \ref{thm:NS-Eg} and Corollary \ref{lem:NS-Eg-thrsh}, we get
\[
\tau=\lambda_{\max}(P^{-1/2}QP^{-1/2})=\frac{\lambda_{\max}(Q)}{n}.
\]
This number has algebraic degree exactly \(g\), as required.
\end{proof}

\begin{remark}\label{rem:appendix-any-degree-cone}
To get irrational jumping numbers from Theorem~\ref{thm:appendix-any-degree-proof}, choose a
smooth double cover \(f\colon W\to E^g\) branched in a smooth divisor from
\(|2M|\), set \(A=f^*L\), and invoke Lemma~\ref{lem:finite-threshold}.  Then
\(K_W=f^*M\), so \(\vartheta(W,A)\) equals \(\tau\), and Theorem~\ref{thm:dFH-direct} gives irrational jumping
numbers on the cone over \((W,A)\).
\end{remark}

\section{Example II: Products of isogenous elliptic curves}\label{sec:examples-isogenous}

This family produces a different quadratic irrational depending on the degree of an isogeny. I will follow \cite{BauerBorntraeger18,BirkenhakeLange,Rosen-Shnidman} closely in this section.

Let $E_1,E_2$ be isogenous elliptic curves withoutcomplex multiplication over an infinite field $k$ and assume there is a minimal isogeny $\varphi:E_1\to E_2$ of degree $d\ge 1$.
Let $X=E_1\times E_2$.
Let $F_1$ and $F_2$ be the fiber classes of the projections, and let $\Gamma$ be the class of the graph of $\varphi$.
Then, by \cite[\S 3, Type B]{BauerBorntraeger18}\footnote{\cite{BauerBorntraeger18}, uses $F_3 = \Gamma - d F_1 -F_2$ as their 3rd basis element.} (or \cite{Rosen-Shnidman})
\[
F_1^2=F_2^2=\Gamma^2=0,\qquad
F_1\cdot F_2=1,\qquad
F_1\cdot \Gamma=1,\qquad
F_2\cdot \Gamma=d.
\]

As in Lemma~\ref{lem:psef-nef-abelian}, $\psef(X)=\nef(X)$, see \cite{Bauer-AbVar, BirkenhakeLange, Rosen-Shnidman}.

\begin{lem}(\cite{BirkenhakeLange,BauerBorntraeger18,Rosen-Shnidman})\label{lem:nef-cone-isog}
Write $D=xF_1+yF_2+z\Gamma\in NS(X)_\R$.
Set
\[
H(D):=
\begin{pmatrix}
x+dz & \sqrt{d}\,z\\
\sqrt{d}\,z & y+z
\end{pmatrix}\in \operatorname{Sym}_2(\R).
\]
Then $D^2=2\det(H(D))$.
Moreover, $D$ is nef if and only if $H(D)$ is positive semidefinite, equivalently
\[
x+dz\ge 0,\qquad y+z\ge 0,\qquad xy+xz+dyz\ge 0.
\]
\end{lem}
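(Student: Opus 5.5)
The plan is to reduce to the case $E_1=E_2$ already treated in Lemma~\ref{lem:nef-cone-exex}, by a linear change of coordinates that turns the intersection form into a determinant.

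\textbf{Step 1: the determinant identity.} Using the intersection table, for $D=xF_1+yF_2+z\Gamma$ we get
\[
D^2 = 2\bigl(xy\,F_1F_2 + xz\,F_1\Gamma + yz\,F_2\Gamma\bigr)=2(xy+xz+dyz).
\]
On the other hand,
\[
\det H(D)=(x+dz)(y+z)-dz^2=xy+xz+dyz .
\]
This gives $D^2=2\det H(D)$.

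\textbf{Step 2: $D$ nef $\Rightarrow$ $H(D)\succeq 0$.} If $D$ is nef then $D^2\ge 0$, so $\det H(D)\ge0$. Also $F_2$ and $F_1$ are effective, and
\[
D\cdot F_2 = x+dz,\qquad D\cdot F_1 = y+z .
\]
These are exactly the diagonal entries of $H(D)$, and nefness makes both nonnegative. A symmetric $2\times 2$ matrix with nonnegative diagonal and nonnegative determinant is positive semidefinite.

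\textbf{Step 3: $H(D)\succeq 0$ $\Rightarrow$ $D$ nef.} This direction needs the structure of the cone. Since $E_i$ have no complex multiplication, $\Hom(E_1,E_2)=\Z\varphi$, so $\rho(X)=3$ and $NS(X)_\R$ is spanned by $F_1,F_2,\Gamma$. By Lemma~\ref{lem:psef-nef-abelian}, $\nef(X)=\psef(X)$ equals the closure of the component of the positive cone $\{D^2>0\}$ containing an ample class.

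\textbf{Step 4: identifying the positive cone.} The map $D\mapsto H(D)$ is a linear isomorphism $NS(X)_\R\to\operatorname{Sym}_2(\R)$, because it is injective: $z$ is read off from the off-diagonal entry, and then $x$ and $y$ from the diagonal. Under this map $D^2=2\det$, and $\det$ on $\operatorname{Sym}_2(\R)$ is a form of signature $(1,2)$. The set $\{\det>0\}$ has exactly two components, the positive definite and the negative definite matrices. The ample class $F_1+F_2$ maps to the identity matrix, so the relevant component is the positive definite cone. Its closure is the positive semidefinite cone, which is the claim. Finally, a symmetric $2\times2$ matrix is positive semidefinite iff its diagonal entries and its determinant are nonnegative, which gives the three displayed inequalities.

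The main obstacle is the justification in Step 3 that the nef cone is the full closed positive-cone component, so that no further $(-2)$-type walls occur. This rests on Lemma~\ref{lem:psef-nef-abelian} together with $\rho=3$, which is where the hypothesis of no complex multiplication and minimality of $\varphi$ is used: it guarantees $F_1,F_2,\Gamma$ span $NS(X)$. Everything else is routine linear algebra.
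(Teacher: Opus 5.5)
Your proof is correct, and it takes a different route from the paper, which gives no argument for this lemma and simply cites Birkenhake--Lange, Bauer--Borntraeger and Rosen--Shnidman. In those references the criterion comes from identifying $\NS(E_1\times E_2)_\R$ with Rosati-symmetric endomorphisms, or Hermitian forms, under which nef classes are exactly the positive semidefinite ones. This is the same mechanism the paper invokes for $E^g$ in Theorem~\ref{thm:NS-Eg}.

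You argue intersection-theoretically instead. You compute $D^2$ from the intersection table, use that on an abelian surface the nef cone is the closed positive cone, and transport this through the linear isomorphism $H$. Two facts then finish it: $\det$ on $\operatorname{Sym}_2(\R)$ has signature $(1,2)$, and $F_1+F_2\mapsto I$. This is self-contained and uses only data already in the paper. It is, however, tied to surfaces through the hyperbolic signature, whereas the cited approach works in every dimension.

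A few small points:
\begin{itemize}
\item Your opening sentence promises a reduction to Lemma~\ref{lem:nef-cone-exex}, but you never use that lemma. The argument is direct, which is fine, but the description should match.
\item Step 2 is subsumed by Step 4.
\item In Step 3, the cleanest justification is the chain $\nef(X)\subseteq\overline{\mathcal C}^{+}\subseteq\psef(X)=\nef(X)$. The first inclusion holds on any projective surface. The second follows from Riemann--Roch, since classes with $D^2>0$ and $D\cdot h>0$ are big. The equality is Lemma~\ref{lem:psef-nef-abelian}.
\item You overstate the role of $\rho(X)=3$. Since $D\cdot(F_1+F_2)=\operatorname{tr}H(D)$, intersecting $\overline{\mathcal C}^{+}=\{D^2\ge 0,\ D\cdot(F_1+F_2)\ge 0\}$ with the span of $F_1,F_2,\Gamma$ gives $\{\det H(D)\ge 0,\ \operatorname{tr}H(D)\ge 0\}$. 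That is exactly the positive semidefinite cone. So the criterion holds for classes in the span even without the no-CM and minimality hypotheses. Those hypotheses only guarantee that the span is all of $\NS(X)_\R$, so that the lemma describes the full nef cone.
\end{itemize}
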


\subsection{Irrational threshold}

Define divisors
\[
L := F_1+2F_2+\Gamma,
\qquad
M := F_1+F_2.
\]
Then $L$ and $M$ are ample for $d\ge 1$.

\begin{proposition}\label{prop:t0-isogenous}
With notation above,
\[
L^2 = 2(2d+3),\qquad L\cdot M = d+4,\qquad M^2=2,
\]
and the pseudo-effective threshold is
\[
t_0(X;L,M) = \frac{(d+4)+\sqrt{d^2+4}}{2(2d+3)}.
\]
In particular, $t_0(X;L,M)$ is irrational for every $d\ge 1$.
\end{proposition}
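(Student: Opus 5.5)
The plan is to repeat the argument of Proposition~\ref{prop:t0-EE}, using Lemma~\ref{lem:nef-cone-isog} in place of Lemma~\ref{lem:nef-cone-exex}, together with the fact that $\psef(X)=\nef(X)$.

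\textbf{Intersection numbers.} These follow by bilinearity from the stated table. For $L^2$ we use the formula $D^2=2(xy+xz+dyz)$ for $D=xF_1+yF_2+z\Gamma$, which comes from Lemma~\ref{lem:nef-cone-isog} or directly from the table. With $(x,y,z)=(1,2,1)$ this gives $L^2=2(2+1+2d)=2(2d+3)$. Next,
\[
L\cdot M=F_1F_2+2F_2F_1+\Gamma F_1+\Gamma F_2=1+2+1+d=d+4,
\]
and $M^2=2F_1F_2=2$.

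\textbf{The pencil $D_t$.} Set
\[
D_t:=tL-M=(t-1)F_1+(2t-1)F_2+t\Gamma .
\]
In Lemma~\ref{lem:nef-cone-isog} the three conditions become:
\begin{itemize}
\item $x+dz=(d+1)t-1\ge 0$, i.e.\ $t\ge \frac{1}{d+1}$;
\item $y+z=3t-1\ge 0$, i.e.\ $t\ge \frac13$;
\item $D_t^2=2\bigl((2d+3)t^2-(d+4)t+1\bigr)\ge 0$.
\end{itemize}
The quadratic factor of $D_t^2$ has discriminant $(d+4)^2-4(2d+3)=d^2+4$, so its roots are
\[
t_\pm=\frac{(d+4)\pm\sqrt{d^2+4}}{2(2d+3)}.
\]

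\textbf{Identifying the threshold.} I claim that $D_t$ is nef exactly when $t\ge t_+$. This is the step needing the most care, since for large $d$ the quantity $\frac{d+4}{2(2d+3)}$ alone drops below $\frac13$; it is handled by keeping the square-root term.
\begin{itemize}
\item Since $\sqrt{d^2+4}>d$, we get $t_+>\frac{2d+4}{4d+6}>\frac12$. Also $\frac12\ge\frac1{d+1}$ and $\frac12>\frac13$, so both linear conditions hold for every $t\ge t_+$.
\item For $t_-<t<t_+$ we have $D_t^2<0$, so $D_t$ is not nef.
\item For $t\le t_-$, write $t_-=\frac{2}{(d+4)+\sqrt{d^2+4}}$ (rationalizing, using $t_+t_-=\frac1{2d+3}$). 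For $d\ge1$ the denominator is at least $5+\sqrt5>6$, so $t_-<\frac13$. Then $y+z<0$ and $D_t$ fails to be nef.
\end{itemize}
Because $\psef=\nef$, this gives $t_0(X;L,M)=t_+$. The same bound $t_+>\frac12$ shows that $t_+$ is positive, so truncation at $0$ in the definition of $t_0$ has no effect.

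\textbf{Irrationality.} It suffices to show that $d^2+4$ is not a perfect square for $d\ge1$. Suppose $m^2=d^2+4$ with $m\ge 0$. Then $(m-d)(m+d)=4$. The two factors have the same parity and $m+d>0$, so both equal $2$, which forces $d=0$, a contradiction. Hence $\sqrt{d^2+4}\notin\Q$, and therefore $t_0=t_+\notin\Q$.

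Ampleness of $L$ and $M$ follows from positive definiteness of $H(L)$ and $H(M)$ and is not needed for the threshold computation itself.
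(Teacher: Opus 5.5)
Your proposal is correct and follows the same route as the paper. Both arguments compute the intersection numbers, write $D_t^2=2\bigl((2d+3)t^2-(d+4)t+1\bigr)$, and check the linear conditions of Lemma~\ref{lem:nef-cone-isog} at the larger root; both then prove irrationality by showing $d^2+4$ is not a square via $(m-d)(m+d)=4$. Your explicit treatment of the branch $t\le t_-$, where $D_t^2\ge 0$ but $3t-1<0$, adds care that the paper's proof leaves implicit.
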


\begin{proof}
Set $D_t:=tL-M$. We first check intersections with the nef boundary classes:
\begin{align*}
D_t\cdot F_1 &= t(L\cdot F_1)-(M\cdot F_1)
=t(2+1)-1=3t-1,\\
D_t\cdot F_2 &= t(L\cdot F_2)-(M\cdot F_2)
=t(1+d)-1=(d+1)t-1,\\
D_t\cdot \Gamma &= t(L\cdot \Gamma)-(M\cdot \Gamma)
=t(1+2d)- (1+d)=(2d+1)t-(d+1).
\end{align*}
The threshold computed below satisfies $t_0>\max\{1/3,\,1/(d+1),\, (d+1)/(2d+1)\}$,
so these inequalities are automatic at~$t=t_0$.
Thus nefness is again controlled by the single condition $D_t^2\ge 0$.

Next, compute
\[
L^2=2(2d+3),\qquad L\cdot M=d+4,\qquad M^2=2,
\]
hence
\[
D_t^2=t^2L^2-2t(L\cdot M)+M^2
=2\Big((2d+3)t^2-(d+4)t+1\Big).
\]
The discriminant equals
\[
(d+4)^2-4(2d+3)=d^2+4.
\]
For $d\ge 1$ this is not a square: if $m^2=d^2+4$, then
\[
(m-d)(m+d)=4,
\]
forcing $d=0$.
Therefore the larger root
\[
t_0=\frac{(d+4)+\sqrt{d^2+4}}{2(2d+3)}
\]
is irrational, and $D_t$ is nef if and only if $t\ge t_0$.
\end{proof}

\subsection{Passing to a double cover}

As in Section~\ref{sec:examples-abelian-EE}, take a double cover $f:W\to X$ branched along a smooth divisor $B\in |2M|$
(over $\mathrm{char}\ne 2$), so $K_W=f^*M$ by Proposition~\ref{prop:double-cover}.
Let $A:=f^*L$.
Then Lemma~\ref{lem:finite-threshold} implies $t_0(W,A)=t_0(X;L,M)$ is irrational.

\section{Example III: Abelian surfaces with real multiplication}\label{sec:examples-real-mult}

This family is conceptually different than the previous sections: the irrationality comes from real multiplication and the shape of the nef cone. My main reference for this section is \cite{BauerBorntraeger18}.

\subsection{Existence and the N\'eron--Severi lattice(\cite[\S2, Type 1]{BauerBorntraeger18})}

Fix a squarefree integer $d>1$ such that $d \equiv 2, 3 \ (mod \ 4) $ .
There exist \footnote{see \cite{BirkenhakeLange} and \cite{vdGeer88}} simple abelian surfaces $X$ with real multiplication by $\Q(\sqrt{d})$,
i.e.\ $\End_\Q(X)\cong \Q(\sqrt{d})$.
For such surfaces with $\rho(X)=2$, there exist classes $L_0,L_\omega\in\NS(X)$ such that
\[
\NS(X)\simeq \Z L_0\oplus \Z L_\omega,\qquad \mathrm{with \ Gram \ matrix \ }
\begin{pmatrix}L_0^2 & L_0\cdot L_\omega\\ L_\omega\cdot L_0 & L_\omega^2\end{pmatrix}
=
\begin{pmatrix}2&0\\0&-2d\end{pmatrix},
\]
and $L_0$ is ample. This is a special case of \cite[\S 2]{BauerBorntraeger18} (Type~1 with $f=1$).

We will use only the numerical consequence:
\[
L_0^2=2,\qquad L_\omega^2=-2d,\qquad L_0\cdot L_\omega=0.
\]
As before, $\psef(X)=\nef(X)$ by Lemma~\ref{lem:psef-nef-abelian}.

\subsection{Irrational threshold}

Fix an integer $n>\sqrt{d}$ and define
\[
L_n := nL_0 + L_\omega,\qquad M:=L_0.
\]
Then $L_n$ is ample (since it has positive square and intersects $L_0$ positively).

\begin{proposition}\label{prop:t0-real-mult}
With notation above,
\[
L_n^2 = 2(n^2-d),\qquad L_n\cdot M = 2n,\qquad M^2=2,
\]
and the pseudo-effective threshold is
\[
t_0(X;L_n,M) = \frac{n+\sqrt{d}}{n^2-d}=\frac{1}{n-\sqrt{d}}.
\]
In particular, $t_0(X;L_n,M)$ is irrational.
\end{proposition}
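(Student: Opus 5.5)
The plan follows the template of Propositions~\ref{prop:t0-EE} and~\ref{prop:t0-isogenous}: intersection numbers, then the quadratic $D_t^2$, then its larger root. Put $D_t:=tL_n-M$.

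\textbf{Step 1 (intersection numbers).} Bilinearity and the Gram matrix give
\[
L_n^2=n^2L_0^2+2nL_0\cdot L_\omega+L_\omega^2=2n^2-2d,\qquad L_n\cdot M=nL_0^2=2n,\qquad M^2=2.
\]
Hence
\[
D_t^2=2\bigl((n^2-d)t^2-2nt+1\bigr).
\]

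\textbf{Step 2 (describe the nef cone).} Since $\rho(X)=2$, the cone $\psef(X)=\nef(X)$ (Lemma~\ref{lem:psef-nef-abelian}) is the closure of the component of $\{D^2>0\}$ containing $L_0$. Writing $D=xL_0+yL_\omega$, we have $D^2=2x^2-2dy^2$, so the two boundary rays are $L_0\pm\frac{1}{\sqrt d}L_\omega$. Therefore $D$ is nef if and only if $D^2\ge 0$ and $D\cdot L_0\ge 0$, i.e.\ $x\ge \sqrt d\,|y|$. For $D_t$ we have $x=tn-1$ and $y=t$, so nefness reads
\[
tn-1\ge \sqrt d\, t, \qquad\text{i.e.}\qquad t(n-\sqrt d)\ge 1 .
\]

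\textbf{Step 3 (the threshold).} This gives $t_0=1/(n-\sqrt d)$ directly, which is positive because $n>\sqrt d$. Rationalising, $1/(n-\sqrt d)=(n+\sqrt d)/(n^2-d)$. As a check, the roots of $(n^2-d)t^2-2nt+1=0$ are
\[
t=\frac{n\pm\sqrt d}{n^2-d}=\frac{1}{n\mp\sqrt d},
\]
and the threshold is the larger one. At the smaller root $1/(n+\sqrt d)$ the class $D_t$ lies on the negative-cone boundary, since there $D_t\cdot L_0=2(tn-1)<0$. That is the only subtlety in selecting the larger root, and the sign of $D_t\cdot L_0$ settles it.

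\textbf{Step 4 (irrationality).} Because $d$ is squarefree and $d>1$, $\sqrt d\notin\Q$. Hence $n-\sqrt d\notin \Q$, and so its reciprocal $t_0$ is irrational.

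The only step needing care is Step 2, namely that pseudo-effectivity on $X$ is exactly the positive-cone condition when $\rho=2$. This is supplied by Lemma~\ref{lem:psef-nef-abelian} together with the hyperbolic signature $(1,1)$ of the Gram matrix, so no real obstacle is expected.
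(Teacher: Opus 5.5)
Your proof is correct and follows the same route as the paper: compute the intersection numbers, form $D_t^2=2\bigl((n^2-d)t^2-2nt+1\bigr)$, and identify $t_0$ with its larger root $1/(n-\sqrt d)$. Your Step 2, which describes $\nef(X)$ explicitly as $x\ge\sqrt d\,|y|$ and uses the sign of $D_t\cdot L_0$ to discard the smaller root, makes explicit a justification the paper leaves implicit, and so does your irrationality argument via $\sqrt d\notin\Q$.
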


\begin{proof}
Compute:
\[
L_n^2 = (nL_0+L_\omega)^2 = n^2L_0^2 + 2n(L_0\cdot L_\omega)+L_\omega^2 = 2n^2-2d =2(n^2-d).
\]
Also $L_n\cdot M = (nL_0+L_\omega)\cdot L_0 = nL_0^2=2n$ and $M^2=2$.

Thus
\[
(tL_n-M)^2 = t^2L_n^2 -2t(L_n\cdot M)+M^2
=2\Bigl((n^2-d)t^2-2nt+1\Bigr).
\]
The larger root is
\[
t_0=\frac{2n+\sqrt{4n^2-4(n^2-d)}}{2(n^2-d)}
=\frac{2n+2\sqrt{d}}{2(n^2-d)}=\frac{n+\sqrt{d}}{n^2-d}=\frac{1}{n-\sqrt{d}}.
\]
\end{proof}

\subsection{Double covers}

As before, take a double cover $f:W\to X$ branched along a smooth divisor in $|2M|$ (over $\mathrm{char}\ne 2$),
so $K_W=f^*M$ and $A=f^*L_n$.
Then $t_0(W,A)=t_0(X;L_n,M)$ is irrational by Lemma~\ref{lem:finite-threshold}.

\begin{remark}
Following \cite[\S2, Type~1]{BauerBorntraeger18}, one can do similar computations when $d\equiv 1\pmod 4$ as well. By \cite[\S2, Type~1]{BauerBorntraeger18} there exists a $\Z$-basis
$\{L_0,L_\infty\}$ of $\NS(X)$ with intersection matrix
\[
\begin{pmatrix}
L_0^2 & L_0\cdot L_\infty\\
L_0\cdot L_\infty & L_\infty^2
\end{pmatrix}
=
\begin{pmatrix}
2 & 1\\
1 & \frac{1-d}{2}
\end{pmatrix}.
\]
Fix an integer $n\gg 0$ and pick $M:=L_0, \ L:=nL_0+L_\infty.$
Then one gets
\[
t_0=\frac{2(2n+1+\sqrt d)}{4n^2+4n+1-d}.
\]
If $d$ is not a square, then $t_0\notin \Q$.
    
\end{remark}

\section{Example IV: Rank-two lattices on $K3$ surfaces}\label{sec:examples-k3}

We now give two $K3$-surface families. The main references which I will follow closely are \cite{Huybrechts16,Kovacs94,Kovacs14}.
The first family reuses the same rank-two lattice as in the real multiplication Section \ref{sec:examples-real-mult}, showing that the mechanism is not special to abelian surfaces.
The second family uses an even lattice with Gram matrix $\begin{psmallmatrix}4&2m\\2m&4\end{psmallmatrix}$, producing a different irrationality.

\subsection{K3 surfaces with N\'eron--Severi lattice $\langle 2\rangle \oplus \langle -2d\rangle$}

Fix a squarefree integer $d>1$ with $d\equiv 3\pmod 4$.
Take a $K3$ surfaces with Picard rank $2$ and N\'eron--Severi lattice
\[
N_d := \langle 2\rangle \oplus \langle -2d\rangle= \Z H\oplus\Z F,
\]

Following \cite{Huybrechts16}, let $H,F$ be a basis of $N_d$ with
\[
H^2=2,\qquad F^2=-2d,\qquad H\cdot F=0.
\]
Choose an integer $u>\sqrt{d}$ and define
\[
L := uH + F,\qquad M:=H.
\]
We compute the pseudo-effective threshold explicitly.

\begin{lemma}\label{lem:t0-Nd}
Assume $\NS(X)\cong N_d$ and that the nef cone equals the positive cone.
Then $L$ and $M$ are ample and
\[
t_0(X;L,M)=\frac{1}{u-\sqrt{d}}\notin\Q.
\]
\end{lemma}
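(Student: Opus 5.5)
The argument runs exactly as in Proposition~\ref{prop:t0-real-mult}, with the abelian-surface input $\psef=\nef$ replaced by the hypothesis that the nef cone equals the positive cone. Since $X$ is a K3 surface, $K_X\sim 0$, so the threshold only concerns when $tL-M$ becomes nef. Moreover, the closure of the effective cone is dual to the nef cone. When the nef cone equals the (closed) positive cone $\{D: D^2\ge 0,\ D\cdot H\ge 0\}$, which is self-dual under the hyperbolic intersection form of signature $(1,1)$, we also get $\psef(X)=\nef(X)$. Thus
\[
t_0(X;L,M)=\inf\{t\ge 0\mid (tL-M)^2\ge 0,\ (tL-M)\cdot H\ge 0\}.
\]

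\textbf{Ampleness.} For $M=H$ we have $H^2=2>0$, and $H$ lies in the interior of the positive cone. Since the nef cone equals the positive cone, its interior is the ample cone, so $H$ is ample. For $L=uH+F$:
\[
L^2=2u^2-2d>0 \quad\text{because } u>\sqrt d,\qquad L\cdot H=2u>0.
\]
So $L$ also lies in the interior of the positive cone and is ample.

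\textbf{Computing the threshold.} Set $D_t:=tL-M$. Then
\[
D_t^2 = 2\bigl((u^2-d)t^2-2ut+1\bigr).
\]
Its roots are
\[
t=\frac{u\pm\sqrt d}{u^2-d},
\]
and the larger one simplifies to $\frac{1}{u-\sqrt d}$. We also have
\[
D_t\cdot H = 2ut-2,
\]
which is nonnegative once $t\ge 1/u$. Since $\frac{1}{u-\sqrt d}>\frac1u$, this condition holds automatically at the larger root.

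It remains to see that on the branch $D_t\cdot H\ge 0$, nefness holds exactly for $t\ge \frac1{u-\sqrt d}$. For $t$ between the two roots, $D_t^2<0$, so $D_t$ is not nef. The smaller root $\frac{1}{u+\sqrt d}$ is less than $1/u$, so there $D_t\cdot H<0$ and $D_t$ lies in the negative half of the cone rather than the nef cone. For $t$ above the larger root, both conditions hold. Hence
\[
t_0=\frac1{u-\sqrt d}.
\]

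\textbf{Irrationality.} Since $d>1$ is squarefree, it is not a perfect square, so $\sqrt d\notin\Q$. If $\frac1{u-\sqrt d}$ were rational, then $u-\sqrt d$ and hence $\sqrt d$ would be rational, a contradiction.

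\textbf{Main obstacle.} The only genuinely non-formal step is justifying $\psef=\nef$ from the hypothesis on the nef cone. This is where the duality argument has to be stated carefully, including the choice of the component of the positive cone containing $H$. The hypothesis holds, for instance, when there are no $(-2)$-curves. The lattice $N_d$ represents $-2$ only if $2a^2-2db^2=-2$ has an integer solution, i.e.\ $a^2-db^2=-1$. Reducing modulo $4$ with $d\equiv 3\pmod 4$ rules this out, although the lemma already takes the cone condition as an assumption.
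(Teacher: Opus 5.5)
Your proposal is correct and follows essentially the same route as the paper. Like the paper, you place $M=H$ and $L=uH+F$ in the interior of the positive cone, expand $(tL-M)^2=2\bigl((tu-1)^2-dt^2\bigr)$, and take the root on the side $D_t\cdot H>0$, which gives $t_0=1/(u-\sqrt d)$, irrational because $d$ is not a square.

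The differences are cosmetic. You deduce $\psef=\nef$ from the self-duality of the closed positive cone in signature $(1,1)$, whereas the paper cites Lemma~\ref{lem:k3-nef}. You also rule out the smaller root $1/(u+\sqrt d)$ explicitly, which the paper does implicitly through the condition $tu-1>0$.
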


\begin{proof}
Since $M=H$ has $M^2=2>0$, it lies in the positive cone.
The class $L=uH+F$ has
\[
L^2 = 2u^2-2d = 2(u^2-d)>0\qquad\text{and}\qquad L\cdot M = 2u>0,
\]
so $L$ lies in the same positive-cone component as $M$; hence $L$ and $M$ are ample.

Set $D_t:=tL-M$.
Using $H^2=2$, $F^2=-2d$, and $H\cdot F=0$ we compute
\[
D_t = (tu-1)H + tF,\qquad
D_t^2 = 2(tu-1)^2 - 2dt^2.
\]
Since $\nef(X)$ is the closure of the positive cone (by Lemma \ref{lem:k3-nef} below), $D_t$ is pseudo-effective iff $D_t^2\ge 0$ and $D_t\cdot M\ge 0$.
For $t>0$ we have $D_t\cdot M = 2(tu-1)$, so the threshold occurs when $D_t^2=0$ with $tu-1>0$.
Solving $D_t^2=0$ gives
\[(tu-1)^2=dt^2,\qquad tu-1=t\sqrt d,\qquad t=\frac{1}{u-\sqrt d}.
\]
This is irrational since $d$ is not a square.
\end{proof}

It remains to justify the nef-cone hypothesis.
In our lattice, this is automatic:

\begin{lemma}\label{lem:Nd-no}
Let $d>1$ be squarefree with $d\equiv 3\pmod 4$.
Then the lattice $N_d=\langle 2\rangle\oplus\langle -2d\rangle$ has no nonzero vector of square $0$ and no vector of square $-2$.
\end{lemma}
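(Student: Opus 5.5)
A general vector is $v=aH+bF$ with $a,b\in\Z$, and its square is
\[
v^2=2a^2-2db^2 .
\]
Both claims of Lemma~\ref{lem:Nd-no} therefore reduce to elementary number theory for this binary quadratic form. I will treat the square-$0$ case and the square-$(-2)$ case separately.

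\textbf{Square $0$.} If $v^2=0$ then $a^2=db^2$. When $b\neq 0$ this gives $d=(a/b)^2$, so $d$ is the square of a rational number. Since $d$ is an integer, it would then be a perfect square, which is impossible for a squarefree $d>1$. Hence $b=0$, and then $a=0$, so $v=0$.

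\textbf{Square $-2$.} If $v^2=-2$ then $a^2-db^2=-1$. I will show this negative Pell equation has no integer solutions by reducing modulo $4$, using $d\equiv 3\pmod 4$. The equation becomes $a^2+b^2\equiv -1\equiv 3\pmod 4$. Squares modulo $4$ are only $0$ or $1$, so $a^2+b^2\in\{0,1,2\}\pmod 4$, and $3$ is never attained. This gives the contradiction.

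This second case is the step that actually uses the congruence hypothesis; the rest is bookkeeping. As an alternative, one could reduce modulo a prime $q\equiv 3\pmod 4$ dividing $d$: then $-1$ would be a square modulo $q$, which is false. Such a prime exists because $d\equiv 3\pmod 4$ forces $d$ to have at least one prime factor congruent to $3$ modulo $4$. I would present the mod $4$ argument since it is the shortest.
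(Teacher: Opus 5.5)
Your proposal is correct and follows essentially the same route as the paper: you write $v^2 = 2(a^2-db^2)$, use that $d$ is not a perfect square to rule out nonzero isotropic vectors, and rule out $a^2-db^2=-1$ by reducing modulo $4$ to $a^2+b^2\equiv 3 \pmod 4$. Your alternative argument (reducing modulo a prime $q\equiv 3\pmod 4$ dividing $d$) is also valid, but it is not needed.
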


\begin{proof}
Write $v=aH+bF$.
Then
\[v^2 = 2(a^2-db^2).
\]
If $v^2=0$, then $a^2=db^2$.
Since $d$ is not a square, this forces $b=0$ and then $a=0$.
If $v^2=-2$, then $a^2-db^2=-1$.
Reducing modulo $4$ and using $d\equiv 3\pmod 4$, we obtain
\[a^2 - 3b^2 \equiv -1 \pmod 4\qquad\Longleftrightarrow\qquad a^2 + b^2 \equiv 3\pmod 4.
\]
But $a^2$ and $b^2$ are each $0$ or $1$ modulo $4$, so the sum cannot be $3$.
\end{proof}

Combining Lemmas~\ref{lem:Nd-no} and \ref{lem:k3-nef}, we may and do choose $X$ with $\NS(X)\cong N_d$ so that the nef cone equals the closure of the positive cone,
and hence Lemma~\ref{lem:t0-Nd} applies.

\begin{lemma}(\cite{Kovacs94,Kovacs14,Huybrechts16})\label{lem:k3-nef}
Let $X$ be a projective $K3$ surface over an algebraically closed field.
If $\NS(X)$ contains no classes of self-intersection $0$ and no classes of self-intersection $-2$, and, if $\rho(X)=2$, then $\psef(X)=\nef(X)$ in $N^1(X)_\R$ and the two boundary rays are irrational.
\end{lemma}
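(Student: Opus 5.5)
We prove the lemma by combining the structure theory of the Néron--Severi cone of a $K3$ surface with elementary plane geometry in the rank-two space $N^1(X)_\R\cong\R^2$.

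\emph{Step 1: describe the effective cone.} On a projective $K3$ surface, Riemann--Roch gives $h^0(D)+h^0(-D)\ge 2+D^2/2$ for any divisor $D$. Hence every class with $D^2\ge -2$ is either effective or anti-effective. In particular, every integral class with $D^2>0$ and $D\cdot h>0$ (for an ample $h$) is effective, so the positive cone $\mathcal C^+$ satisfies $\mathcal C^+\subseteq\psef(X)$.

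Conversely, let $C$ be an irreducible curve. Adjunction gives $C^2=2p_a(C)-2\ge -2$, and by hypothesis $C^2\ne 0,-2$, so $C^2>0$. Since $\rho=2$ and irreducible curves have nonnegative intersection with one another, every irreducible curve lies in $\mathcal C^+$. Taking closures, $\psef(X)=\overline{\mathcal C^+}$.

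\emph{Step 2: nef equals psef.} A class $D\in\overline{\mathcal C^+}$ has $D\cdot C\ge0$ for every $C\in\mathcal C^+$, since two classes in the closed positive cone pair nonnegatively by the Hodge index theorem (signature $(1,1)$). Since every irreducible curve lies in $\mathcal C^+$, this gives $\overline{\mathcal C^+}\subseteq\nef(X)$. The reverse inclusion $\nef(X)\subseteq\psef(X)$ always holds. Therefore
\[
\nef(X)=\overline{\mathcal C^+}=\psef(X).
\]

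\emph{Step 3: the boundary rays are irrational.} The boundary of $\overline{\mathcal C^+}$ consists of the two rays of the isotropic cone $\{D^2=0\}$ in $N^1(X)_\R$. If either ray were rational, it would contain a nonzero integral class $v$ with $v^2=0$, contradicting the hypothesis. So both rays are irrational.

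\emph{Main obstacle.} The delicate point is Step 1, specifically showing that every integral class in $\mathcal C^+$ is effective. This is where the Riemann--Roch/Serre duality argument on a $K3$ is used, together with the choice of orientation via an ample class. Positive characteristic needs a little care, but the argument only uses Riemann--Roch, Serre duality with $K_X=0$, and adjunction, all of which hold for $K3$ surfaces over any algebraically closed field.
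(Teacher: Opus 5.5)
Your proof is correct, and it takes a more elementary, self-contained route than the paper's. The paper leans on Kov\'acs \cite{Kovacs14} for three things:
\begin{enumerate}
\item his Lemma~2.1, for the Riemann--Roch effectivity of classes with $v^2\ge -2$;
\item his Corollary~3.2 and Remark~1.2, for the rank-two dichotomy: a boundary ray of $\overline{\mathrm{NE}}(X)$ either carries an effective class of square $0$ or $-2$, or carries no effective class and is then irrational;
\item the standard K3 fact that, when there are no smooth rational curves, the nef cone is the closed positive cone; from this it deduces $\psef(X)=\overline{\mathcal C^+}$.
\end{enumerate}
You argue directly instead. You get $\psef(X)\subseteq\overline{\mathcal C^+}$ from adjunction, and $\overline{\mathcal C^+}\subseteq\nef(X)$ from the light-cone lemma. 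Irrationality then follows simply because the two boundary rays of $\overline{\mathcal C^+}$ are isotropic, so a rational ray would contain a nonzero integral class of square $0$. No appeal to Kov\'acs's classification is needed.

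Your route makes it visible where each hypothesis enters. The absence of $(-2)$-curves alone gives $\psef=\nef=\overline{\mathcal C^+}$, in any Picard rank. The hypotheses $\rho=2$ and the absence of isotropic classes are used only for the irrationality of the two rays. The paper's route is shorter and places the lemma inside Kov\'acs's general description of cones on K3 surfaces, which also covers the case of rational boundary rays.

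Two small remarks. First, the Riemann--Roch step you flag as the main obstacle is actually redundant. The chain $\overline{\mathcal C^+}\subseteq\nef(X)\subseteq\psef(X)\subseteq\overline{\mathcal C^+}$ already closes using only the curve computation, the light-cone lemma, and $\nef\subseteq\psef$. Second, an irreducible curve $C$ lies in $\mathcal C^+$ simply because $C^2>0$ and $C\cdot h>0$; this has nothing to do with $\rho=2$.
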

\begin{proof}
By \cite[Lemma~2.1]{Kovacs14}, for any $v\in \mathrm{NS}(X)$ with $v^2\ge -2$ one of $v$ or $-v$ is effective. In Picard rank $2$, the cone of curves $\overline{\mathrm{NE}}(X)$ has two extremal rays, and \cite[Corollary~3.2 and Remark~1.2]{Kovacs14} shows that either a boundary ray contains an effective class of square $0$ or $-2$, or else neither boundary ray contains an effective class (in which
case the boundary rays are irrational). Under our hypothesis the latter occurs, hence the boundary rays are irrational.

Since $X$ has no $(-2)$-classes, there are no smooth rational curves. Therefore the nef cone has no walls, so $\mathrm{Nef}(X)$ equals the closure of the positive cone $\overline{V^+}$, and consequently $\overline{\mathrm{Eff}}(X)=\overline{V^+}$ as well.
\end{proof}

Under these hypotheses, threshold computations proceed as on an abelian surface: the boundary of the nef cone is given by square-zero classes, giving the quadratic irrational root.

\subsection{A second $K3$ family with Gram matrix $((4,2m),(2m,4))$}

Fix an integer $m\ge 3$ and consider the rank-two even lattice
\[
\Lambda_m := \begin{pmatrix}4&2m\\2m&4\end{pmatrix}.
\]
There exist projective $K3$ surfaces with $\NS(X)\cong \Lambda_m$ and $\rho(X)=2$.

Let $e_1,e_2$ be a basis with
\[
e_1^2=e_2^2=4,\qquad e_1\cdot e_2=2m.
\]
Define
\[
M:=e_1,\qquad L:=e_1+e_2.
\]
Then $M$ and $L$ are ample if we choose the positive cone component appropriately.

\begin{lemma}\label{lem:Lambda-no}
Let $\Lambda_m$ be the rank-two lattice with Gram matrix
\[
\begin{pmatrix}
4 & 2m\\
2m & 4
\end{pmatrix},
\qquad m\ge 3.
\]
Then $\Lambda_m$ contains no nonzero classes of self-intersection $0$ and no classes of self-intersection $-2$.
\end{lemma}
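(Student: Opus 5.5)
Write $v=ae_1+be_2$ with $a,b\in\Z$ and reduce both assertions to elementary number theory. Expanding with the Gram matrix gives
\[
v^2=4a^2+4mab+4b^2=4(a^2+mab+b^2).
\]

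The $(-2)$-classes are the easy half. Every value $v^2$ is divisible by $4$, and $-2$ is not, so $v^2=-2$ has no solution in $\Lambda_m$.

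For the isotropic vectors, suppose $v\neq 0$ and $v^2=0$, so $a^2+mab+b^2=0$.
\begin{itemize}
\item If $b=0$, then $a^2=0$, so $a=0$, contradicting $v\ne 0$. Hence $b\neq 0$.
\item Dividing by $b^2$, the rational number $x=a/b$ satisfies $x^2+mx+1=0$.
\item The discriminant of this quadratic is $m^2-4$. So a rational root exists only if $m^2-4$ is a perfect square.
\end{itemize}
Now write $m^2-4=k^2$ with $k\ge 0$. This gives $(m-k)(m+k)=4$. Both factors are positive integers of the same parity, so the only possibility is $m-k=m+k=2$, which forces $m=2$ and $k=0$. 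This contradicts $m\ge 3$. Therefore $m^2-4$ is not a square, and there is no nonzero isotropic vector.

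An equivalent check is that $(m-1)^2<m^2-4<m^2$ for $m\ge 3$, so $m^2-4$ lies strictly between consecutive squares.

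There is no real obstacle here. The only point needing care is the reduction from integers to a rational root, which is what the case split on $b=0$ handles. This mirrors the discriminant argument in Proposition~\ref{prop:t0-isogenous}.
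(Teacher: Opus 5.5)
Your proof is correct and is essentially the paper's argument: the factor of $4$ in $v^2=4(a^2+mab+b^2)$ rules out $-2$, and in the isotropic case $x=a/b$ is a rational root of $x^2+mx+1=0$, which forces $m^2-4$ to be a square and hence $m=2$. Your parity remark and the bound $(m-1)^2<m^2-4<m^2$ for $m\ge 3$ are valid additions that spell out the factorization step the paper leaves implicit.
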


\begin{proof}
Let $v=a e_1 + b e_2 \in \Lambda_m$. Then
\[
v^2 = 4a^2 + 4b^2 + 4mab = 4(a^2+b^2+mab),
\]
so $v^2=-2$ is impossible.

If $v^2=0$, then $a^2+b^2+mab=0$.
If $b=0$, then $a=0$. Otherwise, set $x=a/b\in \Q$ and divide by $b^2$ to obtain
\[
x^2 + mx + 1 = 0.
\]
Thus the discriminant $m^2-4$ is a square in $\Q$, hence a square in $\Z$.
Write $m^2-4=r^2$ with $r\in\Z_{\ge 0}$; then $(m-r)(m+r)=4$, forcing $m=2$.
This contradicts $m\ge 3$, so no nonzero isotropic class exists.
\end{proof}

Therefore Lemma~\ref{lem:k3-nef} applies and $\psef=\nef$ on such $X$.

\begin{proposition}\label{prop:t0-Lambda}
On a $K3$ surface $X$ with $\NS(X)\cong \Lambda_m$ for $m\ge 3$ and with ample cone equal to the positive cone,
the pseudo-effective threshold for $(L,M)$ is
\[
t_0(X;L,M)=\frac{(m+2)+\sqrt{m^2-4}}{2(m+2)}\notin\Q.
\]
\end{proposition}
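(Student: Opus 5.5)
The plan is to repeat the argument of Lemma~\ref{lem:t0-Nd}, with the Gram matrix of $\Lambda_m$ in place of that of $N_d$. By Lemma~\ref{lem:Lambda-no} and Lemma~\ref{lem:k3-nef}, $\psef(X)=\nef(X)$ equals the closure $\overline{V^+}$ of the component of the positive cone containing the ample class $M=e_1$. So for $D_t:=tL-M$ I will use the criterion
\[
D_t\in\psef(X)\iff D_t^2\ge 0\ \text{and}\ D_t\cdot M\ge 0 .
\]
The threshold is then the largest $t$ at which $D_t$ reaches the boundary of $\overline{V^+}$.

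First I compute the intersection numbers from $e_1^2=e_2^2=4$ and $e_1\cdot e_2=2m$:
\[
L^2=4+4m+4=4(m+2),\qquad L\cdot M=4+2m=2(m+2),\qquad M^2=4 .
\]
Hence
\[
D_t^2=t^2L^2-2t(L\cdot M)+M^2=4\bigl((m+2)t^2-(m+2)t+1\bigr).
\]
The discriminant of the quadratic is $(m+2)^2-4(m+2)=(m+2)(m-2)=m^2-4$, so the larger root is
\[
t_0=\frac{(m+2)+\sqrt{m^2-4}}{2(m+2)},
\]
which is the claimed value. For $t>t_0$ we have $D_t^2>0$, and between the two roots $D_t^2<0$.

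Next I check the orientation condition, so that $D_t$ lies in the correct component of the positive cone rather than its negative. We have $D_t\cdot M=2(m+2)t-4$, which is positive exactly when $t>2/(m+2)$. Since $t_0\ge \tfrac12$ and $2/(m+2)\le \tfrac25$ for $m\ge 3$, this holds for all $t\ge t_0$. For $t$ slightly less than $t_0$ we have $D_t^2<0$, so $D_t\notin\psef(X)$. The same reasoning applies to all $t\in[0,t_0)$: on that range either $D_t^2<0$ (between the roots) or $D_t\cdot M<0$ (below the smaller root, which is at most $\tfrac12$). Indeed, at the smaller root $t_-$ we have $t_-(m+2)=1/((m+2)t_+)$ by Vieta, and this is less than $2$, so $D_{t_-}\cdot M<0$. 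Thus $D_t$ is pseudo-effective exactly for $t\ge t_0$.

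Finally, irrationality: if $m^2-4=r^2$ with $r\in\Z_{\ge0}$, then $(m-r)(m+r)=4$, which forces $m=2$, exactly as in Lemma~\ref{lem:Lambda-no}. So $\sqrt{m^2-4}\notin\Q$ for $m\ge 3$, and hence $t_0\notin\Q$. The only delicate point is the sign bookkeeping in the previous step, namely excluding the smaller root and the negative cone. Everything else is routine computation, and ampleness of $L$ follows as in Lemma~\ref{lem:t0-Nd} from $L^2>0$ and $L\cdot M>0$.
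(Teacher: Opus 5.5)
Your proposal is correct and follows the paper's route: compute $L^2$, $L\cdot M$, $M^2$, take the larger root of $(tL-M)^2=0$, and deduce irrationality from $m^2-4$ not being a perfect square; your orientation check, which rules out $t_-$ and the negative cone, is a detail the paper leaves implicit. One small slip: Vieta gives $(m+2)t_-=1/t_+$, not $1/((m+2)t_+)$, but since $t_+>\tfrac12$ this is still $<2$, so $D_{t_-}\cdot M<0$ holds as you claim.
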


\begin{proof}
Compute $L^2=(e_1+e_2)^2 =4+4+4m=4(m+2)$,
$L\cdot M=(e_1+e_2)\cdot e_1=4+2m=2(m+2)$,
and $M^2=4$.

\[
(tL-M)^2 = t^2L^2 -2t(L\cdot M)+M^2
=4\Bigl((m+2)t^2-(m+2)t+1\Bigr).
\]
The larger root is
\[
t_0=\frac{(m+2)+\sqrt{(m+2)^2-4(m+2)}}{2(m+2)}
=\frac{(m+2)+\sqrt{m^2-4}}{2(m+2)}.
\]
Irrationality follows from Lemma~\ref{lem:Lambda-no}.
\end{proof}

As before, take a double cover branched in $|2M|$ (for $\mathrm{char}\ne 2$) to obtain a smooth surface $W$ with $K_W=f^*M$
and $A=f^*L$, preserving the threshold.

\subsection{Existence}
\emph{Existence in characteristic $0$.}
Let $L$ be an even lattice of signature $(1,\rho-1)$ with $\rho\le 10$.
By Morrison's theorem \cite{Mor84} (see \cite[Corollary.~14.3.1 and Remark.~14.3.7]{Huybrechts16}),
there exists a complex projective K3 surface $X/\C$ with $\NS(X)\simeq L$.
In particular, for $d>1$ squarefree, the lattices
\[
N_d=\langle 2\rangle \oplus \langle -2d\rangle,\qquad
\Lambda_m=\begin{pmatrix}4&2m\\2m&4\end{pmatrix}
\]
are even of signature $(1,1)$, hence occur as N\'eron--Severi lattices of projective complex K3 surfaces.
Choosing the connected component of the positive cone containing a prescribed class
(e.g. $H\in N_d$ with $H^2=2$, or $e_1\in\Lambda_m$ with $e_1^2=4$) realizes that class as ample.

\emph{Existence in positive characteristic.}
Let $L$ be one of the above rank-$2$ lattices, and let $d_L:=|\mathrm{disc}(L)|$. Bragg--Brakkee--V\'arilly-Alvarado construct a moduli stack of lattice-polarized K3 surfaces over $\Spec\Z$
and show that, after inverting the primes dividing $d_L$, the primitive locus is smooth away from a closed substack supported on supersingular locus and has the expected relative dimension \(20-\rk(L)\); see \cite[Thm.~1.1]{BBVA25} (also see \cite[\S1, \S2]{Riz06}).
Since the generic fiber (over $\Q$) is nonempty by the characteristic $0$ paragraph above, the image of this moduli stack in $\Spec\Z[1/d_L]$ contains a nonempty open subset.  For every prime \(p\) in that open subset, the fiber is nonempty,
which yields a lattice-polarized $K3$ surface over some finite type field extension of \(\F_p\). To arrange $\NS(X)\simeq L$, choose an irreducible component whose generic point is not contained in any Noether--Lefschetz locus corresponding to additional algebraic classes. Then the geometric generic fiber of that component has Neron--Severi group exactly \(L\).

\section{Irrational $F$-jumping numbers}\label{sec:irrational-f-jumps}

We now prove \textbf{Theorem 2}.

\subsection{The threefold cone and notation}

Let $(W,A)$ be any smooth projective surface from Sections~\ref{sec:examples-abelian-EE}--\ref{sec:examples-k3}
with irrational pseudo-effective threshold \footnote{Replacing $A$ by a sufficiently large multiple (equivalently, passing to a Veronese subring of $R(W,A)$), we may and do assume $0<t_0<1$. This anyway holds in all our explicit computations in Sections 4-7.}
\[
1>t_0 := t_0(W,A)\notin\Q,
\qquad
\kappa := 1-t_0\in(0,1).
\]

Form the cone $C=\Spec R(W,A)$ and let $\m\subset R(W,A)$ be the vertex ideal.
Let $R:=R(W,A)_\m$ be the local ring at the vertex.
Then $\dim R=3$ and $R$ is normal.
Note,$R$ is not $\Q$-Gorenstein whenever $t_0(W,A)$ is irrational, by Corollary \ref{cor:cone-not-QG}.

\begin{lemma}(\cite{Hara-Takagi,BSTZ10,ST})\label{lem:skoda-big}
Let $r$ be the minimal number of generators of $\m$ as an ideal of $R$. Then for all real $t\ge r$ one has
\[
\taub(R;\m^t) = \m\cdot \taub(R;\m^{t-1}).
\]
Consequently, for $t>r$, $t$ is an $F$-jumping number of $\taub(R;\m^t)$ if and only if $t-1$ is.
\end{lemma}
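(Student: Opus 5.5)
The plan is to reduce Lemma~\ref{lem:skoda-big} to Skoda's theorem for each summand in Schwede's sum formula. I want to apply Skoda uniformly to every boundary, so the bound $t\ge r$ must not depend on $\Delta$.

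First I would fix generators $x_1,\dots,x_r$ of $\m$. For every effective $\Q$-divisor $\Delta$ with $K_R+\Delta$ $\Q$-Cartier of index not divisible by $p$, I would invoke Skoda's theorem for test ideals of triples (\cite{Hara-Takagi}, \cite[Theorem 4.1]{BSTZ10}, \cite{ST}). This gives
\[
\tau(R,\Delta;\m^t)=\m\cdot\tau(R,\Delta;\m^{t-1})\qquad\text{for all real } t\ge r.
\]
The key point is that the threshold depends only on the number of generators of $\m$, not on $\Delta$. This is the step I would check most carefully in the literature. The standard argument writes $\m^{\lceil t(p^e-1)\rceil}\subseteq \m\cdot(\m^{[p^e]}\cdots)$ via the pigeonhole bound $\m^{N}=\m\,\m^{N-r}$ for $N\ge r$ in a reduction ideal. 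It uses only the $\phi$-stability of Definition~\ref{Def-testideal}, so it is uniform in $\Delta$.

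Next I would sum over all such $\Delta$ using Theorem~\ref{def:big-test}. Multiplication by $\m$ commutes with arbitrary sums of ideals, so
\[
\taub(R;\m^t)=\sum_\Delta \m\,\tau(R,\Delta;\m^{t-1})=\m\sum_\Delta\tau(R,\Delta;\m^{t-1})=\m\cdot\taub(R;\m^{t-1}).
\]
If one prefers, one can first reduce to a finite sum using \cite[Cor.~5.4]{SchwedeBig} at the exponent $t-1$. Enlarging that finite index set to also compute $\taub(R;\m^t)$ is harmless, so this is only cosmetic. Alternatively, the statement follows from Skoda for the big test ideal directly, as in \cite{Hara-Takagi}, since $\taub$ is the non-$\Q$-Gorenstein big test ideal there.

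For the consequence, I would note that $t$ is an $F$-jumping number exactly when $\taub(R;\m^{t-\varepsilon})\ne\taub(R;\m^t)$ for all small $\varepsilon>0$. When $t>r$, both $t$ and $t-\varepsilon$ are at least $r$, so the identity gives $\taub(R;\m^{t-\varepsilon})=\m\,\taub(R;\m^{t-1-\varepsilon})$ and $\taub(R;\m^t)=\m\,\taub(R;\m^{t-1})$. Hence a jump at $t-1$ forces a jump at $t$, provided multiplication by $\m$ detects strict inclusions of $\m$-primary-or-zero ideals. Conversely, a jump at $t$ implies a jump at $t-1$ trivially, because equal inputs give equal outputs.

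The main obstacle is this injectivity: $I\subsetneq J$ must imply $\m I\subsetneq \m J$. This can fail for general ideals in a local ring. I would handle it by using that all ideals in question are integrally closed powers $\m^{a}$ for $t\gg0$, via Proposition~\ref{prop:raywise-agreement}. Otherwise, I would use the graded structure: with $I\subsetneq J$ homogeneous, take the lowest degree $d$ with $J_d\not\subseteq I$. Then $\m_1 J_d\not\subseteq \m I$ by comparing degree-$(d+1)$ pieces, since $W$ is embedded by $|A|$ and multiplication by $\m_1$ on $W$'s section ring is injective on graded pieces in the relevant sense.
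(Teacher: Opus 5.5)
Your derivation of the identity $\taub(R;\m^t)=\m\cdot\taub(R;\m^{t-1})$ for $t\ge r$ is fine, and it is what the paper's citations amount to. Skoda holds for each pair $(R,\Delta)$ with a threshold depending only on the number of generators of $\m$, and $\m\cdot\sum_\Delta I_\Delta=\sum_\Delta \m I_\Delta$. (Your pigeonhole formula is wrong as written, since $\m\cdot\m^{N-r}=\m^{N-r+1}$. The relevant fact is $\fa^{N}=\fa^{[p^e]}\fa^{N-p^e}$ for $N\ge r(p^e-1)+1$ when $\fa$ has $r$ generators. This is harmless, since you are citing Skoda anyway.) Your argument that, for $t>r$, a jump at $t$ forces a jump at $t-1$ is also correct.

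The gap is the converse, and neither of your repairs works.
\begin{itemize}
\item The graded claim is false: $V\subsetneq V'\subseteq R_d$ does not imply $R_1V\subsetneq R_1V'$, because of reductions. In $k[x,y]$ one has $(x^2,y^2)\subsetneq(x,y)^2$ but $\m(x^2,y^2)=\m^3=\m\cdot\m^2$. The same happens in $R(W,A)$ itself. Take a minimal reduction $\mathfrak q$ of $\m$ by general linear forms; its reduction number $\rho$ is at least $1$, since $\m$ needs more than $\dim R$ generators. Then $\mathfrak q\m^{\rho-1}\subsetneq\m^{\rho}$, while both have product $\m^{\rho+1}$ with $\m$. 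Being a domain gives injectivity of multiplication by a single element, not injectivity of $V\mapsto R_1V$.
\item The appeal to Proposition~\ref{prop:raywise-agreement} addresses only $t\gg0$, not all $t>r$. Even there it does not suffice. The bound $N(\alpha)$ is not uniform as $\alpha\uparrow\kappa$, and detecting a jump at $n+\kappa$ requires knowing $\taub(R;\m^{n+\kappa-\varepsilon})$ for all small $\varepsilon$ at a fixed $n$. The uniform formula of Corollary~\ref{cor:asymptotic-floor} would rescue only the range $t-1\ge T$. Moreover, that corollary is itself obtained from Theorem~\ref{thm:no-accum} together with this lemma's downward direction.
\end{itemize}

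What the upward step really requires is a cancellation property. If $I\subseteq J$ and $\m I=\m J$, the determinantal trick only gives $J\subseteq\overline I$. So the step works when $\taub(R;\m^{t-1})$ is integrally closed; that is the multiplier-ideal argument. Test ideals need not be integrally closed, and you offer no substitute.

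The upward direction is not innocuous here. Combined with Corollary~\ref{lem:rigidity-window}, it would place every $F$-jumping number in $(r-1,\infty)$ inside $\kappa+\Z$. The paper's closing Question leaves open exactly the upward step from jumping numbers $\lambda<r$, which overlaps the range $t-1\in(r-1,r)$ covered by this statement. So only the downward implication is actually justified.

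The downward implication is also all that is needed downstream. In the proof of Theorem~2, do not translate $r+\kappa$ upward. Instead, translate the $\lambda_n\in(n+\kappa-\varepsilon,n+\kappa]$ down to each fixed $n'\ge r$ and apply Theorem~\ref{thm:no-accum}. This shows directly that every $n'+\kappa$ is a jumping number.
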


Fix a small real number $\varepsilon$ with
\[
0<\varepsilon<\kappa.
\]
We will compare $\taub(R;\m^{n+\kappa})$ and $\taub(R;\m^{n+\kappa-\varepsilon})$ for large integers $n$.

The upper bound for \emph{all} boundaries comes from Lemma~\ref{lem:k-ineq}:

\begin{lemma}\label{lem:upper-bound-all}
Let $\Gamma$ be any effective $\Q$-divisor on $C$ such that $K_C+\Gamma$ is $\Q$-Cartier.
Write $\pi^*(K_C+\Gamma)=K_Y+\Gamma_Y+kW_0$ as in Lemma~\ref{lem:k-ineq}.
Then for every real $t\ge 0$,
\[
\cJ(C,\Gamma;\m^t)\subseteq \m^{\floor{k+t}}
\subseteq \m^{\floor{t_0+1+t}}.
\]
In particular, for every integer $n\ge 0$,
\[
\cJ(C,\Gamma;\m^{n+\kappa}) \subseteq \m^{n+2},
\qquad
\cJ(C,\Gamma;\m^{n+\kappa-\varepsilon}) \subseteq \m^{n+1}.
\]
\end{lemma}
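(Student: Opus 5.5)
The plan is to compute $\cJ(C,\Gamma;\m^t)$ on the blowup $\pi:Y\to C$ of the vertex. By Proposition~\ref{prop:blowup-vertex}, $Y$ is smooth and $\m\cO_Y=\cO_Y(-W_0)$. The first issue is that $\pi$ need not be a log resolution of $(C,\Gamma)$, since $\Gamma_Y$ may fail to be SNC with $W_0$. So I would take a further log resolution $g:\widetilde Y\to Y$ of $(Y,\Gamma_Y+W_0)$ and set $\widetilde\pi=\pi\circ g$. Then
\[
\cJ(C,\Gamma;\m^t)=\widetilde\pi_*\cO_{\widetilde Y}\bigl(\ceil{K_{\widetilde Y}-\widetilde\pi^*(K_C+\Gamma)-t\,g^*W_0}\bigr).
\]
Using $\pi^*(K_C+\Gamma)=K_Y+\Gamma_Y+kW_0$, the divisor inside the round-up is
\[
K_{\widetilde Y/Y}-g^*\Gamma_Y-(k+t)g^*W_0 .
\]

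The key step is to bound this divisor from above. Since $\Gamma_Y\ge 0$, we have $\ceil{K_{\widetilde Y/Y}-g^*\Gamma_Y-(k+t)g^*W_0}\le \ceil{K_{\widetilde Y/Y}-(k+t)g^*W_0}$. I would then apply $g_*$ together with the same smooth-divisor computation used in the proof of Theorem~\ref{thm:Jm-explicit} (\cite[Lemma~4.6]{dFH}, \cite[Lemma 9.2.19]{Positivity-II}). Because $W_0$ is a smooth divisor on the smooth $Y$, this gives
\[
g_*\cO_{\widetilde Y}\bigl(K_{\widetilde Y/Y}+\ceil{-(k+t)g^*W_0}\bigr)=\cO_Y\bigl(-\floor{k+t}W_0\bigr).
\]
Pushing forward by $\pi$ and using Proposition~\ref{prop:blowup-vertex}(c) yields $\cJ(C,\Gamma;\m^t)\subseteq\m^{\floor{k+t}}$. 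This holds when $\floor{k+t}\ge0$, which is automatic since $k\ge t_0+1>0$.

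For the second inclusion I would use Lemma~\ref{lem:k-ineq}, which gives $k\ge\vartheta(W,A)+1$. In our examples $K_W$ is pseudo-effective, so $\vartheta=t_0$ by Remark~\ref{lem:theta=t0}. Hence $\floor{k+t}\ge\floor{t_0+1+t}$, and so $\m^{\floor{k+t}}\subseteq\m^{\floor{t_0+1+t}}$.

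The specializations are then direct substitutions with $t_0=1-\kappa$. At $t=n+\kappa$ we get $t_0+1+t=n+2$, so the bound is $\m^{n+2}$. At $t=n+\kappa-\varepsilon$ we get $n+2-\varepsilon$, whose floor is $n+1$ because $0<\varepsilon<\kappa<1$.

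The main obstacle is the pushforward step. One must make sure that dropping $g^*\Gamma_Y$ is legitimate and that the formula $g_*\cO(K_{\widetilde Y/Y}+\ceil{-c\,g^*W_0})=\cO_Y(-\floor{c}W_0)$ holds for a smooth divisor $W_0$ in arbitrary characteristic. The argument only needs the containment $\subseteq$. For that it suffices that $K_{\widetilde Y/Y}$ is $g$-exceptional, effective, and has discrepancies at least $1\cdot\ord_E(g^*W_0)-1$ for each $E$; this is the standard fact that $(Y,W_0)$ is log canonical with $W_0$ smooth on regular $Y$.

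If one wants to avoid resolution in characteristic $p$ altogether, the alternative is to argue valuatively. Any element $f\in\cJ(C,\Gamma;\m^t)$ must satisfy $\ord_{W_0}(f)\ge\floor{k+t}$, by looking at the coefficient of $W_0$ itself (where $\Gamma_Y$ contributes $0$). By Proposition~\ref{prop:blowup-vertex}(c), the elements of $R$ satisfying this are exactly $\m^{\floor{k+t}}$. This valuative route is cleaner, and I would use it as the actual proof.
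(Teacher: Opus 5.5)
Your proposal is correct and follows the paper's argument: use $\Gamma_Y\ge 0$ to bound $\lceil -\Gamma_Y-(k+t)W_0\rceil$ above by $-\lfloor k+t\rfloor W_0$, push forward using Proposition~\ref{prop:blowup-vertex}(c), and then invoke Lemma~\ref{lem:k-ineq} and substitute. Your valuative reading (only the coefficient along $W_0$ matters, so the bound holds even when $\pi$ is not a log resolution of $(C,\Gamma)$) and your note that $k-1\ge\vartheta=t_0$ requires $K_W$ pseudo-effective make explicit two points that the paper's short proof leaves implicit.
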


\begin{proof}
As in Proposition~\ref{prop:J-Gamma-s},
\[
E_\pi(t)=\ceil{K_Y-\pi^*(K_C+\Gamma)-tW_0}
=\ceil{-\Gamma_Y-(k+t)W_0}\le \ceil{-(k+t)W_0}=-\floor{k+t}W_0,
\]
Pushing forward gives $\cJ(C,\Gamma;\m^t)\subseteq \pi_*\cO_Y(-\floor{k+t}W_0)=\m^{\floor{k+t}}$.
By Lemma~\ref{lem:k-ineq}, $k\ge 1+t_0$, hence $\floor{k+t}\ge \floor{t_0+1+t}$.

Now set $t=n+\kappa$.
Since $t_0+1+\kappa=2$, we have $\floor{t_0+1+n+\kappa}=n+2$.
Similarly, for $t=n+\kappa-\varepsilon$ we have $t_0+1+\kappa-\varepsilon=2-\varepsilon$, so $\floor{t_0+1+n+\kappa -\varepsilon}=n+1$.
\end{proof}

The lower bound uses the special boundaries $\Gamma_s$ with $s>t_0$ and $s-t_0$ arbitrarily small.

\begin{lemma}\label{lem:lower-bound}
Fix a real $0<\delta<\kappa$. Then there exists a rational $s$ with
\[
t_0 < s < t_0+\delta
\]
such that $K_C+\Gamma_s$ is $\Q$-Cartier of index not divisible by $p$.
For such $s$, there exists $N_s\ge 0$ such that for all integers $n\ge N_s$,
\[
\tau(C,\Gamma_s;\m^{n+\kappa})=\m^{n+2}
\quad\text{and}\quad
\tau(C,\Gamma_s;\m^{n+\kappa-\delta})=\m^{n+1}.
\]
\end{lemma}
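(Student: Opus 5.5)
The proof has three steps: choose $s$, control the Cartier index of $K_C+\Gamma_s$, and then apply Proposition~\ref{prop:tau-Gamma-s} with a floor computation.

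\textbf{Choosing $s$ and controlling the index.} Rationals whose denominator is prime to $p$ are dense in $\R$, so we can pick such an $s$ in the open interval $(t_0,t_0+\delta)$. Since $\kappa<1$, we may shrink $\delta$ further if needed so that $sA-K_W$ is ample; this holds for every $s>t_0$ that is close to $t_0$, because $t_0A-K_W$ lies on the boundary of the pseudo-effective cone. In the examples at hand that cone equals the nef cone, and adding a small positive multiple of the ample class $A$ then gives an ample class.

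Next I would build $\Gamma_s$ as in Proposition~\ref{prop:Gamma-s}, choosing $m$ with $p\nmid m$ in Lemma~\ref{lem:boundary-near}. Then $m(K_W+\Delta_s)\sim msA$. I would also take $m$ divisible by the denominator of $s$, which is prime to $p$, so that $ms\in\Z$. In the cone construction of Urbinati, $m(K_C+\Gamma_s)$ is the cone over the Cartier divisor $m(K_W+\Delta_s)-msA\sim 0$, so it is Cartier. Hence the index divides $m$, which is prime to $p$. Verifying this through Urbinati's explicit construction is the step I expect to need the most care, since the paper's Proposition~\ref{prop:Gamma-s} only asserts $\Q$-Cartierness.

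\textbf{Applying the asymptotic comparison.} The blowup of the vertex is smooth, $-W_0$ is $\pi$-ample, and $\Supp(W_0\cup\pi^{-1}_*\Gamma_s)$ is SNC. The last point holds because the strict transform of $\Gamma_s$ is $p^*\Delta_s$, which meets the zero section $W_0$ transversally along the smooth divisor $\Delta_s$. So Setup~\ref{setup:asymptotic} holds, and Proposition~\ref{prop:tau-Gamma-s} gives $T_s$ with
\[
\tau(C,\Gamma_s;\m^t)=\m^{\lfloor s+1+t\rfloor}\qquad\text{for all } \lfloor t\rfloor\ge T_s.
\]
Set $N_s:=T_s$. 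For $n\ge N_s$, both $t=n+\kappa$ and $t=n+\kappa-\delta$ have $\lfloor t\rfloor=n\ge T_s$, because $0<\delta<\kappa<1$.

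\textbf{The floor computation.} Recall $t_0+\kappa=1$.
\begin{itemize}
\item For $t=n+\kappa$: $s+1+t=n+2+(s-t_0)$ with $0<s-t_0<\delta<1$, so the floor is $n+2$.
\item For $t=n+\kappa-\delta$: $s+1+t=n+2+(s-t_0-\delta)$, and $-1<s-t_0-\delta<0$ because $s<t_0+\delta$ and $\delta<1$. So the floor is $n+1$.
\end{itemize}
These give $\tau(C,\Gamma_s;\m^{n+\kappa})=\m^{n+2}$ and $\tau(C,\Gamma_s;\m^{n+\kappa-\delta})=\m^{n+1}$ for all $n\ge N_s$, as claimed. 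Localizing at $\m$ changes nothing.
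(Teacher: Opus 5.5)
Your proposal is correct and follows essentially the same route as the paper. You choose $s\in(t_0,t_0+\delta)$ with denominator prime to $p$, build $\Gamma_s$ with $p\nmid m$ so that the index of $K_C+\Gamma_s$ is prime to $p$, invoke Proposition~\ref{prop:tau-Gamma-s}, and do the two floor computations. Your extra checks fill in details the paper leaves implicit: ampleness of $sA-K_W$, Cartierness of $m(K_C+\Gamma_s)$ via $ms\in\Z$, and the SNC and relative-ampleness hypotheses of Setup~\ref{setup:asymptotic}. The only quibble is wording: you should shrink the interval from which $s$ is chosen, not $\delta$ itself. In fact no shrinking is needed, since in the examples $t_0A-K_W=f^*(t_0L-M)$ is nef, so $sA-K_W$ is ample for every $s>t_0$.
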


\begin{proof}
Choose $s\in\Q_{>0}$ with $t_0<s<t_0+\delta$ and with denominator not divisible by $p$.
Since $\delta<\kappa=1-t_0$, we also have $s<t_0+\delta<1$,
and then choose $\Gamma_s$ as in Proposition~\ref{prop:Gamma-s}.
By choosing $m$ in that construction
prime to $p$ (and taking $s$ with denominator prime to $p$), we may ensure that the Cartier index of
$K_C+\Gamma_s$ is not divisible by $p$.

By Proposition~\ref{prop:tau-Gamma-s}, there exists an integer $N_s$ such that for all real $t$ with $\lfloor t\rfloor\ge N_s$,
\[
\tau(C,\Gamma_s;\m^t)=\m^{\floor{s+1+t}}.
\]
Fix $n\ge N_s$.  Since $\kappa=1-t_0$, we compute
\[
  \lfloor s+1+n+\kappa\rfloor
  =\big\lfloor n+2+(s-t_0)\big\rfloor
  =n+2,
\]
because $0<s-t_0<\delta <1$.
Similarly,
\[
  \lfloor s+1+n+\kappa-\delta\rfloor
  =\big\lfloor n+2+(s-t_0)-\delta\big\rfloor
  =n+1,
\]
because $0<s-t_0<\delta$ implies $2+(s-t_0)-\delta\in(1,2)$.
This gives the desired equalities.
\end{proof}

Now we combine the upper and lower bounds with Schwede's sum formula \ref{def:big-test}.

\begin{proposition}\label{prop:big-values}
Fix $\varepsilon\in(0,\kappa)$.
Then there exists $N\ge 0$ such that for all integers $n\ge N$,
\[
\taub(R;\m^{n+\kappa}) = \m^{n+2}
\qquad\text{and}\qquad
\taub(R;\m^{n+\kappa-\varepsilon}) = \m^{n+1}.
\]
\end{proposition}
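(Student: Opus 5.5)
The plan is a sandwich argument: bound $\taub$ above by Lemma~\ref{lem:upper-bound-all} and below by a single boundary $\Gamma_s$ from Lemma~\ref{lem:lower-bound}. Throughout, $\taub(R;\cdot)$ is computed on $C$ and then localized at the vertex. Since all ideals involved are $\m$-primary or powers of $\m$, localization preserves the equalities.

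\emph{Upper bound.} Fix $t\ge 0$. By Theorem~\ref{def:big-test}, $\taub(R;\m^t)$ is the sum of $\tau(R,\Gamma;\m^t)$ over all effective $\Gamma$ with $K_C+\Gamma$ $\Q$-Cartier of index prime to $p$. For each such $\Gamma$, Lemma~\ref{lem:tau-in-J} gives $\tau(C,\Gamma;\m^t)\subseteq \cJ(C,\Gamma;\m^t)$. Lemma~\ref{lem:upper-bound-all} then puts this inside $\m^{\floor{t_0+1+t}}$, and this bound does not depend on $\Gamma$. Taking $t=n+\kappa$ and $t=n+\kappa-\varepsilon$ yields
\[
\taub(R;\m^{n+\kappa})\subseteq \m^{n+2},\qquad \taub(R;\m^{n+\kappa-\varepsilon})\subseteq \m^{n+1}
\]
for every integer $n\ge 0$. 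No largeness of $n$ is needed here.

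\emph{Lower bound.} Apply Lemma~\ref{lem:lower-bound} with $\delta:=\varepsilon$, which is allowed because $0<\varepsilon<\kappa$. This produces a rational $s$ with $t_0<s<t_0+\varepsilon$ and a boundary $\Gamma_s$ such that $K_C+\Gamma_s$ is $\Q$-Cartier of index prime to $p$. It also produces $N_s$ such that for all $n\ge N_s$,
\[
\tau(C,\Gamma_s;\m^{n+\kappa})=\m^{n+2},\qquad \tau(C,\Gamma_s;\m^{n+\kappa-\varepsilon})=\m^{n+1}.
\]
Since $\Gamma_s$ is one of the summands in Schwede's sum formula, $\taub$ contains both of these ideals. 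Setting $N:=N_s$ and combining with the upper bound gives both equalities for all $n\ge N$.

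The only point that needs care is that a single $\Gamma_s$ must handle both exponents simultaneously. The choice $\delta=\varepsilon$ ensures this: $s-t_0<\varepsilon$ makes $\floor{s+1+n+\kappa-\varepsilon}=n+1$, while $s>t_0$ gives $n+2$ at $n+\kappa$. One must also check that $\Gamma_s$ is admissible in Theorem~\ref{def:big-test}, meaning its index is prime to $p$; Lemma~\ref{lem:lower-bound} already arranges this. The remaining input, namely that $\tau=\cJ$ for $\Gamma_s$ at large exponents, is supplied by Proposition~\ref{prop:tau-Gamma-s} through Theorem~\ref{thm:asymptotic-real}. That theorem requires Setup~\ref{setup:asymptotic}, and the blowup of the vertex satisfies it: $Y$ is smooth, $-W_0$ is $\pi$-ample, and $\Gamma_{s,Y}+W_0$ is SNC for a general choice of $D$. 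I regard this last verification as the main thing to double-check, but it was already used in Proposition~\ref{prop:tau-Gamma-s}.
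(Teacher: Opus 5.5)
Your proposal is correct and follows essentially the same route as the paper's proof. The upper bound comes from Lemma~\ref{lem:tau-in-J} together with the boundary-independent bound of Lemma~\ref{lem:upper-bound-all}, summed over the summands in Schwede's formula. The lower bound comes from the single admissible summand $\Gamma_s$ given by Lemma~\ref{lem:lower-bound} with $\delta=\varepsilon$, which handles both exponents at once for $n\ge N_s$.

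Your check that the vertex blowup satisfies Setup~\ref{setup:asymptotic} for $\Gamma_s$ is a helpful detail the paper leaves implicit. Note that $\pi_*^{-1}\Gamma_s$ is supported on the cone over the smooth divisor $D$, so it meets $W_0$ transversally for any smooth $D$.
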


\begin{proof}
We work on $C$; localizing at the vertex does not change anything.

For the upper bound at $t=n+\kappa$, apply Lemma~\ref{lem:tau-in-J} to each boundary $\Gamma$ appearing in the sum formula:
\[
\tau(C,\Gamma;\m^{n+\kappa}) \subseteq \cJ(C,\Gamma;\m^{n+\kappa}) \subseteq \m^{n+2},
\]
where the final containment is Lemma~\ref{lem:upper-bound-all}.
Summing over $\Gamma$ gives $\taub(C;\m^{n+\kappa})\subseteq \m^{n+2}$.

For the matching lower bound, choose $\delta=\varepsilon$ and pick $s$ as in Lemma~\ref{lem:lower-bound}.
Then for $n \ge N_s$,
\[
\tau(C,\Gamma_s;\m^{n+\kappa})=\m^{n+2}.
\]
Since this summand appears in \ref{def:big-test}, we get $\m^{n+2}\subseteq \taub(C;\m^{n+\kappa})$.
Hence equality holds.

The argument for $t=n+\kappa-\varepsilon$ is identical:
Lemma~\ref{lem:upper-bound-all} bounds every summand by $\m^{n+1}$,
while Lemma~\ref{lem:lower-bound} provides a summand equal to $\m^{n+1}$.
\end{proof}

\begin{corollary}\label{lem:rigidity-window}
Fix $\epsilon\in(0,\kappa)$. Let $N=N(\epsilon)$ be as in Proposition~\ref{prop:big-values}.
Then for every integer $n\ge N+1$ the following hold:
\begin{enumerate}
\item $\tau_b(R;\m^t)=\m^{n+1}$ for all $t\in[\,n,\,n+\kappa-\epsilon\,]$.
\item $\tau_b(R;\m^t)=\m^{n+2}$ for all $t\in[\,n+\kappa,\,n+1\,]$.
\item Consequently, every $F$-jumping number of $\tau_b(R;\m^t)$ lying in the unit
interval $(n,n+1)$ must lie in the small interval $(\,n+\kappa-\epsilon,\;n+\kappa\,]$.
\end{enumerate}
\end{corollary}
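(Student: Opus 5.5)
The plan is to use monotonicity of $t\mapsto \tau_b(R;\m^t)$ (non-increasing in $t$, since each summand $\tau(R,\Delta;\m^t)$ in Theorem~\ref{def:big-test} is non-increasing) and sandwich each ray between values already computed in Proposition~\ref{prop:big-values}.

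For (1), fix $n\ge N+1$ and $t\in[n,n+\kappa-\epsilon]$. The upper endpoint is handled directly: Proposition~\ref{prop:big-values} at $n$ gives $\tau_b(R;\m^{n+\kappa-\epsilon})=\m^{n+1}$. For the lower endpoint I apply the same proposition at $n-1\ge N$, which gives $\tau_b(R;\m^{(n-1)+\kappa})=\m^{n+1}$. Since $n-1+\kappa\le n\le t\le n+\kappa-\epsilon$, monotonicity yields
\[
\m^{n+1}=\tau_b(R;\m^{n-1+\kappa})\supseteq \tau_b(R;\m^t)\supseteq \tau_b(R;\m^{n+\kappa-\epsilon})=\m^{n+1}.
\]
So $\tau_b(R;\m^t)=\m^{n+1}$ on the whole interval.

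For (2), take $t\in[n+\kappa,n+1]$. Proposition~\ref{prop:big-values} at $n$ gives $\tau_b(R;\m^{n+\kappa})=\m^{n+2}$. At $n+1$ it gives $\tau_b(R;\m^{n+1+\kappa-\epsilon})=\m^{n+2}$, and $n+1\le n+1+\kappa-\epsilon$ because $\epsilon<\kappa$. The same sandwich then shows $\tau_b(R;\m^t)=\m^{n+2}$. The monotonicity claim itself is routine: for $t\le t'$ we have $\m^{\lceil t'(p^e-1)\rceil}\subseteq \m^{\lceil t(p^e-1)\rceil}$, and the minimality in Definition~\ref{Def-testideal} then gives $\tau(R,\Delta;\m^{t'})\subseteq\tau(R,\Delta;\m^t)$.

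For (3), recall that an $F$-jumping number is a $t$ with $\tau_b(R;\m^{t-\eta})\ne\tau_b(R;\m^t)$ for all $\eta>0$. By (1) and (2), $\tau_b$ is constant on $[n,n+\kappa-\epsilon]$ and on $[n+\kappa,n+1]$. A point $t\in(n,n+\kappa-\epsilon]$ has a left neighbourhood inside $[n,t]$ where $\tau_b$ is constant, so it is not a jump. The same holds for $t\in(n+\kappa,n+1)$. Hence every jump in $(n,n+1)$ lies in $(n+\kappa-\epsilon,n+\kappa]$.

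The only subtle points are the index bookkeeping (the need for $n\ge N+1$ so that $n-1\ge N$) and verifying monotonicity for $\tau_b$; neither should be a real obstacle.
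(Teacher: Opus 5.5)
Your proof is correct, but it takes a different route from the paper's. The paper's one-line proof also cites Lemma~\ref{lem:upper-bound-all} and Proposition~\ref{prop:tau-Gamma-s}, not just Proposition~\ref{prop:big-values}. That is, it computes pointwise at every $t$ in the two windows. The upper bound is $\tau_b(R;\m^t)\subseteq \m^{\lfloor t+t_0+1\rfloor}$, from Lemma~\ref{lem:tau-in-J} and Lemma~\ref{lem:upper-bound-all} summed over all boundaries. The lower bound is the summand $\tau(C,\Gamma_s;\m^t)=\m^{\lfloor t+s+1\rfloor}$ for $\lfloor t\rfloor\ge T_s$, with $t_0<s<t_0+\epsilon$. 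It then checks that both floors equal $n+1$ on $[n,n+\kappa-\epsilon]$ and $n+2$ on $[n+\kappa,n+1]$.

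You instead treat Proposition~\ref{prop:big-values} as a black box. You interpolate between its values at two consecutive integers using monotonicity of $t\mapsto\tau_b(R;\m^t)$. You justify that monotonicity correctly, from the minimality in Definition~\ref{Def-testideal} and the fact that the index set of boundaries in Theorem~\ref{def:big-test} does not depend on $t$.

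Your version is more economical: it needs no floor bookkeeping at general real $t$. It also accounts for the hypothesis $n\ge N+1$, since you need $n-1\ge N$ in (1). The paper's pointwise version needs no monotonicity and only requires $\lfloor t\rfloor\ge T_s$. It reads off $\tau_b(R;\m^t)$ at each $t$ directly from the floor formulas, rather than squeezing it between endpoint values. Your deduction of (3) from (1) and (2) is exactly the intended one.
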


\begin{proof}
    Immediate from Lemma \ref{lem:upper-bound-all}, Proposition \ref{prop:big-values} and Proposition \ref{prop:tau-Gamma-s}.
\end{proof}

To deduce that $n+\kappa$ itself is a jumping number, we need a no-accumulation result.

\begin{theorem}({\cite{S-Graf}})\label{thm:no-accum}
Let $(R,\m)$ be an $F$-finite normal local domain with an isolated singularity.
Then the set of $F$-jumping numbers of $\taub(R;\m^t)$ has no accumulation points in $\R$.
\end{theorem}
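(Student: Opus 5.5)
The plan is to obtain discreteness from a finite-length argument. For every bounded interval $[0,T]$, I will show that all the ideals $\taub(R;\m^t)$ with $t\in[0,T]$ contain one fixed $\m$-primary ideal $\m^{N(T)}$. Since $t\mapsto\taub(R;\m^t)$ is non-increasing in $t$, the ideals $\taub(R;\m^t)$, $t\in[0,T]$, then form a descending chain of ideals between $R$ and $\m^{N(T)}$. Such a chain has at most $\ell(R/\m^{N(T)})<\infty$ strict drops. Hence there are only finitely many $F$-jumping numbers in $[0,T]$, and no accumulation point can exist.

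\emph{Step 1: $\taub(R)$ is $\m$-primary.} Since $R$ has an isolated singularity, $R_P$ is regular for every prime $P\neq\m$. Formation of $\taub$ commutes with localization, and on a regular local ring $\taub(R_P)=R_P$. So $\taub(R)_P=R_P$ for all $P\ne\m$, hence $\taub(R)\supseteq\m^{c}$ for some $c\ge 0$. If I prefer to stay with the tools of the paper, I would argue through the sum formula of Theorem~\ref{def:big-test}. By \cite[Cor.~5.4]{SchwedeBig}, finitely many boundaries $\Delta_1,\dots,\Delta_\ell$ already compute $\taub(R)$. I would then check, prime by prime away from $\m$, that the sum is the unit ideal there; this uses that a regular local ring is strongly $F$-regular, so small boundaries give trivial test ideals. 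Noetherianity then yields finitely many $\Delta_i$ covering all $P\ne\m$.

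\emph{Step 2: $\m^{\lceil t\rceil}\taub(R)\subseteq \taub(R;\m^t)$.} For each pair test ideal, the inclusion $\fa\cdot\tau(R,\Delta;\fa^{t})\subseteq\tau(R,\Delta;\fa^{t+1})$ follows directly from Definition~\ref{Def-testideal}. The ideal $\fa J$ is stable under the maps $\phi$ with the exponent shifted by one, because $\fa^{\lceil (t+1)(p^e-1)\rceil}\fa J\subseteq \fa^{\lceil t(p^e-1)\rceil}\fa^{p^e}J$ and $\phi(F^e_*(\fa^{[p^e]}x))=\fa\,\phi(F^e_*x)$. Iterating gives $\m^{\lceil t\rceil}\tau(R,\Delta)\subseteq\tau(R,\Delta;\m^{t})$ for every boundary $\Delta$. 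Summing over $\Delta$ via Theorem~\ref{def:big-test} gives the claim for $\taub$. Combining with Step~1, for $t\le T$ we get
\[
\m^{\lceil T\rceil+c}\subseteq \taub(R;\m^t)\subseteq R .
\]
This is the uniform lower bound $\m^{N(T)}$ with $N(T)=\lceil T\rceil+c$.

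\emph{Step 3: conclusion.} Monotonicity of $t\mapsto\taub(R;\m^t)$ is immediate from the sum formula, since each summand is non-increasing in $t$. A jumping number in $[0,T]$ is a point where the ideal strictly drops. All ideals in the chain lie in the finite-length interval $[\m^{N(T)},R]$, so there are at most $\ell(R/\m^{N(T)})$ jumping numbers in $[0,T]$. This holds for every $T$, which gives the theorem. Notably, no right-continuity is needed.

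I expect the main obstacle to be Step~1, i.e.\ making the localization statement for $\taub$ rigorous at the level of generality used here. The step is easy with the intrinsic definition of $\taub$ from \cite{HH,SchwedeBig}, where compatibility with localization is standard. Via the sum formula alone, one has to control the boundaries near each non-maximal prime. I would try the intrinsic definition first and cite \cite{SchwedeBig} for the localization property.
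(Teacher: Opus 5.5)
Your overall strategy is sound, but the justification you give for Step~2 is wrong. If $\fa J$ (with $J=\tau(R,\Delta;\fa^t)$) were stable for the exponent $t+1$, minimality in Definition~\ref{Def-testideal} would give $\tau(R,\Delta;\fa^{t+1})\subseteq \fa J$. That is the \emph{Skoda} direction, the reverse of the inclusion you want, and it fails for small $t$: for $R=k[x,y]$, $\fa=\m$, $t=0$ one has $\tau(R;\m^{1})=R\not\subseteq\m$. The computation also breaks, because only $\fa^{[p^e]}\subseteq\fa^{p^e}$ holds, so $\fa$ cannot be pulled out of $\phi(F^e_*(\fa^{p^e}\,\cdot\,))$.

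The inclusion $\fa\,\tau(R,\Delta;\fa^t)\subseteq\tau(R,\Delta;\fa^{t+1})$ is nevertheless true, via a colon ideal. Set $I:=(\tau(R,\Delta;\fa^{t+1}):\fa)$, which contains $\tau(R,\Delta;\fa^{t+1})\neq 0$. For $a\in\fa$ one has $a\,\phi(F^e_*y)=\phi(F^e_*(a^{p^e}y))$ and $a^{p^e}\fa^{\lceil t(p^e-1)\rceil}I\subseteq\fa^{\lceil (t+1)(p^e-1)\rceil}\cdot\fa I\subseteq\fa^{\lceil (t+1)(p^e-1)\rceil}\tau(R,\Delta;\fa^{t+1})$. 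So $I$ is stable for the exponent $t$, and minimality gives $\tau(R,\Delta;\fa^t)\subseteq I$.

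Better still, Step~2 is unnecessary. For $P\neq\m$ you have $\m R_P=R_P$, so the localization argument of Step~1 gives $\taub(R;\m^T)_P=\taub(R_P)=R_P$. Thus $\taub(R;\m^T)$ is itself $\m$-primary, and by monotonicity it is contained in $\taub(R;\m^t)$ for every $t\le T$.

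With this repair your proof is complete, and it takes a genuinely different route from the paper. The paper does not prove the statement; it imports \cite[Theorem~A]{S-Graf}. Graf's result is much more general. It allows arbitrary ideals $\fa$, whose test ideals need not be cosupported at $\m$, so no finite-length count is available. It is also designed for isolated non-$\Q$-Gorenstein points rather than isolated singularities.

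Your argument instead uses that, for $\fa=\m$ at an isolated singularity, every $\taub(R;\m^t)$ is $\m$-primary, uniformly for $t$ in a bounded interval. Discreteness then reduces to bounding the number of strict drops in a chain by $\ell(R/\m^{N(T)})$. This is elementary, self-contained, needs no right-continuity, and covers exactly the situation in which the paper uses Theorem~\ref{thm:no-accum} (the cone vertex with $\fa=\m$). It says nothing for non-$\m$-primary $\fa$, which is what Graf's theorem buys.
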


We now finish the main theorem.

\begin{proof}[Proof of \textbf{Theorem 2}]
Fix $\varepsilon\in(0,\kappa)$.
By Corollary~\ref{lem:rigidity-window}, for all $n\ge N(\epsilon)+1$ there exists at least one $F$-jumping number $\lambda_n$ in the interval $(n+\kappa-\varepsilon,n+\kappa]$.

Apply Lemma~\ref{lem:skoda-big} repeatedly to translate each $\lambda_n$ down by the integer $n-r$:
\[
\mu_n := \lambda_n-(n-r) \in (r+\kappa-\varepsilon,r+\kappa].
\]
Thus for every $\varepsilon>0$, there exist $F$-jumping numbers in $(r+\kappa-\varepsilon,r+\kappa]$.
In other words, $r+\kappa$ is a limit point of $F$-jumping numbers.

By Theorem~\ref{thm:no-accum}, the set of jumping numbers has no accumulation points, so the only way $r+\kappa$ can be a limit point
is if $r+\kappa$ itself is a jumping number.
Finally, by Lemma~\ref{lem:skoda-big},
if $r+\kappa$ is a jumping number then so is $n+\kappa$ for every $n\ge r$.
Since $\kappa\notin\Q$, these are irrational.
\end{proof}

\begin{corollary}\label{rem:all-but-finite}
    \textbf{Theorem 2} together with Theorem \ref{thm:no-accum} (=\cite[Theorem~A]{S-Graf}) shows that we have missed only finitely many $F$-jumps. The set of rational $F$-jumping numbers of $\tau_b(R;\mathfrak m^t)$ is finite.
\end{corollary}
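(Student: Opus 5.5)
The claim is that $\tau_b(R;\m^t)$ has only finitely many rational $F$-jumping numbers. The plan is to locate all jumping numbers in $(n,n+1)$ for large $n$, and then handle the bounded initial segment by discreteness.

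\textbf{Step 1: pick a threshold $n_0$.}
Fix some $\epsilon\in(0,\kappa)$ and let $N=N(\epsilon)$ be as in Proposition~\ref{prop:big-values}. By Corollary~\ref{lem:rigidity-window}(3), for every integer $n\ge N+1$ any $F$-jumping number in the open interval $(n,n+1)$ lies in $(n+\kappa-\epsilon,n+\kappa]$.

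\textbf{Step 2: show the only jump in $(n+\kappa-\epsilon,n+\kappa]$ is $n+\kappa$.}
I will run the argument for every $\epsilon'\in(0,\epsilon)$. Corollary~\ref{lem:rigidity-window} gives a threshold $N(\epsilon')$, and this threshold may grow as $\epsilon'\to 0$, so I remove that dependence with Skoda.

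Suppose $\lambda\in(n+\kappa-\epsilon,n+\kappa)$ is a jump with $n\ge\max(N+1,r)$. By Lemma~\ref{lem:skoda-big}, $\lambda+m$ is a jump for all $m\ge 0$. Choose $\epsilon'<n+\kappa-\lambda$. For $m$ large, Corollary~\ref{lem:rigidity-window}(1) applied with $\epsilon'$ says $\tau_b$ is constant on $[n+m,\,n+m+\kappa-\epsilon']$, and this interval contains $\lambda+m$. That is a contradiction.

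\textbf{Step 3: the jump at $n+\kappa$ and the integers.}
The point $n+\kappa$ is itself a jump by Theorem 2. Integer points $n$ might also be jumps; these are rational, but only one per unit interval.

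For integers $n\ge n_0:=\max(N+1,r)$, Lemma~\ref{lem:skoda-big} says $n$ is a jump if and only if $n_0$ is. Here I must rule out integer jumps. By Corollary~\ref{lem:rigidity-window}(2), $\tau_b=\m^{n+1}$ on $[n-1+\kappa,\,n]$, and by (1), $\tau_b=\m^{n+1}$ on $[n,\,n+\kappa-\epsilon]$. So $\tau_b$ is constant across $n$, hence $n$ is not a jump.

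Therefore every jump $\ge n_0$ has the form $n+\kappa$, which is irrational.

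\textbf{Step 4: the bounded region.}
The jumping numbers in $[0,n_0]$ form a finite set, by Theorem~\ref{thm:no-accum} (the cone has an isolated singularity since $W$ is smooth). So there are only finitely many rational jumping numbers in total.

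The main obstacle is Step 2, namely the non-uniformity of $N(\epsilon')$. The Skoda translation shown above is what removes it.
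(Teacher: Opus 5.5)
Your argument is valid if Lemma~\ref{lem:skoda-big} is taken at face value, but Step~2 takes a genuinely different route from the paper.

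\textbf{The paper's route.} The paper argues by contradiction. If infinitely many jumps are not of the form $n+\kappa$, then by Theorem~\ref{thm:no-accum} they must occur in arbitrarily high unit intervals. So one can pick $\lambda_j\in(n_j+\kappa-\tfrac1j,\,n_j+\kappa)$. Translating these \emph{downward} gives jumps $\mu_j\in(r+\kappa-\tfrac1j,\,r+\kappa)$. These accumulate at $r+\kappa$, which contradicts Theorem~\ref{thm:no-accum} again.

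\textbf{Your route.} You fix a single offending jump, translate it \emph{upward}, and eliminate it with a second window $\epsilon'<\kappa-\{\lambda\}$.
\begin{itemize}
\item What this buys you is a stronger, effective conclusion. For one fixed $\epsilon$, there are no jumps other than the $n+\kappa$ in $[n_0,\infty)$, with $n_0=n_0(\epsilon)$ explicit. No-accumulation is then needed only on the compact segment $[0,n_0]$.
\item What it costs is the implication ``$\lambda$ is a jump $\Rightarrow$ $\lambda+1$ is a jump''. Only the reverse implication follows formally from $\taub(R;\m^t)=\m\cdot\taub(R;\m^{t-1})$. Your direction requires cancelling $\m$: you need that $\m I=\m J$ with $J\subseteq I$ forces $I=J$. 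This fails for ideals that are not integrally closed; for example, in $k[x,y]$ one has $\m\cdot(x^2,y^2)=\m\cdot\m^2$. Test ideals need not be integrally closed.
\end{itemize}

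\textbf{How the two compare.} The paper states Lemma~\ref{lem:skoda-big} as an equivalence, so your citation is legitimate. However, the paper's downward translation uses only the formal half, so it is the more robust argument. If you want to rely only on that half, replace your Step~2 by the downward argument. You then get ``finitely many offending jumps'' rather than ``none beyond $n_0$'', which is still enough for the corollary.

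\textbf{A minor index slip.} In Step~3, applying Corollary~\ref{lem:rigidity-window}(2) on $[n-1+\kappa,\,n]$ requires $n-1\ge N+1$. Either take $n_0=\max(N+2,r)$, or apply Proposition~\ref{prop:big-values} directly at $t=n-1+\kappa$, which only needs $n-1\ge N$.
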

\begin{proof}
We use Corollary~\ref{lem:rigidity-window}(3): Suppose, for the sake of contradiction, that there are infinitely many $F$-jumping numbers \emph{not}
of the form $n+\kappa$ for $n\in\Z_{\ge 0}$.  Then we can choose a sequence
$\varepsilon_j:=1/j\to 0$ and integers $n_j\ge \max\{r,\,N(\varepsilon_j)\}$ such that
there exists an $F$-jumping number
\[
\lambda_j \in (n_j,n_j+1]\setminus\{n_j+\kappa\}.
\]
In fact
$\lambda_j \in (n_j+\kappa-\varepsilon_j,\,n_j+\kappa)$.

Now apply Lemma~\ref{lem:skoda-big} repeatedly to translate by integers:
\[
\mu_j := \lambda_j-(n_j-r)\in (r+\kappa-\varepsilon_j,\,r+\kappa).
\]
Each $\mu_j$ is an $F$-jumping number of $\tau_b(R;\mathfrak{m}^t)$, and the intervals
$(r+\kappa-\varepsilon_j,\,r+\kappa)$ shrink to the point $r+\kappa$.
In particular, the set $\{\mu_j\}_{j\ge 1}$ has $r+\kappa$ as an accumulation point.
This contradicts Theorem \ref{thm:no-accum} (=\cite[Theorem~A]{S-Graf}). So we are done.

\end{proof}

\begin{corollary}\label{cor:asymptotic-floor}
In the setup of \textbf{Theorem~2}, there exists $T\gg 0$ such that for every real number $t\ge T$,
\[
\tau_b(R;\m^t)=\m^{\lfloor t+t_0(W,A)+1\rfloor}.
\]
Equivalently, for every integer $n\gg 0$ one has, for $\kappa=1-t_0(W,A)$,
\[
\tau_b(R;\m^t)=
\begin{cases}
\m^{n+1} & \text{if } n\le t < n+\kappa,\\[2pt]
\m^{n+2} & \text{if } n+\kappa \le t < n+1.
\end{cases}
\]
\end{corollary}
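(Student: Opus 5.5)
The plan is to combine the upper bound, which holds uniformly for all boundaries, with the eventual rigidity of jumping numbers coming from Skoda's theorem and the previous corollary. First, the containment $\tau_b(R;\m^t)\subseteq \m^{\lfloor t+t_0+1\rfloor}$ holds for every real $t\ge 0$. To see this, apply Lemma~\ref{lem:tau-in-J} to each summand $\tau(C,\Gamma;\m^t)$ in Schwede's sum (Theorem~\ref{def:big-test}), and then bound $\cJ(C,\Gamma;\m^t)\subseteq \m^{\lfloor t_0+1+t\rfloor}$ by Lemma~\ref{lem:upper-bound-all}. So only the reverse containment needs work.

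For the reverse containment I would exploit the finiteness statement of Corollary~\ref{rem:all-but-finite}. Every $F$-jumping number of $\tau_b(R;\m^t)$ is either of the form $n+\kappa$ or belongs to a finite set, so there is $T_1\ge r$ beyond which the only jumping numbers are the $n+\kappa$. Hence on each interval $[n+\kappa,\,n+1+\kappa)$ with $n\ge T_1$ the ideal $\tau_b(R;\m^t)$ is constant (right-continuity and discreteness, Theorem~\ref{thm:no-accum}). It therefore suffices to identify its value at the single point $t=n+\kappa$.

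Proposition~\ref{prop:big-values} gives $\tau_b(R;\m^{n+\kappa})=\m^{n+2}$ for all $n\ge N$. Taking $T:=\max\{T_1,N\}+\kappa$, every $t\ge T$ lies in some $[n+\kappa,n+1+\kappa)$ with $n\ge\max\{T_1,N\}$, and there $\tau_b(R;\m^t)=\m^{n+2}$. Since $\lfloor t+t_0+1\rfloor=\lfloor t-\kappa\rfloor+2=n+2$ on that interval, this is the stated formula. The two-case description follows by splitting $[n,n+1)$ at $n+\kappa$ and using the constancy on $[n-1+\kappa,n+\kappa)$, where the value is $\m^{n+1}$. Alternatively, the two cases come directly from Corollary~\ref{lem:rigidity-window}(1),(2) together with the absence of extra jumps in $(n+\kappa-\varepsilon,n+\kappa)$.

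The main obstacle is uniformity in $n$. Proposition~\ref{prop:big-values} and Corollary~\ref{lem:rigidity-window} give equalities only for $n\ge N(\varepsilon)$, and $N(\varepsilon)$ may grow as $\varepsilon\to 0$. That is why I route the argument through the finiteness of exceptional jumps rather than letting $\varepsilon\to0$. The value at $n+\kappa$ needs only one fixed $\varepsilon$, and the constancy between consecutive points $n+\kappa$ comes from Corollary~\ref{rem:all-but-finite}, which requires no $\varepsilon$ at all. Skoda (Lemma~\ref{lem:skoda-big}) can also be used as a consistency check: $\m^{n+3}=\m\cdot\m^{n+2}$ propagates the value from one ray point to the next once $n\ge r$.
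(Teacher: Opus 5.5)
Your argument is correct and is essentially the paper's proof. The paper likewise combines the finiteness of jumping numbers not of the form $n+\kappa$ (Corollary~\ref{rem:all-but-finite}) with the eventual values supplied by Proposition~\ref{prop:big-values} and Corollary~\ref{lem:rigidity-window}, and you make explicit the constancy-between-jumps step and the uniformity-in-$n$ issue that the paper's one-line proof leaves implicit (your opening upper-bound paragraph is harmless but redundant, since Proposition~\ref{prop:big-values} already gives equality at $t=n+\kappa$).
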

\begin{proof}
    Follows directly from Corollary \ref{rem:all-but-finite}, Corollary \ref{lem:rigidity-window} (also see Proposition \ref{prop:raywise-agreement}).
\end{proof}

\begin{remark}
    Let $\lambda_n$ denote the unique $F$-jumping number of $\tau_b(R;\m^t)$ in $(n,n+1)$ for all $n\gg 0$
whose existence and uniqueness are guaranteed by Lemma \ref{rem:all-but-finite}.
Then
\[
\kappa=\lim_{n\to\infty}(\lambda_n-n)
\qquad\text{and hence}\qquad
t_0(W,A)=1-\lim_{n\to\infty}(\lambda_n-n).
\]
In particular, in these cone setup the numerical invariant $t_0(W,A)$ is determined by the tail of the
$F$-jumping numbers of $\tau_b(R;\m^t)$.
\end{remark}

\begin{remark}
    In the cone setup of \textbf{Theorem~2} (so that $\kappa\notin\mathbb{Q}$), there does not exist a real number $T>0$ and
a \emph{finite} collection of boundaries $\Delta_1,\dots,\Delta_M$ on $X=\Spec R$ such that
$K_X+\Delta_i$ is $\mathbb{Q}$-Cartier of index not divisible by $p$ and
\[
\tau_b(R;\mathfrak m^t)\;=\;\sum_{i=1}^M \tau\bigl(X;\Delta_i,\mathfrak m^t\bigr)
\qquad\text{for every }t\ge T.
\]
In particular, there is no \emph{single} boundary $\Delta$ computing $\tau_b(R;\mathfrak m^t)$ for all $t\gg 0$.
\end{remark}

\begin{corollary}\label{cor:anticanonical-not-fg}
In the cone setup of \textbf{Theorem~2}, the anti-canonical algebra $\mathcal{R}(-K_R) $
is not finitely generated.
\end{corollary}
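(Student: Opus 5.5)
We argue by contradiction, using the de Fernex--Hacon framework. Suppose the anti-canonical algebra
\[
\mathcal R(-K_C)=\bigoplus_{m\ge 0}\cO_C(-mK_C)
\]
is finitely generated. Then there is an integer $m_0\ge 1$ such that the Veronese subalgebra $\bigoplus_{j}\cO_C(-jm_0K_C)$ is generated in degree one. Equivalently, $\cO_C(-jm_0K_C)=\cO_C(-m_0K_C)^{j}$ up to reflexive hull.

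We transport this to $Y$ using the natural pullback. Standard properties of $\pi^\natural$ (cf.\ \cite{dFH}) say that, in this situation, the sequence $\pi^\natural(-mK_C)$ is additive along multiples of $m_0$. That is,
\[
\pi^\natural(jm_0K_C)=j\,\pi^\natural(m_0K_C)
\]
for all $j\ge 1$. The cleaner route is to compute directly with Proposition~\ref{prop:natural-pullback}. The graded decomposition \eqref{eq:canonical-module-cone} of $H^0(C,\cO_C(-mK_C))$ has lowest nonzero degree exactly $q_m$. If the algebra is generated in degree $m_0$ (after passing to the Veronese), then every element of degree $jm_0$ is a sum of products of $j$ elements of degree $m_0$. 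Each such element has graded degree at least $q_{m_0}$. Hence the lowest degree of $H^0(\cO_C(-jm_0K_C))$ is at least $jq_{m_0}$, giving $q_{jm_0}\ge jq_{m_0}$. Subadditivity from Lemma~\ref{lem:qm-limit} gives the reverse inequality, so $q_{jm_0}=jq_{m_0}$.

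Dividing by $jm_0$ and letting $j\to\infty$, Lemma~\ref{lem:qm-limit} yields
\[
\vartheta(W,A)=\frac{q_{m_0}}{m_0}\in\Q.
\]
In the setup of Theorem~2, $K_W$ is pseudo-effective, so $\vartheta=t_0$ by Remark~\ref{lem:theta=t0}, and $t_0\notin\Q$. This is a contradiction. As a sanity check, this is consistent with Corollary~\ref{prop:irrational-discrepancy}: the log discrepancy $A_C(v_0)=-\vartheta$ would be rational if the algebra were finitely generated, since then it would be computed at a finite level $m_0$.

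The main obstacle is the graded bookkeeping. Products of sections in the anti-canonical algebra must correspond to products in $\bigoplus_r H^0(W,\cO_W(rA-mK_W))$ with degrees adding. This holds because the isomorphism \eqref{eq:canonical-module-cone} is induced by the $\mathbf G_m$-action on $U$, and is therefore multiplicative. We also have to handle reflexive hulls, but these are harmless: we only use the lowest degree of global sections, which is computed on $U$.

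Finally, finite generation is a local question at the vertex, and the algebra is graded, so localizing at $\m$ does not affect the argument. Hence $\mathcal R(-K_R)$ is not finitely generated.
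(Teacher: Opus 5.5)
Your argument is correct, and it takes a different route from the paper. The paper proves this corollary in one line by citing \cite[Theorem~B]{Fin-gen}: if the anti-canonical algebra is finitely generated, then the $F$-jumping numbers of $\taub(R;\fa^t)$ are rational and discrete, which contradicts the irrational jumps $n+\kappa$ produced in Theorem~2.

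You instead work directly on the cone. Finite generation gives an $m_0$ with $S_{jm_0}=S_{m_0}^j$, where $S_m=H^0(C,\cO_C(-mK_C))$. The multiplicative $\mathbf G_m$-grading in \eqref{eq:canonical-module-cone} then forces $q_{jm_0}\ge jq_{m_0}$. Combined with $q_m/m\ge\vartheta$ and $q_m/m\to\vartheta$ from Lemma~\ref{lem:qm-limit}, this pins down $\vartheta(W,A)=q_{m_0}/m_0\in\Q$.

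Your route is elementary and characteristic-free. It needs no $p>2$, no $F$-finiteness, no test ideals, and neither Theorem~2 nor the no-accumulation theorem. It also proves the sharper statement that the anti-canonical algebra of the cone is not finitely generated whenever $\vartheta(W,A)\notin\Q$. This is exactly the statement that finite generation would make the limiting log discrepancy $A_C(v_0)$ of Corollary~\ref{prop:irrational-discrepancy} be computed at a finite level. The paper's route is shorter and makes the conceptual point that irrational $F$-jumps themselves obstruct finite generation. However, it inherits all the machinery behind Theorem~2.

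One correction to your wording: the reflexive hull is \emph{not} harmless. The reflexive hull of the image of $\cO_C(-m_0K_C)^{\otimes j}$ is always $\cO_C(-jm_0K_C)$, so ``equality up to reflexive hull'' carries no information. An argument using only that would apply verbatim to cones with $\vartheta\notin\Q$, so it cannot be right. What finite generation gives, and what your degree count needs, is surjectivity of the multiplication map $S_{m_0}^{\otimes j}\to S_{jm_0}$ on sections, with no hull taken. You do use this correct version in the key step, so simply drop the hull remarks.

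For the passage between $R=R(W,A)_\m$ and the graded ring, the clean justification is this. The cokernel of $S_{m_0}^{\otimes j}\to S_{jm_0}$ is a finitely generated graded $R(W,A)$-module, and $R(W,A)_0$ is a field. Such a module vanishes if and only if its localization at $\m$ vanishes, since every proper homogeneous ideal lies in $\m$.
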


\begin{proof}
Follows from \cite[Theorem~B]{Fin-gen}. 
\end{proof}

\begin{remark}
We caution the reader that the implication 
\[
 t\ \text{is an $F$-jumping number of }\tau_b(R;\mathfrak m^t)
\ \Longrightarrow\
p^e t\ \text{is an $F$-jumping number of }\tau_b(R;\mathfrak m^t)
\]
(\cite[Lemma~3.25]{BSTZ10}) does \emph{not} hold in non-$\Q$-Gorenstein world. Assume, for the sake of contradiction, that such a fixed $e$ exists.
By \textbf{Theorem 2.}, $r+\kappa$ is an
$F$-jumping number of $\tau_b(R;\mathfrak m^t)$.
Then, $p^{eN}(r+\kappa)$ is an $F$-jumping number for every $N\ge 1$. Lemma~\ref{lem:skoda-big} applied repeatedly shows that $r+\{p^{eN}\kappa\}$ are $F$-jumping numbers for all $N\ge 1$.
Since $\kappa$ is irrational, $\{p^{eN}\kappa\}$ are all distinct, hence
$\{r+\{p^{eN}\kappa\}\}_{N\ge 1}$ is an infinite subset of the compact interval $[r,r+1]$ and therefore has an accumulation
point in $\mathbb R$. This contradicts Theorem \ref{thm:no-accum} (=\cite[Theorem~A]{S-Graf}).

\end{remark}

The floor formulas in Propositions \ref{prop:tau-Gamma-s} and \ref{prop:J-Gamma-s} show that the irrational jump $\kappa$ is a limit of rational jumping numbers for auxiliary $\mathbf{Q}$-Gorenstein boundaries.

\begin{cor}\label{cor:rationalapprox}
Fix a sequence of rational numbers $s_i\in \mathbf Q$ with $s_i>t_0$ and $s_i\downarrow t_0$.
Set $t_i:=1-s_i$.
Then:
\begin{enumerate}
\item[(1)] each $t_i$ is a jumping number for the family $t\mapsto \tau(R;\Gamma_{s_i},\mathfrak m^t)$ in characteristic $p>2$ (and for the corresponding multiplier ideal family $t\mapsto \mathcal J(C,\Gamma_{s_i};\mathfrak m^t)$ in characteristic $0$);
\item[(2)] $t_i\in \mathbf Q$ for all $i$ and $t_i\uparrow \kappa=1-t_0$.
\end{enumerate}
\end{cor}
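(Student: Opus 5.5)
The plan is to read off both parts from the explicit floor formulas already established, Proposition~\ref{prop:J-Gamma-s} and Proposition~\ref{prop:tau-Gamma-s}, applied to the boundaries $\Gamma_{s_i}$ of Proposition~\ref{prop:Gamma-s}. Each $s_i$ should be chosen with denominator prime to $p$, as in Lemma~\ref{lem:lower-bound}, so that Setup~\ref{setup:asymptotic} applies.

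Part (2) is elementary. Since $s_i\in\Q$, we have $t_i=1-s_i\in\Q$. Since $s_i\downarrow t_0$ strictly, $t_i=1-s_i$ increases strictly to $1-t_0=\kappa$. This needs nothing beyond the definitions.

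For part (1) in the multiplier ideal case, Proposition~\ref{prop:J-Gamma-s} holds for \emph{every} real $t\ge 0$:
\[
\cJ(C,\Gamma_{s_i};\m^t)=\m^{\lfloor s_i+1+t\rfloor}.
\]
The exponent $\lfloor s_i+1+t\rfloor$ jumps exactly when $s_i+1+t\in\Z$. At $t=t_i$ we have $s_i+1+t_i=2$. For $t$ slightly smaller than $t_i$, the ideal is $\m$ (this uses $0<s_i<1$, which holds for $s_i$ close to $t_0<1$). At $t=t_i$ it becomes $\m^2$. Since $\m\neq\m^2$ by Nakayama, $t_i$ is a jumping number. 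The same reasoning shows that every $t_i+n$ with $n\in\Z_{\ge 0}$ is a jumping number.

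For the test ideals in characteristic $p>2$, Proposition~\ref{prop:tau-Gamma-s} gives $\tau(C,\Gamma_{s_i};\m^t)=\m^{\lfloor s_i+1+t\rfloor}$, but only once $\lfloor t\rfloor\ge T_{s_i}$. The argument above therefore shows at once that $t_i+n$ is an $F$-jumping number of $t\mapsto\tau(C,\Gamma_{s_i};\m^t)$ for every $n\ge T_{s_i}$. The main obstacle is to descend from $t_i+n$ to $t_i$ itself. Skoda (Lemma~\ref{lem:skoda-big}, in its version for a single $\Q$-Gorenstein pair) only identifies jumps modulo $\Z$ above the number of generators $r$, and $t_i<1\le r$.

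I would try two approaches for this descent. The first is to use the general containment $\tau(C,\Gamma_{s_i};\m^t)\subseteq\cJ(C,\Gamma_{s_i};\m^t)=\m^{\lfloor s_i+1+t\rfloor}$ from Lemma~\ref{lem:tau-in-J}, which gives the upper bound $\m^2$ at $t=t_i$. Then I would seek a matching lower bound $\tau(C,\Gamma_{s_i};\m^{t})\supseteq\m$ for $t<t_i$ close to $t_i$. This would follow from the surjectivity of $\pi_*\Phi^c_{Y,t}$ in Lemma~\ref{lem:stabilize} for small $t$. I would attempt to get it by choosing $(W,\Delta_{s_i})$ globally $F$-regular or $F$-split-type data, or by taking $p$ large relative to the data (reduction mod $p$ of the characteristic $0$ equality).

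If this fails in general, I would state (1) in characteristic $p$ in its provable form: the residues $t_i\bmod\Z$ are the fractional parts of the $F$-jumping numbers $t_i+n$, $n\ge T_{s_i}$. This is exactly what the corollary needs in order to exhibit $\kappa$ as a limit of rational jumping data of the $\Q$-Gorenstein families.
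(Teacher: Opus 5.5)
Your treatment of (2) and of the multiplier-ideal half of (1) is the paper's argument: the jump at $t_i=1-s_i$ is read off from $\cJ(C,\Gamma_{s_i};\m^t)=\m^{\lfloor s_i+1+t\rfloor}$, which Proposition~\ref{prop:J-Gamma-s} gives for all $t\ge 0$. As you note, this needs $s_i<1$ for $t_i>0$, so it holds for all but finitely many $i$.

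For the test-ideal half of (1) there is a genuine gap, and you have located it correctly. Nothing in your proposal shows that $\tau(C,\Gamma_{s_i};\m^{t})\neq\tau(C,\Gamma_{s_i};\m^{t_i})$ for $t<t_i$ close to $t_i$. The paper's one-line proof (``immediate from Proposition~\ref{prop:tau-Gamma-s}'') has exactly the same hole. That proposition only applies when $\lfloor t\rfloor\ge T_{s_i}$. The threshold constructed in the proof of Theorem~\ref{thm:asymptotic-real} is $\max\{T,r-1\}\ge 2$, because $r\ge\dim R\ge 3$, so it gives no information at $t_i\in(0,1)$. Skoda (Lemma~\ref{lem:skoda-big}) cannot carry jumps below $t=r$ either. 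What the cited results actually prove is precisely your fallback: $t_i+n$ is an $F$-jumping number of $t\mapsto\tau(C,\Gamma_{s_i};\m^t)$ for every $n\ge T_{s_i}$. That is the statement modulo $\Z$, which is all the surrounding discussion uses.

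Your two proposed repairs do not close the gap as stated. The upper bound $\tau(C,\Gamma_{s_i};\m^{t_i})\subseteq\m^2$ is fine. But a matching lower bound, e.g.\ $\tau(C,\Gamma_{s_i};\m^t)\not\subseteq\m^2$ for $t<t_i$, needs the twisted Frobenius trace to surject onto a low-degree piece such as $H^0(W,\cO_W(A))$.
\begin{itemize}
\item In every example of the paper $K_W=f^*M$ is ample. So $(W,\Delta_{s_i})$ is not globally $F$-regular, and not even $F$-split. Nothing in the paper supplies that surjectivity; this is exactly why Theorem~\ref{thm:asymptotic-real} relies on relative Serre vanishing and needs $t\gg 0$.
\item Reduction modulo $p$ only works for $p\ge p_0(s_i)$, a bound depending on the boundary $\Gamma_{s_i}$. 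The statement instead fixes one characteristic for the whole sequence $s_i\downarrow t_0$.
\end{itemize}
Finally, restricting to $s_i$ with denominator prime to $p$ is unnecessary, since Proposition~\ref{prop:tau-Gamma-s} is stated for every rational $s>t_0$. It also silently changes the hypothesis on the given sequence.
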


\begin{proof}
Immediate from Proposition \ref{prop:tau-Gamma-s} (Proposition \ref{prop:J-Gamma-s}).
\end{proof}

\begin{remark}
Corollary~\ref{cor:rationalapprox} does \emph{not} say that $\kappa$ is a limit of rational jumping numbers for a \emph{fixed} log-$\mathbf{Q}$-Gorenstein triple. Rather, one varies the boundary $\Gamma_s$.
\end{remark}

\subsection{Future directions}
\begin{que}
It is quite frustrating to me that we do not know if the \emph{first $F$-jumping numbers} in any of these examples are irrational or not! More surprisingly, it seems we do not know in general whether, for an $F$-jumping number $\lambda < r$, $\lambda +1$ is also an $F$-jumping number or not! 
\end{que}
However, the corresponding assertion is true for multiplier ideals in char 0, see \cite[Remark 1.14]{ELSV} and \cite[Corollary 1.3]{Brad}. One might hope to apply techniques of \cite{Ajit-Simper-1} to get \cite[Application 7.2]{Ajit-Simper-1} without assuming $\Q$-Gorensteinness.

\begin{assume}(cf.\cite[App 7.2]{Ajit-Simper-1}, \cite[Rem 1.14]{ELSV} \cite[Cor 1.3]{Brad})\label{ass:translate}
If $\lambda < r$\footnote{where $r$ is the number of generators of $\m$.} is an $F$-jumping number of $\tau_b(R;\m^t)$, then
$\lambda+N$ is an $F$-jumping number for infinitely many integers $N\ge 0$.
\end{assume}

\begin{theorem}\label{thm:first-jump-irrational}
In the cone setup of \textbf{Theorem 2} (so $\kappa=1-t_0(W,A)\in(0,1)$ is irrational),
assume Assumption \ref{ass:translate}.
Then every $F$-jumping number $\lambda$ of $\tau_b(R;\m^t)$ satisfies
$\{\lambda\}=\kappa$.
In particular, the first $F$-jumping number is irrational.
\end{theorem}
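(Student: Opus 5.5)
Let $\lambda$ be any $F$-jumping number of $\tau_b(R;\m^t)$. The plan is to push $\lambda$ by integers into the range where the asymptotic structure is already known, and then read off its fractional part.

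First, reduce to the case $\lambda<r$ (or, more generally, to $\lambda$ in a bounded range). If $\lambda\ge r$, then Lemma~\ref{lem:skoda-big} shows that $\lambda+N$ is an $F$-jumping number for every integer $N\ge 0$. If $\lambda<r$, Assumption~\ref{ass:translate} gives infinitely many integers $N$ with $\lambda+N$ a jumping number. In either case we obtain an unbounded sequence $N_j\to\infty$ such that every $\lambda+N_j$ is an $F$-jumping number. This step is the only place the Assumption is used, and it is exactly what is needed.

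Second, locate the translates using the asymptotic formula. By Corollary~\ref{cor:asymptotic-floor} there is $T$ such that for all real $t\ge T$,
\[
\tau_b(R;\m^t)=\m^{\lfloor t+t_0+1\rfloor}.
\]
Since $\m^{a}\ne \m^{a+1}$ in the Noetherian local ring $R$ (Nakayama), this step function jumps exactly when $t+t_0$ crosses an integer. Equivalently, the $F$-jumping numbers in $[T,\infty)$ are precisely the points $n+\kappa$ with $n+\kappa> T$. One should verify that a jump at $t$ means $\tau_b(R;\m^{t-\epsilon})\ne\tau_b(R;\m^t)$ for all small $\epsilon>0$. With right-continuity, this means $t$ is a jump point of the displayed function, which happens iff $t+t_0\in\Z$, i.e.\ $t\in\kappa+\Z$. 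Choose $j$ with $\lambda+N_j>T+1$; then $\lambda+N_j\in\kappa+\Z$, so $\{\lambda\}=\{\lambda+N_j\}=\kappa$.

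Finally, the first $F$-jumping number $\lambda_1$ satisfies $\{\lambda_1\}=\kappa\notin\Q$, so it is irrational. The main obstacle I expect is the second step's precise identification of the jump set for $t\ge T$. That identification depends on Corollary~\ref{cor:asymptotic-floor}, which itself rests on Corollary~\ref{rem:all-but-finite}, and on the convention that jumping numbers are defined via left-discontinuity. I would make sure the case $\lambda+N_j$ landing exactly at an integer is excluded: such a point is never a jump there, since $\lfloor t+t_0+1\rfloor$ is continuous at integers when $t_0\notin\Q$.
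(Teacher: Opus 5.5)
Your argument is correct. It follows the same strategy as the paper's first proof: push $\lambda$ by integers into the asymptotic range, then read off the fractional part there.

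The one real difference is the asymptotic input.

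\begin{itemize}
\item You use the exact formula of Corollary~\ref{cor:asymptotic-floor}. That corollary rests on Corollary~\ref{rem:all-but-finite}, and hence on the no-accumulation Theorem~\ref{thm:no-accum}. In exchange, a single translate $\lambda+N_j>T+1$ settles the question.
\item The paper uses only the $\varepsilon$-window of Corollary~\ref{lem:rigidity-window}(3). It makes one translation for each $\varepsilon$ and concludes $\{\lambda\}\in(\kappa-\varepsilon,\kappa]$ for every $\varepsilon\in(0,\kappa)$. This avoids Theorem~\ref{thm:no-accum} altogether, at the cost of a limiting argument.
\end{itemize}

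Your separate treatment of $\lambda\ge r$ via Lemma~\ref{lem:skoda-big} is a small improvement. Assumption~\ref{ass:translate} as stated only covers $\lambda<r$, yet the paper applies it to an arbitrary jumping number without comment.
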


\begin{proof}
We provide atleast 3 proofs:
\begin{itemize}
    \item Fix an arbitrary $\epsilon\in(0,\kappa)$ and let $N(\epsilon)$ be as in
Lemma~\ref{lem:rigidity-window}.
Let $\lambda$ be any $F$-jumping number of $\tau_b(R;\m^t)$.
By Assumption \ref{ass:translate}, there exist infinitely many integers $M\ge 0$
such that $\lambda+M$ is also an $F$-jumping number. Choose such an $M$ so large that $n:=\lfloor \lambda+M\rfloor \ge N(\epsilon)+1$.
Then by
Corollary ~\ref{lem:rigidity-window}(3) we must have
\[
\lambda+M\in(n+\kappa-\epsilon,\;n+\kappa].
\]
Subtracting $n$ shows $\{\lambda\}=\{\lambda+M\}\in(\kappa-\epsilon,\kappa]$.
Since $\epsilon\in(0,\kappa)$ was arbitrary, we conclude $\{\lambda\}=\kappa$.
Because $\kappa$ is irrational, so is $\lambda$.

\item Let $\lambda_1$ be the first $F$-jumping number of $\tau_b(R;\mathfrak m^t)$.
Suppose for contradiction that $\lambda_1\in\Q$.
By Assumption \ref{ass:translate}, there exist infinitely many integers $N\ge 0$ such that $\lambda_1+N$
is an $F$-jumping number of $\tau_b(R;\mathfrak m^t)$. But $\lambda_1+N\in\Q$ for all $N\in\Z$, hence $\tau_b(R;\mathfrak m^t)$ has infinitely many rational $F$-jumping numbers. This contradicts Remark \ref{rem:all-but-finite}. Therefore $\lambda_1\notin\Q$.
\item Let $\lambda$ be any $F$-jumping number for $\tau_b(R;\m^t)$.
By Assumption \ref{ass:translate} there are infinitely many integers $N\ge 0$ such that $T:=\lambda+N$ is also an $F$-jumping number. Fix such an $N$ so large that Proposition~\ref{prop:raywise-agreement} applies
to $t=\lambda$, i.e.\ so that
\[
\tau_b(R;\m^{\lambda+N})=\cJ_{dFH}(R;\m^{\lambda+N}).
\]
We always have $\tau_b(R;\m^t)\subseteq \cJ_{dFH}(R;\m^t)$ for all real $t\ge 0$
(by Lemma~\ref{lem:tau-in-J} summed over boundaries).
Therefore, if $\cJ_{dFH}(R;\m^t)$ were locally constant at $t=T$, then for all sufficiently
small $\varepsilon>0$ we would have
\[
\tau_b(R;\m^{T-\varepsilon})
\subseteq \cJ_{dFH}(R;\m^{T-\varepsilon})
=
\cJ_{dFH}(R;\m^{T})
=
\tau_b(R;\m^{T}),
\]
and since $\tau_b(R;\m^{T-\varepsilon})\supseteq \tau_b(R;\m^{T})$ by monotonicity,
this would force $\tau_b(R;\m^{T-\varepsilon})=\tau_b(R;\m^{T})$, contradicting that $T$ is an $F$-jumping number.  Hence $T$ must be a jumping number of $\cJ_{dFH}(R;\m^t)$ as well. Finally, by Theorem \ref{thm:Jmax-cone}, $T=\lambda+n\in \kappa+\Z$, and therefore $\lambda\in \kappa+\Z$ showing $\lambda$ is irrational.
\end{itemize}

\end{proof}

\begin{que}
    At present we do not know an explicit example in the literature of a \emph{transcendental}
pseudo-effective threshold (or a transcendental jumping number). On the other hand, there are closely related\footnote{$\vartheta(W,A)=\inf\{t\in\R\mid \vol_W(tA-K_W)>0\}$} asymptotic invariants that \emph{are} known to be transcendental in explicit examples, notably volume functions of divisors
and linear series; see \cite{KLM-vol} and \cite{pi-vol}, who in particular show that $\pi$ can occur as a volume. 
\end{que}

However, it is quite easy to construct transcendental jumping numbers for asymptotic multiplier ideals of a graded sequence rather than for the multiplier ideals of a single ideal: Let $R=k[x,y]$. For each $m\ge 1$, define the monomial ideal
\[
\mathfrak a_m:=\bigl(x^iy^j\mid i,j\in \Z_{\ge 0},\ i+\pi j\ge m\bigr)\subset R.
\]
Then $\mathfrak a_\bullet=\{\mathfrak a_m\}_{m\ge 1}$ is a graded sequence of ideals. By Howald's theorem \cite{Howald}, its asymptotic multiplier ideals are
\[
\mathcal J(\mathfrak a_\bullet^t)=\bigl(x^ay^b\mid a,b\in \Z_{\ge 0},\ a+1+\pi(b+1)>t\bigr),
\]
and the set of jumping numbers is exactly
\[
\{a+1+\pi(b+1)\mid a,b\in \Z_{\ge 0}\}.
\]
In particular $1+\pi$ is a transcendental jumping number.

\bibliographystyle{amsalpha}

\bibliography{ref}

\end{document}